\newtheorem{theorem}{Theorem}
\newtheorem{lemma}[theorem]{Lemma}
\newtheorem{corollary}[theorem]{Corollary}
\newtheorem{proposition}[theorem]{Proposition}
\theoremstyle{definition}
\newtheorem{remark}[theorem]{Remark}
\newtheorem{definition}[theorem]{Definition}
\newcommand{\eref}[1]{(\ref{e.#1})}
\newcommand{\tref}[1]{Theorem \ref{t.#1}}
\newcommand{\lref}[1]{Lemma \ref{l.#1}}
\newcommand{\pref}[1]{Proposition \ref{p.#1}}
\newcommand{\cref}[1]{Corollary \ref{c.#1}}
\newcommand{\fref}[1]{Figure \ref{f.#1}}
\newcommand{\sref}[1]{Section \ref{s.#1}}
\newcommand{\partref}[1]{\ref{part.#1}}
\newcommand{\dref}[1]{Definition \ref{d.#1}}
\numberwithin{theorem}{section}
\numberwithin{equation}{section}
\newcommand{\Z}{\mathbb{Z}}
\newcommand{\R}{\mathbb{R}}
\newcommand{\C}{\mathbb{C}}
\newcommand{\grad}{\nabla}
\def\XXint#1#2#3{{\setbox0=\hbox{$#1{#2#3}{\int}$ }
\vcenter{\hbox{$#2#3$ }}\kern-.6\wd0}}
\newcommand{\ep}{\varepsilon}
\newcommand{\osc}{\mathop{\textup{osc}}}
\newcommand{\id}{\operatorname{1}}
\begin{document}

\title[The Alt-Caffarelli energy functional in inhomogeneous media]{Limit Shapes of local minimizers for the Alt-Caffarelli energy functional in inhomogeneous media}

\author{William M Feldman}
\email{feldman@math.uchicago.edu}
\address{Department of Mathematics, The University of Chicago, Chicago, IL 60637, USA}
\keywords{contact angle hysteresis, pinning, free boundaries, homogenization}

\maketitle

\begin{abstract}
This paper considers the Alt-Caffarelli free boundary problem in a periodic medium.  This is a convenient model for several interesting phenomena appearing in the study of contact lines on rough surfaces, pinning, hysteresis and the formation of facets. We show the existence of an interval of effective pinned slopes at each direction $e \in S^{d-1}$.  In $d=2$ we characterize the regularity properties of the pinning interval in terms of the normal direction, including possible discontinuities at rational directions.  These results require a careful study of the families of plane-like solutions available with a given slope. Using the same techniques we also obtain strong, in some cases optimal, bounds on the class of limit shapes of local minimizers in $d=2$, and preliminary results in $d \geq 3$.  
\end{abstract}

\tableofcontents

\section{Introduction}

Consider the free boundary problem in a heterogeneous medium,
\begin{equation}\label{e.fb0}
  \begin{cases}
    \Delta u = 0 & \mbox{in } \{ u > 0 \} \\
    |\grad u| = Q(x/\ep) & \mbox{on } \partial \{ u > 0 \}.
  \end{cases}
\end{equation}
This is the Euler-Lagrange equation associated with the Alt-Caffarelli-type energy functional,
\begin{equation}\label{e.energyep}
 E_\ep(u) = \int |\grad u|^2 + Q(x/\ep)^2{\bf 1}_{\{u>0\}} \ dx.
 \end{equation}
 Our main physical motivation for studying this problem is the connection with capillarity problems on a rough surface, in that case the dimension of interest is $d=2$. Dimension $d=3$ is also of interest in connection with problems involving flows in porous media.
 
The global energy minimizers, generally speaking, converge as $\ep \to 0$ to the global minimizer of
\begin{equation}\label{e.gammalimit}
 E_0(u) = \int |\grad u|^2 + \langle Q^2 \rangle {\bf 1}_{\{u>0\}} \ dx.
 \end{equation}
We are interested instead in the limiting shape of local minimizers or critical points.  In that case, formally speaking, the scaling limit is a free boundary problem of the form
\begin{equation}\label{e.fb1}
  \begin{cases}
    \Delta u = 0 & \mbox{in } \{ u > 0 \} \\
    |\grad u| \in [Q_*(n_x),Q^*(n_x)] & \mbox{on } \partial \{ u > 0 \}
  \end{cases}
  \end{equation}
  where $n_x$ is the inward unit normal to $\{u>0\}$ at $x$.   The interval of stable slopes, or pinning interval, $[Q_*(n),Q^*(n)]$ defined for each $n \in S^{d-1}$ is determined by a cell problem.  We call \eref{fb1} the pinning free boundary problem, or the pinning problem.  
  
  In this paper we will show that, in $d=2$, solutions of \eref{fb1} correspond to local minimizers of \eref{energyep} for $\ep>0$ small.  There are some important additional restrictions on the result which will be explained below.  In the process we study the fine properties of $Q_*,Q^*$, directions of continuity and discontinuity.  These properties give qualitative information on the structure of the free boundary.  In a forthcoming companion article we show how discontinuities in $Q^*,Q_*$ are responsible for formation of facets in the free boundary under a monotone quasi-static motion \cite{FeldmanQS}.  It was already discovered by Caffarelli and Lee~\cite{CaffarelliLee}, and explored further by the author and Smart \cite{FeldmanSmart}, that, in a convex setting, discontinuities in $Q^*$ result in facets in the minimal supersolution of \eref{fb1}.
  
One of the most interesting aspects of this problem is that macroscopic hysteresis arises from inhomogeneities in a microscopic system which is reversible.  This is well known in the physics literature, and has been explored in some aspects in the mathematical literature \cite{DeSimoneGrunewaldOtto,AlbertiDeSimone,CaffarelliMellet2,Kim,KimMellet}.  
  
  The interval of stable slopes, or pinning interval, $[Q_*(e),Q^*(e)]$ defined for each $e \in S^{d-1}$ is determined by the following cell problem. We say $p \in \R^d \setminus \{0\}$ is a stable or pinned slope if there exists a solution on $\R^d$ of
  \begin{equation}\label{e.cell0}
\left\{
\begin{array}{ll}
 \Delta u = 0 & \hbox{ in } \ \{u >0\} \vspace{1.5mm}\\
|\grad u| = Q(x) & \hbox{ on } \ \partial \{u >0\} \vspace{1.5mm}\\
\sup_{\R^d} |u(x) - (p \cdot x)_+| < +\infty.
\end{array}
\right.
\end{equation}
  Our first main result is on the qualitative properties of $Q^*,Q_*$ as functions of the normal direction.   
  \begin{theorem}\label{t.mainQ}
  The following properties holds for the pinning interval endpoints:
    \begin{enumerate}[label=(\roman*)]
  \item\label{part.mainQexist} Let $e \in S^{d-1}$ there exist $Q_*(e) \leq \langle Q^2 \rangle^{1/2} \leq  Q^*(e)$, respectively lower and upper semicontinuous in $e$, such that, there exists a global solution of \eref{cell0} with slope $p = \alpha e$ if and only if $\alpha \in [Q_*(e),Q^*(e)]$.  
  \item For any $\alpha \in (Q_*(e),Q^*(e)) \cup  \langle Q^2 \rangle^{1/2}$ there exist solutions of \eref{cell0} which are local energy minimizers.
  \item \label{part.mainQcont} When $d=2$, $Q^*,Q_*$ are continuous at irrational directions $e \in S^{1} \setminus \R\Z^2$.
  \item \label{part.mainQcont1} When $d=2$, directional limits of $Q^*,Q_*$ exist at rational directions $e \in S^{1}  \cap \R\Z^2$.
  \item\label{part.mainQdisc} Given any $k$-dimensional rational subspace, $1\leq k \leq d-1$, there exists $Q$ such that $Q^*,Q_*$ are discontinuous on that subspace.
  \item\label{part.mainQint} There exist $Q$ such that the pinning interval is nontrivial at every direction, $\inf_{S^{d-1}}(Q^*-Q_*) \geq \delta >0$.
  \end{enumerate}
  
  \end{theorem}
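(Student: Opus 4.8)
\textbf{Plane-like solutions; parts \partref{mainQexist} and~(ii).} The whole argument is organized around the families of plane-like solutions of \eref{cell0} in a fixed direction $e\in S^{d-1}$, i.e.\ solutions whose positivity set stays within bounded Hausdorff distance of a half-space $\{e\cdot x>c\}$, and around which slopes $\alpha>0$ they can carry. The plan is to produce them by a constrained minimization: fixing $e$, a slope $\alpha$ and a barrier position $t\in\R$, minimize $E$ over $B_R$ among functions with boundary datum $\alpha(e\cdot x-t)_+$ subject to $\{u=0\}\supseteq\{e\cdot x\le t-C_0\}$, and then let $R\to\infty$; the Alt--Caffarelli Lipschitz bound $|\grad u|\le\|Q\|_\infty$ and nondegeneracy give local uniform compactness and stability of the free boundary condition, yielding a plane-like minimizer $u_t$ with a well-defined slope $\alpha(t)e$. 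The core lemmas should then establish: $t\mapsto\alpha(t)$ is monotone and continuous with image a closed interval, named $[Q_*(e),Q^*(e)]$; for $\alpha(t)$ interior to it the barrier is inactive, so $u_t$ solves \eref{cell0} and, since any compactly supported admissible perturbation remains admissible, is a local energy minimizer (this is part~(ii) on the open interval); the endpoint slopes are realized by solutions produced as limits of interior ones, which need not be minimizers — hence their exclusion from part~(ii); and no $\alpha$ outside $[Q_*(e),Q^*(e)]$ is pinned, because a pinned solution of slope $\alpha e$ would trap the constrained family. That $Q_*(e)\le\langle Q^2\rangle^{1/2}\le Q^*(e)$ comes from the homogenization picture: corrector-type constructions give plane-like super- and sub-solutions of slope $\langle Q^2\rangle^{1/2}\pm\epsilon$ for every $\epsilon>0$, bracketing the pinned interval as $\epsilon\to0$. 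Semicontinuity is soft: the set $\{(e,\alpha):\alpha e\text{ pinned}\}$ is closed — re-center each plane-like solution by the lattice point nearest its free boundary and pass to a limit — so its upper and lower fibre-envelopes $Q^*,Q_*$ are u.s.c.\ and l.s.c.; and when $\langle Q^2\rangle^{1/2}$ is itself an endpoint, it still carries a minimizing solution obtained as a scaling limit of global minimizers of $E_\ep$ converging to the half-plane minimizer of \eref{gammalimit}.

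\textbf{The two-dimensional fine structure; parts \partref{mainQcont} and \partref{mainQcont1}.} This is the technical heart. The tool is the Birkhoff/ordering property: the lattice translates $\{u(\cdot-z):z\in\Z^2\}$ of a plane-like solution are totally ordered by inclusion of positivity sets once compared at $e\cdot x\to\pm\infty$. In $d=2$ this rigidity forces $\{u>0\}$ to be the subgraph of a function over $e^\perp$ with a quantitative modulus of continuity, so that facets are the only flat pieces and occur only in rational directions. Continuity of $Q^*,Q_*$ at an irrational $e_0$ then reduces, given part~\partref{mainQexist}, to the reverse semicontinuities, which I would prove by a squeeze: take $e\to e_0$ with $Q^*(e)$ near its limit, pass to a plane-like limit solution at $e_0$, and use that irrationality of $e_0$ makes $\{e_0\cdot z:z\in\Z^2\}$ dense, so the solution is approximated with no gap by its own lattice translates; a slope below $Q^*(e_0)$ would place it strictly inside the translate-family of a steeper solution, forcing an interior or free-boundary touching of two lattice translates where the strong maximum principle, the Hopf lemma, and the free boundary condition collide. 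At a rational $e_0$ the set $\{e_0\cdot z\}$ is discrete, gaps (facets) are allowed, and $Q^*,Q_*$ may jump, but along a one-sided approach $e\to e_0$ the admissible plane-like solutions vary monotonically in the Birkhoff order, which yields the one-sided limits — the upshot being a monotone-type decomposition of $Q^*,Q_*$ with jumps concentrated at rational directions, in analogy with the rotation number and the minimal average action in Aubry--Mather theory. Establishing the graph property with a uniform modulus and then mining the resulting monotone structure for both the irrational squeeze and the rational one-sided limits is, I expect, the main obstacle.

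\textbf{The constructions; parts \partref{mainQdisc} and \partref{mainQint}.} For \partref{mainQint}, take $Q^2=1+M\sum_{z\in N^{-1}\Z^d}\psi(N(x-z))$ with $\psi\ge0$ a fixed bump profile and $M,N$ large; the combinatorial input is that every hyperplane of $\R^d$ passes within $c/N$ of at least $\sim N^{d-1}$ bump centres per unit $(d-1)$-volume, so the free boundary of every plane-like solution meets a positive density of bumps for all $e$, and displacing it rigidly across one layer of bumps costs energy $\ge\delta(M,N)>0$ in either direction, giving $\inf_{S^{d-1}}(Q^*-Q_*)\ge\delta$. For \partref{mainQdisc}, given the target $k$-dimensional rational subspace $P$ of normal directions, set $W=P^\perp$ (rational, of dimension $d-k\ge1$) and let $Q$ be a small constant plus a thin high ridge supported near $\bigcup_{z\in\Z^d}(W+z)$: for $e\in P\cap S^{d-1}$ the hyperplane $e^\perp$ contains a full translate of $W$, so a plane-like solution can lay a facet of its free boundary along that ridge and this facet-type pinning widens $[Q_*(e),Q^*(e)]$ by a fixed amount, whereas for nearby $e'\notin P$ the hyperplane $e'^\perp$ meets the ridges only in lower-dimensional sets and the widening vanishes; quantifying this gap — checking that the facet produces a genuine jump rather than merely extra pinning in the limiting direction, the other delicate point — gives a discontinuity of $Q^*$, and, via the mirror construction, of $Q_*$, across $P\cap S^{d-1}$.
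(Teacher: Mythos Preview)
Your outline has the right large-scale architecture but diverges from the paper's proofs in several places, and for parts \partref{mainQcont}--\partref{mainQcont1} there is a genuine gap.

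\textbf{Parts \partref{mainQexist}--(ii).} The paper does not construct the pinning interval via a one-parameter family of constrained minimizers with a ``monotone continuous $t\mapsto\alpha(t)$''. Instead it fixes boundary height $u=t$ on $\{x\cdot e=0\}$, takes the \emph{minimal supersolution} (Perron's method) of the approximate cell problem, establishes the Birkhoff property and a universal bound on the free-boundary width, and then proves that $r(t)=\inf_{\partial\{u_t>0\}}|x\cdot e|$ is \emph{approximately subadditive}, so $t/r(t)\to Q^*(e)$. The minimal slope $Q_*$ comes symmetrically from maximal subsolutions. Intermediate slopes are obtained by sandwiching between rescaled extremal solutions. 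The energy-minimizing statement (ii) is then handled separately by running the same width/Birkhoff argument on the \emph{smallest energy minimizer} (using the lattice $\min/\max$ identity \eref{intersectionunion}) and computing the energy-per-period as a function of the slope to identify $\langle Q^2\rangle^{1/2}$. Your scheme may be salvageable, but the unproven assertion that the image of your $\alpha(t)$ is a closed interval is precisely the content; subadditivity is what the paper uses to get it.

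\textbf{Parts \partref{mainQcont}--\partref{mainQcont1}: the missing idea.} Your Aubry--Mather-style ``squeeze'' via Birkhoff ordering and density of $\{e_0\cdot z:z\in\Z^2\}$ is not how the paper proceeds, and as written it is incomplete: you never justify the graph property with a uniform modulus, and your touching argument (``a slope below $Q^*(e_0)$ would place it strictly inside the translate-family\dots'') does not produce a contradiction, because two distinct plane-like solutions of the same slope can coexist without touching. The paper's mechanism is constructive. At a fixed irrational $e$, the lattice translates of a single Birkhoff solution give a monotone family $\{v_s\}_{s\in\R}$ whose gaps are controlled by an averaging estimate (Proposition~8.1): on any long enough interval in $e^\perp$ there is a point where $v_{s+\sigma}-v_s$ is small. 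One then \emph{bends} each $v_s$ by a Caffarelli-type sup-convolution with a carefully chosen subharmonic profile $\varphi$ (Lemmas~8.2--8.4), and \emph{sews} the bent pieces together across the good points to manufacture a subsolution (resp.\ supersolution) at any nearby direction $q$ with slope $(1+C\epsilon)Q_*(e)$, proving $Q_*(q)\le(1+C\epsilon)Q_*(e)$. At rational $\xi$ the family $\{v_s\}$ is only $|\xi|^{-1}$-periodic in $s$ and may have genuine gaps; the paper fills them with \emph{heteroclinic connections} $v_{s_1,s_2}$ obtained as limits of translated solutions at nearby slopes (the ``oriented sweepout'', Lemma~9.4, requires a Zorn's-lemma maximality argument), and the same bending/sewing then yields one-sided limits. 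None of this foliation/bending/heteroclinic machinery appears in your proposal.

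\textbf{Parts \partref{mainQdisc}--\partref{mainQint}.} For \partref{mainQint} the paper's bump construction is close to yours, but the lower bound on $Q^*-Q_*$ is obtained by comparing $Q^*(e)\ge\langle Q^2\rangle^{1/2}$ with an explicit subsolution avoiding all bumps, not via an energy-displacement argument. For \partref{mainQdisc} the paper's key tool is a \emph{laminar lemma}: if $\nabla Q\cdot f=0$ for some direction $f$, then $Q_*(e)=Q^*(e)=\langle Q^2\rangle^{1/2}$ whenever $e\cdot f\neq0$, proved by sliding the large-slope solution along $f$ until it touches the small-slope one. One then takes $Q$ invariant along $P^\perp$ and of bump type within $P$; the laminar lemma kills the pinning off $P$ while the bump argument gives nontrivial pinning on $P$. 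Your ridge construction is in the right spirit, but without the sliding/laminar argument you have no mechanism to show the interval actually collapses for $e'\notin P$.
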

  Qualitative properties of $Q^*,Q_*$ are important to study, both for our homogenization result, and to understand the structure of the free boundary for solutions to \eref{fb1}.  As explained above, there is a direct connection between the formation of facets in the free boundary and the discontinuities in $Q_*,Q^*$.

  In a previous work with Smart \cite{FeldmanSmart} we considered the scaling limit of a free boundary problem on the lattice $\Z^d$ analogous to \eref{fb0}.  In that case we were able to find an explicit formula for $I(p) = [Q_*(p),Q^*(p)]$.  There $I(p)$ has jump discontinuities along every rational subspace of co-dimensions $1 \leq k \leq d-1$.  Still $I(p)$ satisfies a continuity property, easiest explained in $d=2$, left and right limits of $I(p)$ exist at every $p$.  Our expectation is that, generically, a similar structure is present here.  \tref{mainQ} gives examples supporting the presence of discontinuities, and proves the sharp continuity result in $d=2$.
  
  The key in the proof for parts~\ref{part.mainQcont} and \ref{part.mainQcont1} of \tref{mainQ} is the construction of certain foliations of $\R^2$ by the free boundaries of global plane-like solutions.  These foliations allow to construct approximate solutions at nearby directions by sewing together solutions along the foliation.  One of the major difficulties we face, and it is fundamental to the problem, is that these are not truly foliations. At irrational directions there may be gaps in the foliations, we are able to show that the gaps are localized in a certain sense which still allows for the sewing procedure.  At rational directions the foliations keep an orientation which only allows to construct approximate plane-like solutions on one side.  As we will see below this issue can potentially lead to additional facets at rational directions, which we do not yet fully understand.

  Now we discuss the limit \eref{fb0} to \eref{fb1} for general, not asymptotically linear, solutions.  This limit is slightly unusual from the perspective of homogenization theory in that there is no uniqueness for the limiting equation \eref{fb1}.  Nonetheless it is precisely this non-uniqueness that explains the multitude of local minimizers for the rough coefficient energy $E_\ep$.  
  
  Our main result has two parts. The first part is that limits of solutions to \eref{fb0} solve \eref{fb1}, this type of statement is usually all that is needed for typical elliptic homogenization problems.  In fact it has already been considered by Caffarelli and Lee~\cite{CaffarelliLee}, and it is also a corollary of the result of Kim~\cite{Kim} on a related dynamic problem.  We include the statement for completeness not for novelty.  
  \begin{theorem}[Caffarelli-Lee~\cite{CaffarelliLee}, Kim~\cite{Kim}]\label{t.kcl}
Let $U \subset \R^d$ open. Suppose that $u^\ep$ is a bounded sequence of solutions to \eref{fb0} in $U$.  Then $u^\ep$ are uniformly Lipschitz and if, along a subsequence, $u^\ep \to u$ locally uniformly in $U$, then $u$ solves \eref{fb1} in the viscosity sense.
  \end{theorem}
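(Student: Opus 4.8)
The plan is to run the standard compactness-and-correctors scheme for homogenization of the one-phase Alt--Caffarelli problem, the one structural subtlety being that the admissible ``correctors'', namely the global plane-like solutions of the cell problem \eref{cell0}, exist only for slopes in the pinning interval — a fact we take from part~\partref{mainQexist} of \tref{mainQ}. The first step is the uniform Lipschitz bound and compactness. Under the standing assumptions on $Q$ (periodic, bounded, and bounded below away from $0$), each solution $u^\ep$ of \eref{fb0} obeys an interior Lipschitz bound $\|\grad u^\ep\|_{L^\infty(V)}\le C$ with $C$ depending only on $d$, $\dist(V,\partial U)$ and $\|Q\|_\infty$, hence independent of $\ep$: since $u^\ep\ge 0$ is harmonic in $\{u^\ep>0\}$, a standard comparison argument gives $\sup_{B_r(z)}u^\ep\le C\|Q\|_\infty r$ whenever $B_{2r}(z)$ meets the free boundary, and interior gradient estimates for harmonic functions then yield the gradient bound; dually, $\inf Q>0$ furnishes the matching nondegeneracy $\sup_{B_r(z)}u^\ep\ge c\,r$ at free boundary points, uniformly in $\ep$. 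Arzel\`a--Ascoli produces a subsequence with $u^\ep\to u$ locally uniformly, $u$ nonnegative and Lipschitz, and the uniform nondegeneracy forces $\{u^\ep>0\}\to\{u>0\}$ and $\partial\{u^\ep>0\}\to\partial\{u>0\}$ locally in the Hausdorff distance. This is classical Alt--Caffarelli/Caffarelli theory, so I would only sketch it.

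\emph{Stability of the bulk equation.} On any ball compactly contained in $\{u>0\}$ one has $u^\ep>0$ for small $\ep$, hence $\Delta u^\ep=0$ there and $\Delta u=0$ in the limit; likewise $u$ remains subharmonic across the free boundary, so $u$ solves the one-phase obstacle-type problem and it remains only to verify the free boundary condition of \eref{fb1} at free boundary points in the viscosity sense, that is: every smooth $\phi$ touching $u$ from below at $x_0\in\partial\{u>0\}$ with $\grad\phi(x_0)=\alpha n$, $\alpha>0$, must have $\alpha\le Q^*(n)$, and every smooth $\phi$ touching $u$ from above at such a point with slope $\alpha n$ must have $\alpha\ge Q_*(n)$.

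\emph{The crux: the viscosity free boundary inequalities.} The two inequalities are symmetric (exchange ``below/above'', super-/sub-, and $Q^*$/$Q_*$), so suppose for contradiction that $\phi$ touches $u$ from below at $x_0\in\partial\{u>0\}$ with slope $\alpha>Q^*(n)$; after a routine perturbation I may take the touching strict with $\alpha$ still above $Q^*(n)$. By part~\partref{mainQexist} of \tref{mainQ} there is a global plane-like solution $W$ of \eref{cell0} of slope $Q^*(n)\,n$, and the rescaling $W^\ep(x):=\ep\,W\big((x-x_0+\tau n)/\ep\big)$ solves \eref{fb0} with $W^\ep(x)=Q^*(n)\,(n\cdot(x-x_0)+\tau)_+ + O(\ep)$. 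Using $u^\ep\to u$ uniformly and $u\ge\phi$ with strict touching at $x_0$, one slides $\tau$: for $\tau$ sufficiently negative $W^\ep\le u^\ep$ on a fixed small ball $B_r(x_0)$ with ordered free boundaries, whereas as $\tau$ increases to $0$ the slope defect $\alpha-Q^*(n)>0$ of the lower test $\phi$ forces $u^\ep$ to exceed $W^\ep$ inside $B_r(x_0)$ for $\ep$ small; at the first crossing value the comparison principle for \eref{fb0} localizes the contact to a common free boundary point, where the gradient relations of the two solutions are incompatible, which is the desired contradiction. An equivalent blow-up formulation: rescaling $u^\ep$ at $x_0$ at the microscopic scale gives, along a subsequence with $x_0/\ep$ convergent modulo $\Z^d$, a global solution $v$ of a translated cell problem \eref{cell0} with $v\ge\alpha(n\cdot x)_+$ and $0\in\partial\{v>0\}$, which one then shows must be plane-like of slope $\alpha n$, again contradicting part~\partref{mainQexist} of \tref{mainQ}. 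I expect this to be the main obstacle: making rigorous the free boundary comparison/sliding for \eref{fb0} at fixed $\ep$ (contact only at the free boundaries, controlled through the gradient relation — genuinely delicate for one-phase problems), and, in the blow-up variant, excluding a ``bulging'' free boundary for $v$. Since the statement is due to Caffarelli and Lee~\cite{CaffarelliLee} and also follows from Kim~\cite{Kim}, I would carry out this step briefly and refer there for the full details.
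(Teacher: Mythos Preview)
Your blow-up approach is exactly the paper's argument, and the compactness and bulk-harmonicity steps are treated the same way. The one place you diverge is at the very end: you want to conclude that the blow-up limit $v$ is itself a plane-like solution of slope $\alpha n$, so that $\alpha>Q^*(n)$ contradicts \tref{mainQ}\partref{mainQexist}; you correctly flag the possible ``bulging'' of $\partial\{v>0\}$ as an obstacle to this. The paper sidesteps this entirely via \lref{visccorr}, which characterizes $Q^*(e)$ as the supremum of all $\alpha$ admitting a global \emph{supersolution} $u\ge\alpha(e\cdot x)_+$ with $\inf_{B_C(0)}u=0$ --- not just plane-like solutions. Since the blow-up limit $v$ is a global solution with $v(x)\ge(\grad\varphi(x_0)\cdot x)_+$ and has a zero within $B_C(0)$ (inherited from the free-boundary touching point $x_\ep$ after rescaling), \lref{visccorr} gives $|\grad\varphi(x_0)|\le Q^*$ immediately, with no need to prove $v$ is plane-like. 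Your sliding/barrier variant would also work but is, as you note, more delicate; the paper does not use it.
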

  Note that the full statement of \tref{kcl} that we make here is not proven in \cite{CaffarelliLee}, however almost all of the main ideas of the proof can be found there.  Of course it is possible, with only the information of \tref{kcl}, that the class of limits of $u^\ep$ satisfy some stronger condition than just \eref{fb1}.  
  
  The second part of the homogenization result, which is completely new in this paper, is to show that for an arbitrary solution $u$ of \eref{fb1} there exists a sequence of solutions $u^\ep$ of \eref{fb0} converging to $u$.  In analogy with the language of $\Gamma$-convergence we call this the existence of a recovery sequence for $u$.  Furthermore, we would like this sequence $u^\ep$ to be local minimizers of the energy functional \eref{energyep}.  Actually we do not prove such a general result.  We give a sufficient condition here, we leave to future work to answer the question of whether such a condition is necessary.  
  
  We need to augment the information provided by the upper and lower endpoints of the pinning interval with additional microscopic information.  We call this the continuous part of the pinning interval
  \begin{equation}\label{e.qcontd1}
   [Q_{*,cont}(e), Q^*_{cont}(e)] \subset [Q_*(e),Q^*(e)]. 
   \end{equation}
  The definition is rather technical so we drop some of the details, the full exposition can be found in \sref{contpart}.  Define $Q_{*,cont}(e)$ to be the smallest slope $\alpha$ such that, for sufficiently small $\delta>0$, and any smooth test function $\varphi$ with $ |\grad \varphi  - \alpha e| \leq \delta$, there exists a recovery sequence of subsolutions $\varphi^\ep$ solving \eref{fb0} and $\varphi^\ep \to \varphi_+$ as $\ep \to 0$.  Then $Q^*_{cont}$ is defined similarly in terms of recovery sequences for smooth supersolutions with approximately constant gradient.

  \begin{figure}
  \begin{tabular}{ll}
  \begin{tabular}{l}
  \includegraphics[width=.4\textwidth]{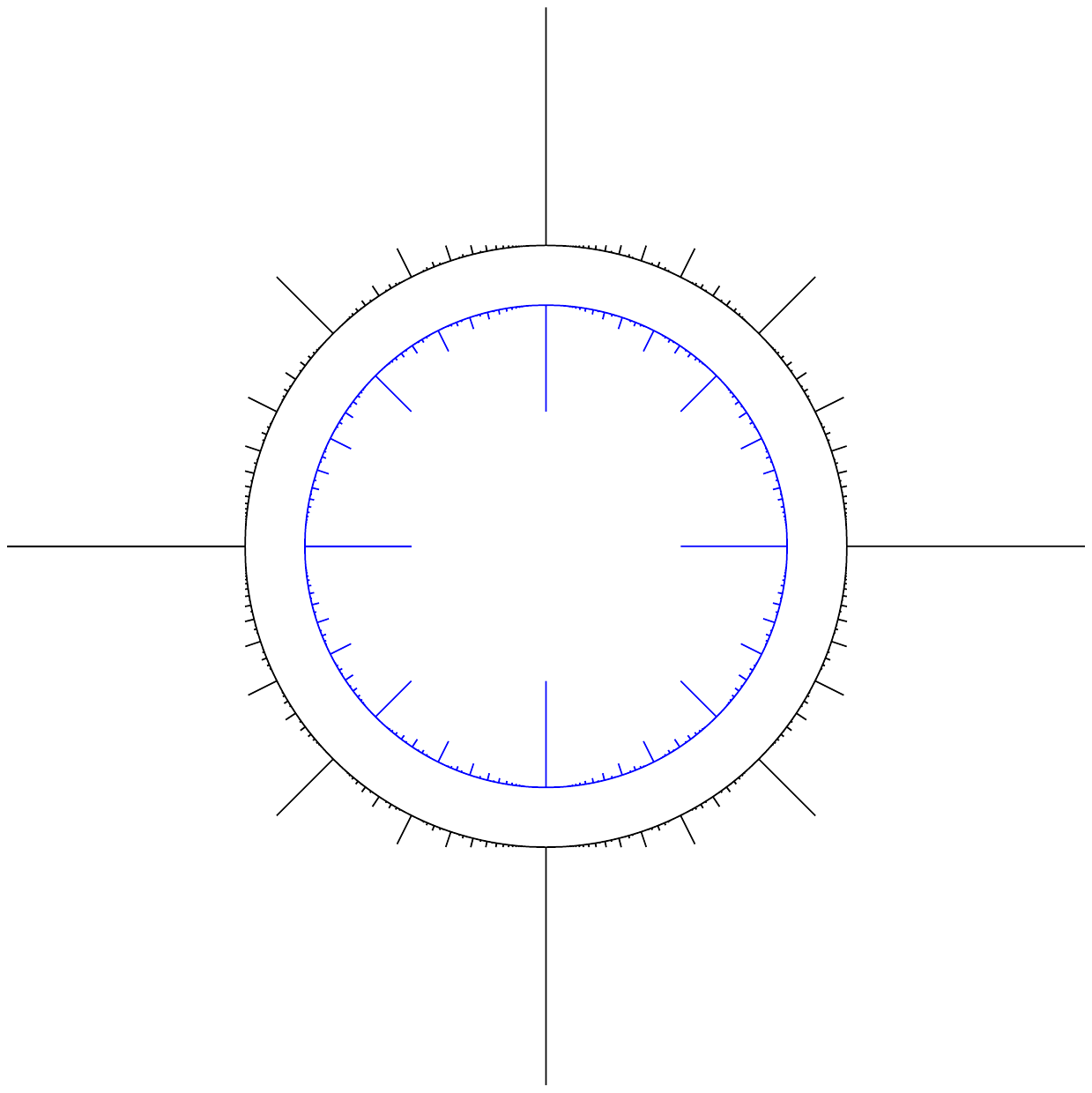}
  \end{tabular} &
  \begin{tabular}{l}
    \includegraphics[width=.4\textwidth]{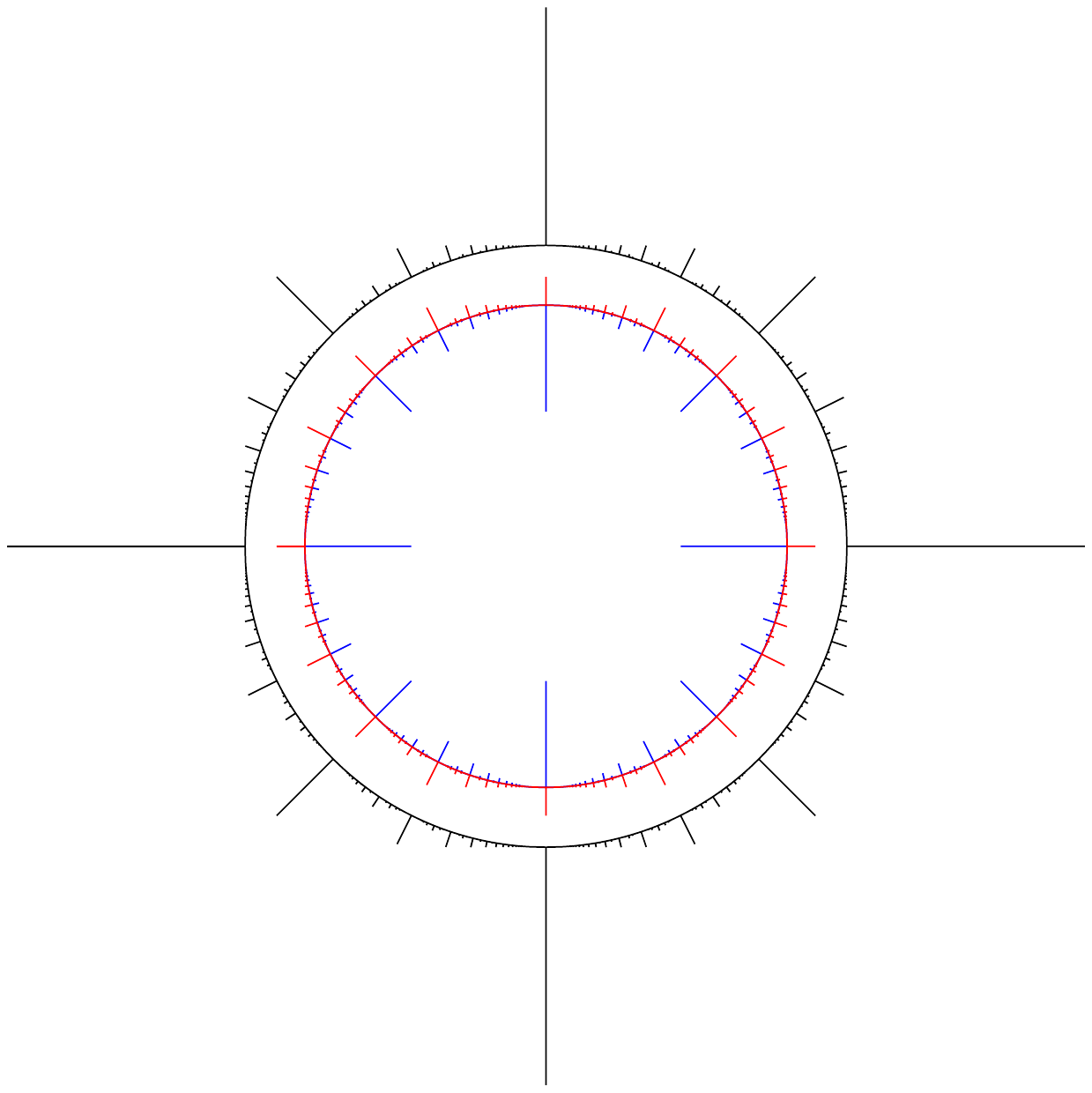}
  \end{tabular}
  \end{tabular}
  \caption{On the left is a schematic drawing of $Q_*$ and $Q^*$ as functions on $S^{1}$, in blue and black respectively.  On the right is additionally included the graph of $Q_{*,cont}$ in red.  Except for the radial symmetry at irrational directions, the picture represents the bounds proved in \tref{mainQ} and \tref{mainQcont}.}
  \label{f.intro2d}
\end{figure}

 It will follow easily from the definitions that $Q_{*,cont}$ and $Q^*_{cont}$ have the reversed upper/lower-semicontinuity properties from $Q_*$ and $Q^*$, and 
  \begin{equation}\label{e.Qconteq}
   \limsup_{e' \to e} Q_*(e') \leq Q_{*,cont}(e)  \ \hbox{ and } \  Q^*_{cont}(e) \leq \liminf_{e' \to e} Q^*(e').
   \end{equation}
  Our conjecture is that equality holds in \eref{Qconteq}, however we do not have evidence in either direction at the moment. Assuming equality holds, with minor nondegeneracy caveats, we could construct recovery sequences for arbitrary solutions of \eref{fb1} in $d=2$.  Although we do not prove the full conjecture, we make significant steps in that direction, in particular we prove that equality holds in \eref{Qconteq} at all irrational directions in $d=2$, and only fails by a small amount for rational directions with large modulus.  This is stated precisely below.
   
   Before stating our results we explain what role $I_{cont}$ plays.  In terms of $I_{cont}$ we specify the subclass of solutions to the pinning problem \eref{fb1} for which we can construct a recovery sequence.  We will call this new problem the augmented pinning problem.  Consider a convex setting, let $U \subset \R^d$ an open domain with $\R^d \setminus U$ convex and compact.  Say that $u$ is a solution of the augmented pinning problem if $\{u>0\}$ is convex and
  \begin{equation}\label{e.fbconvex}
  \begin{cases}
    \Delta u = 0 & \mbox{in } \{ u > 0 \} \cap U \\
    |\grad u| \in [Q_{*,cont}(n_x),Q^*(n_x)] & \mbox{on } \partial \{ u > 0 \} \cap U \\
    u = 1 & \hbox{on } \partial U.
  \end{cases}
  \end{equation}
Here the subsolution condition is upper semicontinuous and so needs to be interpreted carefully. The theory for this type of problem was developed in the previous paper of the author and Smart~\cite{FeldmanSmart}.  The augmented pinning problem can also be stated in the case when $\overline{U}$ is compact and convex.  Say that $u$ is a solution of the augmented pinning problem in this case if $\{u=0\}$ is convex and 
    \begin{equation}\label{e.fbconcave}
  \begin{cases}
    \Delta u = 0 & \mbox{in } \{ u > 0 \} \cap U \\
    |\grad u| \in [Q_{*}(n_x),Q^*_{cont}(n_x)] & \mbox{on } \partial \{ u > 0 \} \cap U \\
    u = 1 & \hbox{on } \partial U.
  \end{cases}
  \end{equation}
  Here the supersolution condition is the one which needs to be interpreted carefully since it is lower semicontinuous. The problems \eref{fbconvex} and \eref{fbconcave} are, in a sense, dual to each other.

 This paper only gives a notion of solution to the augmented pinning problem in these convex settings, it is not clear how a solution should be defined in the non-convex setting. The solution condition would seem to depend on the local convexity or concavity of the free boundary.  
 
 We do not currently have any example of a homogenization problem where equality fails in \eref{Qconteq}.  However, in Appendix~\ref{s.augmentedpinningex}, we give an example of a limit procedure approximating \eref{fb1} by other homogeneous problems of the form \eref{fb1} where the limit equation is indeed an augmented pinning problem of the form \eref{fbconvex}.

 Now we state our main result about $I_{cont}(p)$.

  \begin{theorem}\label{t.mainQcont}
 The following properties hold for the continuous part of the pinning interval.  See \sref{contpart} for the precise definitions of $Q^*_{cont}(e)$ and $Q_{*,cont}(e)$.
\begin{enumerate}[label=(\roman*)]
 \item Let $e \in S^{d-1}$ there exist 
 \[ \limsup_{e' \to e} Q_*(e') \leq Q_{*,cont}(e) \leq \langle Q^2 \rangle^{1/2} \leq  Q^*_{cont}(e) \leq \liminf_{e' \to e} Q^*(e')\]
  respectively upper and lower semicontinuous in $e$ such that the subsolution (supersolution) perturbed test function argument works for $\alpha > Q_{*,cont}(e)$ (resp. for $\alpha < Q^*_{cont}$), see \sref{contpart} for the precise definitions.  
 \item If $d=2$ then, for irrational directions $e \in S^{1} \setminus \R\Z^2$, $Q^*(e) = Q^*_{cont}(e)$ and $Q_*(e) = Q_{*,cont}(e)$.  For $e = \frac{\xi}{|\xi|}$ rational, with $\xi \in \Z^2 \setminus \{0\}$ irreducible,
 \[ Q_{*,cont}(e) \leq Q_*(e) + C|\xi|^{-1/2} \ \hbox{ and } \  Q^*_{cont}(e) \geq Q^*(\xi) - C|\xi|^{-1/2}\]
 for $C = C(\min Q,\max Q,\|\grad Q\|_\infty)$.
 \item\label{part.mainQcontdir} If $d=2$ then directional limits of $Q^*_{cont}$ and $Q_{*,cont}$ exist at rational directions $e \in S^{1}  \cap \R\Z^2$ and agree with the directional limits of $Q_*$ and $Q^*$.
\end{enumerate}
  \end{theorem}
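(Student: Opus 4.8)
The plan is to reduce every assertion to the construction of families of plane-like sub- and supersolutions of \eref{fb0} with slope near $\alpha e$ that can be \emph{sewn} along the level sets of a slowly curving test function, feeding in the foliation technology built to prove parts \partref{mainQcont}--\partref{mainQcont1} of \tref{mainQ}. For part (i) I would first note that the set $A(e)$ of slopes $\alpha$ at which the subsolution perturbed test function argument of \sref{contpart} succeeds is an upward-closed subset of $(0,\infty)$ — this is built into the argument, since the subsolution free-boundary condition $|\grad\varphi^\ep|\ge Q(\cdot/\ep)$ only relaxes as the slope grows — so $Q_{*,cont}(e):=\inf A(e)$ is well defined, and a downward-closed set defines $Q^*_{cont}(e)$ symmetrically. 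Upper semicontinuity of $Q_{*,cont}$ is then immediate: the construction at a given $\alpha\in A(e)$ uses only plane-like objects of slope in a full neighborhood of $\alpha e$, hence runs verbatim at every $e'$ near $e$. The sandwich $\limsup_{e'\to e}Q_*(e')\le Q_{*,cont}(e)\le\langle Q^2\rangle^{1/2}$ is \eref{Qconteq}, which follows easily from the definitions: for the right inequality one exhibits an admissible family at slopes just above $\langle Q^2\rangle^{1/2}$ (from the minimizer of \eref{gammalimit} plus a small slope surplus, or from the plane-like solution of slope $\langle Q^2\rangle^{1/2}\,e$ guaranteed by \partref{mainQexist}); for the left inequality, a recovery sequence of subsolutions converging to the linear profile $(\alpha e'\cdot x)_+$ rescales and extracts to a global plane-like subsolution of slope $\alpha e'$, whence $\alpha\ge Q_*(e')$ by the comparison characterization of $Q_*$.

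For part (ii) at an irrational direction $e\in S^1\setminus\R\Z^2$, fix $\alpha\in(Q_*(e),Q^*(e))$. The construction behind \partref{mainQcont} supplies a family of free boundaries of global plane-like solutions of slope $\alpha e$ foliating $\R^2$ up to a set of gaps that is localized in its precise sense. Given smooth $\varphi$ with $|\grad\varphi-\alpha e|$ small, I would sew: over each level set $\{\varphi=t\}$, whose normal lies within $O(\delta)$ of $e$, insert the leaf of the family tangent to it at the height prescribed by $\varphi$, and take the supremum. Localization of the gaps guarantees the supremum is a subsolution of \eref{fb0} lying within $o(1)$ of $\varphi_+$, so $\alpha\in A(e)$; for $\alpha\ge Q^*(e)$ a truncated larger-slope plane-like subsolution serves the same purpose, so $A(e)\supseteq(Q_*(e),\infty)$ and $Q_{*,cont}(e)\le Q_*(e)$. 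Since $Q_*$ is continuous at irrational $e$ by \partref{mainQcont}, $\limsup_{e'\to e}Q_*(e')=Q_*(e)$, and with part (i) this forces $Q_{*,cont}(e)=Q_*(e)$; the supersolution claim $Q^*_{cont}(e)=Q^*(e)$ is identical with infima replacing suprema.

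For part (ii) at a rational direction $e=\xi/|\xi|$, the extremal plane-like solutions of slope $Q_*(e)e$ are $|\xi|$-periodic tangentially, and — as anticipated in the discussion of \partref{mainQcont1} — the available family is only a \emph{one-sided} foliation: its leaves bend a free boundary in one orientation, whereas the subsolution recovery sequence for a convex positivity set needs the other. I would compensate by replacing the extremal leaves with leaves of slope $Q_*(e)+\eta$ and estimating the gradient defect produced when the periodic free boundary is sheared to follow the curved level set: a harmonic function equal to $(p\cdot x)_+$ up to a free-boundary perturbation of height $h$ over tangential length $\ell$ carries a defect of order $h/\ell$ in $|\grad|$ on the boundary, tracking the macroscopic curvature over a tile forces $h\sim\ell^2$, and periodicity forces $\ell\lesssim|\xi|$; balancing these against the slope surplus $\eta$ shows admissibility once $\eta\gtrsim|\xi|^{-1/2}$, i.e. $Q_{*,cont}(e)\le Q_*(e)+C|\xi|^{-1/2}$ with $C=C(\min Q,\max Q,\|\grad Q\|_\infty)$, and dually for $Q^*_{cont}$.

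For part (iii), let $e\in S^1\cap\R\Z^2$ and $e'\to e$ within a fixed half-plane. If $e'$ is irrational then $Q_{*,cont}(e')-Q_*(e')=0$ by part (ii); if $e'=\xi'/|\xi'|$ is rational then $\xi'\ne\pm\xi$, and since only finitely many rational directions have bounded modulus near $e$ we get $|\xi'|\to\infty$ and $|Q_{*,cont}(e')-Q_*(e')|\le C|\xi'|^{-1/2}\to 0$. Hence $Q_{*,cont}(e')-Q_*(e')\to 0$ along the approach, so the one-sided limit of $Q_{*,cont}$ at $e$ exists if and only if that of $Q_*$ does — which it does, by \partref{mainQcont1} — and the two limits agree; symmetrically for $Q^*_{cont}$ and $Q^*$. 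I expect the real obstacle to sit in the rational-direction construction of part (ii): with a merely one-sided foliation there is no ready-made barrier in the unfavorable orientation, so the recovery sequence must be assembled by hand from the periodic extremal solution, and pinning down the sharp exponent $|\xi|^{-1/2}$ (rather than a cruder loss) requires a tight balance of the period, the admissible tile length, and the degradation of the Alt-Caffarelli free-boundary condition under the curvature-induced shear.
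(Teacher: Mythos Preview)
Your strategy for parts (i), (iii), and the irrational case of (ii) is essentially the paper's: define $Q_{*,cont}$ via the perturbed test function property, read off semicontinuity from the built-in openness in $e$, get $\langle Q^2\rangle^{1/2}\in I_{cont}$ by a direct construction (the paper does this by $\Gamma$-convergence of $E_\ep$ to $E_0$ in \pref{Qcontenergy}, not via plane-like solutions as you suggest, but your idea would also work with more effort), and derive (iii) from (ii) by the observation $|\xi'|\to\infty$. Good.

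The gap is in your account of the rational case of (ii). You attribute the $|\xi|^{-1/2}$ loss to the \emph{one-sided} nature of the rational foliation and a curvature--shear balance ``$h\sim\ell^2$, $\ell\lesssim|\xi|$, defect $\sim h/\ell$''. That balance does not produce $|\xi|^{-1/2}$ (it gives a defect $\sim\ell$), and more importantly the one-sidedness is not what is being used here: that is the content of \sref{irr} versus the oriented-sweepout machinery of the rational section, which feeds into the \emph{directional limits}, not into the $|\xi|^{-1/2}$ bound. The paper proves the rational and irrational cases of (ii) by the \emph{same} argument. The foliation $\{v_s\}_{s\in S}$ of \pref{irrfamily} has, at $e=\hat\xi$, index set $S=\tfrac{1}{|\xi|}\Z$, so the weak continuity statement there is only available for gap parameter $\delta\ge 1/|\xi|$. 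The sewing in \sref{irrcurved} uses bending sup-convolutions of amplitude $\sim\lambda$ (the slope surplus) and needs the jump $v_{s+\delta}-v_s\lesssim C\ell\delta$ with strip height $\ell\sim 1/\lambda$ (so the translated linear profile can take over above the strip); matching $C\ell\delta\lesssim\lambda$ forces $\delta\lesssim\lambda^2$. Combined with $\delta\ge 1/|\xi|$ this gives $\lambda\gtrsim|\xi|^{-1/2}$. So the exponent comes from the \emph{discreteness} of the rational foliation, not from its orientation, and your proposed mechanism of shearing a periodic extremal solution is not how the argument runs.
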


  See \fref{intro2d} for a drawing representing $Q_*$, $Q^*$ and $Q_{*,cont}$.  The reader may notice that \tref{mainQcont} mirrors the first three parts of \tref{mainQ}, this is true at the level of the proofs as well.  Basically the same techniques are used to prove both results, as described above the key idea is the construction of approximate foliations by plane-like solutions. Then we sew together along the foliation to construct approximate sub/supersolutions near smooth sub/supersolutions $\varphi$ with small variation in the gradient.

Our main result is the existence of recovery sequences in the convex setting for solutions of the augmented pinning problem \eref{fbconvex}.

 \begin{theorem}\label{t.main}
 Suppose that $u$ solves \eref{fbconvex} in a domain $U \subset \R^d$, $\R^d \setminus U$ is convex and compact, and $\{u >0\}$ is convex. Then there exists a sequence of solutions $u^\ep$ of \eref{fb0} which converge uniformly to $u$ and the positivity sets converge in Hausdorff distance.  The sequence $u^\ep$ can be taken to be a local minimizers of the inhomogeneous Alt-Caffarelli energy \eref{energyep}.    
    \end{theorem}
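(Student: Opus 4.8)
The plan is to sandwich $u$ between an explicit subsolution $\underline u^\ep$ and supersolution $\overline u^\ep$ of \eref{fb0}, both converging to $u$, and then take $u^\ep$ to be a minimizer of $E_\ep$ over the double obstacle class $\{\underline u^\ep \le v \le \overline u^\ep\}$. Write $\Omega = \{u>0\}$ and $K = \R^d \setminus U$; since $u$ vanishes on $\partial\Omega$, is harmonic in $\Omega\setminus K$, and equals $1$ on $\partial K$, the function $u$ is precisely the harmonic function determined by the convex body $\Omega$ (which contains $K$ by convexity). Fix small parameters $\ep \ll r \ll \delta \ll 1$, to be sent to zero diagonally. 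Enlarge and shrink: let $\Omega_\delta = \Omega + \delta B_1$ and $\Omega^\delta = (1-\delta)(\Omega - x_0) + x_0$ about an interior point $x_0$, convex bodies with $\Omega^\delta \subset\subset \Omega \subset\subset \Omega_\delta$ and $d_H(\Omega^\delta,\Omega), d_H(\Omega_\delta,\Omega) = O(\delta)$; their harmonic functions satisfy $u^\delta \le u \le u_\delta$, and at each boundary point the slope of $u^\delta$ strictly exceeds $Q_{*,cont}$ of the normal (the slope of $u$ there is $\ge Q_{*,cont}$ of the normal by \eref{fbconvex}, and the dilation rescales it up by $(1-\delta)^{-1}$), while the slope of $u_\delta$ stays $\le Q^*(\cdot) + O(\delta)$.

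For the supersolution barrier, cover $\partial\Omega_\delta$ by balls $B_r(x_i)$; at each $x_i$ with outer unit normal $e_i$ set $\alpha_i = \min\{|\grad u_\delta(x_i)|, Q^*(e_i)\}$, so that $\alpha_i \le Q^*(e_i)$ and hence, by \tref{mainQ}\partref{mainQexist}, there is a global plane-like solution $W_i$ of \eref{cell0} with slope $\alpha_i e_i$. Rescaling, $V_i^\ep(x) := \ep W_i((x - z_i)/\ep)$ solves \eref{fb0} with free boundary within $O(\ep)$ of the supporting hyperplane of $\Omega_\delta$ at $x_i$, which I translate outward by a distance $c_i$; using the quadratic bound $u_\delta(x) \le \alpha_i\, e_i\cdot(x-x_i) + C r^2$ on $B_r(x_i)$ (valid at the a.e.\ twice-differentiable points of $\partial\Omega_\delta$, with a harmless covering argument at the remaining boundary points), the choice $c_i \sim r^2 + \delta r + \ep$ — the $\delta r$ term absorbing the clipping error $|\grad u_\delta(x_i)| - \alpha_i = O(\delta)$ — makes $V_i^\ep \ge u_\delta \ge u$ near $x_i$. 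Because $\Omega_\delta$ is the intersection of its supporting half-spaces, $\min_i V_i^\ep$, cut off at distance $\sim\delta$ inside $\partial\Omega_\delta$ and glued there to the harmonic function in the interior of $\Omega_\delta\setminus K$, is a supersolution of \eref{fb0} lying above $u$; stability of the supersolution condition under minima and the gluing are governed by the comparison theory of \cite{FeldmanSmart}. Since the interior harmonic piece converges to $u$ and the collar shrinks, $\overline u^\ep \to u$.

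For the subsolution barrier, away from the corners of $\partial\Omega^\delta$ (at most finitely many for each angular-deficit threshold) cover the boundary by balls $B_r(x_i)$ centered at twice-differentiable points; there $u^\delta$ has nearly constant gradient $\alpha_i' e_i'$ with $\alpha_i' > Q_{*,cont}(e_i')$ strictly, so the defining property of $Q_{*,cont}$ — equivalently the sewing-along-foliations construction underlying \tref{mainQcont} — produces a localized recovery subsolution $\varphi_i^\ep$ of \eref{fb0} converging to a linear function just below $u^\delta$ near $x_i$ and to $0$ outside $B_r(x_i)$. Taking $\underline u^\ep := \max\!\big(w^\delta, \max_i \varphi_i^\ep\big)$, where $w^\delta$ is the harmonic function on $\Omega^\delta\setminus K$, truncated within $O(r)$ of each corner of $\Omega^\delta$, gives a subsolution of \eref{fb0} (max of subsolutions) with $\underline u^\ep \le u$ whose positive set is $\Omega^\delta$ minus a collar and corner-neighborhoods of width $O(\delta + r)$, hence converging to $\Omega$ in Hausdorff distance. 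Finally let $u^\ep$ minimize $E_\ep$ over $\{v : \underline u^\ep \le v \le \overline u^\ep,\ v = 1 \text{ on } \partial K\}$. The lower obstacle is never active along $\partial\{u^\ep>0\}$ (wherever $|\grad u^\ep| \ge |\grad \underline u^\ep| \ge Q(x/\ep)$ the minimizer strictly prefers to expand, and expansion is blocked only by the upper obstacle, whose free boundary is $\sim\delta$ away); along $\partial\{u^\ep>0\}$ the upper obstacle can be active only on smooth pieces, where $u^\ep$ then coincides locally with a plane-like solution, and not at the convex corners of $\partial\{\overline u^\ep>0\}$, since a free boundary through such a corner would force $|\grad u^\ep|\to 0$ there and the minimizer strictly prefers to retract; the remaining contact with $\overline u^\ep$ occurs in the interior where $u^\ep$ is harmonic and the free boundary condition is vacuous. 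Thus $u^\ep$ solves \eref{fb0}, the sandwich gives $u^\ep\to u$ uniformly with Hausdorff convergence of the positive sets, and $u^\ep$ is a local minimizer of \eref{energyep} because competitors close to $u^\ep$ either remain admissible for the double obstacle problem or differ only near the interior contact set $\{u^\ep=\overline u^\ep\}$, where raising $u^\ep$ (harmonic there) can only increase $E_\ep$. A diagonal choice $\delta, r\to 0$ as $\ep\to 0$ finishes the argument.

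The main obstacle is the construction and verification of the two barriers. For $\overline u^\ep$ the delicate point is that $u$ bulges above its tangent hyperplanes, so the plane-like solutions must be pushed outward by precisely the curvature-scale amount $c_i\sim r^2$ while still assembling into a global supersolution — this is exactly where convexity of $\{u>0\}$ enters, and the reason the weaker endpoint $Q^*$ (rather than $Q^*_{cont}$) suffices on the supersolution side, since one only ever takes minima of genuine plane-like solutions, whose mere existence is supplied by \tref{mainQ}. For $\underline u^\ep$ the core difficulty is that minima of subsolutions are not subsolutions, forcing one to glue localized recovery subsolutions along the free boundary of $u$; producing these localized objects is precisely the sewing-along-approximate-foliations mechanism of \tref{mainQcont}, which is available only under $|\grad u| \ge Q_{*,cont}$ — hence the $Q_{*,cont}$ in the augmented problem \eref{fbconvex}. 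The corners of $\Omega$ require the separate cutoff treatment above and account for the $O(r)$ loss in the positive set.
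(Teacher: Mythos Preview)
Your overall architecture --- sandwich $u$ between a subsolution and a supersolution of \eref{fb0} and minimize $E_\ep$ in the double obstacle class --- matches the paper, but the paper constructs the barriers very differently, and your explicit constructions have gaps.

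\textbf{The supersolution barrier.} You take $\overline u^\ep = \min_i V_i^\ep$ in a collar and glue to the harmonic function $u_\delta$ inside. This gluing is not justified. The pieces do not match continuously on the interface $\{u_\delta = c\}$: there is no reason that $\min_i V_i^\ep = c$ there. More seriously, the $V_i^\ep$ need not lie above $u$ outside the collar (for the disk, $V_i^\ep$ at $\partial K$ is roughly $\alpha_i R \sim 1/\log(R/r_K) < 1 = u$), so you cannot simply take the harmonic lift either without destroying the free boundary supersolution condition. The polytope $\bigcap_i\{V_i^\ep>0\}$ is close to $\Omega_\delta$, but producing from it a supersolution with the correct Dirichlet data on $\partial K$ is exactly the nontrivial step you have skipped.

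\textbf{How the paper avoids this.} Rather than building barriers by hand near $\partial\Omega$, the paper moves the obstacle to a level set of $u$: pick $\lambda>0$ small, set $K^\lambda = (1+C_0\lambda)\{u\ge\lambda\}$ and $K_\lambda = (1-C_0\lambda)\{u\ge\lambda\}$, and let $u^{\lambda,\ep}$, $u^\ep_\lambda$ be the minimal supersolution and maximal subsolution of \eref{fb0} outside $K^\lambda$, $K_\lambda$ with data $\lambda$. These are genuine solutions of \eref{fb0} (hence nondegenerate, Lipschitz), and their convergence as $\ep\to 0$ to the homogenized minimal supersolution / maximal subsolution is exactly parts (i)--(ii) of the same proposition, proved by the perturbed test function argument that \emph{defines} $Q_{*,cont}$. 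The barriers $\overline v^\ep, \underline v^\ep$ are then obtained by extending $u^{\lambda,\ep}, u^\ep_\lambda$ across $\partial K^\lambda, \partial K_\lambda$ by the dilations $u(\cdot/(1\pm C_0\lambda))$; the super/subharmonicity at the interface follows in one line from the strict ordering $u^{\lambda,\ep} \prec u(\cdot/(1+C_0\lambda))$ and the mean value property. This sidesteps all the gluing issues in your construction: no corners of a polytope, no mismatched boundary values, no need to verify that a minimum of plane-like pieces dominates $u$ globally.

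\textbf{The local minimizer step.} Your argument that the obstacles are inactive (``the minimizer strictly prefers to expand/retract'') is heuristic; what is actually needed is \lref{localmin}, which requires the lower barrier to be an inner-regular nondegenerate $R$-subsolution and the upper barrier to be an outer-regular $R$-supersolution. Your $\max_i \varphi_i^\ep$ and $\min_i V_i^\ep$ do not obviously have these properties (max destroys inner regularity, min destroys outer regularity). The paper's barriers are solutions of \eref{fb0} near the free boundary, so after a small sup/inf-convolution the hypotheses of \lref{localmin} are met.

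In short: the reduction to convergence of minimal supersolutions and maximal subsolutions is the key idea you are missing. Once that is in hand, the recovery sequence falls out from \lref{localmin} with almost no further work.
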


 One key new idea in the proof of \tref{main} is that the construction of solutions to \eref{fb0}, or local minimizers of \eref{energyep}, can be reduced to the convergence of the minimal supersolution and maximal subsolution.  In effect this means that the construction of curved subsolutions and supersolutions can be localized using the perturbed test function method.  Such localized construction is exactly the content of \tref{mainQcont}.  In a previous paper the author and Smart \cite{FeldmanSmart} developed viscosity solution tools to prove the convergence of minimal supersolutions and maximal subsolutions.  We will use those tools again here, with some necessary refinements.  These ideas should also work without convexity.
 
 Note that the convergence of the minimal supersolutions in the convex setting is a corollary of the statement \tref{main}.  As described above, at the level of the proof, \tref{main} should really be seen as a corollary of the convergence of the minimal supersolutions (and maximal subsolutions).  The sequence of minimal supersolutions to \eref{fb0} were previously studied by \cite{CaffarelliLee}, they show subsequential convergence to a supersolution of \eref{fb1}.

\subsection{Literature and motivation}
One of the main physical motivations for our work is to explain the shapes of capillary drops on rough or patterned solid surfaces.  It has been observed in experimental literature that water droplets placed on micro-patterned surfaces with a lattice structure can appear to have polygonal shapes, see Raj et al. \cite{Raj-Adera-Enright-Wang}. A similar phenomenon appears in patterned porous media, see \cite{lenormand1990liquids,Kim-Zheng-Stone,Courbin}.  One is led to wonder whether these shapes are a microscale phenomenon, or a macroscale phenomenon that remains in the homogenization limit.  Starting in our previous work with Smart \cite{FeldmanSmart} we have been investigating this question.  In that paper we derived an equation like \eref{fb1} from a scaling limit for a discrete version of the Alt-Caffarelli functional.  From this perspective we argued that these facets appearing the physical experiments are indeed a macroscale phenomenon and they are caused by discontinuities of the pinning interval in the normal direction.  Then the shape of the large scale facets can be understood by studying the problem \eref{fb1} using viscosity solution techniques.  In this paper we are now able to derive at least some of the same results in the continuum.  The situation for the continuum problem is much more complicated, still many parts of the philosophy there have carried over here.

The closest results to the present paper are the works of Caffarelli and Lee~\cite{CaffarelliLee}, Caffarelli, Lee and Mellet~\cite{CaffarelliLeeMellet}, Kim~\cite{Kim}, and Kim and Mellet \cite{KimMellet}.  Caffarelli and Lee~\cite{CaffarelliLee} studied the same problem as us, they constructed plane-like solutions of the cell problem at the maximal slope.  They used this to show that any subsequential limit of the minimal supersolutions to \eref{fb0} is a supersolution of \eref{fb1}.  They also introduced, with some very beautiful arguments, the idea that facets in the free boundary are caused by discontinuities in $Q^*$.  Caffarelli, Lee and Mellet ~\cite{CaffarelliLeeMellet} studied a flame propagation problem which combines homogenization with a singular limit leading the the Alt-Caffarelli free boundary problem.  Among their results, they show existence of minimal slope plane-like solutions with Birkhoff monotonicity properties.  Kim~\cite{Kim} studied an evolution associated with \eref{fb0}, she showed the homogenization for that problem and the possibility of non-trivial pinning interval in laminar media.  The result of Kim, when specified to the case of stationary solutions, gives \tref{kcl} recalled above. Kim and Mellet~\cite{KimMellet} studied a $1$-$d$ evolutionary problem associated with \eref{fb0} on an inclined plane, they showed the existence of travelling wave, volume constrained solutions and explained the affects of pinning and de-pinning in that model.   We also mention a connection with the work of Po\v{z}\'{a}r~\cite{Pozar}, on the space-time periodic Hele-Shaw flow, where resonances cause pinning of the velocity at some directions.   In numerical experiments, see Po\v{z}\'{a}r and Palupi~\cite{PozarPalupi}, velocity pinning at a single direction also appears to cause creation of facets in the flow.

There have been several mathematical investigations of hysteresis phenomena in the capillarity model. The earliest we are aware of is Caffarelli and Mellet \cite{CaffarelliMellet2} which shows the existence of non-axially symmetric local minimizers in a slight generalization of the laminar setting.  DeSimone, Grunewald and Otto~\cite{DeSimoneGrunewaldOtto} have introduced a quasistatic rate-independent dissipative evolution to model the effects of hysteresis.  This was studied further by Alberti-DeSimone~\cite{AlbertiDeSimone}.  In that model the contact angle hysteresis is ``baked in" and rotation invariance is assumed for the pinning interval.  For us the lack of rotation invariance, and the presence of discontinuities in the pinning interval, is one of the key difficulties.  It would be very interesting to derive an energy-based quasistatic evolution of this type by homogenization of a microscopic model without hysteresis.

We also mention a connection with the boundary sandpile model introduce by Aleksanyan-Shahgholian~\cite{AleksanyanShahgholian1,AleksanyanShahgholian2}.  This was the original discrete model which motivated \cite{FeldmanSmart} and, as we showed in there, the scaling limit of the steady state for the boundary sandpile model is the minimal supersolution of a problem like \eref{fb1}.

We explain the relation between our results and the results in Caffarelli and Lee \cite{CaffarelliLee}. There is a small overlap where, in \sref{cell}, we reprove the existence of global plane-like solutions of \eref{cell0} at the maximal slope $Q^*(e)$. There are some minor technical changes in the proof.  This result is stated here as a subset of \tref{mainQ} part \ref{part.mainQexist}.  The other parts of \tref{mainQ} part \ref{part.mainQexist} are new, but still very much inspired by \cite{CaffarelliLee} and also by Caffarelli and de la Llave~\cite{CaffarellidelaLlave}.

\subsection{Acknowledgments} The author thanks Inwon Kim, Felix Otto and Charlie Smart for helpful conversations and suggestions which have helped to improve the exposition.


\section{Background}
We recall some basic properties of solutions to the free boundary problem
\begin{equation}\label{e.unitscale}
\left\{
\begin{array}{ll}
 \Delta u = 0 & \hbox{ in } \ \{u >0\} \cap U \vspace{1.5mm}\\
|\grad u| = Q(x) & \hbox{ on } \ \partial \{u >0\}  \cap U
\end{array}
\right.
\end{equation}
and/or minimizers/local minimizers/critical points of the Alt-Caffarelli energy
\begin{equation}\label{e.ACenergy}
 E(v,U) = \int_{U} |\grad v|^2 + Q(x)^2 1_{\{v >0\}} \ dx
 \end{equation}
over some domain $U \subset \R^d$.  Most of this section is review of results from the literature, however some additional arguments are needed in certain places.

\subsection{Notation}  We explain some notations and conventions which will be used in the paper.  We will say that a constant $C$ or $c$ is universal if depends at most on the dimension $d$, the upper and lower bounds $0 < \min Q \leq Q \leq  \max Q$, and the Lipschitz norm of $Q$.  These constants may change from line to line.  For $u, v \geq 0$ we say 
\[ u \prec v \ \hbox{ if } \ u \leq v \ \hbox{ and } \ u < v \ \hbox{ in } \ \overline{\{u>0\}}.\]
We say an open set $\Omega$ is inner/outer-regular if every boundary point has an interior/exterior ball touching at that point.  We say that $\Omega$ is $r$-inner/outer-regular if the touching balls have radius at least $r$.  For a continuous $u \geq 0$ we may say that $u$ is inner/outer-regular if $\{u>0\}$ is inner/outer regular.
\subsection{Viscosity solutions and comparison principle}\label{s.viscositysolutions} The equation \eref{unitscale} will be interpreted in the sense of viscosity solutions.  We will also work with local minimizers for \eref{ACenergy}, in that case we will typically need to establish that the minimizers we create are viscosity solutions.

Let $U$ a domain of $\R^d$.

\begin{definition}
  A supersolution of \eref{unitscale} is a non-negative function $u \in LSC(U)$ such that, whenever $\varphi \in C^\infty(\R^d)$ touches $u$ from below in $U$, there is a contact point $x$ such that either
  \begin{equation*}
    \Delta \varphi(x) \leq 0
  \end{equation*}
  or
  \begin{equation*}
    \varphi(x) = 0 \quad \mbox{and} \quad |\grad \varphi(x)| \leq Q(x).
  \end{equation*}
\end{definition}
\begin{definition}
  A subsolution of \eref{unitscale} is a non-negative function $u \in USC(U)$ such that, whenever $\varphi \in C^\infty(\R^d)$ touches $u$ from above in $\overline{\{u>0\}} \cap U$, there is a contact point $x \in \overline{\{u>0\}} \cap U$ such that either
  \begin{equation*}
    \Delta \varphi(x) \geq  0
  \end{equation*}
  or
  \begin{equation*}
    \varphi(x) = 0 \quad \mbox{and} \quad |\grad \varphi(x)| \geq Q(x).
  \end{equation*}
\end{definition}
\begin{definition}
We will say that $u$ is a strict supersolution (subsolution) of \eref{unitscale} if it is a supersolution (subsolution) of \eref{unitscale} for $\lambda Q(x)$ with some $\lambda <1$ ($\lambda >1$).  
\end{definition}

Typically we will want to work with super/subsolutions which are actually harmonic in their positivity set.  For this we can use the harmonic lift.
\begin{lemma}
Suppose that $U$ is outer-regular and $u$ is a super/subsolution of \eref{unitscale} and let $w$ be the minimal supersolution of
\[ \max\{\Delta w,u-w\} = 0 \ \hbox{ in } \ \{u>0\} \cap U \ \hbox{ with } \ w = 0 \ \hbox{ in } \ U \setminus \overline{\{u>0\}}.\]
Then $w$ is a super/subsolution of \eref{unitscale}.
\end{lemma}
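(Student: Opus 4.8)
The plan is to prove the statement for supersolutions; the subsolution case is entirely symmetric (reversing inequalities and using the outer-regularity hypothesis to anchor the free boundary from the outside). First I would clarify the meaning of the auxiliary obstacle-type problem: $w$ is the minimal viscosity supersolution of the obstacle problem $\max\{\Delta w, u-w\}=0$ in $\{u>0\}\cap U$ with zero boundary data on $U\setminus\overline{\{u>0\}}$. Standard obstacle problem theory gives that $w$ exists, is continuous, satisfies $0 \le w$, is harmonic on the noncontact set $\{w>u\}$, and satisfies $w = u$ on the contact set; moreover since $u \in LSC(U)$ and $u$ is a supersolution of $\Delta u \le 0$ in $\{u>0\}$ (in the viscosity sense), the comparison $w \le u$ holds on $\{u>0\}\cap U$, so in fact $w = u$ there — wait, that is too strong. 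Let me instead recall the correct picture: the obstacle forces $w \ge u$? No. The operator $\max\{\Delta w, u-w\}=0$ means $\Delta w \le 0$ and $w \ge u$ everywhere, with $\Delta w = 0$ where $w > u$. So $w$ is the smallest superharmonic function lying above $u$ on $\{u>0\}\cap U$ with zero data outside; this is the \emph{harmonic majorant} construction, and it decreases $u$'s positivity set is not changed since $w \ge u \ge 0$ and $w = 0$ exactly where prescribed. I would first carefully record these structural facts (existence, continuity, $w \ge u$, superharmonicity, harmonicity on $\{w>u\}$, $\{w>0\} = \{u > 0\} \cap U$ up to the boundary data), citing the standard obstacle-problem literature, since the free boundary $\partial\{w>0\}$ coincides with $\partial\{u>0\}$ inside $U$ by construction.

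The core of the proof is then a viscosity-solution comparison argument at two types of points: interior points of $\{w>0\}$ and free boundary points $x \in \partial\{w>0\}\cap U$. For the interior: if $\varphi \in C^\infty$ touches $w$ from below at an interior point $x$ of $\{w>0\}$, then since $w$ is superharmonic there, $\Delta\varphi(x)\le 0$ and we are done immediately (the harmonic lift can only help the supersolution property in the interior). For a free boundary point $x$ where $\varphi$ touches $w$ from below with $\varphi(x) = w(x) = 0$: here I would use that $w \ge u$, so $\varphi$ also touches $u$ from below at $x$ (since $\varphi(x) = 0 = w(x) \ge u(x) \ge 0$ forces $u(x) = 0$ too, and $\varphi \le w$ near $x$ with... hmm, $\varphi \le w$ does not give $\varphi \le u$). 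This is the delicate point. The standard trick: if $\varphi$ touches $w$ from below at the free boundary point $x$, consider the test function argument for $u$ directly. Since $u$ is a supersolution and $w \ge u$ with $w = u$ on the contact set of the obstacle problem, one shows the free boundary of $w$ lies in the contact set (because on the noncontact set $w > u \ge 0$, so $w > 0$ and $x$ could only be a free boundary point of $w$ if $x$ is in the contact set or on the prescribed zero set). On the contact set $w = u$, so $\varphi$ touching $w$ from below at $x$ means $\varphi$ touches $u$ from below at $x$, and the supersolution property of $u$ gives $\Delta\varphi(x)\le 0$ or ($\varphi(x)=0$ and $|\grad\varphi(x)|\le Q(x)$), which is exactly what we need.

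The main obstacle I anticipate is precisely the identification of $\partial\{w>0\}\cap U$ with the contact set of the obstacle problem — equivalently, ruling out that the harmonic lift "detaches" from $u$ at a free boundary point in a way that would create a new free boundary point not controlled by $u$'s subsolution-type/supersolution-type information. Concretely one must show: on the open set $\{w > u\}\cap\{u>0\}\cap U$, $w$ is harmonic and strictly positive (by the strong maximum principle, since $w \ge u$ and $w$ cannot attain an interior zero minimum while being superharmonic and nonnegative, unless $w \equiv 0$ on a component, which is excluded by $w \ge u > 0$ on $\{u>0\}$). Hence the only candidates for $\partial\{w>0\}\cap U$ are limits of contact points or points of $\partial(\{u>0\}\cap U)$, at both of which $w = u = 0$, so the free-boundary test reduces to the one for $u$. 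I would also need to invoke upper/lower semicontinuity of $u$ appropriately (LSC for supersolutions) to pass to such limits, and to handle the touching-from-above requirement in the subsolution case where the relevant set is $\overline{\{u>0\}}\cap U$; the outer-regularity hypothesis on $U$ enters to guarantee the obstacle problem is well-posed up to the boundary and that the positivity set of $w$ does not spuriously shrink near $\partial U$.
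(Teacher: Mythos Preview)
Your proposal has a genuine gap, though the underlying instinct (reduce the free-boundary test to the one for $u$) is right.

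First, the supersolution case is trivial and you are fighting a phantom. A supersolution $u$ of \eref{unitscale} is already superharmonic in $\{u>0\}$ (the free-boundary alternative in the definition requires $\varphi(x)=0$, so at interior positivity points only $\Delta\varphi\le 0$ is available). Hence $u$ itself is a supersolution of $\max\{\Delta w,u-w\}=0$, and since every supersolution of that obstacle problem satisfies $w\ge u$, the minimal one is $w=u$. There is nothing to prove. Your worry that ``$\varphi\le w$ does not give $\varphi\le u$'' evaporates because $w=u$.

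Second, and more importantly, your claim that the subsolution case is ``entirely symmetric'' is wrong, and this is where the actual content lies. In the subsolution case $u$ is subharmonic in its positivity set, so the lift is genuine: $w\ge u$ with $w$ harmonic on $\{w>u\}$. But now test functions touch $w$ from \emph{above}, and the inequality $w\ge u$ works \emph{for} you rather than against you. If $\varphi$ touches $w$ from above at $x$ with $\varphi(x)>0$, there are two cases: if $w(x)=u(x)$ then $\varphi\ge w\ge u$ with equality at $x$, so $\varphi$ touches $u$ from above and the subsolution property of $u$ gives $\Delta\varphi(x)\ge 0$; if $w(x)>u(x)$ then the subsolution property of the obstacle equation (which $w$, as the minimal supersolution, also satisfies) gives $0\le\max\{\Delta\varphi(x),u(x)-w(x)\}=\Delta\varphi(x)$ since the second entry is negative. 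If $\varphi(x)=0$ then $0\le u(x)\le w(x)=0$ forces $u(x)=0$, and again $\varphi$ touches $u$ from above at a free-boundary point, so the subsolution condition for $u$ yields $|\nabla\varphi(x)|\ge Q(x)$.

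This is exactly the paper's argument. Your proposal inverts which case is trivial and which needs work, and the machinery you set up (contact-set identification, strong maximum principle, outer-regularity) is not needed.
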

\begin{proof}
The unusual definition of the harmonic lift is due to the possible irregularity of the set $\{u>0\} \cap U$ which is not even necessarily open in the subsolution case.  We check that case since it is slightly more interesting.  Suppose that $\varphi$ touches $w$ from above in $\overline{\{w>0\}} \cap U$ at some $x$.  First suppose $\varphi(x) >0$. Then either $w(x)=u(x)$, in which case the subsolution condition for $u$ applies, or $w(x)>u(x)$ in which case the subsolution condition for $w$ implies
\[ 0 \leq \max\{ \Delta \varphi(x), u(x) - w(x)\} =  \Delta\varphi(x). \]
If $\varphi(x) = 0$ then $0 = \varphi(x) = w(x) = u(x)$ and so the subsolution condition for $u$ applies.
\end{proof}

\begin{lemma}[Strict comparison]\label{l.strictcomparison}
Suppose that $u$ and $v$ are respectively a sub and supersolution of \eref{unitscale} in $U$, $u \leq v$ in $U$, and $u \prec v$ on $\partial U$.  Then $u$ cannot touch $v$ from below in $U$ at a regular free boundary point $x \in \partial \{u>0\} \cap \partial \{v>0\}$.
\end{lemma}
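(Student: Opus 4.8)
The plan is to argue by contradiction. Assume $u$ touches $v$ from below at a regular free boundary point $x_0\in\partial\{u>0\}\cap\partial\{v>0\}$, so $u(x_0)=v(x_0)=0$. Since $u\le v$ we have $\{u>0\}\subseteq\{v>0\}$, so an interior tangent ball $B$ to $\{u>0\}$ at $x_0$ (which exists because $x_0$ is regular) is also an interior tangent ball to $\{v>0\}$ at $x_0$, and an exterior tangent ball to $\{v>0\}$ at $x_0$ is exterior to $\{u>0\}$; hence both free boundaries are tangent to a common hyperplane at $x_0$, with the same inward unit normal $n$, pointing into $B$. Using that $x_0$ is regular, I would record the one-sided developments
\[
 u(x)=\gamma\,((x-x_0)\cdot n)_+ + o(|x-x_0|),\qquad v(x)=\beta\,((x-x_0)\cdot n)_+ + o(|x-x_0|),
\]
and note that the subsolution and supersolution conditions at the regular point $x_0$ force $\gamma\ge Q(x_0)\ge\beta$.

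The first real step is to upgrade this single contact point to local coincidence, $u\equiv v$ on $B$. On $B$ both $u$ and $v$ are harmonic (since $B\subseteq\{u>0\}\subseteq\{v>0\}$), so $w:=v-u$ is a nonnegative harmonic function on $B$ with $w(x_0)=0$. If $w\not\equiv0$, then $w>0$ in $B$ by the strong maximum principle, and Hopf's lemma for the ball $B$ at $x_0$ gives $\frac{1}{t}\,w(x_0+tn)\ge c>0$ for small $t>0$; but the developments give $\frac{1}{t}\,w(x_0+tn)\to\beta-\gamma\le0$, a contradiction. Hence $w\equiv0$ on $B$. It should be emphasized that this step genuinely uses the free boundary conditions at $x_0$: because $x_0$ lies on the boundary of the region where $u,v$ are harmonic, the strong maximum principle alone does not give local equality --- one needs the inequalities $\gamma\ge Q(x_0)\ge\beta$ to beat Hopf's lemma, and it is precisely for these that regularity of $x_0$ is used.

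Next I would propagate. Let $\Omega$ be the connected component of $\{u>0\}$ containing $B$; on $\Omega$ the nonnegative harmonic function $w$ vanishes on the open set $B$, so $w\equiv0$ on $\Omega$ by the strong maximum principle, i.e. $u\equiv v$ on $\Omega$. If some $y\in\partial\Omega\cap U$ had $v(y)>0$, then since $\{v>0\}$ is open with $v$ continuous there and $u\in USC$ with $u\le v$, we would get $u(y)\ge\limsup_{\Omega\ni x\to y}u(x)=\limsup v(x)=v(y)>0$, forcing $y\in\{u>0\}$ and then $y\in\Omega$ by continuity of $u$ on $\{u>0\}$, a contradiction; so $v=u=0$ on $\partial\Omega\cap U$. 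Consequently $\overline\Omega$ cannot be a compact subset of $U$: otherwise $\partial\Omega\subseteq U$, and since $u\equiv v$ on $\Omega$ with $u\in USC$ we would have $\limsup_{\Omega\ni x\to y}v(x)=\limsup u(x)\le u(y)\le v(y)=0$ at every $y\in\partial\Omega$, hence $v\le0$ in $\Omega$ by the maximum principle, contradicting $v>0$ there. (I take $U$ bounded, as in the applications; the unbounded case is handled similarly.) So there is a point $y\in\partial\Omega\cap\partial U$. Then $y\in\overline{\{u>0\}}\cap\partial U$, and the same semicontinuity estimate gives $u(y)=v(y)$; but $u\prec v$ on $\partial U$ forces $u(y)<v(y)$ at this point. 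This final contradiction proves the lemma.

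The step I expect to be the main obstacle is the first one --- converting a single free boundary contact point into local coincidence of $u$ and $v$. This is not a consequence of the maximum principle by itself, since $x_0$ is a boundary point of the harmonicity region; it requires combining Hopf's lemma with the free boundary conditions $\gamma\ge Q(x_0)\ge\beta$ at the regular point $x_0$, which in turn rely on the one-sided developments available there. Once local coincidence is established, the propagation along a connected component and the contradiction with $u\prec v$ on $\partial U$ are routine applications of unique continuation and the maximum principle, modulo the background regularity and continuity of the solutions considered.
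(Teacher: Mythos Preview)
The paper does not supply a proof of this lemma; it is quoted as a standard background result from the free boundary literature (cf.\ Caffarelli--Salsa~\cite{CaffarelliSalsa}). Your argument follows the classical route --- asymptotic expansions at the regular touching point give $\gamma\ge Q(x_0)\ge\beta$, Hopf's lemma then forces $v-u\equiv0$ on the interior tangent ball, and unique continuation carries the coincidence out to $\partial U$, contradicting $u\prec v$ there --- and is essentially correct.

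Two technical points deserve tightening. First, you write that $u$ and $v$ are harmonic on $B$, but the lemma as stated only gives $u$ subharmonic and $v$ superharmonic in their positivity sets. The Hopf step survives (since $w=v-u$ is still superharmonic), but the two-sided expansions $u(x_0+tn)=\gamma t+o(t)$ and $v(x_0+tn)=\beta t+o(t)$ that you feed into Hopf genuinely require harmonicity (this is \lref{asymptotics}); for a merely subharmonic $u$ one only obtains a one-sided bound, and it is the wrong direction for your contradiction. In practice this is harmless: the paper always applies the lemma after the harmonic lift stated immediately before, and in every application $u,v$ are harmonic in their positivity sets --- but you should flag the hypothesis. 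Second, in the propagation step you invoke continuity of $v$ in $\{v>0\}$ and then apply a maximum principle to $v$; neither is available for a general LSC superharmonic function. The fix is immediate: once $u\equiv v$ on the component $\Omega\subset\{u>0\}$, if $\overline\Omega\Subset U$ then $u=0$ on $\partial\Omega$ (since $\Omega$ is a component of $\{u>0\}$ and $u\in USC$), and the maximum principle for the \emph{subharmonic} function $u$ gives $u\le0$ in $\Omega$, absurd. Then at $y\in\overline\Omega\cap\partial U$, lower semicontinuity of $v-u$ gives $(v-u)(y)\le\liminf_{\Omega\ni x\to y}(v-u)(x)=0$, contradicting $u\prec v$ on $\partial U$.
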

If $u$ is inner regular and $v$ is outer regular then any touching point would have to be a regular point.

There is a standard and convenient way to create inner-regular supersolutions / outer-regular subsolutions which is by $\inf$/$\sup$ convolution.  Given $u: U \to [0,\infty)$ and $\delta >0$ we define
\begin{equation}
 u^\delta(x) = \sup_{B_\delta(x)}u(y) \ \hbox{ and } \ u_\delta(x) = \inf_{B_\delta(x)} u(y).
 \end{equation}
 These are well defined in the domain 
 \[ U^\delta = \bigcup_{B_\delta(x) \subset U} B_\delta(x).\]
\begin{lemma}
Suppose that $u$ is a supersolution (resp. subsolution) of \eref{unitscale} in $U$ and $\delta>0$.  Then $u_\delta$ (resp. $u^\delta$) is a supersolution (resp. subsolution) of \eref{unitscale} in $U^\delta$ for 
\[ Q^\delta(x) = \sup_{B_\delta(x)} Q \quad \hbox{(resp. $Q_\delta(x) = \inf_{B_\delta(x)} Q$).}\]
  Furthermore $\{u_\delta>0\}$ is outer-regular with exterior balls of radius $\delta$ at every boundary point (resp. $\{u^\delta>0\}$ is inner-regular with interior balls of radius $\delta$).
\end{lemma}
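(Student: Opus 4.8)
The plan is to view $u_\delta$ as the pointwise infimum of the compact family of translates $\{u(\cdot+z):|z|\le\delta\}$ and to combine two observations: first, each translate $u(\cdot+z)$ is itself a supersolution of \eref{unitscale}, only with $Q$ replaced by $Q(\cdot+z)$ — immediate, since precomposing a test function with a translation changes neither its Laplacian nor the modulus of its gradient; second, infima of such families are again supersolutions in the viscosity sense, with the coefficient passing to the pointwise supremum over the family, which at a point $x$ is precisely $\sup_{|z|\le\delta}Q(x+z)=Q^\delta(x)$. The sup-convolution $u^\delta$ is treated dually, as a supremum of translated subsolutions, with $Q$ passing to $Q_\delta$. (I take the convolutions over closed balls; this is immaterial since the $u$ we care about are continuous.) First I would record the semicontinuity needed even to state the conclusion: $\{u_\delta<c\}=\{u<c\}+\overline{B_\delta}$ is a union of translates of an open set, hence open, so $u_\delta\in LSC$; dually $u^\delta\in USC$ because $\{u\ge c\}+\overline{B_\delta}$ is closed.

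For the viscosity inequality I would argue as follows. Let $\varphi$ touch $u_\delta$ from below at $x_0\in U^\delta$. By lower semicontinuity of $u$ and compactness of $\overline{B_\delta(x_0)}$ the infimum is attained, $u_\delta(x_0)=u(x_0+z_0)$ with $|z_0|\le\delta$, and then the translate $\psi:=\varphi(\cdot-z_0)$ touches $u$ from below at $x_0+z_0$, because $\psi(y)=\varphi(y-z_0)\le u_\delta(y-z_0)\le u(y)$ for $y$ near $x_0+z_0$. To turn the ``there exists a contact point'' clause in the definition of supersolution into a statement at this specific point, I would subtract a small quadratic $\eta|y-(x_0+z_0)|^2$, which makes $x_0+z_0$ the unique contact point; applying the supersolution property of $u$ there and sending $\eta\downarrow0$ yields $\Delta\varphi(x_0)\le0$, or else $\varphi(x_0)=0$ together with $|\grad\varphi(x_0)|\le Q(x_0+z_0)\le Q^\delta(x_0)$, which is exactly the supersolution condition for the coefficient $Q^\delta$ at $x_0$; and when $u_\delta(x_0)>0$ only the first alternative can occur, since $\psi(x_0+z_0)>0$. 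The subsolution statement for $u^\delta$ is symmetric; the only additional point is to check that the supremum-attaining translate $x_0+z_0$ lies in $\overline{\{u>0\}}\cap U$, which follows from a short compactness argument starting from $x_0\in\overline{\{u^\delta>0\}}$.

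For the geometry of the positivity sets I would extract from the attainment of the extrema the explicit descriptions $\{u_\delta>0\}=\{x:\overline{B_\delta(x)}\subset\{u>0\}\}$ and $\{u^\delta>0\}=\{u>0\}+\overline{B_\delta}$. If $x_0\in\partial\{u_\delta>0\}$ then $\overline{B_\delta(x_0)}\not\subset\{u>0\}$, so some $y_0\in\overline{B_\delta(x_0)}$ has $u(y_0)=0$; since $y_0\in\overline{B_\delta(x)}$ for every $x\in B_\delta(y_0)$, the function $u_\delta$ vanishes on $B_\delta(y_0)$, and as $x_0$ is a boundary point of $\{u_\delta>0\}$ necessarily $|x_0-y_0|=\delta$, so $B_\delta(y_0)$ is an exterior ball of radius $\delta$ touching at $x_0$. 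The interior-ball statement for $\{u^\delta>0\}$ is dual: a boundary point has the form $x_0=y_0+w_0$ with $u(y_0)>0$ and $|w_0|=\delta$, and then $B_\delta(y_0)\subset\{u^\delta>0\}$. I do not expect any genuine obstacle: the real content is the bookkeeping in the viscosity definitions — converting the existential ``a contact point'' into the statement at the translated point, tracking the passage $Q\mapsto Q^\delta$ precisely at free-boundary points, and handling the semicontinuity conventions in the degenerate case $u_\delta(x_0)=0$ — all of which is routine.
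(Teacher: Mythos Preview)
Your argument is correct and is precisely the standard one: inf/sup convolutions are infima/suprema over a compact family of translates, each translate is a super/subsolution for the translated coefficient, and the viscosity condition passes to the infimum/supremum with the coefficient passing to its sup/inf over $B_\delta$. The paper does not actually prove this lemma; it simply states it and refers the reader to \cite[Chapter 2]{CaffarelliSalsa}, where the same argument you give (together with the stronger $R$-sub/supersolution property) is carried out.
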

The sup/inf convolutions actually have a stronger property called the $R$-subsolution (or $R$-supersolution) property.   See \cite[Chapter 2]{CaffarelliSalsa} for the proof.

We just state the $R$-subsolution property, the $R$-supersolution property is similar.  Say $v$ is an $R$-subsolution if the following hold.
\begin{enumerate}[label=(\roman*)]
\item $v$ is a viscosity subsolution of \eref{unitscale}
\item Whenever $x_0 \in \partial \{v>0\}$ has an interior touching ball then
\[ v(x) \geq Q(x_0)[(x - x_0) \cdot n]_+ + o(|x-x_0|)\]
where $n$ is the unit vector pointing from the center $x_0$ to the center of the touching ball.
\end{enumerate}
Note that the usual subsolution property requires the free boundary to be outer regular at a point to get the asymptotic expansion, for $R$-subsolutions the asymptotic expansion also holds at inner regular free boundary points.  For the sup convolution every free boundary point is inner regular. 

$R$-subsolutions and $R$-supersolutions satisfy a stronger comparison principle.  Again, see \cite[Chapter 2]{CaffarelliSalsa} for the proof.  

\begin{lemma}\label{l.Rcomparison}
Suppose that $u$ and $v$ are respectively an $R$-subsolution and a supersolution of \eref{unitscale} in $U$, $u \leq v$ in $U$, and $u \prec v$ on $\partial U$.  Then $u$ cannot touch $v$ from below in $U$ at an inner regular free boundary point $x \in \partial \{u>0\} \cap \partial \{v>0\}$.
\end{lemma}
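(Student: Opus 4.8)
The plan is to argue by contradiction, combining Hopf's lemma for the superharmonic difference $v-u$ with the one-sided asymptotic development built into the $R$-subsolution hypothesis; this is essentially the argument of \cite[Chapter 2]{CaffarelliSalsa}, which I outline here.

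Suppose, for contradiction, that $u$ touches $v$ from below at an inner regular free boundary point $x_0\in\partial\{u>0\}\cap\partial\{v>0\}$, and let $B_\rho(z)\subset\{u>0\}$ be an interior ball with $x_0\in\partial B_\rho(z)$; write $n$ for the inward unit normal at $x_0$ (pointing to $z$). Since $u\le v$ we have $B_\rho(z)\subset\{v>0\}$ as well. Because $u$ is a subsolution and $v$ a supersolution of $\Delta=0$ in their positivity sets, $u$ is subharmonic and $v$ superharmonic in $B_\rho(z)$, so $w:=v-u$ is nonnegative and superharmonic there with $w(x_0)=0$. The first step is to rule out $w\equiv0$: if $w$ vanished at an interior point of $B_\rho(z)$, the strong minimum principle would force $w\equiv0$ on the whole connected component $\Omega$ of $\{u>0\}$ containing $B_\rho(z)$, so $u\equiv v$ on $\Omega$. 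Then $u=0$ (with $u$ continuous) on $\partial\Omega$, hence, using lower semicontinuity of $v$ together with $v\ge u$, also $v=0$ on $\partial\Omega$; this contradicts $u\prec v$ on $\partial U$ if $\overline\Omega$ meets $\partial U$, and it contradicts the maximum principle for the subharmonic function $u$ (positive in $\Omega$, zero on $\partial\Omega$) if $\overline\Omega$ is a compact subset of $U$. So $w>0$ throughout $B_\rho(z)$.

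Next, I would apply Hopf's lemma to the positive superharmonic function $w$ at the boundary point $x_0$ of the ball $B_\rho(z)$: comparison with an explicit barrier vanishing to first order at $x_0$ produces a constant $c_0>0$ with
\[ w(x)\ \ge\ c_0\,(x-x_0)\cdot n + O(|x-x_0|^2)\qquad\text{for }x\in B_\rho(z)\text{ near }x_0.\]
The $R$-subsolution property of $u$ supplies, at the inner regular point $x_0$, the development
\[ u(x)\ \ge\ Q(x_0)\,[(x-x_0)\cdot n]_+ + o(|x-x_0|),\]
so that, adding,
\[ v(x)\ =\ u(x)+w(x)\ \ge\ \big(Q(x_0)+c_0\big)\,(x-x_0)\cdot n + o(|x-x_0|)\]
as $x\to x_0$ inside $B_\rho(z)$; that is, $v$ grows in the normal direction strictly faster than the critical slope $Q(x_0)$.

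It remains to contradict the supersolution property of $v$. Fix a small ball $B_s(x_0+sn)\subset B_\rho(z)$ internally tangent to $B_\rho(z)$ at $x_0$, let $\tilde\sigma(x)=s-|x-(x_0+sn)|$ be the signed distance to its boundary (positive inside), and set
\[ \varphi\ :=\ \gamma\,\tilde\sigma + \tfrac{A}{2}\,\tilde\sigma^2,\qquad \gamma:=Q(x_0)+\tfrac{c_0}{2},\]
with $A$ chosen large enough that $\Delta\varphi>0$ near $x_0$; after a routine modification far from $x_0$ (capping $\varphi$ at a small negative value) we may take $\varphi\in C^\infty_c(\R^d)$ with $\varphi(x_0)=0$, $|\grad\varphi(x_0)|=\gamma>Q(x_0)$, $\varphi<0$ outside $\overline{B_s(x_0+sn)}$, and $\varphi=0<v$ on $\partial B_s(x_0+sn)\setminus\{x_0\}$. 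Using the growth estimate for $v$ together with the quadratic geometry of the internally tangent ball ($(x-x_0)\cdot n\ge|x-x_0|^2/(2s)$ inside it), one checks that $\varphi$ touches $v$ from below only at $x_0$; but at $x_0$ we have $\Delta\varphi(x_0)>0$ and, although $\varphi(x_0)=0$, $|\grad\varphi(x_0)|>Q(x_0)$, so \emph{both} alternatives in the definition of supersolution fail at the unique contact point --- a contradiction. The main obstacle is precisely this last verification, i.e.\ ensuring $\varphi<v$ in the tangential directions near $x_0$, where $v$ is controlled only through the $o(|x-x_0|)$ error in the development; this is where the full strength of the $R$-subsolution notion enters --- the ordinary subsolution property yields that development only at outer regular free boundary points, not at the merely inner regular $x_0$ considered here --- and it is the technical point we borrow from \cite[Chapter 2]{CaffarelliSalsa}.
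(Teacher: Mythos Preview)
Your overall strategy---Hopf's lemma on $w=v-u$ in the touching ball, combined with the $R$-subsolution asymptotic expansion for $u$, yielding a normal development $v(x)\ge(Q(x_0)+c_0)[(x-x_0)\cdot n]+o(|x-x_0|)$ that should contradict the supersolution condition---is exactly the approach of \cite[Chapter~2]{CaffarelliSalsa}, which is all the paper invokes for this lemma.

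The gap is in your final step.  You assert that ``one checks that $\varphi$ touches $v$ from below only at $x_0$,'' but with only the $o(|x-x_0|)$ control this verification actually fails for your test function $\varphi=\gamma\tilde\sigma+\tfrac{A}{2}\tilde\sigma^2$.  Inside the tangent ball $B_s(x_0+sn)$ one has $(x-x_0)\cdot n\ge |x-x_0|^2/(2s)$, so along nearly tangential approaches the gain $(c_0/2)(x-x_0)\cdot n$ is of order $|x-x_0|^2$, which is dominated by the $o(|x-x_0|)$ error.  Concretely, writing $t=(x-x_0)\cdot n$ and $r=|x-x_0|$, the required inequality reduces to $(c_0/2)t\ge \tfrac{A}{2}t^2+\epsilon r$ with $r$ as large as $\sqrt{2st}$; for $t\to0$ this forces $(c_0/2)\sqrt t\ge\epsilon\sqrt{2s}$, which is violated.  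No choice of $s$ or $A$ rescues this, and the same obstruction defeats any smooth test function with the prescribed first-order data at $x_0$.  You correctly flag this as ``the main obstacle,'' but the construction you propose does not overcome it.

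The resolution in Caffarelli--Salsa is not a more clever smooth $\varphi$; rather, their viscosity supersolution is \emph{defined} through asymptotic developments at inner regular points (cf.\ their Definitions~2.2--2.3), so the inequality $Q(x_0)+c_0\le Q(x_0)$ is an immediate contradiction and no smooth test function is needed.  With the paper's smooth-test-function definition one must either establish that equivalence first, or---and this is the alternative the paper emphasizes in the paragraph following the lemma---restrict to the case actually used, where the $R$-subsolution is a sup-convolution $u^\delta$: then the touching of $u^\delta$ and $v$ forces $u^{\delta/2}$ to touch the inf-convolution $v_{\delta/2}$ at a point that is simultaneously inner regular for $u^{\delta/2}$ and outer regular for $v_{\delta/2}$, and one falls back on \lref{strictcomparison}.
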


The $R$-subsolution and $R$-supersolution condition and the corresponding comparison principle turn out to be rather useful for energy minimization arguments.  However, in any case we use them, they are really just a convenient rephrasing of the following trick:  If $u^\delta = \sup_{B_\delta(x)}u(y)$ touches $v$ from below at a free boundary point, then $u^{\delta/2}$ touches $v_{\delta/2}$ from below at a free boundary point.  In particular \lref{Rcomparison} and \lref{strictcomparison} are really the same when the $R$-supersolution or $R$-subsolution in question is an inf or sup convolution.

Finally we include a result on the asymptotic expansion for a positive harmonic function in a domain $\Omega$, vanishing on $\partial \Omega$, near one-sided regular boundary points.  This is copied from \cite[lemma 11.17]{CaffarelliSalsa}.
\begin{lemma}[Lemma 11.17 \cite{CaffarelliSalsa}]\label{l.asymptotics}
Let $u$ be a positive harmonic function in a domain $\Omega$.  Assume that $x_0 \in \partial \Omega$ and $u$ vanishes on $B_1(x_0) \cap \partial \Omega$.  Then the following hold.
\begin{enumerate}[label=(\roman*)]
\item If $x_0$ is an inner regular boundary point then either $u$ grows more than any linear function near $x_0$ or it has the asymptotic expansion
\[ u(x) \geq \alpha \alpha[(x-x_0)\cdot n]_++o(|x-x_0|)\]
for some $\alpha>0$, with $n$ the inward normal of the touching ball at $x_0$.  Equality holds in every nontangential region.
\item If $x_0$ is an outer regular boundary point then either $u$ grows slower any linear function near $x_0$ or it has the asymptotic expansion
\[ u(x) \leq \alpha [(x-x_0)\cdot n]_++o(|x-x_0|)\]
for some $\alpha>0$, with $n$ the outward normal of the touching ball at $x_0$.  Equality holds in every nontangential region and, in the case $\alpha>0$ actually $n$ is the normal direction to $\partial \Omega$ at $x_0$.
\end{enumerate}
\end{lemma}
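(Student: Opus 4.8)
The plan is to normalize by translation and rotation so that $x_0 = 0$ and $n = e_d$, and then to treat the two one-sided cases separately: the inner-regular case by a Hopf barrier together with an explicit Poisson-kernel computation, and the outer-regular case by a comparison argument followed by a blow-up.

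\emph{Part (i), inner regular.} Let $B = B_r(re_d) \subset \Omega$ be an interior touching ball at $0$. First I would build a lower barrier: let $\phi$ be harmonic in $B$ with $\phi = 0$ on $\partial B \cap B_{r/2}(0)$ and $\phi = m$ on the rest of $\partial B$, where $0 < m \le \inf_{\partial B\setminus B_{r/2}(0)} u$. Since the boundary data vanish near $0$, $\phi$ extends harmonically across $\partial B$ near $0$, hence is smooth there, and Hopf's lemma gives $\phi(x) = \mu\,(x\cdot e_d)_+ + O(|x|^2)$ with $\mu > 0$. Comparison yields $u \ge \phi$ in $B$, which already gives a global lower bound and $\liminf_{t\to 0^+} u(te_d)/t \ge \mu > 0$. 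Next, on a slightly smaller ball $B_\rho(\rho e_d)\subset B$ I would use the Poisson representation of $u$ with pole at $0$, the kernel being a constant multiple of $\frac{2\rho\,(x\cdot e_d) - |x|^2}{|x-\xi|^d}$. Letting $x\to 0$ inside a nontangential cone $\{x\cdot e_d \ge c|x|\}$, one has $|x-\xi|\gtrsim |x| + |\xi|$ there, so the contribution of $\{|\xi|\le\eta\}$ is $o(|x|)$ provided $\int_{|\xi|\le\eta}u(\xi)|\xi|^{-d}\,d\sigma(\xi)$ is finite and small, while the contribution of $\{|\xi|>\eta\}$ converges, as $x\to 0$, to $(x\cdot e_d)$ times $\int_{|\xi|>\eta}\,\mathrm{const}\cdot u(\xi)|\xi|^{-d}\,d\sigma(\xi)$. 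Setting $\alpha := \mathrm{const}\int_{\partial B_\rho(\rho e_d)}u(\xi)|\xi|^{-d}\,d\sigma(\xi) \in (0,+\infty]$, which is independent of $\rho$ by matching the two expansions along $te_d$, the dichotomy is whether $\alpha < \infty$: if so one obtains $u(x) = \alpha(x\cdot e_d) + o(|x|)$ in every nontangential cone together with the global bound $u(x)\ge \alpha(x\cdot e_d)_+ + o(|x|)$; if $\alpha = +\infty$, bounding $u(te_d)$ from below by the same kernel shows $u(te_d)/t\to +\infty$, which is the faster-than-linear alternative.

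\emph{Part (ii), outer regular.} Let $B' = B_r(-re_d)$ be an exterior touching ball, so $\Omega\cap B_1(0)\subset B_1(0)\setminus\overline{B'}$. Put $M = \sup_{\overline{B_1(0)}\cap\overline\Omega}u < \infty$ and let $w$ be harmonic in $B_1(0)\setminus\overline{B'}$ with $w = M$ on $\partial B_1(0)$ and $w = 0$ on $\partial B'$; the maximum principle gives $0\le u\le w$ on $\Omega\cap B_1(0)$. Since $\partial B'$ is smooth with constant data there, $w\in C^{1,\gamma}$ up to $\partial B'$, so $w(x) = \beta\,(x\cdot e_d) + o(|x|)$ with $\beta = \partial_{e_d}w(0)\ge 0$; as $\Omega\cap B_1(0)$ lies outside $B'$, points there satisfy $x\cdot e_d\ge -C|x|^2$, hence $u(x)\le \beta(x\cdot e_d)_+ + o(|x|)$ and $u$ has at most linear growth at $0$. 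If $\limsup_{x\to 0}u(x)/|x| = 0$ this is the slower-than-linear alternative. Otherwise set $\alpha := \limsup_{x\to 0}u(x)/|x|\in(0,\beta]$ and run a blow-up: $v_\lambda(x) := u(\lambda x)/\lambda$ is positive harmonic on $\Omega_\lambda := \lambda^{-1}\Omega$, uniformly bounded on compacts, and since $B'$ scales to the complementary closed half-space, $\Omega_\lambda$ is eventually contained in $\{x\cdot e_d > -o(1)\}$. Any subsequential limit $v_0$ is positive harmonic on its (half-space-contained) domain, vanishes on the part of the boundary coming from $\partial\Omega_\lambda$, and has linear growth, hence equals $c_0(x\cdot e_d)_+$ by the classical Liouville theorem for positive harmonic functions in a half-space vanishing on the flat boundary. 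If $c_0 = \alpha > 0$ the limiting domain must be the full half-space $\{x\cdot e_d > 0\}$ --- otherwise $v_0$ would vanish at interior points of that half-space --- which is exactly the assertion that $\partial\Omega$ is differentiable at $0$ with normal $e_d$; a standard argument then pins down $c_0$ along every blow-up sequence (the comparison with $w$ at scale $s$ controls the oscillation of $s^{-1}\sup_{\partial B_s(0)}u$), giving $u(x) = \alpha(x\cdot e_d) + o(|x|)$ with equality in nontangential cones.

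The routine ingredients are the barrier construction, the Poisson-kernel estimates near the pole, and the comparison with $w$. The genuine obstacle is in part (ii): lacking an interior touching ball, one cannot read the expansion off a single Poisson representation, and the precise constant $\alpha$ and the equality in nontangential regions have to come from a compactness/blow-up argument plus uniqueness of the blow-up limit --- in particular one must show, using $\alpha > 0$, that the blow-up domain is an honest half-space and that the normalizing constant is the same along every approach to $x_0$.
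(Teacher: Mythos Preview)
The paper does not contain its own proof of this lemma: it is quoted verbatim from Caffarelli--Salsa \cite{CaffarelliSalsa} as background, with the attribution ``Lemma 11.17 \cite{CaffarelliSalsa}'' in the statement header and no proof environment following. So there is nothing in the paper to compare your argument against.

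That said, your outline for part (i) is essentially the textbook argument: a Hopf barrier in the interior ball, followed by the Poisson representation to extract the coefficient $\alpha$ and the nontangential expansion. For part (ii), your upper barrier $w$ and the dichotomy on $\alpha = \limsup u(x)/|x|$ are also standard. The one place where your sketch is thin is the uniqueness of the blow-up in the outer-regular case: you assert that ``a standard argument then pins down $c_0$ along every blow-up sequence,'' but this is precisely the nontrivial step. Subsequential blow-ups could in principle yield different half-plane slopes, and ruling this out requires either a monotonicity argument for $t \mapsto u(t e_d)/t$ (via comparison with dilates of a fixed barrier, which is what Caffarelli--Salsa do) or a boundary Harnack estimate; the parenthetical about oscillation of $s^{-1}\sup_{\partial B_s} u$ does not by itself give this. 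If you want a self-contained proof, that is the point to sharpen.
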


\subsection{Minimal supersolutions / maximal subsolutions}  One important way of creating viscosity solutions of \eref{unitscale} is by Perron's method, finding the minimal supersolution or maximal subsolution above or, respectively, below a certain obstacle.

These properties can also be localized.  
\begin{definition}
Let $U \subset \R^d$ a domain.  We say that $u \in LSC(U)$ is a minimal supersolution in $U$ if it is a supersolution and, for any $D \subset U$ open and a supersolution $v \in LSC(\overline{D})$ with $v \geq u$ on $\partial D$, also $v \geq u$ in $D$.
\end{definition}
\begin{definition}
Let $U \subset \R^d$ a domain.  We say that $u \in USC(U)$ is a maximal subsolution in $U$ if it is a subsolution and, for any $D \subset U$ open and a subsolution $v \in USC(\overline{D})$ with $v \leq u$ on $\partial D$, also $v \leq u$ in $D$.
\end{definition}
It is standard to check that if $u$ is a minimal supersolution or maximal subsolution in $U$ then $u$ is a solution in $U$.  In particular, actually $u \in C(U)$. Moreover, as we will see in the next section, $u$ will satisfy a Lipschitz bound.
\begin{theorem}
Let $U$ be an outer regular domain and $g$ be a continuous function on $\partial U$.  Suppose $\overline{v}$ is an outer regular, continuous, $R$-supersolution in $\overline{U}$ with $g \prec \overline{v}$ on $\partial U$.  Then the function
\[ u(x) = \sup\{w: \ \hbox{ $w$ is a subsolution in $U$, $w \leq g$ on $\partial U$, and $w \leq \overline{v}$ in $\overline{U}$}\} \]
is a viscosity solution of the free boundary problem \eref{unitscale} in $U$ with $u \in C(\overline{U})$ and $u = g$ on $\partial U$.
\end{theorem}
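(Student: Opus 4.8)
The plan is to run Perron's method, with $\bar v$ keeping the Perron family bounded and supplying the upper barrier near $\partial U$, and the outer regularity of $U$ supplying lower barriers near $\partial U$. Let $\mathcal S$ denote the family of subsolutions $w$ of \eref{unitscale} in $U$ with $w\le g$ on $\partial U$ and $w\le\bar v$ in $\overline U$ (we may assume $g\ge 0$, since every admissible $w$ is nonnegative). First I would record that $\mathcal S\ne\emptyset$, as the zero function belongs to it, and that $u=\sup\mathcal S$ satisfies $u\le\bar v$ and $u\le g$ on $\partial U$, hence is locally bounded since $\bar v$ is continuous on $\overline U$. Using the standard fact that the upper semicontinuous envelope of a pointwise supremum of subsolutions of \eref{unitscale} is again a subsolution (see \cite[Chapter 2]{CaffarelliSalsa}), and that the constraints $\le\bar v$ and $\le g$ on $\partial U$ pass to the envelope, I conclude $u^*\in\mathcal S$, so $u=u^*$; in particular $u\in USC(U)$ is a subsolution of \eref{unitscale} with $u\le\bar v$.

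The heart of the argument is to show the lower semicontinuous envelope $u_*$ is a supersolution, by the usual bump construction, now respecting the extra constraint $w\le\bar v$. Suppose $\varphi\in C^\infty$ touches $u_*$ from below at $x_0\in U$; after replacing $\varphi$ by $\varphi-|x-x_0|^4$ the touching is strict, so $x_0$ is the unique contact point. Assume for contradiction that both supersolution alternatives fail at $x_0$, i.e.\ $\Delta\varphi(x_0)>0$ and either $\varphi(x_0)>0$, or $\varphi(x_0)=0$ and $|\grad\varphi(x_0)|>Q(x_0)$. I would first note that necessarily $u(x_0)<\bar v(x_0)$: otherwise $\varphi$ touches $\bar v$ from below at $x_0$ (strictly, as $\varphi\le u\le\bar v$), and since $\bar v$ is a supersolution the contact point $x_0$ must satisfy one of the supersolution alternatives, contradicting the assumption. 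Knowing there is room below $\bar v$, I bump $u$ up in a small ball $B_r(x_0)$: if $\varphi(x_0)>0$, then for $r,\delta$ small $\varphi+\delta$ is positive on $B_r(x_0)$ with $\Delta(\varphi+\delta)>0$ there, lies below $u$ on $\partial B_r(x_0)$ and below $\bar v$ on $\overline{B_r(x_0)}$, so $\max\{u,\varphi+\delta\}$ on $B_r(x_0)$ and $u$ outside is an element of $\mathcal S$ exceeding $u$ at $x_0$; if $\varphi(x_0)=0$ (so $u(x_0)=0$) with $|\grad\varphi(x_0)|>Q(x_0)$, I instead glue in near $x_0$ the standard slightly enlarged radial classical solution of \eref{unitscale} (harmonic and positive on an exterior domain, vanishing on an inner sphere just past $x_0$, with free-boundary slope chosen in the nonempty interval $\big(\sup_{B_r(x_0)}Q,\ |\grad\varphi(x_0)|\big)$) and again take a maximum with $u$, producing an element of $\mathcal S$ strictly above $u$ near $x_0$. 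Either way this contradicts $u=\sup\mathcal S$, so $u_*$ is a supersolution. Together with $u=u^*$ being a subsolution, the comparison principle for \eref{unitscale} (\lref{strictcomparison}, \lref{Rcomparison}, in the full form obtained from them by the standard sup-/inf-convolution perturbation, cf.\ \cite[Chapter 2]{CaffarelliSalsa}), once the boundary values below are in hand, gives $u^*\le u_*$ and hence $u_*=u^*=u\in C(U)$, a viscosity solution of \eref{unitscale}, to which the Lipschitz estimate of the next section applies.

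It remains to establish $u\in C(\overline U)$ with $u=g$ on $\partial U$, via barriers at each $x_0\in\partial U$. For $\limsup_{x\to x_0}u(x)\le g(x_0)$ I would compare $u$ in $U\cap B_r(x_0)$ with a continuous supersolution barrier dominating $g$ on $\partial U\cap B_r(x_0)$ and dominating $u$ on $U\cap\partial B_r(x_0)$; for the latter one feeds in the a priori bound $u\le\bar v$ together with continuity of $g$ and $\bar v$ and outer regularity of $U$, iterating over a shrinking sequence of radii to control points of $U\cap\partial B_r(x_0)$ near $\partial U$. For $\liminf_{x\to x_0}u(x)\ge g(x_0)$ I would place below $u$ a subsolution barrier supported near $x_0$, built from the exterior ball at $x_0$ supplied by outer regularity and the harmonic lift of Dirichlet data close to $g$ near $x_0$ and to $0$ away from it, arranged to lie in $\mathcal S$ and to equal approximately $g(x_0)$ at $x_0$; then $u$ dominates it. Letting the errors tend to $0$ gives $u=g$ on $\partial U$ and continuity up to the boundary, which also supplies the boundary ordering used in the comparison step above.

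I expect the main obstacle to be the free-boundary bump in the case $\varphi(x_0)=0$, $|\grad\varphi(x_0)|>Q(x_0)$: one must check the glued-in competitor is a genuine subsolution of \eref{unitscale} at every new free-boundary point it creates (the delicate computation underlying the $R$-subsolution comparison of \lref{Rcomparison}) and that it stays below $\bar v$, which follows from $u(x_0)<\bar v(x_0)$ and a smallness choice of the bump. A secondary difficulty is making the upper barrier near $\partial U$ dominate $u$ on the inner sphere using only outer regularity of $U$, which the iteration in the radius together with the crude bound $u\le\bar v$ handles. Everything else is the standard Perron machinery for the Alt--Caffarelli problem.
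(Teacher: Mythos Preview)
The paper does not actually prove this theorem; it cites \cite[Theorem 6.1]{CaffarelliSalsa} for the minimal supersolution case and says the maximal subsolution case is ``similar but not exactly the same.'' So your proposal is being compared to the standard Caffarelli--Salsa approach rather than to anything written here.

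Most of your outline is sound, but there is a genuine gap at the step where you conclude $u^*\le u_*$ from ``comparison.'' The lemmas you cite, \lref{strictcomparison} and \lref{Rcomparison}, are \emph{strict} comparison results: they assume $u\le v$ globally with $u\prec v$ on $\partial U$ and then rule out touching at regular free boundary points. They do not yield a weak comparison principle of the form ``subsolution $\le$ supersolution given only boundary ordering,'' and the sup/inf-convolution trick does not manufacture the missing a priori ordering---you would still need $(u^*)^\delta\le u_*$ as a hypothesis before \lref{Rcomparison} applies. A global comparison principle of the type you are invoking is not available for this free boundary problem.

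The route that actually works, and is the one the paper has in mind, bypasses the $u^*$ versus $u_*$ comparison entirely. One shows directly that $u$ is harmonic in its positivity set: for any ball $B\subset\{u>0\}\cap U$ the harmonic lift of $u$ in $B$ is still a subsolution (the paper's harmonic-lift lemma), still $\le\overline v$ because $\overline v$ is superharmonic in $\{\overline v>0\}\supset\{u>0\}$, and still $\le g$ on $\partial U$; maximality then forces $u$ to equal its lift. With harmonicity in hand, your bump argument (applied to $u$ itself, not $u_*$) gives the supersolution free boundary condition, and then the Lipschitz estimate \lref{properties} delivers continuity directly. This is also why the paper remarks, just before the theorem, that maximal subsolutions are Lipschitz via harmonic lifts. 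Your boundary barrier discussion and the free-boundary bump construction are fine and carry over to this route without change.
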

The result for existence of minimal supersolutions is analogous and can be found in \cite[theorem 6.1]{CaffarelliSalsa}.  The Perron's method argument for the maximal subsolutions is similar but not exactly same as the supersolution case.

\subsection{Linear growth at the free boundary}  In this paper we will only use the most basic level of the local regularity theory for free boundary problems.  This is the Lipschitz bound and nondegeneracy at the zero level set.  Morally speaking the Lipschitz bound follows from the supersolution property $|\grad u| \leq \Lambda$ on the free boundary, while nondegeneracy follows from the sub-solution property $|\grad u| \geq \lambda$ on the free boundary.  

First the Lipschitz bound, see Caffarelli-Salsa~\cite[lemma 11.19]{CaffarelliSalsa} for the proof.
\begin{lemma}[Lipschitz continuity]\label{l.properties}
Suppose that $u \geq 0$ is a harmonic function in $\{u>0\} \cap B_1$. If $u$ solves $|\grad u| \leq \Lambda$ on $\partial \{u>0\} \cap B_1$, in the viscosity sense, then $u$ is Lipschitz continuous with constant $C(d)\Lambda$ in $B_{1/2}$.
\end{lemma}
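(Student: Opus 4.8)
The plan is to prove the quantitative linear growth bound
\[
 u(x_0)\;\le\;C(d)\,\Lambda\,\dist(x_0,\{u=0\})\qquad\text{for }x_0\in\{u>0\}\cap B_{1/2},
\]
and then promote it to the Lipschitz bound using interior estimates for harmonic functions. (The inequality is only meaningful when $\{u=0\}\cap B_1\neq\emptyset$; if the free boundary is absent from $B_1$ the asserted constant requires a tacit normalization of $\sup_{B_1}u$, but that is not the case of interest.) As a preliminary remark, $u$ is continuous and subharmonic in $B_1$: it is harmonic in $\{u>0\}$, vanishes on the interior of $\{u=0\}$, and at free boundary points the sub-mean-value inequality is immediate from $u\ge 0$.

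For the growth bound, fix $x_0\in\{u>0\}\cap B_{1/2}$ and set $r_0=\dist(x_0,\{u=0\})$, which we may assume is smaller than a universal constant $r_\ast(d)$ (the case $r_0$ not small being handled together with the ``far from the free boundary'' points below, via subharmonicity or a Harnack chain). By definition $B_{r_0}(x_0)\subset\{u>0\}$, and there is a point $y\in\partial B_{r_0}(x_0)\cap\partial\{u>0\}\subset B_1$. On $B_{r_0}(x_0)$ the function $u$ is positive harmonic and vanishes at the boundary point $y$, so Hopf's lemma applied in this ball (the quantitative counterpart of the inner-regular case of \lref{asymptotics}) gives
\[
 u(x)\;\ge\;\beta\,\dist\!\big(x,\partial B_{r_0}(x_0)\big)\quad\text{for }x\in B_{r_0}(x_0)\text{ near }y,\qquad \beta\ge c(d)\,\frac{u(x_0)}{r_0}.
\]
Now feed this into the supersolution (``$|\nabla u|\le\Lambda$'') condition at $y$. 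Placing $y$ at the origin and $n=(x_0-y)/r_0=e_1$, one checks that for each small $\epsilon>0$ the smooth saddle-shaped test function
\[
 \varphi(x)=(\beta-\epsilon)\,x_1+\tfrac{d\beta}{2r_0}\,x_1^2-\tfrac{\beta}{2r_0}\,|x'|^2
\]
has the following properties in a sufficiently small neighborhood of the origin: $\{\varphi>0\}$ is a paraboloid opening along $e_1$ which is contained in $B_{r_0}(x_0)$, so the Hopf lower bound forces $\varphi\le u$ there with $y$ the unique contact point (outside $\{\varphi>0\}$ one has $\varphi\le 0\le u$); moreover $\varphi(y)=0$, $|\nabla\varphi(y)|=\beta-\epsilon$, and $\Delta\varphi(y)=\tfrac{d\beta}{r_0}-(d-1)\tfrac{\beta}{r_0}=\tfrac{\beta}{r_0}>0$. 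Thus the Laplacian alternative in the definition of supersolution is excluded, and we must have $\beta-\epsilon\le\Lambda$. Letting $\epsilon\to0$ and using $\beta\ge c(d)u(x_0)/r_0$ yields $u(x_0)\le C(d)\Lambda\,r_0$.

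Finally, given $x_0\in\{u>0\}\cap B_{1/2}$, let $\rho$ be a small universal multiple of $\min(r_0,r_\ast)$ so that $B_{2\rho}(x_0)\subset\{u>0\}$. By the Harnack inequality $\sup_{B_\rho(x_0)}u\le C(d)\,u(x_0)$, by the growth bound $u(x_0)\le C(d)\Lambda\,\rho$, and by the interior gradient estimate for the harmonic function $u$ on $B_\rho(x_0)$,
\[
 |\nabla u(x_0)|\;\le\;\frac{C(d)}{\rho}\sup_{B_\rho(x_0)}u\;\le\;C(d)\,\Lambda .
\]
On the interior of $\{u=0\}$ the gradient vanishes, so $|\nabla u|\le C(d)\Lambda$ a.e.\ in $B_{1/2}$, and since $u$ is continuous this is equivalent to the claimed Lipschitz bound.

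I expect the main obstacle to be the middle step: rigorously verifying that the saddle test function touches $u$ from below near $y$ with a unique contact point and with strictly positive Laplacian there, so that the viscosity supersolution condition actually delivers the slope bound $\beta\le\Lambda$ rather than the (useless) sign condition on $\Delta\varphi$. This hinges on using the full Hopf lower bound for $u$ inside the inscribed ball $B_{r_0}(x_0)$ — not merely the nontangential asymptotics — and on matching the aperture of the paraboloid $\{\varphi>0\}$ to that of $B_{r_0}(x_0)$. The remaining ingredients — subharmonicity, the Harnack/interior-estimate bootstrap, the reduction of the far-from-free-boundary case, and the passage from an a.e.\ gradient bound to Lipschitz continuity — are routine; see \cite[Chapter 11]{CaffarelliSalsa}.
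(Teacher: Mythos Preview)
The paper does not give its own proof of this lemma but simply refers to \cite[Lemma~11.19]{CaffarelliSalsa}, and your argument is exactly the classical one found there: use Hopf's lemma in the inscribed ball $B_{r_0}(x_0)$ to get a lower linear bound at the touching free boundary point, feed that into the viscosity supersolution condition via a strictly subharmonic test function to extract $u(x_0)\le C(d)\Lambda r_0$, and upgrade to a Lipschitz bound by the interior gradient estimate for harmonic functions. Your identification of the delicate step---arranging the saddle test function so that the contact is only at $y$ and $\Delta\varphi>0$ there---is accurate, and your construction handles it correctly; the proposal is sound.
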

Together with the harmonic lifts this allows to show that minimal supersolutions and maximal subsolutions of \eref{unitscale} are both Lipschitz with universal constant.

The nondegeneracy, it turns out, requires more information than just the subsolution property.  As far as we are aware, nondegeneracy is known to hold for minimal supersolutions, energy minimizers, a-priori outer-regular free boundaries, and, in $d=2$, for maximal subsolutions.
\begin{lemma}[Non-degeneracy]\label{l.nondegen}
Take one of the following assumptions:
\begin{enumerate}[label=(\roman*)]
\item\label{part.nondegen1} $u$ is a minimal supersolution in $B_{1}$.
\item\label{part.nondegen4} $d=2$ and $u$ is a maximal subsolution in $B_1$.
\item\label{part.nondegen2} $u$ is an energy minimizer in $B_1$ in the sense that, for any $v \in H^1(B_1)$ with $v \geq 0$ and $u-v$ compactly supported in $B_1$,
\[ E(u ,B_1) \leq E(v,B_1).\]
\item\label{part.nondegen3} $u$ solves $|\grad u| \geq \lambda$ on $\partial \{u>0\}$, in the viscosity sense and the positivity set $\{u>0\}$ has an exterior ball at $x \in \partial \{u>0\}$ with radius $1$.
\end{enumerate}
For any $x \in \partial \{u>0\} \cap B_{1/2}$, or the specific $x\in \partial \{u>0\}$ from \partref{nondegen3}, and $r \leq 1/2$
\[  \sup_{B_r(x)} u \geq c(d,\lambda) r.\]
In case \partref{nondegen1} and \partref{nondegen2}, for any $ x \in B_{1/2}$,
\[ u(x) \geq c(d,\lambda) d(x,\partial \{u>0\}).\]
\end{lemma}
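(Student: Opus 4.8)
The plan is to establish the ball estimate $\sup_{B_r(x)} u \ge c r$ under each of the four hypotheses, and in cases \partref{nondegen1} and \partref{nondegen2} to upgrade it to the interior bound at the end. For energy minimizers, case \partref{nondegen2}, this is the classical Alt--Caffarelli nondegeneracy and I would invoke \cite[Chapter 2]{CaffarelliSalsa}: at a free boundary point $x_0$ with $r \le 1/2$, compare $E(u,B_r(x_0))$ with the energy of the competitor equal to $u$ off $B_r(x_0)$ and, inside, to the harmonic function vanishing on $B_{r/2}(x_0)$ with the boundary values of $u$ on $\partial B_r(x_0)$; the bulk term drops by a definite amount $\gtrsim r^d$ while the Dirichlet term increases only controllably (of order $r^{d-2}$ times the square of the spherical average of $u$ on $\partial B_r(x_0)$), so minimality forces that average to be $\ge c r$, whence the claim after a standard iteration. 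For case \partref{nondegen3} I would use a barrier argument: let $B_1(p)\subset\{u=0\}$ touch $\partial\{u>0\}$ at $x_0$ with inward normal $\nu$, so near $x_0$ the set $\overline{\{u>0\}}$ lies outside $B_1(p)$ and has the half-space $\{(x-x_0)\cdot\nu\ge0\}$ as tangent cone. The condition $|\grad u|\ge\lambda$ forces linear growth $\limsup_{x\to x_0}u(x)/|x-x_0|\ge\lambda$: were $u$ sub-linear at $x_0$, one could build a smooth $\varphi$ with $\varphi\ge u$ on $\overline{\{u>0\}}$ near $x_0$, $\varphi(x_0)=0$, $|\grad\varphi(x_0)|$ arbitrarily small and $\Delta\varphi(x_0)<0$ (the half-space tangent cone and the boundary regularity of $u$ near the exterior ball being used to keep $\varphi\ge u$), contradicting the viscosity subsolution condition. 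To turn this into $\sup_{B_{3r}(x_0)}u\gtrsim\lambda r$ for a fixed $r\le 1/2$, shrink the exterior ball to $B_r(x_0-r\nu)$, compare $u$ on the annulus $B_{2r}(x_0-r\nu)\setminus\overline{B_r(x_0-r\nu)}$ with the harmonic profile $\psi$ having boundary values $0$ and $M:=\sup_{B_{2r}(x_0-r\nu)}u$, and observe that a smoothing of $\psi$ from above near $x_0$ retaining its outer slope $c_dM/r$ touches $u$ from above there, forcing $c_dM/r\ge\lambda$.

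For minimal supersolutions, case \partref{nondegen1}, I would argue by contradiction through a localization of the minimality. Suppose $x_0 \in \partial\{u>0\}\cap B_{1/2}$ and $\sup_{B_r(x_0)} u \le \epsilon r$ for some $r \le 1/2$, with $\epsilon$ to be chosen small. Let $v$ be the harmonic function on the annulus $B_r(x_0)\setminus\overline{B_{r/4}(x_0)}$ with $v \equiv 2\epsilon r$ on $\partial B_r(x_0)$ and $v\equiv 0$ on $\partial B_{r/4}(x_0)$, extended by $0$ on $B_{r/4}(x_0)$. The gradient of this annular profile satisfies $|\grad v| \le C\epsilon$ on $\partial B_{r/4}(x_0)$, so for $\epsilon < (\min Q)/C$ the function $v$ is a supersolution on $B_r(x_0)$: it is harmonic and positive on the open annulus, vanishes on $B_{r/4}(x_0)$, and the free boundary inequality $|\grad v|\le Q$ holds on the one-sided regular sphere $\partial B_{r/4}(x_0)$. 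Since $u \le \epsilon r < 2\epsilon r = v$ on $\partial B_r(x_0)$, the function $w := \min\{u,v\}$ on $\overline{B_r(x_0)}$ is again a supersolution (the minimum of two supersolutions), equals $u$ near $\partial B_r(x_0)$, and vanishes on $B_{r/4}(x_0)$, where $u$ is positive at some point because $x_0\in\partial\{u>0\}$. Thus $w\ge u$ on $\partial B_r(x_0)$ but $w<u$ somewhere in $B_r(x_0)$, contradicting that $u$ is a minimal supersolution; hence $\sup_{B_r(x_0)}u \ge \epsilon r$.

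The step I expect to be the main obstacle is case \partref{nondegen4}, maximal subsolutions in $d=2$. The difficulty is intrinsic: the subsolution gradient condition $|\grad u|\ge Q$ has content at a free boundary point only when $\{u>0\}$ has an interior tangent ball there, and a general subsolution can a priori decay faster than linearly at other free boundary points, so nondegeneracy must be extracted from the maximality together with the two-dimensional geometry. My plan: argue by contradiction from $\sup_{B_r(x_0)}u\le\epsilon r$; since a harmonic function is trivially a subsolution where it is positive, a harmonic-replacement competitor together with maximality shows that wherever $u$ stays strictly below the ambient obstacle it agrees with a harmonic function on a full ball, so the genuine free boundary in $B_r(x_0)$ consists of contact points with the obstacle and is, in $d=2$, a closed subset of a one-dimensional set. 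A covering argument along it should then produce a nearby free boundary point at which $\{u=0\}$ contains a ball of radius comparable to $r$, reducing matters to case \partref{nondegen3} at a comparable scale. Making this reduction quantitative --- in particular controlling, in terms of $r$, the size of the interior ball one finds --- is the crux, and is where the hypothesis $d=2$ is genuinely used.

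Finally, the interior bound in cases \partref{nondegen1} and \partref{nondegen2}: given $x\in B_{1/2}$, let $\delta=d(x,\partial\{u>0\})$ and pick $x_*\in\partial\{u>0\}$ with $|x-x_*|=\delta$; the ball estimate at $x_*$ yields $y\in B_{\delta/2}(x_*)$ with $u(y)\ge c\delta$. For minimal supersolutions and for minimizers $\{u>0\}$ has the interior corkscrew (porosity) property at free boundary points, so $x$ and $y$ can be joined by a Harnack chain of bounded length inside $\{u>0\}$; since $u$ is harmonic there, interior Harnack gives $u(x)\ge c'\delta$.
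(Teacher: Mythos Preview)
The paper does not actually prove this lemma; it simply cites Alt--Caffarelli and Caffarelli--Salsa for cases \partref{nondegen1} and \partref{nondegen2}, calls \partref{nondegen3} a ``straightforward barrier argument'', and refers to Orcan-Ekmekci~\cite{Betul} for \partref{nondegen4}. So there is no proof in the paper to compare against, only references.

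Your sketches for \partref{nondegen1}, \partref{nondegen2}, and \partref{nondegen3} are essentially the standard arguments behind those citations and are fine. One simplification for the interior bound: rather than going through a Harnack chain (which would require you to verify a Harnack chain condition for $\{u>0\}$, not obvious for minimal supersolutions), note that your annular-barrier argument in \partref{nondegen1}, and the competitor argument in \partref{nondegen2}, apply verbatim at any point $x_0\in\{u>0\}$, not only at free boundary points: the contradiction still comes from $u>0$ on $B_{r/4}(x_0)$. So you get $\sup_{B_{\delta/2}(x)}u\ge c\delta$ with $\delta=d(x,\partial\{u>0\})$, and a single application of Harnack in $B_\delta(x)$ gives $u(x)\ge c'\delta$.

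Your plan for \partref{nondegen4} is not a proof and has a conceptual slip: there is no ``ambient obstacle'' in the definition of maximal subsolution used here (see the paper's Definition just before this lemma), so the sentence about the free boundary consisting of contact points with an obstacle does not apply. The actual argument in \cite{Betul} is genuinely two-dimensional and does not proceed by reducing to an exterior-ball point as you suggest; rather it exploits, among other things, that in the plane a positive harmonic function vanishing on a boundary arc has a sharp lower bound governed by harmonic measure, and that the maximality forces enough boundary regularity to run this. Your covering idea does not, as stated, produce a quantitative exterior ball, and this is exactly the step you flagged as ``the crux''. If you want to fill in \partref{nondegen4} you should consult \cite{Betul} directly rather than try to force a reduction to \partref{nondegen3}.
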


Parts \partref{nondegen1} and \partref{nondegen2} can be found in Alt-Caffarelli~\cite{AltCaffarelli}, or the book Caffarelli-Salsa~\cite{CaffarelliSalsa}.  Part~\partref{nondegen3} is a straightforward barrier argument.  See Orcan-Ekmekci~\cite{Betul} for nondegeneracy of largest subsolution in $d=2$.  Since the nondegeneracy of the maximal subsolution is not always a given, we will say that a maximal subsolution $u$ is nondegenerate in a domain $U$ if the estimate of \lref{nondegen} holds with a universal constant for every $x \in \partial \{u>0\} \cap U$ and ball $B_r(x) \subset U$.

Note we can get nondegeneracy at another scale $r$ by applying \lref{nondegen} to 
\[ v(x)= \frac{u\left(rx\right)}{r}, \]
since the solution property / minimization property and the nondegeneracy estimate are invariant under this rescaling.

\subsection{Energy minimizers}\label{s.energysetup} In this section we discuss the existence of local minimizers for the Alt-Caffarelli energy \eref{ACenergy} $E(\cdot,U)$.  Here $U$ could be an outer regular domain of $\R^d$ such that $\partial U$ is compact, a half-space, or all of $\R^d$. It is natural to consider direct minimization of $E$ over subsets of $H^1(U)$ when $\partial U$ is compact.  

Note that the meaning of local minimizer or critical point needs to be made precise, the functional $E$ is not differentiable on the natural space $H^1(U)$ where it is defined.  We say that $u$ is a local energy minimizer for $E(\cdot,U)$ if there exists $\delta>0$ such that, for any $v \in H^1(U)$, $v \geq 0$, with
\[ \sup_U|v-u| \leq \delta \ \hbox{ and } \ d_H(\overline{\{v>0\}} \cap U,\overline{\{u>0\}} \cap U) \leq \delta \]
it holds
\[ E(u,U) \leq E(v,U).\]
This is a slightly different notion of local minimizer than the one appearing in Alt-Caffarelli~\cite{AltCaffarelli}.

We say that $u$ is an absolute minimizer if for any precompact subdomain $D\subset U$ and any $v \in H^1(D)$, $v\geq 0$ and $v-u$ compactly supported in $D$
\[ E(u,D) \leq E(v,D).\]
The concept of absolute minimizer replaces the notion of global energy minimizer when the total energy is not finite.

In order to find local minimizers we will often look at admissibility conditions of the following type.  Suppose that $g : \overline{U} \to \R$ is Lipschitz continuous and $\underline{v}\prec \overline{v}$ Lipschitz continuous functions in $\overline{U}$ with $\underline{v} \prec g \prec \overline{v}$ on $\partial U$.  Consider the class
\[ \mathcal{A} = \{ v \in H^1_g(U): \underline{v} \leq v \leq \overline{v}  \} \]
where $H^1_g(U) = \{ v \in H^1(U): v - g\in H^1_0(U)\}$. Existence of a global minimizer of $E(\cdot,U)$ in the class $\mathcal{A}$ is straightforward by the direct method, the issue is that the constrained minimizer may touch one of the barriers in its positivity set and, therefore, not be a local minimizer.
\begin{lemma}\label{l.localmin}
Suppose that $\underline{v}\prec\overline{v}$ in $\overline{U}$ and $\underline{v}$ and $\overline{v}$ are, respectively, a nondegenerate, inner regular, $R$-subsolution and an outer regular $R$-supersolution of \eref{unitscale} in $U$.  Then there exist minimizers for $E(\cdot,U)$ on $\mathcal{A}$, and any such minimizer $u$ is a viscosity solution of \eref{unitscale} and satisfies
\[\underline{v} \prec u \prec \overline{v} \ \hbox{ in } \ \overline{U}. \]
\end{lemma}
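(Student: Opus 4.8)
The plan is to first produce a minimizer of $E(\cdot,U)$ over $\mathcal A$ by the direct method, and then to show that any such minimizer $u$ separates strictly from the two barriers; once that is done the constraint is inactive near $\overline{\{u>0\}}$, so $u$ is a genuine local minimizer and the free boundary regularity theory identifies it as a viscosity solution of \eref{unitscale}. For existence: $\mathcal A$ is nonempty (it contains, e.g., $\max\{\underline v,\min\{h,\overline v\}\}$ with $h$ the harmonic function on $U$ with boundary values $g$) and is closed under weak $H^1$ limits along bounded sequences. Along a minimizing sequence $v_k$ I would extract $v_k \rightharpoonup u$ in $H^1(U)$ with $v_k \to u$ in $L^2_{loc}$ and a.e.; the Dirichlet part of $E$ is weakly lower semicontinuous, and since $\mathbf 1_{\{u>0\}}\le \liminf_k \mathbf 1_{\{v_k>0\}}$ a.e., Fatou handles the bulk term, so $E(u,U)\le \liminf_k E(v_k,U)$; the constraints $\underline v\le v_k\le\overline v$ and $v_k-g\in H^1_0(U)$ survive the limit, so $u\in\mathcal A$ is a minimizer.

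Next I would show $u$ is harmonic in $\{u>0\}$. For any ball $B\Subset\{u>0\}$, replace $u$ in $B$ by the harmonic function $w$ with the same boundary values on $\partial B$. Since $\underline v$ is subharmonic in its positivity set with $\underline v\le u=w$ on $\partial B$, and $\overline v$ is superharmonic there with $\overline v\ge u=w$ on $\partial B$, comparison gives $\underline v\le w\le\overline v$ in $B$ (using $w\ge \min_{\partial B}u>0$ in $B$). Hence $w\in\mathcal A$, and $\{w>0\}\cap B=B=\{u>0\}\cap B$, so $E(w,U)\le E(u,U)$ with equality only if $w\equiv u$; minimality forces $w=u$. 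In particular $u$ inherits nondegeneracy from $\underline v$ via \lref{nondegen}: where $u>\underline v$ it is a genuine local minimizer and where $u=\underline v$ the estimate for $\underline v$ applies.

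The crux is the separation $\underline v\prec u\prec\overline v$ in $\overline U$; I discuss the upper barrier, the lower one being symmetric. We have $u\le\overline v$ in $\overline U$ and $u=g\prec\overline v$ on $\partial U$. Suppose $u$ touches $\overline v$ from below at some $x_0\in\overline{\{u>0\}}\cap U$. If $x_0$ is a free boundary point of $u$, then $\overline v(x_0)=0$, so $x_0$ is a common free boundary point; outer regularity of $\{\overline v>0\}\supseteq\{u>0\}$ makes it an outer‑regular free boundary point of $u$, and — passing to $u^\delta$ and $\overline v_\delta$ to realize the touching at a regular point while keeping strict ordering on $\partial U$, then letting $\delta\to0$ — \lref{strictcomparison}/\lref{Rcomparison} (using that an energy minimizer is an $R$‑subsolution) exclude this. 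If instead $x_0\in\{u>0\}$, then $\overline v(x_0)=u(x_0)>0$, so $u$ is harmonic and $\overline v$ superharmonic near $x_0$, and $\overline v-u\ge0$ vanishes at the interior point $x_0$; the strong minimum principle forces $u\equiv\overline v$ on the connected component $\Omega_0$ of $\{u>0\}$ containing $x_0$. Then $\overline v=u=0$ on $\partial\Omega_0\cap U$ (and $\overline{\Omega_0}$ can meet $\partial U$ only where $g=\overline v=0$), so $\overline v$ is harmonic in $\Omega_0$ and vanishes on $\partial\Omega_0\cap U$. Applying the asymptotic expansion \lref{asymptotics} at the outer‑regular free boundary points of $\Omega_0$, together with the supersolution condition for $\overline v$ on one side and the minimizer's free boundary relation $|\grad u|=Q$ on the other (valid since $\underline v\prec u$ strictly on $\Omega_0$, so the constraint is inactive there and $u|_{\Omega_0}$ is a genuine local minimizer), forces $\overline v$ to solve \eref{unitscale} exactly along $\partial\Omega_0\cap U$ — which contradicts that $\overline v$ is a strict $R$‑supersolution in our applications, and in any case shows $u$ solves \eref{unitscale} on $\Omega_0$. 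The inequality $\underline v\prec u$ is obtained the same way, now using that $\underline v$ is nondegenerate and inner‑regular: touching forces $u\equiv\underline v$ on a component of $\{\underline v>0\}$, and the inner‑regular linear growth supplied by the $R$‑subsolution property of $\underline v$, matched against the Lipschitz bound (\lref{properties}) and free boundary behavior of the minimizer, yields the contradiction.

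Finally, once $\underline v\prec u\prec\overline v$ holds in $\overline U$ the barrier constraints are inactive in a neighborhood of $\overline{\{u>0\}}$, so $u$ is a local minimizer of $E(\cdot,U)$ in the sense of \sref{energysetup}; by the Alt--Caffarelli theory (harmonicity in $\{u>0\}$ already shown, Lipschitz continuity \lref{properties}, nondegeneracy \lref{nondegen}, and the domain‑variation computation of the free boundary slope) $u$ is a viscosity solution of \eref{unitscale}. I expect the main obstacle to be the separation step, and within it the delicate point of excluding that $u$ coincides with a barrier on a whole connected component of $\{u>0\}$ — this is exactly where the $R$‑sub/supersolution hypotheses, the strict ordering $\underline v\prec\overline v$, and the nondegeneracy and inner‑regularity of $\underline v$ are all used, and where the $\sup$/$\inf$‑convolution trick for reducing to regular free boundary points (and hence to \lref{Rcomparison}) needs to be carried out carefully so as not to destroy the strict inequalities on $\partial U$.
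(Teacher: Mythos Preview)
Your existence argument and the harmonicity in $\{u>0\}$ are fine, and your instinct that the separation step is the crux is correct. But the separation argument, as written, is circular. At a point where $u$ touches $\overline v$ from below, the upper constraint $u\le\overline v$ is \emph{active}; the only energy perturbations of $u$ that stay in $\mathcal A$ there are \emph{downward} ones. Downward perturbations produce the supersolution direction of the free boundary condition, not the subsolution direction. So you are not entitled to assert that the constrained minimizer is an $R$-subsolution (your Case~1), nor that $|\grad u|=Q$ on $\partial\Omega_0$ (your Case~2, where you say ``the constraint is inactive'' --- only the \emph{lower} constraint is inactive when $u\equiv\overline v$ on $\Omega_0$). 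Both appeals presuppose exactly the two-sided viscosity solution property you are trying to prove, and the fallback ``this contradicts that $\overline v$ is a strict $R$-supersolution in our applications'' is not part of the hypothesis.

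The paper's route uses only what is genuinely available. At a touching free boundary point $x_0$ (outer regular for $\{u>0\}$ because $\{\overline v>0\}$ is), nondegeneracy and \lref{asymptotics} give a linear blow-up $u_0(x)=\alpha[(x-x_0)\cdot n]_+$; the $R$-supersolution expansion of $\overline v$ forces $\alpha\le\beta$ with $\beta$ controlled by $Q(x_0)$. The key step is that the \emph{one-sided} (downward-only) minimization property of $u$ passes to the blow-up, so $u_0$ is a one-sided minimizer of the homogeneous Alt--Caffarelli energy $E_0$. One then compares with an \emph{unconstrained} global minimizer $v_B$ in a ball: after a $\min/\max$ trick one may take $v_B\ge u_0$, and then sliding $u_0$ in the $-n$ direction until it touches $v_B$ from above invokes the subsolution condition for the genuinely unconstrained $v_B$ to get $Q(x_0)\le\alpha$, the desired contradiction. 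The paper also checks, before blowing up, that the Alt--Caffarelli Lipschitz and nondegeneracy arguments survive in the constrained setting (the harmonic lift used for Lipschitz must respect the upper barrier, and nondegeneracy is inherited from $\underline v$ where downward perturbations are blocked); you gestured at this but it needs the same care.
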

Note that $u$ constructed in \lref{localmin} is a local minimizer in the previous sense.  The proof is following Alt-Caffarelli~\cite{AltCaffarelli} and Caffarelli~\cite[theorem 4]{CaffarelliFlatLipschitz}, which do not deal with constrained minimization of the type we consider so we need some additional arguments.
\begin{proof}

The existence of a minimizer $u$ for $E(\cdot,U)$ over $\mathcal{A}$ is standard by the direct method.
  
We check the Lipschitz continuity and nondegeneracy of $u$, we just sketch the proofs which are from \cite{AltCaffarelli} to point out where the obstacles come in.  The key point for the Lipschitz estimate is the following: there is a universal constant $C$ such that for any $B_r(x) \subset U$
\[  \hbox{ if } \ \frac{1}{r}\fint_{\partial B_r(x)} u \geq C \ \hbox{ then } \ u > 0 \ \hbox{ in } \ B_r(x). \]
The proof of the estimate requires perturbing $u$ by replacing with the harmonic lift $\tilde{u}$ in $B_r(x)$. By maximum principle this replacement preserves the ordering $\tilde{u} \leq \overline{v}$ as long as $B_r(x) \subset \{\overline{v}>0\}$.  This is \cite[lemma 3.2]{AltCaffarelli}, to prove the Lipschitz estimate, as in \cite[corollary 3.3]{AltCaffarelli} the estimate only needs to be applied in balls $B_{r+\ep}(x)$ with $B_r(x) \subset \{u>0\}$ and $\ep>0$ sufficiently small.  Thus there is a potential issue only when $B_r(x)$ touches $\{u>0\}$ from the inside at a point of $\partial \{u>0\} \cap \partial \{\overline{v}>0\}$.  However in this case we use the Lipschitz estimate of $\overline{v}$, following from the supersolution property \lref{properties}, and $x \in \partial \{\overline{v}>0\}$ so that $\overline{v}(x) \leq Cr$ in $B_r(x)$ and
\[ |\grad u(x)| \leq \frac{1}{r} \fint_{\partial B_r(x)} u \leq \frac{1}{r} \fint_{\partial B_r(x)} \overline{v} \leq C. \]
Next we check the nondegeneracy of $u$, the argument has a similar flavor.  Let $x \in \partial \{u>0\}$ and $r>0$, if $B_{r/2}(x) \cap \partial \{\underline{v}>0\}$ is nonempty then use the nondegeneracy of $\underline{v}$, otherwise $B_{r/2}(x) \subset \{\underline{v} = 0\}$ and so arbitrary downward perturbations of $u$ (of course preserving nonnegativity) are allowed and the nondegeneracy argument of \cite[lemma 3.4]{AltCaffarelli} applies.

We show that $u$ cannot touch $\underline{v}$ from above in $\overline{\{\underline{v}>0\}}$, and $\overline{v}$ cannot touch $u$ from above in $\overline{\{u>0\}}$. Then the argument of \cite[theorem 4]{CaffarelliFlatLipschitz} carries over and the viscosity solution condition holds for $u$.    

  Suppose $x_0 \in \partial \{u>0\} \cap \partial \{\overline{v}>0\}$, the other case is similar.  Since $\{\overline{v}>0\}$ is outer regular, $x_0$ is an outer regular point for $\{u>0\}$, thus $u$ has the asymptotic expansion at $x_0$, for some $n \in S^{d-1}$ and $\alpha>0$,
\[ u(x) \leq \alpha [(x-x_0) \cdot n]_+ +o(|x-x_0|) \ \hbox{ as } \ x \to x_0\]
with equality in any non-tangential region, this is by \lref{asymptotics}.  Note that $\alpha>0$ because of the nondegeneracy, in the case $u$ touches $\overline{v}$ from below we would instead use the Lipschitz estimate to get the linear blow-up.  In the case of $u$ touching $\underline{v}$ from above we use the assumption that $\underline{v}$ is nondegenerate.  Since $\overline{v}$ is an $R$-supersolution it also has an asymptotic expansion, by \lref{asymptotics}, at $x_0$
\[\overline{v}(x) \leq \beta [(x-x_0) \cdot n]_+ +o(|x-x_0|)  \ \hbox{ with } \  \beta < Q(x_0)\]
by the ordering $u \leq \overline{v}$, $\alpha \leq \beta$, and so $\alpha < Q(x_0)$.

  The proof of \cite[lemma 5.4]{AltCaffarelli} applies just as well in our constrained setting, to show that the blow-up $u_{0}(x) =  \alpha [(x-x_0) \cdot n]_+$ is a one-sided minimizer of $E_{0}$ on $\R^d$ in the sense that for any $\varphi \in H^1_0(B)$ for some ball $B \subset \R^d$ with $\varphi \leq 0$
\[ E_0(u_0,B) \leq E_0(u_0 + \varphi,B) \]
where
\[ E_0(v,U) = \int_{U} |\grad v|^2 + Q(x_0)^2 1_{\{v >0\}} \ dx.\]
We claim this is inconsistent with $\alpha < Q(x_0)$.  The ``correct" proof is by comparing the energy per unit length (of the free boundary) of linear solutions, but we take a shortcut using the known results on uncontrained global minimizers.  Let $v_B$ be a global minimizer in $B$ with data $u_0$ on $\partial B$.  Without loss we can assume $v_B \geq u_0$ in $B$, otherwise $v_B \wedge u_0$ is a valid perturbation of $u_0$ and so $E_0(v_B \wedge u_0,B) \geq E(u_0,B)$. On the other hand
\[ E(v_B \wedge u_0,B) + E(v_B \vee u_0,B) = E(u_0,B) + E(v_B,B),\]
and so $E(v_B \vee u_0,B) \leq E(v_B,B)$ i.e. $v_B \vee u_0$ is also a global minimizer with the same boundary data.  Now, since $v_B \geq u_0$, we translate $u_0$ in the $-n$ direction until it touches $u_0$ from above at some free boundary point, then the subsolution condition for $v_B$ from \cite[theorem 4]{CaffarelliFlatLipschitz} implies
\[ Q(x_0) \leq \alpha.\]
This is a contradiction.

\end{proof}


\section{Plane-like solutions and the pinning interval}\label{s.cell}
The effective stable slopes are determined by a cell problem.  We would like to identify for which values of $p \in \R^d \setminus \{0\}$ there exists a solution of the free boundary problem behaving, at large scales, like $(p \cdot x)_+$.  More precisely we would like to find a global solution of
\begin{equation}\label{e.cell}
\left\{
\begin{array}{ll}
 \Delta u = 0 & \hbox{ in } \ \{u >0\} \vspace{1.5mm}\\
|\grad u| = Q(x) & \hbox{ on } \ \partial \{u >0\} 
\end{array}
\right.
\end{equation}
with, for some universal $C>1$,
\begin{equation}\label{e.cellb}
(p \cdot x-C)_+ \prec u(x) \prec (p \cdot x+C)_+.
\end{equation}
We call such a solution a plane-like solution.

The main goal of this section and the next is the following result on the existence of solutions of \eref{cell}.

\begin{theorem}\label{t.cell}
Let $e \in S^{d-1}$ there exist $Q_*(e) \leq \langle Q^2 \rangle^{1/2} \leq  Q^*(e)$ such that, there exists a global solution of \eref{cell} with slope $p = \alpha e$ if and only if $\alpha \in [Q_*(e),Q^*(e)]$.  Furthermore this solution can be chosen to be a local minimizer of the energy.
\end{theorem}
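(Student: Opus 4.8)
The plan is to set
\[
  A_e \;=\; \{\,\alpha>0 : \ \eref{cell}\ \text{has a solution satisfying \eref{cellb} with } p=\alpha e\,\},
  \qquad Q^*(e)=\sup A_e, \quad Q_*(e)=\inf A_e,
\]
and to show that $A_e$ is a nonempty closed interval containing $\langle Q^2\rangle^{1/2}$, with a local minimizer of \eref{ACenergy} available at each of its points. The last assertion will be automatic from the construction: every solution below is produced as a locally uniform limit of minimizers of $E(\cdot,B_R)$ on an exhausting sequence of balls. Such a minimizer is Lipschitz by \lref{properties}, nondegenerate by \lref{nondegen} (parts \partref{nondegen2}--\partref{nondegen3}) and a viscosity solution of \eref{unitscale} by Alt--Caffarelli theory (cf.\ the argument of \lref{localmin}); a locally uniform limit of such functions is again an absolute minimizer of $E$ on $\R^d$ and still solves \eref{unitscale}, with the positivity sets converging thanks to nondegeneracy and the density estimates.

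First I would establish nonemptiness and locate $\langle Q^2\rangle^{1/2}$. Put $\alpha_0=\langle Q^2\rangle^{1/2}$ and let $u_R$ minimize $E(\cdot,B_R)$ over nonnegative $H^1(B_R)$ functions equal to $(\alpha_0\,e\cdot x)_+$ on $\partial B_R$. The affine profile $(\alpha_0\,e\cdot x)_+$ is a minimizer of the homogenized energy $E_0$ of \eref{gammalimit}, the classical half-plane minimizer for the constant coefficient $\langle Q^2\rangle=\alpha_0^2$. Comparing energies, using the density estimates for minimizers of $E$, and exploiting the periodicity of $Q$, one obtains, uniformly in $R$, that $\partial\{u_R>0\}$ stays within a universal distance of $\{e\cdot x=0\}$ and $\|u_R-(\alpha_0 e\cdot x)_+\|_{L^\infty(B_{R/2})}\le C$. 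Translating $u_R$ so that $0\in\partial\{u_R>0\}$ and passing to a locally uniform limit $u_\infty$ along a subsequence via the Lipschitz bound, $u_\infty$ solves \eref{cell}, satisfies \eref{cellb} with $p=\alpha_0 e$ (nondegeneracy upgrades the one-sided bounds to $\prec$), and is an absolute minimizer. Hence $\alpha_0\in A_e$, so $A_e\neq\emptyset$ and $Q_*(e)\le\langle Q^2\rangle^{1/2}\le Q^*(e)$.

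Next I would prove the interval property. Suppose plane-like local minimizers $u_1,u_2$ exist with slopes $\alpha_1 e$ and $\alpha_2 e$, $\alpha_1<\alpha<\alpha_2$. Translating $u_1$ far in the $+e$ direction and $u_2$ far in the $-e$ direction and using $\alpha_1<\alpha_2$ with \eref{cellb}, arrange $u_1\prec u_2$ with the slab between $\partial\{u_1>0\}$ and $\partial\{u_2>0\}$ containing, over a bounded region, the free boundary of a slope-$\alpha$ affine profile. Replacing $u_1,u_2$ by a small sup- and inf-convolution turns them into a nondegenerate inner-regular $R$-subsolution $\underline v$ and an outer-regular $R$-supersolution $\overline v$, still ordered $\underline v\prec\overline v$; \lref{localmin} then gives, for each $R$, a local minimizer $u_R$ of $E(\cdot,B_R)$ with slope-$\alpha$ boundary data and $\underline v\prec u_R\prec\overline v$ solving \eref{unitscale}. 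The boundary data pins the average slope at $\alpha$, the barriers confine $\partial\{u_R>0\}$ to a fixed slab, and $R\to\infty$ yields a plane-like absolute minimizer of slope exactly $\alpha e$, so $\alpha\in A_e$. For closedness, if $\alpha_k\to\bar\alpha:=\sup A_e$ with plane-like solutions $u_k$ of slope $\alpha_k e$ and a uniform constant $C$ in \eref{cellb}, translate the free boundaries to the origin, extract a locally uniform limit $\bar u$ from the uniform Lipschitz and nondegeneracy bounds, and conclude as above that $\bar u$ solves \eref{cell}, is plane-like of slope $\bar\alpha e$, and is a local minimizer; thus $\bar\alpha\in A_e$, and symmetrically at $\inf A_e$. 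Combining, $A_e=[Q_*(e),Q^*(e)]$.

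I expect the main obstacle to be the no-drift estimate behind the first step (and behind any passage to the limit where plane-like structure must survive): there is no unique corrector, and at slopes in $[\min Q,\max Q]$ the affine profiles are neither sub- nor supersolutions, so $\{u_R>0\}$ cannot simply be trapped between explicit barriers. One must show directly that an energy minimizer close to an affine profile cannot make large excursions of its free boundary away from the reference hyperplane, via a clean-cut argument that pays for an excursion in Dirichlet or bulk energy while aligning the cut with the period of $Q$; this is the delicate and direction-dependent core of the construction. A secondary issue, and the reason the argument is smoothest in low dimension yet still valid for all $d$, is that the comparison lemmas \lref{strictcomparison} and \lref{Rcomparison} require nondegeneracy and one-sided regularity of the free boundary: I would apply them only to energy minimizers and to sup/inf convolutions of such, where \lref{nondegen} and \lref{localmin} supply what is needed, and never to maximal subsolutions, whose nondegeneracy we have only for $d=2$.
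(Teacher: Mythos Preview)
Your logical skeleton---define $A_e$, show it is a nonempty closed interval containing $\langle Q^2\rangle^{1/2}$---is sound and is a genuinely different organization from the paper's. The paper does \emph{not} begin with the energy-minimizing slope; it first constructs $Q^*(e)$ and $Q_*(e)$ directly by running Perron's method on the approximate corrector problem \eref{appcell}, taking the minimal supersolution (resp.\ maximal subsolution) with data $t$ on $\{x\cdot e=0\}$ and studying $r(t)=\inf_{x\in\partial\{u_t>0\}}|x\cdot e|$. The minimality gives the Birkhoff monotonicity \eref{monotonicity} (\lref{monotonicity}), which together with nondegeneracy yields the universal width bound \lref{width}. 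Then approximate subadditivity of $r(t)$ (\lref{subadd}) defines $Q^*(e)=\lim t/r(t)$, and \lref{minimalsuper} fills in all intermediate slopes by Perron between rescaled extremal solutions. Only afterward, in \pref{mean}, does the paper locate $\langle Q^2\rangle^{1/2}$ and construct local minimizers for $\alpha\in(Q_*,Q^*)\cup\{\langle Q^2\rangle^{1/2}\}$.

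The gap in your proposal is the no-drift estimate, which you correctly identify as the crux but do not solve. Your suggested ``clean-cut'' energy argument is not the mechanism the paper uses, and I do not see how to make it work: an energy minimizer on $B_R$ with planar boundary data has no a priori reason to stay flat, and comparison with the affine profile only controls the energy difference, not the geometry. The paper's mechanism is Birkhoff monotonicity, obtained not from an arbitrary minimizer but from the \emph{smallest} minimizer (using that $u\wedge v$ and $u\vee v$ are again minimizers when $u,v$ are, \eref{intersectionunion}), and for rational directions this is done on the periodized strip $U\bmod p^\perp\cap\Z^d$, not on balls. Once you have Birkhoff, the width bound is the argument of \lref{width}. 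Without Birkhoff you also cannot justify the uniform constant $C$ you need in your closedness step.

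Finally, your claim that the endpoint solutions are local minimizers is not supported, and the paper explicitly flags this as open: see the Remark after \pref{mean}. A locally uniform limit of local minimizers need not be a local minimizer (the neighborhood radius may collapse), and an absolute minimizer should exist only at the single slope $\langle Q^2\rangle^{1/2}$ when the pinning interval is nontrivial. The paper's \tref{cell} is proved with local minimizers only on $(Q_*,Q^*)\cup\{\langle Q^2\rangle^{1/2}\}$; the endpoints are reached as viscosity solutions via \lref{minimalsuper}, not as minimizers.
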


The parts of \tref{cell} having to do with energy minimization are handled below in \sref{energy}.

The construction of a maximal slope solution to \eref{cell} was carried out by Caffarelli-Lee~\cite{CaffarelliLee}.  We construct solutions from scratch here, taking a slightly different approach.  The slight differences in our argument are not the main point of the presentation.  We will need to see the intermediate stages of the construction in order to study the qualitative properties of $Q^*,Q_*$ in more detail, which is one of the aims of this article.  

The main issue is in proving that the free boundary for solutions of an approximate corrector problem stays within a band of finite width.  Here we follow the idea of \cite{CaffarelliLee}, which itself followed an idea of Caffarelli and de la Llave~\cite{CaffarellidelaLlave}.

We start with an approximate corrector problem, for $t>1$ define $u_t$ to be the minimal supersolution of
\begin{equation}\label{e.appcell}
\left\{
\begin{array}{ll}
\Delta u_t = 0 & \hbox{ in } \ \{u_t >0\} \cap \{ x\cdot p <0\} \vspace{1.5mm}\\
|\grad u_t| = Q(x) & \hbox{ on } \ \partial \{u_t >0\} \cap \{ x\cdot p <0\} \vspace{1.5mm}\\
u_t(x) = t & \hbox{ on } \ x \cdot p = 0 \ \
\end{array}
\right.
\end{equation}
We define the minimal distance from $x \cdot p = 0$ to the free boundary of $u_t$ and the approximate slope of $u_t$
\[ r(t) = \inf_{x \in \partial \{u_t >0\}} |x \cdot p| \ \hbox{ and } \  \alpha(t) = t/r(t).\]
We first show a universal bound on the oscillation of the free boundary in the direction $p$.  Then we use this to show that the quantity $r(t)$ is approximately subadditive, this allows us to conclude that the sequence of slopes $\alpha(t)$ has a limit as $t \to \infty$.  Symmetrical arguments will work for the maximal subsolution of \eref{appcell}, in that case we would find an approximately superadditive quantity.  There is a small subtlety here due to the different nondegeneracy results (\lref{nondegen}) available in the minimal supersolution / maximal subsolution case, see below for more details.

First we establish the so-called Birkhoff property which takes advantage of the periodicity and the minimal super-solution / maximal subsolution property to get monotonicity with respect to lattice translations.  The Birkhoff monotonicity property in direction $p$, for a function $v$ on $\R^d$, is
\begin{equation}\label{e.monotonicity}
 v(x + k) \leq v(x) \ \hbox{ if $k \in \Z^d$ with } \ k \cdot p \leq 0.
 \end{equation}
 Although $u_t$ is not defined on $\R^d$ we can extend to $\R^d$ by defining $u_t(x) = t$ for $x \cdot p \geq 0$.
\begin{lemma}\label{l.monotonicity}
The solution $u_t$ of \eref{appcell} satisfies the Birkhoff property \eref{monotonicity}.
\end{lemma}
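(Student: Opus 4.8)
The plan is to exploit the fact that $u_t$ is the \emph{minimal} supersolution of \eref{appcell}, together with the $\Z^d$-periodicity of $Q$, via a comparison argument against translates of $u_t$. Fix $k \in \Z^d$ with $k \cdot p \leq 0$ and set $v(x) = u_t(x+k)$. I would first check that $v$ is a supersolution of the same free boundary equation \eref{appcell} but on the translated half-space $\{(x+k)\cdot p < 0\} = \{x \cdot p < -k\cdot p\} \supseteq \{x\cdot p < 0\}$ (using $k \cdot p \leq 0$), with $Q(x+k) = Q(x)$ by periodicity; in particular $v$ is a supersolution of \eref{appcell} on the smaller half-space $\{x\cdot p<0\}$. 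Next I would verify the boundary ordering: on $\{x\cdot p = 0\}$ we have $(x+k)\cdot p = k\cdot p \leq 0$, so $v(x) = u_t(x+k)$, and since $u_t$ was extended by the constant $t$ on $\{x\cdot p\geq 0\}$ and $u_t \leq t$ everywhere (the constant $t$ is itself a supersolution lying above, so by minimality $u_t \le t$), we need $v \geq u_t = t$ on $\{x\cdot p = 0\}$. This last point requires knowing $u_t(y) = t$ for $y$ with $y\cdot p \le 0$ sufficiently close to the hyperplane, or more robustly, that $v$ restricted to the closed half-space $\{x\cdot p \le 0\}$ dominates the boundary data $t$ on $\{x\cdot p=0\}$ — which holds because $u_t(x+k)$ for $x\cdot p = 0$ equals either $t$ (if $(x+k)\cdot p \ge 0$) or a value of the solution on the half-space, and one compares using that $u_t \equiv t$ in a neighborhood of the hyperplane below it is \emph{not} automatic, so I would instead argue as follows.

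The cleaner route: compare $v = u_t(\cdot + k)$ with $u_t$ directly on the domain $D = \{x \cdot p < 0\}$. On $\partial D = \{x\cdot p = 0\}$, $u_t = t$; and $v = u_t(\cdot+k) \le t$ since $u_t \le t$ globally. That is the \emph{wrong} direction. So the correct comparison is the reverse: I should show $u_t(x) \le u_t(x+k)$ fails to follow from minimal-supersolution comparison in this naive way, and instead use that $u_t$ extended by $t$ on $\{x\cdot p \ge 0\}$ is globally a supersolution, and that $w(x) := \min\{u_t(x), u_t(x+k)\}$ — or rather I consider $\tilde w(x) = u_t(x+k)$ as a \emph{competitor from below}. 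The standard argument (following Caffarelli--de la Llave and Caffarelli--Lee) is: since $k\cdot p\le 0$, the translated function $u_t(\cdot+k)$, when cut off appropriately, is a supersolution of \eref{appcell} in $D$ lying above the boundary data $t$ on a \emph{sub}domain, and $\min\{u_t, u_t(\cdot+k)\}$ is again a supersolution of \eref{appcell} on $D$ with the same boundary data $t$ on $\{x\cdot p = 0\}$ (here one uses $u_t(x+k) \to$ value $\ge$ something, but crucially that on $\{x\cdot p=0\}$ the translate $u_t(x+k)$ has $(x+k)\cdot p \le 0$ so it is the genuine solution value, which by the maximum principle / nondegeneracy on the half-space is $\le t$, hence the min is $\le t$; one then needs the min to still attain $t$ on the boundary). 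I would resolve this by noting $u_t$ restricted to $\{x \cdot p \le 0\}$ satisfies $u_t = t$ on $\{x\cdot p = 0\}$ and $u_t < t$ in the open half-space (strong maximum principle on the harmonic part), and that $u_t(x+k) \geq $ the minimal supersolution of \eref{appcell} which is $u_t$ itself — because $u_t(\cdot+k)$ solves the same problem on a \emph{larger} half-space with the \emph{same} constant boundary data, hence by the localized minimality (Perron/comparison, \lref{strictcomparison} or the definition of minimal supersolution) restricted to $D$ we get $u_t(\cdot+k) \ge u_t$ on $D$. Extending by the constant $t$ on $\{x\cdot p\ge 0\}$ handles the rest since there $u_t \equiv t \le u_t(x+k)$ whenever $u_t(x+k)=t$, and $u_t(x+k)$ could be $<t$ only if $(x+k)\cdot p<0$, i.e. $x\cdot p < -k\cdot p$, possibly with $x\cdot p\ge 0$; on that sliver one uses $u_t(x+k)\ge u_t$ extended.

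Concretely I would organize it as: (1) recall $0 \le u_t \le t$ everywhere and $u_t = t$ on and above the hyperplane $\{x\cdot p = 0\}$ in the extended sense; (2) fix $k\in\Z^d$, $k\cdot p\le 0$, and let $D' = \{x\cdot p < -k\cdot p\}\supseteq\{x\cdot p<0\}=:D$; observe $x\mapsto u_t(x+k)$ is, by periodicity of $Q$, a supersolution of the equation in \eref{appcell} on $D'$ with boundary value $t$ on $\partial D' = \{x\cdot p = -k\cdot p\}$, and equals $t$ above $\partial D'$; (3) therefore $u_t(\cdot+k)\big|_{D}$ is a supersolution of \eref{appcell} in $D$ which on $\partial D=\{x\cdot p=0\}$ has value $\ge$ its boundary data — but here $\{x\cdot p = 0\}\subset D'$ so $u_t(x+k)$ on $\{x\cdot p=0\}$ is an interior value of the $D'$-problem; using the comparison principle on $D'$ (the constant $t$ is a supersolution with the same boundary data, so $u_t(\cdot+k)\le t$) AND the nondegeneracy/strong-maximum-principle giving that the only way $u_t(\cdot+k)=t$ on the open set $\{0 \le x\cdot p < -k\cdot p\}$ is if $-k\cdot p = 0$; (4) conclude that $u_t(\cdot+k)$ is a supersolution of \eref{appcell} in $D$ with $u_t(\cdot+k) \ge t$? — \textbf{this is the crux and the main obstacle}: showing the translated solution lies \emph{above} the boundary data $t$ on $\{x\cdot p=0\}$ when $k\cdot p = 0$ (tangential translations) and handling $k\cdot p<0$ separately. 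For $k\cdot p = 0$: $u_t(\cdot+k)$ solves exactly \eref{appcell} (same domain $D$, same data $t$), and by uniqueness of the minimal supersolution it \emph{equals} $u_t$, giving equality in \eref{monotonicity}. For $k\cdot p < 0$: $u_t(\cdot+k)$ is a supersolution of \eref{appcell} in $D$, and on $\partial D$ equals $u_t(x+k)$ with $x\cdot p = 0 < -k\cdot p$, so $(x+k)\cdot p<0$, an interior point of $D'$ where — and here I invoke that $u_t\ge 0$ with $u_t$ monotone \emph{increasing} toward the hyperplane in the $p$-direction is \emph{not} known a priori (that's what we're proving!). So instead: $\min\{u_t, u_t(\cdot+k)\}$ is a supersolution of \eref{appcell} in $D$ (min of two supersolutions of the fully nonlinear FB problem is a supersolution — standard), and on $\partial D = \{x\cdot p = 0\}$ it equals $\min\{t, u_t(x+k)\} = u_t(x+k)$, which in general is $< t$. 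This min does \emph{not} have the right boundary data, so I cannot directly invoke minimality on $D$.

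\emph{Resolution via the extended global problem.} The right statement is: the function $U_t := u_t$ extended by $t$ on $\{x\cdot p\ge 0\}$ is the minimal supersolution of the \emph{global} problem \eref{cell} among supersolutions $\ge t \cdot \mathbf{1}_{\{x\cdot p\ge 0\}}$ (equivalently $\ge (t + \text{anything})$ on the closed upper half-space), this is how $u_t$ should be characterized. Then for $k\cdot p\le 0$, $U_t(\cdot+k)$ is again a supersolution of \eref{cell} (periodicity), and $U_t(x+k) \ge t\mathbf 1_{\{(x+k)\cdot p\ge 0\}} \ge t\mathbf 1_{\{x\cdot p \ge 0\}}$ precisely because $k\cdot p\le 0 \Rightarrow \{x\cdot p\ge 0\}\subseteq\{(x+k)\cdot p\ge 0\}$. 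Hence $U_t(\cdot+k)$ is an admissible competitor in the minimality class defining $U_t$, so $U_t \le U_t(\cdot+k)$, i.e. $u_t(x)\le u_t(x+k)$, which is exactly \eref{monotonicity}. \textbf{So the single key step is to reformulate $u_t$ via this global minimal-supersolution characterization and check that $U_t(\cdot+k)$ is admissible — the inclusion of half-spaces $\{x\cdot p\ge0\}\subseteq\{(x+k)\cdot p\ge0\}$ for $k\cdot p\le0$ is the heart of the matter, and everything else (min/translation of supersolutions is a supersolution, periodicity of $Q$) is routine.} I would write the proof in that order: reformulate, observe the half-space inclusion, apply periodicity, invoke minimality, done.
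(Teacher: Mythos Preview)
Your global reformulation is the right idea, but you have the half-space inclusion backwards, and as a consequence you conclude the wrong inequality. For $k\cdot p \le 0$ and $(x+k)\cdot p \ge 0$ one has $x\cdot p = (x+k)\cdot p - k\cdot p \ge 0$, so the correct inclusion is $\{(x+k)\cdot p\ge 0\}\subseteq\{x\cdot p\ge 0\}$, the reverse of what you wrote. Thus $U_t(\cdot+k)$ is \emph{not} in general admissible for the obstacle $t\,\mathbf 1_{\{x\cdot p\ge 0\}}$, and the inequality you derive, $U_t \le U_t(\cdot+k)$, is the opposite of the Birkhoff property \eref{monotonicity}, which reads $u_t(x+k)\le u_t(x)$ for $k\cdot p\le 0$.

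The fix is immediate: with the inclusion the right way round, $U_t$ itself satisfies $U_t \ge t\,\mathbf 1_{\{x\cdot p\ge 0\}} \ge t\,\mathbf 1_{\{(x+k)\cdot p\ge 0\}}$, so $U_t$ is admissible for the \emph{translated} obstacle problem, whose minimal element is $U_t(\cdot+k)$; minimality then gives $U_t(\cdot+k)\le U_t$, which is \eref{monotonicity}.

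The paper's proof is this same observation phrased directly on the half-space, and is in fact the route you abandoned too early in your step (3). The point you missed there is that the minimal supersolution property is preserved under restriction of the domain: $u_t(\cdot+k)$ is a minimal supersolution on the larger half-space $\{x\cdot p < -k\cdot p\}$, hence also on $\{x\cdot p<0\}$ with its own trace as boundary data. On $\{x\cdot p=0\}$ one has $u_t(x+k)\le t = u_t(x)$ (since $u_t\le t$ globally), so $u_t$ is a supersolution lying above $u_t(\cdot+k)$ on the boundary, and minimality of $u_t(\cdot+k)$ gives $u_t(\cdot+k)\le u_t$. You were trying instead to show $u_t(\cdot+k)\ge t$ on the boundary in order to invoke minimality of $u_t$, which is the wrong way round.
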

\begin{proof}
 Note that $u_t(\cdot + k)$ solves \eref{fb0} in $p \cdot x <0$ with boundary data
\[ u_t(x + k) \leq t \ \hbox{ on } \ x \cdot p = 0 \ \hbox{ since } \ (x+k) \cdot p \leq 0.\]
Since the minimal supersolution property is preserved under restriction of the domain, and $u_t(x) \geq u_t(x+k)$ on $p \cdot x =0$, $u_t(x) \geq u_t(x-k)$ on $p \cdot x <0$.
\end{proof}

Now using also the nondegeneracy, Lipschitz estimates, and periodicity we get an oscillation bound on the free boundary for both minimal supersolutions and maximal subsolutions.  Note that the known nondegeneracy properties are a bit different for minimal supersolutions and maximal subsolutions, we will only use the nondegeneracy at outer regular free boundary points, \lref{nondegen} part \partref{nondegen3}, which only uses the viscosity solution property $|\grad u| \geq \lambda$ on the free boundary.
\begin{lemma}\label{l.width}
There is a universal constant $C$ such that
\[ \osc_{x\in \partial \{u_t >0\}} x \cdot p \leq C.\]
\end{lemma}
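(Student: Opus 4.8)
\emph{Setup.} We may assume $|p|=1$ (the statement for general $p$ follows by scaling). Write $\Omega=\{u_t>0\}$ and
\[ R^+=\sup_{x\in\partial\Omega}x\cdot p,\qquad R^-=\inf_{x\in\partial\Omega}x\cdot p, \]
so that $r(t)=-R^+$ and the claim is $R^+-R^-\le C$. This is the ``bounded band'' estimate of Caffarelli--de~la~Llave and Caffarelli--Lee; the plan is to combine comparison with explicit linear barriers, the Birkhoff property \lref{monotonicity}, the interior Lipschitz estimate \lref{properties}, the nondegeneracy at outer regular free boundary points \lref{nondegen}~\partref{nondegen3}, and an elementary lattice fact. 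Argue by contradiction: assume $D:=R^+-R^->C_0$ for a large universal $C_0$ fixed at the end.

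\emph{Step 1: a priori structure.} The wedge $v_-(x)=(t+(\max Q)\,x\cdot p)_+$ is a subsolution of \eref{unitscale} equal to $t$ on $\{x\cdot p=0\}$, and $v_+(x)=(t+(\min Q)\,x\cdot p)_+$ is a supersolution with the same boundary data; comparison with the minimal supersolution gives $v_-\le u_t\le v_+$, hence
\[ \{x\cdot p>-t/\max Q\}\subseteq\Omega\subseteq\{x\cdot p>-t/\min Q\}, \]
so $R^+\le-t/\max Q<0$, $R^-\ge-t/\min Q$, and $\min Q\le\alpha(t)=t/r(t)\le\max Q$. Since a zero of $u_t$ at a point with $x\cdot p>R^+$ would, moving towards $\{x\cdot p=0\}$, produce a free boundary point above level $R^+$, while $u_t\le v_+$ forces $u_t\equiv0$ far enough below, the intermediate value theorem also gives
\[ \{x\cdot p>R^+\}\subseteq\Omega,\qquad \{x\cdot p\le R^-\}\cap\{x\cdot p<0\}\subseteq\{u_t=0\}. \]
In particular any deepest free boundary point $x_0$ (any $x_0\in\partial\Omega$ with $x_0\cdot p<R^-+1$) carries exterior balls of every radius, so it is outer regular. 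Finally, \lref{properties} near the free boundary together with the interior/boundary gradient estimate for the harmonic, nonnegative function $t-u_t$ near $\{x\cdot p=0\}$ (where $t-u_t\le t-v_-\lesssim\max Q\cdot\dist(\cdot,\{x\cdot p=0\})$) shows $u_t$ is Lipschitz on $\{x\cdot p<0\}$ with a universal constant $L$.

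\emph{Step 2: transporting the defect.} At $x_0$, \lref{nondegen}~\partref{nondegen3}, rescaled to the exterior ball radius $D$ (admissible since $x_0$ lies at depth $r(t)+D\ge D$), produces a point $y$ with $|y-x_0|\le D/2$ and $u_t(y)\ge c_0 D$ for a universal $c_0>0$; thus $y\cdot p\le R^-+D/2=R^+-D/2$. Now use the Birkhoff property \lref{monotonicity}: choose $k\in\Z^d$ with $k\cdot p\le0$ such that $(y-k)\cdot p\in[R^+-C_d,R^+]$ and the $p^\perp$-component of $y-k$ lies within $C_d$ of that of a shallowest free boundary point $x_1$. This is possible because the subgroup $\{k\cdot p:k\in\Z^d\}\subseteq\R$ has gaps $\le1$ (it is $\tfrac1{|\xi|}\Z$ when $p=\xi/|\xi|$ is rational, and dense otherwise), while the $p^\perp$-projection of any slab $\{k\in\Z^d:k\cdot p\in W\}$ with $|W|\gtrsim1$ is $C_d$-dense in $p^\perp$; and then $k\cdot p\le-D/2+C_d<0$ once $C_0>2C_d$. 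Setting $y_1=y-k$, \lref{monotonicity} gives $u_t(y_1)\ge u_t(y)\ge c_0D$, while $y_1\cdot p\le R^+<0$ and $|y_1-x_1|\le C_d'$ for a dimensional $C_d'$.

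\emph{Step 3: conclusion, and the main obstacle.} Since $u_t(x_1)=0$ and $u_t$ is $L$-Lipschitz,
\[ c_0D\le u_t(y_1)=u_t(y_1)-u_t(x_1)\le L\,|y_1-x_1|\le L\,C_d', \]
so $D\le L C_d'/c_0$; choosing $C_0>\max\{2C_d,\,LC_d'/c_0\}$ yields the contradiction and hence $\osc_{x\in\partial\Omega}x\cdot p\le C_0$ with $C_0$ universal. The maximal subsolution of \eref{appcell} is handled identically: it is also a viscosity solution, the linear‑barrier comparison still applies, its deepest free boundary point is again outer regular, and \partref{nondegen3} asks for nothing more. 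The crux is Step~2: one must upgrade a \emph{geometric} largeness of the band into a pointwise \emph{value} of $u_t$ (nondegeneracy at the outer regular deepest point) and then slide that value, using only lattice translations, to sit directly over a zero of $u_t$. Making the Birkhoff translation control the normal coordinate $k\cdot p$ and the tangential coordinate simultaneously, uniformly over all directions $p$ (including nearly rational ones with large denominator), is the delicate point, and is exactly what forces the lattice‑slab density fact; everything else is barrier comparison and the Lipschitz estimate.
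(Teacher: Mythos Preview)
Your proof is correct and uses the same three ingredients (nondegeneracy at an outer-regular deep point, the Lipschitz estimate, Birkhoff monotonicity) but assembles them differently from the paper. The paper works at a \emph{fixed} universal scale: slide in a ball of radius $r\sim\sqrt d/c_0$ to produce an outer-regular touching point $x_1$ near the bottom, apply nondegeneracy and then Lipschitz to find a full ball $B_{\sqrt d}(y_0)\subset\Omega$ nearby, and observe that the translates $B_{\sqrt d}(y_0)+k$ with $k\cdot p\ge0$ cover the entire slab $\{y_0\cdot p\le x\cdot p\le0\}$ by a one-line cube-convexity argument; Birkhoff then forces that slab into $\Omega$, so no free boundary point sits above level $y_0\cdot p+\sqrt d$. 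You instead argue by contradiction at the large scale $D$: nondegeneracy yields a point with value $\ge c_0 D$, which you Birkhoff-translate to within $O(\sqrt d)$ of a shallowest zero, and the Lipschitz bound gives $D\le C$. The paper's covering step is marginally cleaner---it only requires $k\cdot p\ge0$, not simultaneous control of both the normal and tangential components of $k$---while your argument makes the mechanism (a large value forced next to a zero) more explicit. One small inaccuracy to fix: not every $x_0\in\partial\Omega$ with $x_0\cdot p<R^-+1$ admits exterior balls of every radius, since pieces of $\Omega$ at depths between $R^-$ and $x_0\cdot p$ can obstruct large balls; you should either take $x_0$ with $x_0\cdot p=R^-$ (if the infimum is achieved, so the half-space $\{x\cdot p<R^-\}\subset\Omega^c$ supplies the exterior balls), or slide a ball of radius $D$ in the $-p$ direction until it first touches $\overline\Omega$, exactly as the paper does at scale $r$.
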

\begin{proof}
 Let $x_0 \in \partial \{ u_t >0\}$ with $p \cdot x_0 \leq 1+\inf_{x \in \partial \{ u_t>0\}} x \cdot p$.  For any $r>1$ slide the ball $B_r(x_0 + t p)$ in from $t=-\infty$ until it touches $\{u_t>0\}$ from the outside.  The touching point occurs at some $x_1$ with $x_1 \cdot p \leq x_0 \cdot p \leq 1+\inf_{x \in \partial \{ u_t>0\}} x \cdot p$.  By the nondegeneracy \lref{nondegen} part \partref{nondegen3} and Lipschitz estimate \lref{properties}, 
\[ cr \leq \max_{|x-x_1| \leq r/2} u_t =  u_t(y_0) \leq Cd(y_0,\partial \{u_t>0\})\]
so that
\[ B_{c_0r}(y_0) \subset \{ u_t >0\} \cap B_{r}(x_1).\]
Choose $r = c_0^{-1} \sqrt{d}$ so that $c_0 r = \sqrt{d} = \textup{diam}([0,1]^d)$. Now for any $k \in \Z^d$ with $k \cdot p \geq 0$
\[ B_{\sqrt{d}}(y_0) + k \subset \{u_t >0\} + k \subseteq \{u_t >0\}.\]
Now let $x$ with $0 \geq x \cdot p \geq y_0 \cdot p$, $x$ is in some unit cell $\Box = k + y_0 + [0,1]^d$ of the lattice $\Z^d+y_0$.  By convexity of $\Box$ one of the extreme points $(\Z^d+y_0) \cap \Box$ must also lie in $x \cdot p \geq y_0 \cdot p$. Call this point $y_0 + k$ satisfying $(y_0 + k) \cdot p \geq y_0 \cdot p$, i.e. $k \cdot p \geq 0$.  Then
\[ x \in \Box \subset B_{\sqrt{d}}(y_0) + k \subset \{u_t >0\}.\]
Thus 
\[ \{ 0 \geq  x \cdot p \geq \inf_{x \in \partial \{ u_t>0\}} x \cdot p+C\} \subset \{ u_t >0\}.\]
\end{proof}

\begin{lemma}\label{l.normalbd}
For $t>0$ sufficiently large universal and $x$ with $x \cdot p = 0$
\[|\grad u_t(x) - \alpha(t) p|\leq  C/t. \]
\end{lemma}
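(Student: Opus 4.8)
The plan is to exploit the band confinement from \lref{width} together with interior-boundary Harnack estimates for the harmonic function $u_t$ to show that $u_t$ is, on the hyperplane $\{x\cdot p=0\}$, close to the linear profile $\alpha(t)(p\cdot x)_+$ up to a $C/t$ error in the gradient. First I would record what \lref{width} and the definition of $r(t)$ give us: the free boundary $\partial\{u_t>0\}$ lies in the slab $\{-r(t)-C\le x\cdot p\le -r(t)\}$ (after the normalization $|p|=1$; in general one divides by $|p|$), while $u_t=t$ on $\{x\cdot p=0\}$ and $u_t$ is harmonic and positive in the remaining region $\{-r(t)\le x\cdot p\le 0\}\cap\{u_t>0\}$. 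Since $r(t)=t/\alpha(t)$ with $\alpha(t)$ bounded above and below by universal constants (upper bound from the Lipschitz estimate \lref{properties}, lower bound from nondegeneracy, \lref{nondegen}(iv), exactly as in the proof of \lref{width}), the slab where $u_t$ ramps up from $0$ to $t$ has width comparable to $t$, and the free boundary oscillation $C$ is small compared to $t$.

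The main estimate is a Harnack-type comparison. Rescale by $t$: set $v(y)=u_t(ty)/t$, which is harmonic and positive in its positivity set, satisfies $v=1$ on $\{y\cdot p=0\}$, has free boundary in the slab $\{-r(t)/t - C/t\le y\cdot p\le -r(t)/t\}$, i.e.\ within $C/t$ of the hyperplane $\{y\cdot p=-1/\alpha(t)\}$, and by the Lipschitz bound is Lipschitz with universal constant. The function $v$ is then a harmonic function in a region that differs from the slab $S=\{-1/\alpha(t)\le y\cdot p\le 0\}$ only in a $C/t$-neighborhood of the bottom face, with boundary values $1$ on top and $0$ on the (wiggly) bottom. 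Comparing $v$ with the linear functions $(1\mp C/t)(1+\alpha(t) y\cdot p)_+$ — suitably translated in the $\pm p$ direction by $O(C/t)$ to dominate/be dominated by $v$ on the wiggly free boundary and on the top face, using the maximum principle and the universal Lipschitz bound to control $v$ in the thin transition layer — gives
\[
\left| v(y) - \alpha(t)\,(y\cdot p + 1/\alpha(t))_+\right| \le C/t \quad\text{in } S,
\]
hence in particular $v$ is within $C/t$ of its linear interpolant, with matching gradients on $\{y\cdot p=0\}$ up to $C/t$. A subtlety is that near the free boundary the normal of the level surfaces of $v$ need not be exactly $p$; this is why one needs the oscillation bound \lref{width} (not just band confinement) and why one compares against linear barriers tilted/translated only in the $p$ direction, absorbing the transverse wiggle into the $C/t$ error. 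Translating this back to $u_t$ and differentiating the one-sided bounds on the flat face $\{x\cdot p=0\}$ — where $u_t\equiv t$ so tangential derivatives vanish and only the normal derivative is at issue — yields $|\grad u_t(x)-\alpha(t)p|\le C/t$.

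The step I expect to be the main obstacle is making the comparison with linear barriers rigorous near the free boundary: the free boundary is only known to be confined to a band of universal width, with no regularity, so one cannot literally solve a mixed boundary value problem on a nice domain. The clean way around this, which I would use, is to employ the sub/supersolution structure of $u_t$ together with the comparison principles (\lref{strictcomparison}, \lref{Rcomparison}) rather than PDE estimates on the region itself: build explicit linear subsolutions and supersolutions of \eref{appcell} of the form $(1\mp C/t)\bigl(t + \alpha(t)(x\cdot p + s_\pm)\bigr)_+$ with $|s_\pm|\le C$, check that their slopes satisfy the free boundary inequality (here one may need to also perturb the slope by $O(1/t)$, which is exactly the source of the $C/t$ in the conclusion) and that they are correctly ordered with $u_t$ on $\{x\cdot p=0\}$ and at $x\cdot p\to-\infty$, and then invoke comparison to sandwich $u_t$. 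Differentiating the sandwich on $\{x\cdot p=0\}$, where $u_t$ is smooth and equals the top data, finishes the proof. Since the slab width is $\sim t$ and the free boundary oscillation and the barrier shifts $s_\pm$ are $O(1)$, the relative error in slope forced by the shifts is $O(1/t)$, giving the claimed bound with a universal constant.
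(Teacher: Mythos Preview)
Your plan correctly identifies the two ingredients: first sandwich $u_t$ between linear profiles using \lref{width} and the maximum principle, then convert this $L^\infty$ closeness to a gradient estimate on $\{x\cdot p=0\}$. The first ingredient matches the paper exactly; the paper records it as
\[
(\alpha(t)\,x\cdot p+t)\wedge 2t\ \le\ u_t(x)\ \le\ (\alpha(t)\,x\cdot p+t+C)\vee 0,
\]
obtained by comparing the harmonic function $u_t$ against linear barriers on the region between the free boundary and $\{x\cdot p=0\}$. Your detour through free-boundary sub/supersolutions and \lref{strictcomparison}, \lref{Rcomparison} is unnecessary here --- the harmonic maximum principle suffices, since you only need the comparison in $\{u_t>0\}$ where the barrier ordering on $\partial\{u_t>0\}$ is automatic.

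The gap is in the second step. ``Differentiating the sandwich'' does not work as you have written it: your barriers $(1\mp C/t)\bigl(t+\alpha(t)(x\cdot p+s_\pm)\bigr)_+$ do \emph{not} equal $t$ on $\{x\cdot p=0\}$, so they do not touch $u_t$ there, and one-sided difference quotients give no information. What actually converts the $L^\infty$ bound into a gradient bound is that $u_t-t$ vanishes on the flat piece $\{x\cdot p=0\}$, so by \emph{odd reflection} it extends to a harmonic function in a full ball $B_{ct}(x)$ for any $x$ with $x\cdot p=0$. This is the trick the paper uses: after reflection, $\nabla u_t-\alpha(t)p$ is harmonic in $B_{ct}(x)$, and the mean value identity
\[
\bigl(\nabla u_t(x)-\alpha(t)p\bigr)|B_{ct}|=\int_{\partial B_{ct}(x)}\bigl(u_t(y)-\alpha(t)\,y\cdot p-t\bigr)\,n(y)\,dS_y
\]
together with the $O(1)$ sandwich bound gives $|\nabla u_t(x)-\alpha(t)p|\le C/t$ in one line. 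Your argument can be repaired without reflection by choosing barriers of the form $t+\beta_\pm\,x\cdot p$ with $\beta_-=\alpha(t)$ and $\beta_+=t/(r(t)+C)=\alpha(t)-O(1/t)$, which \emph{do} touch $u_t$ on $\{x\cdot p=0\}$; then the one-sided normal derivative comparison is legitimate and, combined with the vanishing of tangential derivatives that you correctly note, yields the bound. But as written the proposal is missing this, and the reflection is the cleaner route.
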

\begin{proof}
From the Lipschitz bound \lref{width} $|\grad u_t| \leq C$.  Extend $u$ by odd reflection about $x \cdot p = 0$ by 
\[ u_t(x) = 2t-u_t(x-2(x \cdot p )p) \ \hbox{ for } x \cdot p <0.\]
From the bound on the width of $\partial \{ u_t>0\}$, \lref{width}, and using maximum principle,
\[ (\alpha(t)x\cdot p + t) \wedge 2t \leq u_t(x) \leq (\alpha(t) x \cdot p +t+C) \vee  0. \]
Now, for any $x \cdot p = 0$, $\grad u_t - \alpha(t)p$ is harmonic in $B_{ct}(x)$ and
\[ \left|\int_{B_{ct}(x)} (\grad u_t - \alpha(t) p) \ dy\right| = \left|\int_{\partial B_{ct}(x)} (u_t - \alpha(t) y \cdot p - t) n(y) \ dS_y\right| \leq Ct^{d-1}.\]
Then by the mean value theorem
\[ |\grad u_t(x) - \alpha(t) p| \leq C/t.\]
\end{proof}

\begin{lemma}\label{l.subadd}
The distance function $r(t)$ is approximately subadditive
\[r(t+s) \leq r(t) + r(s) + C \]
and therefore the limit exists
\[ Q^*(e) = \lim_{t \to \infty} \frac{t}{r(t)}.\]
\end{lemma}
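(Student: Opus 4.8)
The inequality $r(t+s)\le r(t)+r(s)+C$ is an upper bound on how deep the positivity set of $u_{t+s}$ can reach, so the plan is to exhibit a supersolution $v$ of \eref{appcell} on $\{x\cdot p<0\}$ with $v=t+s$ on $\{x\cdot p=0\}$ that already vanishes at depth $r(t)+r(s)+C$, and then to use that $u_{t+s}$ is the \emph{minimal} supersolution with this data. Applying the minimality on truncations $\{-N<x\cdot p<0\}$, on whose far face both functions vanish (both being bounded with free boundaries at finite depth, by the linear barriers in the proof of \lref{normalbd}), gives $u_{t+s}\le v$, so $\{u_{t+s}>0\}$ sits in a slab of depth $r(t)+r(s)+C$; in particular every free boundary point of $u_{t+s}$ has $|x\cdot p|<r(t)+r(s)+C$, hence $r(t+s)<r(t)+r(s)+C$.

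The supersolution $v$ is built by stacking a corrector on top of a lattice-translated corrector. Assume $\alpha(t)\le\alpha(s)$ (otherwise exchange the roles of $t$ and $s$). Choose $k\in\Z^d$ with $k\cdot p=t/\alpha(s)+O(1)$; this is possible because $\{k\cdot p:k\in\Z^d\}$ is dense at irrational directions and a lattice of universal spacing at rational directions, and translating by $k\in\Z^d$ preserves the coefficient $Q$ by periodicity. On $\{-k\cdot p<x\cdot p<0\}$ let $v$ be the affine profile dropping from $t+s$ at $\{x\cdot p=0\}$ to $s$ at $\{x\cdot p=-k\cdot p\}$: this is harmonic, stays $\ge s>0$ (so no free boundary condition is in play), and has inward slope $\approx\alpha(s)$. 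Below $\{x\cdot p=-k\cdot p\}$ set $v=u_s(\,\cdot+k\,)$, which is a supersolution of \eref{appcell}, equals $s$ on $\{x\cdot p=-k\cdot p\}$ (using the extension $u_s\equiv s$ past its hyperplane), and vanishes for $x\cdot p\le -k\cdot p-r(s)-C$ by \lref{width}. Since $\alpha(t)\le\alpha(s)$ we have $k\cdot p\approx t/\alpha(s)\le t/\alpha(t)=r(t)$, so $v$ vanishes at depth $\le r(t)+r(s)+C$, as required. The two pieces agree at $\{x\cdot p=-k\cdot p\}$; their normal slopes there match up to the $O(1/\min(t,s))$ error of \lref{normalbd}, so after inserting a transition of universal width that absorbs this mismatch while remaining superharmonic, $v$ is a genuine supersolution of \eref{appcell}. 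If necessary one first slides $u_s$ laterally, using the Birkhoff property \lref{monotonicity}, to align the pieces.

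Given the subadditivity, the limit follows from Fekete's subadditive lemma: the Lipschitz bound ($|\grad u_t|\le C$ from \lref{properties} and \lref{width}) gives $t=u_t|_{\{x\cdot p=0\}}\le C\,r(t)$, so $r(t)/t$ is bounded below by a positive constant; applying the subadditive lemma to $t\mapsto r(t)+C$ shows $r(t)/t\to\inf_{t>0}(r(t)+C)/t\in(0,\infty)$, whence $Q^*(e):=\lim_{t\to\infty}t/r(t)$ exists.

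The main obstacle is the seam: the naive candidate $u_t+s$ is \emph{not} a supersolution because it is strictly subharmonic along $\partial\{u_t>0\}$, which is why the top piece must be built without a constant shift, and the normal-derivative mismatch at the gluing hyperplane, controlled only by \lref{normalbd}, must be absorbed into the depth budget without upsetting the subadditive accounting. Finally, running the same scheme with the maximal subsolution of \eref{appcell} in place of the minimal supersolution produces an approximately \emph{super}additive $r(t)$ and the limit $Q_*(e)$; there one invokes the nondegeneracy available for maximal subsolutions in $d=2$, \lref{nondegen} part \partref{nondegen4}, or simply the dimension-free nondegeneracy at outer regular free boundary points, \lref{nondegen} part \partref{nondegen3}, exactly as in the proof of \lref{width}.
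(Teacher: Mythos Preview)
Your overall strategy---stack an affine piece on top of a translated corrector and invoke minimality---is exactly the paper's. The difference is in how you close the seam and how you then extract the clean subadditive bound, and there is a genuine gap at that step.

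At the gluing hyperplane you need the piecewise function to be \emph{super}harmonic, which forces the affine slope from above to be \emph{no larger} than the normal derivative of $u_s$ from below. By \lref{normalbd} the latter is only guaranteed to be $\ge \alpha(s)-C/s$, so the affine slope must be at most $\alpha(s)-C/s$; a ``transition of universal width'' cannot repair a seam where the upper slope exceeds the lower one, since a superharmonic insert can only \emph{decrease} the slope going down. Enforcing the correct slope makes the affine layer have depth $t/(\alpha(s)-C/s)$ rather than $t/\alpha(s)$, and the discrepancy is
\[
\frac{t}{\alpha(s)-C/s}-\frac{t}{\alpha(s)}\;\sim\;\frac{Ct}{s},
\]
which is \emph{not} $O(1)$. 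Your WLOG assumption $\alpha(t)\le\alpha(s)$ does yield $t/\alpha(s)\le r(t)$, but it places no control on $t/s$, so the bound you actually obtain is $r(t+s)\le r(t)+r(s)+C(1+t/s)$, not $r(t+s)\le r(t)+r(s)+C$.

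The paper deals with this by \emph{not} taking a WLOG: it records the asymmetric estimate
\[
r(t+s)\;\le\;(1+\tfrac{s}{t})\bigl(r(t)+C\bigr),
\]
swaps the roles of $t$ and $s$ to get the companion bound $(1+\tfrac{t}{s})(r(s)+C)$, and then takes the convex combination with weights $\tfrac{t}{t+s},\tfrac{s}{t+s}$, which collapses exactly to $r(t)+r(s)+2C$. Your construction (with the seam handled correctly by choosing the affine slope $=\alpha(s)-C/s$) produces precisely one of these two asymmetric bounds; you just need the symmetrization step rather than the WLOG. The Fekete argument and the remarks on $Q_*$ are fine.
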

\begin{proof}
We create a supersolution for the problem with data $t+s$.  Call $\overline{\alpha} = \alpha(t) - C_0/t$, for universal $C_0$ as in the statement of \lref{normalbd}. Call $a = -s/\overline{\alpha}$ and define
\[ 
v(x) =\left\{ \begin{array}{ll}
t+s + \overline{\alpha} x \cdot p & \hbox{for } \  0 \geq x\cdot p \geq a \\
u_{t,a}(x) & \hbox{for } \ x\cdot p \leq a.
\end{array}\right.
\]
To see that $v$ is a supersolution of the free boundary problem we just need to check that the interior supersolution condition holds on $x \cdot p = a$ which amounts to requiring the correct ordering of the normal derivatives of the piecewise components,
\[ p \cdot [ \grad u_{t,a} - (\alpha(t)-C_0/t)p] \geq 0, \]
which indeed holds by \lref{normalbd}.  Now since $u_t$ is the minimal supersolution, $u_t \leq v$ and therefore,
\[ r(t+s) \leq r(t) + C + |a| = r(t) + s/(\alpha(t)-C_0/t) + C \leq (1+\frac{s}{t})r(t) + C(1+\frac{s}{t}).  \]
Switching the roles of $t,s$ we find,
\begin{align*} 
r(t+s) & \leq \min\{ (1+\frac{s}{t})(r(t) + C), (1+\frac{t}{s})(r(s)+C)\} \\
& \leq r(t) + r(s) + C
\end{align*}
where in the last step we used $\min\{a,b\} \leq \lambda a + (1-\lambda)b$ in this case with $\lambda = \frac{t}{t+s}$.  

To complete the proof we just note that the approximate sub-additivity we proved is enough to carry out the usual argument for the convergence of subadditive sequences.
\end{proof}

\begin{lemma}\label{l.minimalsuper}
For any $\alpha \in [Q_*,Q^*]$ there exists a solution $v$ of \eref{cell}.  The solution can be chosen to have the Birkhoff monotonicity property \eref{monotonicity}. If the maximal subsolution $u_*$ constructed above has the nondegeneracy property of \lref{nondegen} at every free boundary point, then $v$ can be chosen to have it as well. 
\end{lemma}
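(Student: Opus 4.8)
The plan is first to realise the two extreme slopes as limits of the approximate correctors from \eref{appcell}, and then to obtain every intermediate slope by trapping a solution between a plane-like super- and subsolution of that exact slope.

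\textbf{Extreme slopes.} For $\alpha = Q^*(e)$, translate each $u_t$ from \eref{appcell} so that its free boundary meets a fixed bounded band around $\{x\cdot e = 0\}$; this is possible by the uniform oscillation bound \lref{width}. The translates are uniformly Lipschitz by \lref{properties} and \lref{width} and harmonic in their positivity sets, so a subsequence converges locally uniformly, and by the standard stability of viscosity solutions the limit $u^*$ solves \eref{cell}. Passing to the limit in the two-sided bound established inside the proof of \lref{normalbd}, and using $\alpha(t)\to Q^*(e)$ from \lref{subadd}, gives \eref{cellb} with $p = Q^*(e)\,e$. The Birkhoff property \eref{monotonicity} is closed under local uniform limits, so $u^*$ inherits it from \lref{monotonicity}; and each $u_t$, being a minimal supersolution, is nondegenerate by \lref{nondegen}\partref{nondegen1}, which is a scale-invariant lower bound and hence survives the limit. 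The same argument applied to the maximal subsolutions of \eref{appcell} produces a Birkhoff plane-like solution $u_*$ of slope $Q_*(e)\,e$: here the limit is automatically a subsolution of \eref{cell}, and it is a genuine solution, and nondegenerate, exactly when the maximal subsolutions are uniformly nondegenerate, i.e. always in $d=2$ by \lref{nondegen}\partref{nondegen4} and under the stated hypothesis in general; in any case $u_*$ will serve as a plane-like subsolution below.

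\textbf{Intermediate slopes.} Fix $\alpha\in[Q_*(e),Q^*(e)]$. Let $\overline v_\alpha$ be the minimal supersolution of \eref{cell} on $\R^d$ lying above the affine obstacle $(\alpha e\cdot x)_+$; it is finite because a lattice translate $u^*(\cdot-k)$, with $k\in\Z^d$ chosen so that the free boundary of $u^*$ moves $O(1)$ into $\{x\cdot e<0\}$, is a supersolution dominating the obstacle. By monotonicity of the minimal supersolution in its obstacle together with shift-equivariance (here the $\Z^d$-periodicity of $Q$ is used), $\overline v_\alpha$ has the Birkhoff property \eref{monotonicity}; it agrees with $(\alpha e\cdot x)$ for $x\cdot e$ large; and, being trapped below $u^*(\cdot-k)$ and above the obstacle, its free boundary lies in a band of universal width around $\{x\cdot e=0\}$. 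Dually, let $\underline v_\alpha$ be the maximal subsolution of \eref{cell} lying below $(\alpha e\cdot x)_+$ and above $u_*(\cdot-k')$ for a suitable $k'\in\Z^d$; it is a Birkhoff plane-like subsolution of slope $\alpha\,e$ with free boundary in a bounded band. Shifting $\underline v_\alpha$ far forward by $k''\in\Z^d$ with $k''\cdot e$ large makes $\underline v_\alpha(\cdot-k'')\prec\overline v_\alpha$, and then the Perron solution $v$ trapped between $\underline v_\alpha(\cdot-k'')$ and $\overline v_\alpha$ --- equivalently, the maximal subsolution below $\overline v_\alpha$ that lies above $\underline v_\alpha(\cdot-k'')$ --- solves \eref{cell}, satisfies \eref{cellb} with $p=\alpha\,e$ because it is sandwiched between two functions that do, again inherits \eref{monotonicity} by monotonicity of the extremal solution in its obstacle, and, realised as a maximal subsolution, is nondegenerate whenever maximal subsolutions are, in particular in $d=2$ or under the stated hypothesis.

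\textbf{Main obstacle.} The delicate point is the uniform control of the free-boundary band for $\overline v_\alpha$ (and $\underline v_\alpha$): one must show that the places where the constrained extremal solution detaches from the affine slope-$\alpha$ profile are localised --- the minimal supersolution sits on the obstacle except in the shadows of $\{Q<\alpha\}$, whose connected components are bounded by periodicity of $Q$, and in addition it is pinned between the already-constructed $u^*$ and $u_*$ --- so that the asymptotics remain of slope $\alpha$. The two remaining points are routine given the background section: propagating \eref{monotonicity} through each obstacle-constrained Perron construction is done exactly as in \lref{monotonicity}, using that in the trapped regime the minimal supersolution and the maximal subsolution coincide; and one replaces $\overline v_\alpha$ and $\underline v_\alpha$ by an inf- and a sup-convolution to turn them into outer/inner-regular $R$-super/subsolutions, so that the comparison and Perron machinery of the background section applies verbatim, the resulting $O(\delta)$ shifts being harmless.
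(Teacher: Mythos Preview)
Your treatment of the extreme slopes $\alpha=Q^*(e)$ and $\alpha=Q_*(e)$ by translating the approximate correctors and passing to the limit is essentially the paper's argument, and the inheritance of Birkhoff monotonicity and nondegeneracy under local uniform limits is fine.

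The gap is in the intermediate slopes. You construct $\overline v_\alpha$ as the minimal supersolution above the obstacle $(\alpha e\cdot x)_+$, bound it above by a lattice translate of $u^*$, and then assert that it ``agrees with $(\alpha e\cdot x)$ for $x\cdot e$ large'' and that its free boundary lies in a bounded band. The band for the free boundary is indeed controlled by the trapping between $(\alpha e\cdot x)_+$ and $u^*(\cdot-k)$, but that upper barrier has slope $Q^*(e)$, not $\alpha$: nothing in what you wrote prevents $\overline v_\alpha$ from having asymptotic slope strictly larger than $\alpha$ as $x\cdot e\to+\infty$. Your ``shadows of $\{Q<\alpha\}$'' heuristic does not work: the obstacle $(\alpha e\cdot x)_+$ fails the supersolution free-boundary condition on $\{x\cdot e=0\}\cap\{Q<\alpha\}$, and the resulting detachment is governed by a harmonic function in the positivity set, which has no reason to relax back to the obstacle away from the free boundary. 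Periodicity of $Q$ bounds the components of $\{Q<\alpha\}$ on the hyperplane, but it gives no control of the detachment in the $e$ direction. The dual issue is present for $\underline v_\alpha$.

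The paper's route avoids this problem entirely with a one-line observation: for $\alpha\in(Q_*(e),Q^*(e))$ the scalar rescalings $\tfrac{\alpha}{Q^*}v^*$ and $\tfrac{\alpha}{Q_*}v_*$ are respectively a super- and a subsolution of \eref{cell} of asymptotic slope exactly $\alpha$, because $\tfrac{\alpha}{Q^*}<1<\tfrac{\alpha}{Q_*}$. After a lattice shift to separate them, Perron between these two barriers produces the desired solution; Birkhoff and nondegeneracy are then obtained by realising the Perron solution as the minimal supersolution with data $\alpha t$ on $\{x\cdot e=t\}$ constrained between the two rescaled barriers, and by a dichotomy near each free boundary point according to whether a half-ball lies in $\{v_*=0\}$. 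Your obstacle-problem detour can in fact be repaired, but only by using $\tfrac{\alpha}{Q^*}v^*$ (suitably translated) as an upper barrier above the obstacle to force the slope of $\overline v_\alpha$ down to $\alpha$ --- which is precisely the paper's idea, used in a more roundabout way.
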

\begin{proof}
First we construct a solution with the maximal slope $Q^*$, the construction for $Q_*$ is symmetric.  Take $u_t$ as above the minimal supersolution of \eref{appcell} and take an arbitrary, but fixed for each $t$,
\[k(t) \in \Z^d \cap \{ -r(t)e \geq x\cdot e \geq -(r(t) +\sqrt{d}/2)e\}.\]
Then define
\[ v_t(x) = u_t(x +k(t)).\]
The $v_t$ satisfy the bounds, by maximum principle as in \lref{normalbd},
\begin{equation}\label{e.planecomparison}
 (\alpha(t)(x +k(t))\cdot e + t)_+ \leq v_t(x) \leq (\alpha(t)(x+k(t))\cdot e + t+C)_+.
 \end{equation}
Now 
\[  \alpha(t)k(t) \cdot e \leq - r(t) \alpha(t) =- t  \]
and
\[ \alpha(t)k(t) \cdot e \geq -r(t)\alpha(t) - \alpha(t)\sqrt{d}/2 \geq -t - C.\]
Plugging these estimates into \eref{planecomparison} we find
\[ (\alpha(t)x \cdot e - C)_+ \leq v_t(x) \leq (\alpha(t)x \cdot e + C)_+\]
Now from \lref{subadd} $\alpha(t)$ converge to some $Q^*$ as $t \to \infty$, and the $v_t$ are uniformly Lipschitz continuous, \lref{properties}, and so we can extract a subsequential locally uniform limit $v^*$ with
\[ (Q^*x \cdot e - C)_+ \leq v^*(x) \leq (Q^*x \cdot e + C)_+.\]
Since the viscosity solution property is preserved under locally uniform limits $u$ solves the free boundary problem \eref{fb0} and combining with the above bound we see that $v^*$ solves the global corrector problem \eref{cell}.  The monotonicity property \eref{monotonicity} holds for the $v_t$ by \lref{monotonicity} and therefore it also holds in the limit for $v^*$.

Now we construct correctors for slopes $\alpha \in (Q_*,Q^*)$.  Consider the minimal and maximal slope solutions of \eref{cell}, $v_*$ and $v^*$ constructed above.  By making an appropriate $\Z^d$ translation of $v_*$ we can retain all the properties of \eref{cell} and also have
\[ v_*(x)  \prec Q_* (x\cdot e)_+ \leq Q^*(x\cdot e)_+ \prec  v^*(x) \ \hbox{ in } \ \R^d.\]
  Now consider
\[u_*(x) = \frac{\alpha}{Q_*}v_*(x) \ \hbox{ and } \ u^*(x)=\frac{\alpha}{Q^*}v^*(x). \]
By assumption $\frac{\alpha}{Q_*} > 1$ and $\frac{\alpha}{Q^*} < 1$ and therefore $u_*$ and $u^*$ are respectively sub and supersolutions of \eref{fb0}, still satisfying $u_* \prec u^*$ and now with
\begin{equation}\label{e.alphabds}
 (\alpha x \cdot e - C)_+ \leq u(x) \leq (\alpha x \cdot e + C)_+ \ \hbox{ for } \ u \in \{u_*,u^*\}.
 \end{equation}
Thus by Perrons method there is a solution to \eref{fb0} $u_* \leq v \leq u^*$ which, satisfying the above bounds, is a solution to \eref{cell}. 

We need to be a bit more precise about the construction to get the monotonicity \eref{monotonicity} and nondegeneracy properties. Fix data $v_t(x) = \alpha t$ on $x \cdot e = t$, by the above set up $u_*(x) \leq \alpha t \leq u^*(x)$ on $x \cdot e = t$.  Now find the minimal supersolution $v_t$ between $u_*$ and $u^*$ on $\{x \cdot e <t\}$ with the given Dirichlet data.  The Birkhoff property, \lref{monotonicity}, holds for $v_t$ by almost the same proof as before, now using also \lref{monotonicity} applied to $v_*$.  

Now for nondegeneracy, let $x \in \partial \{ v_t >0\}$ and $r>0$.  Suppose that $B_{r/2}(x) \subset \{u_* = 0\}$, then the usual nondegeneracy proof for minimal supersolutions carries over.  Suppose otherwise, then $y \in B_{r/2}(x) \cap \partial \{ u_* >0\}$ and by the nondegeneracy estimate of $u_*$, 
\[\sup_{B_r(x)} v_t \geq \sup_{B_{r/2}(y)}u_* \geq cr.\]
Finally we send $t\to \infty$ and extract a subsequential locally uniform limit $v$.  Then $v$ solves the equation, has the bounds \eref{alphabds}, the Birkhoff property is preserved in the limit and so is the nondegeneracy.

\end{proof}

We make a useful note about periodic plane-like solutions, as exist in the case of rational slope $\xi \in \Z^d \setminus \{0\}$.  Not only do these solutions stay within bounded distances of a plane, but actually, away from the free boundary, they converge with exponential rate to a particular linear function with the appropriate slope.
\begin{lemma}\label{l.bdrylayer}
Let $\xi \in \Z^d \setminus \{0\}$ irreducible and let $v$ be a solution of \eref{cell} with slope $\alpha \hat\xi$ which is $\xi^\perp$-periodic and
\[ \sup_x |v(x) - \alpha ( x \cdot \hat\xi)_+| \leq C\]
then there exists $s \in \R$ such that
\[ \sup_{x \cdot \hat \xi \geq t}  |v(x) -  (\alpha x \cdot \hat\xi+s)_+| \leq C\exp(-Ct/|\xi|).\]
\end{lemma}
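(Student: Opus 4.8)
The plan is to exploit the periodicity of $v$ to reduce the boundary layer estimate to a Harnack-type argument for a positive harmonic function in a periodic domain. First I would observe that since $v$ is $\xi^\perp$-periodic, it descends to a function on the quotient cylinder $\T_\xi \times \R$, where $\T_\xi = \R^{d}/(\Z^d \cap \xi^\perp)$ is a $(d-1)$-torus of diameter comparable to $|\xi|$, and the coordinate along the cylinder axis is $x \cdot \hat\xi$. By \lref{width} (applied with $p = \alpha\hat\xi$) the free boundary $\partial\{v>0\}$ is confined to a slab $\{|x \cdot \hat\xi - c_0|\le C\}$ of universal width, so in the region $\{x \cdot \hat\xi \ge t_0\}$ for $t_0$ a suitable universal constant, $v$ is a positive harmonic function on the full cylinder cross-section. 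The function $w := v - \alpha(x\cdot\hat\xi)$ is then harmonic in this half-cylinder and, by the hypothesis $\sup_x|v(x) - \alpha(x\cdot\hat\xi)_+| \le C$, it is bounded by $C$ there. I want to show $w$ converges exponentially fast (in $x\cdot\hat\xi$) to a constant $s$.

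The key step is a quantitative decay estimate for bounded harmonic functions on a half-cylinder. Let $m(\tau) = \min_{\{x\cdot\hat\xi = \tau\}} w$ and $M(\tau) = \max_{\{x\cdot\hat\xi=\tau\}} w$; both make sense for $\tau \ge t_0$ since the cross-section is a full torus. The oscillation $\omega(\tau) = M(\tau) - m(\tau)$ is what I need to control. The standard tool is a Harnack inequality on the cylinder comparing values on the slice $\{x\cdot\hat\xi = \tau\}$ with values on slices at distance of order $\mathrm{diam}(\T_\xi) \sim |\xi|$ away: applying Harnack to the nonnegative harmonic functions $w - m(\tau - L)$ and $M(\tau+L) - w$ on the slab $\{\tau - L \le x\cdot\hat\xi \le \tau + L\}$ with $L \sim |\xi|$, one obtains a contraction $\omega(\tau + L) \le \theta\, \omega(\tau)$ for some universal $\theta < 1$. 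Iterating gives $\omega(\tau) \le C\exp(-c(\tau - t_0)/|\xi|)$ after absorbing $L \sim |\xi|$ into the exponent; here I also use that $m$ is nondecreasing and $M$ nonincreasing in $\tau$ (each is a harmonic function's extremum over an expanding family of domains, by the maximum principle applied on half-cylinders), so $m(\tau)$ and $M(\tau)$ are monotone bounded sequences with a common limit $s$, and $|w(x) - s| \le \omega(x\cdot\hat\xi)$. Translating back, $|v(x) - \alpha x\cdot\hat\xi - s| \le C\exp(-ct/|\xi|)$ on $\{x\cdot\hat\xi \ge t\}$ for $t \ge t_0$, and for $t_0 \ge t$ one can absorb the (bounded) range into the constant $C$. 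This yields the claimed estimate, with the exponential rate $C/|\xi|$ coming precisely from the cross-sectional diameter of the cylinder.

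The main obstacle I anticipate is making the Harnack-contraction step clean and genuinely quantitative in $|\xi|$: one must verify that the relevant harmonic functions are genuinely positive on a full slab of the cylinder (which requires $t_0$ large enough, universal, relative to the width bound of \lref{width}), and that the Harnack constant over a slab of length comparable to the cross-sectional diameter degenerates only in the expected exponential way — this is where the $|\xi|$ dependence enters and must be tracked, e.g. by chaining $O(|\xi|)$ unit-scale Harnack inequalities, or by rescaling the cylinder to unit diameter and paying a factor $|\xi|$ in the slab length. A secondary technical point is ensuring $s$ is the same constant for the whole half-cylinder (not slice-dependent), which follows from the monotonicity of $m$ and $M$ together with $\omega(\tau) \to 0$. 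Finally, one should note the sign convention makes the estimate meaningful only for $x\cdot\hat\xi \ge t$ — there is no claim near or below the free boundary, consistent with the statement.
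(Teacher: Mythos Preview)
Your proposal is correct and follows essentially the same approach as the paper: reduce to the bounded, $\xi^\perp$-periodic, harmonic function $w = v - \alpha(x\cdot\hat\xi)$ in the half-space $\{x\cdot\hat\xi \geq C_0\}$, then use the Harnack inequality at scale $|\xi|$ (the periodicity scale) to contract the oscillation by a fixed factor over each slab of width $\sim |\xi|$, and iterate. The paper's proof is a brief sketch of exactly this argument, deferring details to a cited reference.
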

\begin{proof}
The function $v(x) - \alpha (x \cdot \hat\xi)_+$ is bounded, $\xi^\perp$-periodic and harmonic in the half space $x \cdot \hat\xi \geq C_0$ for an appropriate $C_0$.  Then it is a classical result that there is boundary layer limit with exponential rate of convergence, see \cite{FeldmanKimBDRY} for a complete proof.  Basically the idea is to use the Harnack inequality oscillation decay at distance $C|\xi|$ from the half-space boundary $x \cdot \hat\xi = C_0$ to get the oscillation to decay by a factor of $1/2$ on the entire plane $x \cdot \hat \xi = C_0 + C|\xi|$ (using periodicity).  Then iterating one gets the exponential decay of oscillations.
\end{proof}

Finally we establish an alternative characterization of the pinning interval endpoints which is well suited to checking the viscosity solution condition in the homogenization limit.
\begin{lemma}\label{l.visccorr}
The upper and lower endpoints of the pinning interval are characterized by:
\begin{enumerate}
\item $Q^*(e)$ is the supremum over all $\alpha >0$ such that there exists a global supersolution $u$ of \eref{cell} with 
\[ u \geq \alpha (e \cdot x)_+ \ \hbox{ and } \ \inf_{B_C(0)} u = 0.\]
\item $Q_*(e)$ is the infimum over all $\alpha >0$ such that there exists a global supersolution $u$ of \eref{cell} with 
\[ u \leq \alpha (e \cdot x)_+ \ \hbox{ and } \ \sup_{B_C(0)} u >0.\]
\end{enumerate}
\end{lemma}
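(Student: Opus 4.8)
The plan is to show that the supremum in (1) and the infimum in (2) both equal $\lim_{t\to\infty}\alpha(t)$, which is $Q^*(e)$ by \lref{subadd} for (1) and $Q_*(e)$ by its maximal-subsolution counterpart for (2). I describe the argument for (1); statement (2) is obtained by the symmetric argument with the maximal subsolution $\tilde u_t$ of \eref{appcell} replacing the minimal supersolution $u_t$ (there the distance $\tilde r(t)$ is approximately \emph{super}additive, so $t/\tilde r(t)\to Q_*(e)$), with the roles of super- and subsolution interchanged, and with the comparison step run through the $R$-comparison principle \lref{Rcomparison} after a sup-convolution in place of the bare minimality property; the discrepancy between the available nondegeneracy results for the two cases, flagged before \lref{width}, is immaterial here since \lref{width} was established uniformly for both $u_t$ and $\tilde u_t$.

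For the inequality $\sup\ge Q^*(e)$ in (1), I exhibit, for every $\alpha\le Q^*(e)$, an admissible competitor built from the maximal-slope plane-like solution $v^*$ of \eref{cell} produced in \lref{minimalsuper}, which satisfies $(Q^*(e)\,x\cdot e-C_0)_+\le v^*(x)\le(Q^*(e)\,x\cdot e+C_0)_+$ for a universal $C_0$; in particular $v^*$ vanishes on $\{x\cdot e\le-C_0/Q^*(e)\}$. Since the image of $\Z^d$ under $x\mapsto x\cdot e$ is a subgroup of $\R$ with gaps at most $1$, choose $m\in\Z^d$ with $m\cdot e\in[C_0/Q^*(e),\,C_0/Q^*(e)+1]$. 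Because $Q$ is $\Z^d$-periodic, $u:=v^*(\,\cdot+m)$ again solves \eref{cell}, and from $Q^*(e)(m\cdot e)\ge C_0$ we get $u(x)\ge(Q^*(e)(x\cdot e)+Q^*(e)(m\cdot e)-C_0)_+\ge Q^*(e)(x\cdot e)_+\ge\alpha(e\cdot x)_+$. Moreover $u$ vanishes on $\{x\cdot e\le-C_0/Q^*(e)-m\cdot e\}\supseteq\{x\cdot e\le-2C_0/Q^*(e)\}$, which contains the point $-(2C_0/Q^*(e))e$; since $Q^*(e)\ge\langle Q^2\rangle^{1/2}\ge\min Q$, this point lies in $B_C(0)$ once the constant $C$ is taken larger than $2C_0/\min Q$ (a choice depending only on $Q$, not on $e$ or $\alpha$). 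Hence $\inf_{B_C(0)}u=0$ and every $\alpha\in(0,Q^*(e)]$ is admissible, so the supremum is $\ge Q^*(e)$ (in fact attained).

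For the reverse inequality, let $u$ be a global supersolution of \eref{cell} with $u\ge\alpha(e\cdot x)_+$ and $\inf_{B_C(0)}u=0$; I must show $\alpha\le Q^*(e)$. Fix $t$ large and pick, using again the gap bound, $m\in\Z^d$ with $m\cdot e\in[t/\alpha,\,t/\alpha+1]$. By $\Z^d$-periodicity, $u(\,\cdot+m)$ is a supersolution of the free boundary problem on $\{x\cdot e<0\}$, and on $\{x\cdot e=0\}$ it satisfies $u(x+m)\ge\alpha(m\cdot e)\ge t$; by minimality of $u_t$ among supersolutions with boundary value $t$ on $\{x\cdot e=0\}$ (and $\ge 0$ at infinity, which $u(\,\cdot+m)$ trivially satisfies), comparison forces $u(\,\cdot+m)\ge u_t$ throughout $\{x\cdot e<0\}$. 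Since $u$ is lower semicontinuous, $\inf_{B_C(0)}u$ is attained at some $x^\ast\in\overline{B_C(0)}$, so $u_t(x^\ast-m)\le u(x^\ast)=0$; but \lref{width} shows that any zero of $u_t$ in $\{x\cdot e<0\}$ lies in $\{x\cdot e\le-r(t)+C_0\}$ (its free boundary has $\sup(x\cdot e)=-r(t)$ and oscillation $\le C_0$ in the direction $e$). Hence $(x^\ast-m)\cdot e\le-r(t)+C_0$, so $r(t)\le m\cdot e+C_0-x^\ast\cdot e\le t/\alpha+C'$, and therefore $\alpha(t)=t/r(t)\ge t/(t/\alpha+C')\to\alpha$. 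By \lref{subadd}, $Q^*(e)=\lim_{t\to\infty}\alpha(t)\ge\alpha$, as required.

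The only genuinely delicate points are the two comparison steps. In the lower bound everything is explicit once one has the two-sided control of $v^*$ from \lref{minimalsuper} and the elementary gap bound for $\Z^d\cdot e$. In the upper bound the crux is the use of the minimal-supersolution property to slide $u(\,\cdot+m)$ below $u_t$ and then to locate the zero set of $u_t$ via \lref{width}; the standard technicality of comparison on the unbounded half-space $\{x\cdot e<0\}$ is handled, as elsewhere in the paper, by truncating to boxes $\{-M<x\cdot e<0\}\cap\{|x'|<R\}$ and using that $u_t=0$ on $\{x\cdot e=-M\}$ for $M$ large. For (2), the same step must be run with the maximal subsolution $\tilde u_t$, where one first regularizes $u$ by sup-convolution in order to invoke \lref{Rcomparison}; the main bookkeeping there is to check that this regularization and the associated $o(1)$ errors (vanishing as $t\to\infty$) do not affect the conclusion, and to track the reversed, super-additive monotonicity that makes $\lim_{t\to\infty}t/\tilde r(t)$ equal to $Q_*(e)$ rather than $Q^*(e)$.
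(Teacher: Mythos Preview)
Your argument is correct and follows essentially the same route as the paper's proof: the lower bound comes from a lattice translation of the maximal-slope plane-like solution $v^*$, and the upper bound comes from translating the given supersolution so as to dominate $t$ on $\{x\cdot e=0\}$, comparing with the minimal supersolution $u_t$, and reading off a bound on $r(t)$. The only notable difference is that you translate by a lattice vector $m\in\Z^d$ with $m\cdot e\approx t/\alpha$, whereas the paper translates by the real vector $\tfrac{t}{\alpha}e$; your choice is the more careful one, since it preserves the supersolution property for the periodic coefficient $Q$ and makes the comparison with $u_t$ unambiguous.
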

\begin{proof}
We just do the characterization of $Q^*$.  From the above construction, \lref{minimalsuper}, for $\alpha \leq Q^*(e)$ there exists such a global supersolution.  Take an appropriate lattice translation of $\frac{\alpha}{Q^*} u^*$.  

If there was such a supersolution $v$ existing for some $\alpha >Q^*(e)$. Translate to $v_t(x) = v(x +\frac{1}{\alpha}te)$ so that $v_t(x) \geq t$ on $x \cdot e = 0$ and therefore $v_t \geq u_t$ the minimal supersolution of \eref{appcell}.  Then since $\inf_{B_C(0)} v = 0$,
\[ 0 = \inf_{B_{C}(-\frac{1}{\alpha}te)} v_t \geq \inf_{B_{C}(-\frac{1}{\alpha}te)} u_t \ \hbox{ implies } \ r(t) \leq t/\alpha - C.\]
Sending $t \to \infty$ we get $\liminf \frac{t}{r(t)} \geq \alpha > Q^*(e)$ which contradicts the definition of $Q^*$, \lref{subadd}.
\end{proof}

\section{Energy minimizers}\label{s.energy}

In this section we group several results related to energy minimization.  The first goal is to complete the proof of \tref{cell}.  The last part of \tref{cell} is to show that the slope $\langle Q^2 \rangle ^{1/2}$ achieved by the global energy minimizers is in the pinning interval and to construct locally minimizing plane-like solutions with slope $\alpha p \in (Q_*(p),Q^*(p)) \cup \langle Q^2 \rangle ^{1/2}$.  The ideas are quite similar to the work of Caffarelli-de~la~Llave~\cite{CaffarellidelaLlave}, and the proof is basically a rehash of \sref{cell} using energy minimization to find solutions instead of Perron's method.

We will use the same ideas to construct energy minimizers near curved surfaces.  The techniques are similar to those we will use for the cell problem, the usefulness will come later when we begin to discuss the continuous part of the pinning interval.

\subsection{Local and global energy minimizing plane-like solutions} Here we finish the proof of \tref{cell}.
 
We also need to discuss the meaning of local minimizer for states on $\R^d$.

\begin{proposition}\label{p.mean}
For all $p \in S^{d-1}$ 
\[ \langle Q^2 \rangle^{1/2} \in [Q_*(p),Q^*(p)]. \]
Furthermore for all $\alpha \in (Q_*(p),Q^*(p)) \cup \langle Q^2 \rangle^{1/2}$ there exists a global plane-like solution of \eref{cell} which is a local minimizer (absolute minimizer if $\alpha = \langle Q^2 \rangle^{1/2}$) and satisfies the Birkhoff property \eref{monotonicity}.
\end{proposition}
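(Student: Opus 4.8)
\medskip
\noindent\textbf{Plan.} The approach is to rerun the construction of \sref{cell} with energy minimizers in place of minimal supersolutions, in the spirit of Caffarelli--de~la~Llave~\cite{CaffarellidelaLlave}. There are two parts: showing $\langle Q^2\rangle^{1/2}$ is a pinned slope, via an \emph{unconstrained} minimizing corrector whose slope is forced to be $\langle Q^2\rangle^{1/2}$; and producing a locally minimizing plane-like solution at each $\alpha\in(Q_*(p),Q^*(p))$, via minimization between strict barriers built from the extremal solutions of \lref{minimalsuper}.

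\medskip
\noindent\textbf{The slope $\langle Q^2\rangle^{1/2}$.} Fix $e=p$, and for $t>1$ and $R\gg1$ let $m_{t,R}$ minimize the energy \eref{ACenergy} over nonnegative $v\in H^1(\{x\cdot e<0\}\cap B_R)$ with $v=t$ on $\{x\cdot e=0\}\cap B_R$ and $v=0$ on the spherical part of $\partial B_R$; sending $R\to\infty$ gives an absolute minimizer $m_t$ in $\{x\cdot e<0\}$ with Dirichlet data $t$. It is uniformly Lipschitz and nondegenerate by \lref{properties} and \lref{nondegen} parts \partref{nondegen2} and \partref{nondegen3}, and a max/min rearrangement argument using the periodicity of $Q$ (the ``$\max$'' competing for the lattice-shifted problem, the ``$\min$'' for the original, energies splitting exactly) gives the Birkhoff property \eref{monotonicity} for the minimal such minimizer, exactly as in \lref{monotonicity}; the proof of \lref{width} then applies verbatim to bound $\osc_{x\in\partial\{m_t>0\}}x\cdot e$. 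Writing $r(t)=\inf\{\,|x\cdot e|:x\in\partial\{m_t>0\}\,\}$, minimality of $m_t$ against a localized copy of the linear profile $(t+\langle Q^2\rangle^{1/2}x\cdot e)_+$ on a box $B$ (a side-$L$ cube in $e^\perp$, with $L$ a large multiple of the period, crossed with a slab in $e$) gives, using periodicity of $Q^2$, $E(m_t,B)\le 2L^{d-1}t\langle Q^2\rangle^{1/2}+o(L^{d-1}t)$; conversely, Cauchy--Schwarz along the $e$-lines together with the width bound gives $E(m_t,B)\ge L^{d-1}\bigl(\tfrac{t^2}{r(t)+C}+\langle Q^2\rangle\,r(t)\bigr)-o(L^{d-1}t)$. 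Since $r(t)/t$ is bounded above and below (Lipschitz and nondegeneracy) and $\rho\mapsto\rho^{-1}+\langle Q^2\rangle\rho$ is strictly convex with minimum $2\langle Q^2\rangle^{1/2}$ at $\rho=\langle Q^2\rangle^{-1/2}$, these two bounds force $r(t)/t\to\langle Q^2\rangle^{-1/2}$. (Alternatively, rescaling $m_t(t\,\cdot)/t$ and invoking the $\Gamma$-convergence of $E_\ep$ to $E_0$ on the half-space identifies the limit as that linear profile.) Now the minimal supersolution of \eref{appcell} lies below the supersolution $m_t$, and the maximal subsolution of \eref{appcell} lies above the subsolution $m_t$ (both have data $t$ on the hyperplane), so their positivity sets are nested and $r(t)$ lies between the corresponding depths; dividing by $t$ and using \lref{subadd} and its maximal-subsolution analogue gives $Q_*(e)\le\langle Q^2\rangle^{1/2}\le Q^*(e)$. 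Finally, recentering $m_t$ by a lattice vector as in \lref{minimalsuper} and passing to a locally uniform limit produces a global plane-like solution $v$ of \eref{cell} with slope $\langle Q^2\rangle^{1/2}e$; the Birkhoff property passes to the limit, and absolute minimality survives because the uniform Lipschitz and nondegeneracy bounds force $\mathbf{1}_{\{m_t>0\}}\to\mathbf{1}_{\{v>0\}}$ in $L^1_{\mathrm{loc}}$.

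\medskip
\noindent\textbf{Slopes $\alpha\in(Q_*(e),Q^*(e))$.} Let $v_*,v^*$ be the extremal plane-like solutions from \lref{minimalsuper} and set $u_*=\tfrac{\alpha}{Q_*(e)}v_*$, $u^*=\tfrac{\alpha}{Q^*(e)}v^*$; since $\alpha/Q_*(e)>1>\alpha/Q^*(e)$ these are, respectively, a strict subsolution and a strict supersolution of \eref{cell}, and after a $\Z^d$-translation (harmless by periodicity and Birkhoff) one may take $u_*\prec u^*$ with both obeying slope-$\alpha$ bounds of the form \eref{cellb}. A sup-convolution $\underline v$ of $u_*$ and an inf-convolution $\overline v$ of $u^*$, with $\delta$ small, give a nondegenerate inner regular $R$-subsolution and an outer regular $R$-supersolution, still strict for $Q$ and still plane-bounded; the nondegeneracy of $\underline v$ is furnished by \lref{minimalsuper}, automatically when $d=2$ and, when $d\ge3$, subject to the nondegeneracy of the maximal-subsolution corrector discussed in \sref{cell}. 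Applying \lref{localmin} in $B_R$ and letting $R\to\infty$ (keeping energy minimality and the plane bounds in the limit by the uniform Lipschitz and nondegeneracy estimates) yields a global plane-like viscosity solution $u$ of \eref{cell} with slope $\alpha e$ which is a local minimizer of \eref{ACenergy}; choosing the minimal minimizer at each stage and rerunning the rearrangement argument gives $u$ the Birkhoff property.

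\medskip
\noindent\textbf{Main obstacle.} The crux is the \emph{exact} identification of the energy-minimizing slope with $\langle Q^2\rangle^{1/2}$: unlike for $Q^*,Q_*$, where soft (approximate) subadditivity sufficed, this needs the two-sided quantitative energy balance above (equivalently the half-space $\Gamma$-limit), which in turn leans on the width bound and hence on the Birkhoff property. Establishing the Birkhoff property for energy minimizers is the second delicate point, since no comparison principle is available: one works with the minimal minimizer and argues via minimality of its max/min rearrangements, taking care near the Dirichlet boundary. What remains is the usual nondegeneracy bookkeeping for the barriers in the intermediate-slope case --- transparent in $d=2$ --- and the preservation of (local, or absolute) minimality under locally uniform limits, which rests on uniform nondegeneracy forcing $L^1_{\mathrm{loc}}$-convergence of the positivity indicators.
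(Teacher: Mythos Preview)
Your strategy is the same as the paper's---rerun \sref{cell} with energy minimizers, compute the slope of the unconstrained minimizer, then trap intermediate slopes between rescaled extremal barriers---but you compactify differently, and this is where a real gap appears.

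The paper does not send $R\to\infty$ in a ball.  For rational $p=\hat\xi$ it minimizes on the quotient half-cylinder $\{x\cdot p<0\}\bmod (p^\perp\cap\Z^d)$, whose boundary is compact.  Finite energy then makes ``smallest minimizer'' well-defined via the $\wedge/\vee$ identity \eref{intersectionunion}, and comparing the smallest minimizers on the $m$-fold and $1$-fold quotients yields $p^\perp$-periodicity; the full Birkhoff property then follows as in \lref{monotonicity}.  The energy per period cell is computed exactly against the linear competitor, and irrational $p$ are handled at the end by limits of rational directions.  Your route---minimize on $\{x\cdot e<0\}\cap B_R$ with zero data on the sphere, then $R\to\infty$---forces you to speak of ``the minimal such minimizer'' in the half-space, where the total energy is infinite and the $\wedge/\vee$ argument does not apply directly (if $u,v$ are absolute minimizers, $u\vee v$ need not be an admissible competitor for $u$ on a compact set since it does not agree with $u$ outside).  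Your appeal to \lref{monotonicity} is also misplaced: that lemma uses the minimal-\emph{supersolution} property and comparison, not energy rearrangement.  None of this is unfixable, but the clean way to fill it is exactly the paper's: pass to the periodic quotient first.

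A smaller point: for passing (local or absolute) minimality to the limit you invoke only $L^1_{\mathrm{loc}}$-convergence of the indicators.  You also need the Dirichlet terms to converge, not merely to be lower semicontinuous; the paper obtains this from a.e.\ convergence of the gradients (harmonicity away from the free boundary, Hausdorff convergence of the free boundaries, and measure zero of $\partial\{v>0\}$).  Your treatment of the intermediate slopes $\alpha\in(Q_*,Q^*)$ matches the paper's step~5 and is fine, including the $d\ge 3$ nondegeneracy caveat.
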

\begin{remark}
In general it is not clear to us whether one can construct local minimizers with minimal/maximal slope $Q_*(p),Q^*(p)$. In the $d=1$ case it is not possible, a straightforward calculation checks that plane-like solutions with the minimal/maximal slope are not local minimizers when $Q''$ is not zero at its min/max.  The situation is degenerate. In the $d=2$ laminar case these $1$-$d$ perturbations that violate the local minimization property are not compactly supported and therefore are not valid perturbations, it is possible the situation is better in higher dimensions.
\end{remark}
\begin{proof}
The heuristic idea is that the global energy minimizer solves the free boundary problem and for this solution the optimal configuration results in an approximate slope $\langle Q^2 \rangle^{1/2}$.  First we construct an, appropriately defined, energy minimizing solution of the approximate corrector problem \eref{appcell}. Then we show that the free boundary for the minimal energy minimizing solution satisfies the same oscillation bound as for minimal supersolutions / maximal subsolutions.  The proof of the oscillation bound relies on uniqueness, previously this came from the minimal supersolution or maximal subsolution property.  In this case we will take the smallest energy minimizer, which will have a similar uniqueness property.  Once we have proven that the free boundary is flat we can compute the energy explicitly as a function of the slope and minimize.

 We assume that $p = \hat\xi$ for a lattice direction $\xi \in \Z^d \setminus \{0\}$.  We will show the existence of a global plane-like solution $u$ satisfying the Birkhoff property \lref{monotonicity} with slope $\langle Q^2 \rangle^{1/2}p$.  This solution will also be an absolute energy minimizer in the sense that
 \[ E(u ,B) \leq E(v,B) \]
   for ball $B$ and any $v\geq 0$ in $H^1_{loc}(\R^d)$ such that $u-v$ is compactly supported in $B$. Then the existence of such a solution at irrational directions follows by taking limits.

1.  Consider minimizing the Alt-Caffarelli functional $E$ on an open domain $U$ of $\R^d$ with $\partial U$ compact
\[ E(v,U) = \int_{U} |\grad v|^2 + Q(x)^2 1_{\{v >0\}} \ dx\]
 over $v \in H^1(U)$ with $v = g$ on $\partial U$ (call the admissible class $H^1_g(U)$).  Since $\partial U$ is compact there are finite energy states. Suppose that $u$ and $v$ both minimize $E(\cdot,U)$ over $H^1_g(U)$, then $u \wedge v$ and $u \vee v$ are admissible and
 \begin{equation}\label{e.intersectionunion}
 E(u \wedge v,U) + E(u \vee v,U) = E(u,U) + E(v,U). 
 \end{equation}
 Thus $u\wedge v$ and $u \vee v$ are both minimizers as well.  
 
 We can define a smallest energy minimizer $u$ with the property that that any other minimizer $v$ must have $v \geq u$.  Call $\mathcal{M} \subset H^1_g(U)$ to be class of energy minimizers and let $u_k\in \mathcal{M}$ be a sequence with $\int_U u_k \to \inf_{v \in \mathcal{M}} \int_U v$.  Without loss $u_k$ are monotone decreasing, otherwise take instead the sequence $u_1 \wedge \cdots \wedge u_k$.  By \lref{localmin} the $u_k$ are solutions of \eref{unitscale} and by \lref{properties} they are uniformly Lipschitz continuous.  Since the energies $E(u_k,U)$ are constant, up to taking a subsequence the $u_k \rightharpoonup u$ in $H^1(U)$ and uniformly in $\overline{U}$.  Thus $u\in H^1_g(U)$ and $E(u,U) \leq \liminf E(u_k,U)$, so $u \in \mathcal{M}$, and
 \[ \int_U u  = \inf_{v \in \mathcal{M}} \int_U v.\]
 Therefore there cannot be any $u' \in \mathcal{M}$ with $u' < u$ somewhere.

2. What we would like to do is consider the global minimizer in the domain $U = \{ x \cdot p <0\}$ of
\[ E(v,U) = \int_{U} |\grad v|^2 + Q(x)^2 1_{\{v >0\}} \ dx\]
 over $v \in H^1_{loc}(U)$ with $v(x) = t$ on $x \cdot p \geq 0$.  We would expect this minimizer to have the Birkhoff property. This does not quite make sense due to the infinite domain.  
 
 We take a different approach, finding compactness by enforcing periodicity. We use that $p$ is rational, then $p^\perp \cap \Z^d$ is a periodicity lattice for $Q$ and for the boundary data on $\partial U$. Find the smallest energy minimizer $v_m$ over the periodized domain $U \bmod mp^\perp \cap \Z^d$.  Now $\partial (U \bmod mp^\perp \cap \Z^d) = \partial U \bmod mp^\perp \cap \Z^d$ which is compact, so the argument of the first part of the proof still applies to prove existence of a smallest minimizer.  The $v_m$ solve \eref{appcell}, they are uniformly Lipschitz continuous and $mp^\perp \cap\Z^d$-periodic.  Furthermore almost the same argument of \lref{monotonicity} applies and $v_m$ satisfy the Birkhoff property
 \begin{equation}\label{e.birkhoffvm}
  v_m(\cdot + k) \geq v_m(\cdot) \ \hbox{ in $U$ for } \ k \cdot \xi \geq 0.
  \end{equation}
 In particular $v_m$ is actually $p^\perp \cap \Z^d$-periodic, and therefore $v_m = v_1$.  By Lemma~\ref{localmin} $v_1$ is also viscosity solution of \eref{unitscale}, and by the same proof as above in \lref{width} the free boundary stays in a bounded width slab, in particular independent of $t$,
 \begin{equation}\label{e.bddwidthenergy}
  \{x \cdot p > -r(t)\}\subset \{v^1 >0\} \subset \{x \cdot p > r(t) - C\}
  \end{equation}
 where $r(t) = \inf_{x \in \partial \{ v>0\}} |x \cdot p|$ and $C$ is universal.
 
 Now we can check that $v_1$ is an absolute energy minimizer.  Let $\varphi \in H^1_0(B)$ for any ball $B \subset U$.  For $m$ sufficiently large $B$ is contained in a single unit period cell of $m p^\perp \cap \Z^d$.  Then, consider the periodization of $\varphi$
 \[\tilde{\varphi}(x) = \sum_{k \in m p^\perp \cap \Z^d} \varphi(x+k) \]
 which is well defined and equal to $\varphi(\cdot+k)$ in $B+k$ for any $k \in p^\perp \cap \Z^d$ and zero in the complement of $\cup_{k \in p^\perp \cap \Z^d} B+k$.
  Abusing notation we also write $B$ for the subset of $U \bmod p^\perp \cap \Z^d$ corresponding to it.  Using the minimization property of $v_1 = v_m$
 \begin{align*}
 E(v_1+\varphi,B)+E(v_1,U \bmod p^\perp \cap \Z^d \setminus B) &= E(v_1+\tilde\varphi,U \bmod p^\perp \cap \Z^d)  \\
 &\geq E(v_1,U \bmod p^\perp \cap \Z^d) \\
 &= E(v_1,U \bmod p^\perp \cap \Z^d \setminus B) + E(v_1,B) 
 \end{align*}
 which proves the absolute minimum property for $v_1$.

 3. Now we can compute the energy per unit period cell of the smallest energy minimizing solution $v_1 = v$ as a function of the approximate slope
 \[ \alpha(t) = \frac{t}{r(t)} \ \hbox{ with } \ r(t) = \inf_{x \in \partial \{ v>0\}} |x \cdot p|.\]
 Call $Q_p$ to be the unit period cell of $p^\perp \cap \Z^d$.  For any $\delta>0$ and $t \gg 1/\delta$ we can compute the energy
 \begin{align*}
  \frac{1}{|Q_p|}E(v,U \bmod p^\perp \cap \Z^d) &= \alpha(t)^2r(t)+\langle Q^2 \rangle r(t)+O(\delta t )\\
  &= [\alpha(t)+\langle Q^2 \rangle \alpha(t)^{-1}+O(\delta)]t
  \end{align*}
  Compare this with the energy of the linear solution $\ell(x) = (\langle Q^2 \rangle^{1/2} p\cdot x+t)_+$ which is also $p^\perp \cap \Z^d$ periodic,
   \begin{align*} 
   \frac{1}{|Q_p|}E(v,U \bmod p^\perp \cap \Z^d) &\leq  \frac{1}{|Q_p|}E(\ell,U \bmod p^\perp \cap \Z^d) \\
   &= \langle Q^2 \rangle\frac{t}{\langle Q^2 \rangle^{1/2}}\\
   &\quad +\int_{U \bmod p^\perp \cap \Z^d} Q^2(x) {\bf 1}_{\{-t/\langle Q^2 \rangle^{1/2} <x \cdot p< 0\}} \ dx \\
&= 2\langle Q^2 \rangle^{1/2}t+O( 1).
   \end{align*}
   Putting these together,
   \[ \alpha(t)+\langle Q^2 \rangle \alpha(t)^{-1} \leq 2\langle Q^2 \rangle^{1/2} + \frac{C}{t}.\]
   Note that the function $\alpha \mapsto \alpha+\langle Q^2 \rangle \alpha^{-1}$ is convex and has its unique minimum on $\R_+$ at $\alpha = \langle Q^2 \rangle^{1/2}$, furthermore the second derivative has a lower bound by $c\langle Q^2 \rangle^{-1/2}$ in a unit neighborhood of the minimum, thus
   \[ |\alpha(t) - \langle Q^2 \rangle^{1/2}| \leq \frac{C}{t^{1/2}}.\]

4. Now we take the limit $t \to \infty$, the minimizer $v$ constructed above of course depends on $t$ which we now need to keep track of, write $v = v^t$. Now translate, let $k_t \in \Z^d$ with $|k_t \cdot p + r(t)| \leq \sqrt{d}$.  Define
\[ \tilde{v}^t(x) = v^t(x-k_t).\]
The $\tilde{v}^t$ are uniformly Lipschitz continuous, by the bounded width \eref{bddwidthenergy}
\[ \{ x \cdot p > - C\} \subset \{\tilde{v}^t > 0 \} \subset \{ x \cdot p >C\}\]
for a universal $C$, $v^t$ satisfy the Birkhoff property, and they are absolute minimizers in the sense that for any ball $B \subset \{x \cdot p < |k_t \cdot p|\}$ and any perturbation $\varphi \in H^1_0(B)$
\begin{equation}\label{e.energytildev}
 E(\tilde v^t,B) \leq E(\tilde v^t+\varphi,B).
 \end{equation}
Using again the bounded width, the boundary data $\tilde{v}^t = t$ on $x \cdot p = k_t \cdot p = r(t) + O(1)$, and the maximum principle
\[ (\alpha(t) x \cdot p - C)_+\leq \tilde{v}^t(x) \leq (\alpha(t) x \cdot p + C)_+.\]
Finally we take the limit $t\to \infty$, up to a subsequence the $\tilde{v}^t$ converge locally uniformly to some $w$, by the nondegeneracy of global minimizers \lref{nondegen} the boundaries $\partial \{\tilde{v}^t >0\}$ converge locally in Hausdorff distance to $\partial \{w>0\}$.  By the stability of viscosity solutions under uniform convergence $w$ is a solution of \eref{unitscale} in $\R^d$.  

Next we aim to show that $\grad \tilde{v}^t \to \grad w$ almost everywhere. By the Hausdorff convergence of $\partial \{\tilde{v}^t>0\}$ if $x \in \R^d \setminus \partial \{w>0\}$ then $B_r(x) \subset  \R^d \setminus \partial \{\tilde{v}^t>0\}$ for sufficiently small $r$ and large $t$.  Then $\tilde{v}^t$ is either harmonic or identically zero in $B_r(x)$ so $\grad \tilde{v}^t \to \grad w$ uniformly in $B_{r/2}(x)$.  We just need to show that $\partial \{w>0\}$ has measure $0$, the argument is from \cite{ACF}, if the set had positive measure there would have to be a point $x_0 \in \partial \{w>0\}$ with lebesgue density $1$.  Then by Lipschitz continuity $w(x) = o(|x-x_0|)$ as $ x \to x_0$, this contradicts the nondegeneracy \lref{nondegen}.

It is easy to check by the uniform convergence, and Hausdorff convergence of positivity sets that $w$ inherits the bounded width, Birkhoff property, and the bounds
\[ (\langle Q^2 \rangle^{1/2} x \cdot p - C)_+\leq \tilde{v}^t(x) \leq (\langle Q^2 \rangle^{1/2} x \cdot p + C)_+ \]
since $\alpha(t) \to \langle Q^2 \rangle^{1/2}$ as $t \to \infty$ as shown above.  The energy minimization property follows from the local Hausdorff convergence of $\partial \{\tilde{v}^t >0\}$ and the a.e. convergence $\grad \tilde{v}^t \to \grad w$, from that the energies on both sides of \eref{energytildev} converge.

Finally we need to cover the irrational directions.  Take a sequence $w_n$ as constructed above with slopes $\langle Q^2 \rangle^{1/2}p_n$ with $p_n$ rational converging to $p$.  As before, up to extracting a subsequence, $w_n$ converge to some $w$ locally uniformly, $\partial \{w_n>0\}$ converge in Hausdorff distance, and $\grad w_n \to \grad w$ almost everywhere.  As just argued above, all of the desired properties are stable with respect to this convergence.

5. Finally we show the existence of global locally minimizing plane-like solutions with the Birkhoff property and slope $\alpha \in (Q_*(p),Q^*(p))$.  The argument is almost exactly the same as above except that, in step $2$ instead of looking for the smallest energy minimizer of $E(\cdot,U \bmod p^\perp \cap \Z^d)$ with boundary data $v = t$ on $\partial U$, we constrain the minimizer using the minimal and maximal plane-like solutions.  Let $v_*$ and  $v^*$ be, respectively, plane-like solutions of \eref{cell} with slopes $Q_*(p)$ and $Q^*(p)$ as constructed in \lref{minimalsuper} with lattice translations so that 
\begin{equation}\label{e.vsubvsup}
\alpha(x \cdot p- C)_+ \prec  \frac{\alpha}{Q_*(p)}v_* \prec \alpha (x \cdot p - \sqrt{d})_+  \prec \alpha (x \cdot p + \sqrt{d})_+ \prec \frac{\alpha}{Q^*(p)}v^* \prec \alpha(x \cdot p+ C)_+ .
 \end{equation}
Since $\tfrac{\alpha}{Q_*(p)} > 1 > \tfrac{\alpha}{Q^*(p)}$ we can choose $\delta>0$ sufficiently small, depending on $\alpha$, so that the sup/inf convolutions
\[\underline{v}(x) = \tfrac{\alpha}{Q_*(p)}\sup_{y\in B_\delta(x)} v_*(y) \ \hbox{ and } \ \overline{v}(x) = \tfrac{\alpha}{Q^*(p)}\inf_{y\in B_\delta(x)} v^*(y) \]
are, respectively, an inner-regular $R$-subsolution and an outer regular $R$-supersolution of \eref{unitscale} still satisfying \eref{vsubvsup}.  Now define the constraint set
\[
\mathcal{A}_t = \left\{
v \in H^1_{loc}(\R^d): 
\begin{array}{l}
\underline{v}(\cdot + k_t) \leq v  \leq \overline{v}(\cdot + k_t), \vspace{1.5mm}\\
 \hbox{$v$ is $p^\perp \cap \Z^d$-periodic, and $v = t$ on $\partial U$}\end{array}\right\}.
\]
Here $k_t \in \Z^d$ with $|k_t \cdot p +\tfrac{t}{\alpha}| \leq \sqrt{d}$ so that
\[ \underline{v}(x) < t - C < t+C < \overline{v}(x) \ \hbox{ on } \ (x +k_t)\cdot p =0.\] 
The constraints are $p^\perp \cap \Z^d$-periodic by \lref{minimalsuper}, so the arguments above give the existence of a smallest periodic minimizer $v^t$ in $\mathcal{A}_t$. By \lref{localmin} the minimizer $v^t$ is a solution of \eref{unitscale} and
\[ \underline{v}(\cdot+k_t) \prec v_t \prec \overline{v}(\cdot+k_t).\]
 Almost all of the remainder of the arguments in parts $2$ and $4$ above are the same, except we will only get the local minimization property, for any ball $B \subset U$ with sufficiently small radius and any $\varphi \in H^1_0(B)$ with $\|\varphi\|_\infty$ sufficiently small,
\[E(v_t,B) \leq E(v_t+\varphi,B). \]
After taking the limit of the $v_t(x - k_t)$, we get a $\Z^d \cap p^\perp$-periodic solution $w$ of \eref{unitscale} on $\R^d$ with $\underline{v} \leq w \leq \overline{v}$, we need to check that $w$ does not touch the constraints in $\overline{\{w>0\}}$
\[ \inf_{\{v_*>0\}} (w- \underline{v})>0 \ \hbox{ and } \ \inf_{\{w>0\}} (\overline{v}-w)>0\]
so that the same local minimization property as above holds.  By periodicity and maximum principle if one of the infima above is zero, then touching must happen at a point $x\in \partial \{w>0\}$, but this is a contradiction of the comparison principle for inner regular $R$-subsolutions / outer regular $R$-supersolutions \lref{Rcomparison}.

\end{proof}

\subsection{Energy minimizers near curved surfaces}
Now we make our last main argument having to do with energy minimization.  We construct global energy minimizers whose free boundary stays close to the graph of a smooth function.  Basically the argument amounts to the $\Gamma$-convergence of the energies $E_\ep$ \eref{energyep} to $E_0$ \eref{gammalimit}.  

We define a convenient type of domain for our construction.  Let $e \in S^{d-1}$ and $U \subset \{x \cdot e = 0\}$ relatively open and connected.  Define
\[ D_e(U) = \{ x \in \R^d : x \cdot e >0 \ \hbox{ and } \ x - (x\cdot e) e \in U\}.\]
It is the part of the half-space $x \cdot e>0$ above $U$.

\begin{lemma}\label{p.Qcontenergy}
Let $e \in S^{d-1}$ and the domain $D = D_e(U)$ for some relatively open, connected, and bounded $U \subset \{x \cdot e = 0\}$.  If $\varphi \in C^\infty(\overline{D})$ is harmonic in $\{\varphi>0\} \cap D$, $\inf_U\varphi>0$, $\varphi$ is a strict subsolution of
\[  |\grad \varphi| > \langle Q^2 \rangle^{1/2} \ \hbox{ on } \ \partial \{ \varphi >0\} \cap D,\]  
and
\[\frac{\grad\varphi}{ |\grad \varphi|} \cdot e >0 \ \hbox{ in } \ \overline{\{ \varphi>0\}} \cap \overline{D},\]
then for all $\ep>0$ there exists a subsolution $v^\ep$ of \eref{fb0} in $D$ such that
\[ \lim_{\ep \to 0} \sup_{\partial D} |v^\ep - v| = 0, \quad \lim_{\ep \to 0} \inf_{D} (v^\ep - v)  \geq 0, \]
and
\[\lim_{\ep \to 0} d_H(\{v^\ep>0\} \cap D,(\{v^\ep>0\} \cup \{\varphi>0\}) \cap D) = 0.\]
\end{lemma}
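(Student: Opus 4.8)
\emph{Proof plan.} The plan is to produce $v^\ep$ as the smallest energy minimizer of the inhomogeneous functional $E_\ep$ (see \eref{energyep}) with boundary data $\varphi$, and to pass to the limit $\ep\to0$ using the $\Gamma$-convergence of $E_\ep$ to $E_0$ (see \eref{gammalimit}) together with the comparison principle; a minimizer of $E_\ep$ is automatically a viscosity solution of \eref{fb0}, hence in particular a subsolution. The role of the \emph{strict} subsolution hypothesis is to force the limiting $E_0$-minimizer to lie above $\varphi$ with a uniform margin, while the transversality hypothesis $\grad\varphi/|\grad\varphi|\cdot e>0$ and $\inf_U\varphi>0$ enter only through the geometry near $\partial D$: together they guarantee that $\partial\{\varphi>0\}\cap\overline D$ is a bounded graph staying at positive distance from the bottom face $U$, so all of the free boundary activity takes place in a fixed compact set and $\varphi>0$ (hence nothing degenerate happens) in a neighborhood of $U$. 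For the setup: by definition of strict subsolution there is $\lambda>1$ with $\Delta\varphi=0$ in $\{\varphi>0\}$ and $|\grad\varphi|\ge\lambda\langle Q^2\rangle^{1/2}$ on $\partial\{\varphi>0\}\cap D$, so $\varphi$ is a classical, hence viscosity, subsolution of the homogenized free boundary problem ($\Delta u=0$ in $\{u>0\}$, $|\grad u|=\langle Q^2\rangle^{1/2}$ on the free boundary), which is the Euler--Lagrange problem of $E_0$; by smoothness of $\varphi$ and compactness of its free boundary we may also arrange $|\grad\varphi|\ge\lambda\langle Q^2\rangle^{1/2}$ on a one-sided collar $\{0\le\varphi\le s_0\}\cap\overline D$.

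\emph{Construction of $v^\ep$.} For $\ep>0$ and $R\gg1$ let $v^{\ep,R}$ be the smallest minimizer of $E_\ep(\cdot,D\cap B_R)$ over nonnegative $H^1$ functions with boundary data $\varphi$ on $\partial(D\cap B_R)$; the existence of a smallest minimizer follows from the submodularity identity \eref{intersectionunion} exactly as in the proof of \pref{mean}. For $R$ large the free boundaries of $\varphi$ and of $v^{\ep,R}$ lie in $B_{R/2}$, and letting $R\to\infty$ produces $v^\ep$, a viscosity solution of \eref{fb0} in $D$ with $v^\ep=\varphi$ on $\partial D$ (near $\partial D$ one has $\varphi>0$, so there $v^\ep$ is simply the harmonic function with data $\varphi$, by the maximum principle and $\inf_U\varphi>0$). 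In particular $v^\ep$ is a subsolution of \eref{fb0}; by \lref{properties} the $v^\ep$ are equi-Lipschitz, and by \lref{nondegen}\partref{nondegen2} (applied after rescaling by $\ep$) they satisfy the nondegeneracy estimate with a universal constant.

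\emph{Comparison: $\varphi$ lies below every $E_0$-minimizer.} Let $v^0$ be any minimizer of $E_0$ with boundary data $\varphi$; then $v^0$ is a solution, hence a supersolution, of the homogenized problem. If $\{\varphi>v^0\}\ne\emptyset$ it is compactly contained in $D$ (equal data on $\partial D$); on $\{\varphi>v^0\}\cap\{v^0>0\}$ both functions are harmonic and agree on the boundary of each component, which is impossible, while on $\{\varphi>v^0\}\cap\{v^0=0\}$ one reaches a free boundary point of $v^0$ at which $\varphi$ touches $v^0$ from above, contradicting $|\grad\varphi|\ge\lambda\langle Q^2\rangle^{1/2}>\langle Q^2\rangle^{1/2}\ge|\grad v^0|$ there via the strict comparison argument of \lref{strictcomparison} (the free boundary of the minimizer $v^0$ is regular and nondegenerate, so such a touching point is a regular point). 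Hence $\varphi\le v^0$ in $D$, and by the strict inequality $\lambda>1$ this inclusion has a uniform margin, i.e.\ $\{\varphi>0\}$ stays at a fixed positive distance from $\partial\{v^0>0\}$ inside every compact subset of $D$.

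\emph{Passage to the limit and conclusion.} Being equi-Lipschitz, a subsequence of the $v^\ep$ converges locally uniformly in $D$ to some $w\ge0$ with $w=\varphi$ on $\partial D$; by the uniform nondegeneracy, $\partial\{v^\ep>0\}\to\partial\{w>0\}$ in local Hausdorff distance and $|\partial\{w>0\}|=0$ — this is the density-point argument from step~4 of the proof of \pref{mean} — so $\grad v^\ep\to\grad w$ a.e.\ and $\mathbf 1_{\{v^\ep>0\}}\to\mathbf 1_{\{w>0\}}$ a.e., which together with $Q(x/\ep)^2\rightharpoonup\langle Q^2\rangle$ and a standard cutoff-interpolation recovery argument on balls $B\subset\subset D$ shows that $w$ is a local (absolute) minimizer of $E_0$ with data $\varphi$; hence $w$ is a solution of the homogenized problem and $\varphi\le w$ with a margin by the previous step, and since the $v^\ep$ were taken to be the smallest minimizers the whole sequence converges, $v^\ep\to w=:v^0$. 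Reading off the three claims: $v^\ep=\varphi$ on $\partial D$ gives the first; $v^\ep\to v^0\ge\varphi$ locally uniformly, together with the maximum principle in the region where $\varphi>0$ near $\partial D$ and at infinity, gives $\liminf_{\ep\to0}\inf_D(v^\ep-\varphi)\ge0$; and the local Hausdorff convergence $\partial\{v^\ep>0\}\to\partial\{v^0>0\}$ with $\{\varphi>0\}$ sitting at positive distance inside $\{v^0>0\}$ gives $\sup_{x\in\{\varphi>0\}\cap D}\dist(x,\{v^\ep>0\}\cap D)\to0$, which is precisely the asserted $d_H(\{v^\ep>0\}\cap D,(\{v^\ep>0\}\cup\{\varphi>0\})\cap D)\to0$. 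I expect the main obstacle to be exactly the combination of (a) extracting from the strict subsolution property that $\varphi$ lies strictly below $v^0$ through the free boundary touching-point analysis, and (b) the $\liminf$ half of the $\Gamma$-convergence, which hinges on the uniform nondegeneracy of the $E_\ep$-minimizers to pass the characteristic functions to the limit and on the resulting zero measure of the limiting free boundary; the unboundedness of $D$ and the behavior along the lateral boundary are handled by the transversality hypothesis and $\inf_U\varphi>0$ but have to be tracked with care.
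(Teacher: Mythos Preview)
Your overall strategy coincides with the paper's: construct $v^\ep$ as an energy minimizer of $E_\ep$ with boundary data $\varphi$, pass to the limit via a $\Gamma$-convergence argument to a minimizer $v^0$ of $E_0$, and use that $\varphi$ is a strict subsolution of the homogenized problem to conclude $\varphi\le v^0$. Two points are worth flagging.

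\textbf{A gap in the comparison step.} Your case analysis on $\{\varphi>v^0\}$ does not close. If a component $C$ of $\{\varphi>v^0\}$ meets $\{v^0=0\}$, you assert that one reaches a free boundary point of $v^0$ at which $\varphi$ touches $v^0$ from above and then invoke \lref{strictcomparison}. But that lemma requires a global ordering $\varphi\le v^0$ with touching at a regular free boundary point, precisely what you are trying to prove; inside $C$ you have strict inequality $\varphi>v^0$, and at points of $\partial\{v^0>0\}\cap C$ there is no touching at all. The paper handles this by a sliding argument that uses the transversality hypothesis directly: set $\varphi_t(x)=\varphi(x-te)$, note $\grad\varphi\cdot e>0$ makes $\varphi_t$ strictly decreasing in $t$ (in particular $\varphi_t<\varphi=v$ on $\partial D$ for $t>0$), start from $t$ large so $\varphi_t\le v^0$, and decrease $t$ until first contact. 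By the strong maximum principle the contact is at a free boundary point of $v^0$, where the smooth $\varphi_t$ is now a legitimate test function from below for the viscosity supersolution $v^0$, giving $|\grad\varphi_t|\le\langle Q^2\rangle^{1/2}$ and contradicting the strict subsolution bound. This is the missing ingredient, and it is exactly where $\grad\varphi/|\grad\varphi|\cdot e>0$ is used.

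\textbf{A genuine difference in the energy step.} For the passage $E_\ep(v^\ep)\to E_0(w)$ you pair the weak-$*$ convergence $Q(\cdot/\ep)^2\rightharpoonup\langle Q^2\rangle$ with strong $L^1$ convergence of ${\bf 1}_{\{v^\ep>0\}}$ (the latter from nondegeneracy and $|\partial\{w>0\}|=0$). This is correct. The paper instead uses the Alt--Caffarelli perimeter bound $\mathcal H^{d-1}(\partial\{v^\ep>0\}\cap B_r)\le Cr^{d-1}$ to count the $\ep\Z^d$ cells meeting the free boundary and obtain the quantitative estimate $|E_\ep(v^\ep)-E_0(v^\ep)|\le C\ep$ directly; combined with the same estimate for the fixed competitor $v^0$ this gives $|E_0(v^0)-E_0(v^\ep)|\le C\ep$ without any lower-semicontinuity or product-of-weak-and-strong argument. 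Either route works; the paper's is shorter and gives a rate, yours is closer to a textbook $\Gamma$-convergence.
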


The same result holds for smooth supersolutions with the inequalities reversed.

\begin{proof}[Proof of \pref{Qcontenergy}]
 The construction of the solution is by finding the global energy minimizer.  Let $v$ be any solution of
\begin{equation}\label{e.minsoln}
 \Delta v = 0 \ \hbox{ in }  D' \cap \{u>0\} \ \hbox{ with } \ |\grad v| = \langle Q^2 \rangle^{1/2} \ \hbox{ on } \ \partial \{v>0\}
 \end{equation}
with $v = \varphi$ on $\partial D$. Then we claim $v \geq \varphi$.  If not slide $\varphi_t(x) = \varphi(x-te)$ increasing $t$, and decreasing $\varphi_t$, until it touches $v$ from below at a free boundary point $x$.  Touching cannot occur on $\partial D$ because $v = \varphi > \varphi_t$ there for $t>0$, and it cannot occur in $\{v>0\}$ by the strong maximum principle.  Now since $\varphi$ is smooth the viscosity supersolution condition says $|\grad \varphi(x)| \leq \langle Q^2 \rangle^{1/2}$ which is a contradiction of the strict subsolution property of $\varphi$.

Let $v^\ep$ be a global minimizer of the energy $E_\ep$ on the constraint set
\[ \mathcal{A} = \{ w \in H^{1}(D'): \ w = \varphi \ \hbox{ on } \ \partial D\}. \]
By \lref{localmin} there exists such a minimizer, $v^\ep$ is a viscosity solution of \eref{fb0} in $D$, and it satisfies the usual Lipschitz and nondegeneracy properties \lref{properties} and \lref{nondegen}.  Also there exists a minimizer $v^0$ corresponding to $E_0$ solving \eref{minsoln}.

By \cite[Theorem 4.3, Theorem 4.5]{AltCaffarelli}, which only relies on the upper and lower bounds for $Q$ and not the regularity, the free boundary $\partial \{v^\ep>0\}$ satisfies the Hausdorff dimension bound
\[ cr^{d-1}\leq \mathcal{H}^{d-1}(\partial \{v^\ep>0\} \cap B_r(x)) \leq Cr^{d-1}\]
for any $x \in \partial \{v^\ep>0\}$ with $B_r(x) \subset D$.  Thus the total number of the $\ep\Z^d$ lattice cubes which intersect $\partial \{v^\ep >0 \} \cup \partial D$ is bounded from above by
\[ \#\{k \in \Z^d: (\partial \{v^\ep >0 \} \cup \partial D) \cap([0,\ep)^d + \ep k) \neq \emptyset\} \leq C \ep^{1-d}\]
where the constant $C$ depends on the domain $D$.  Therefore
\[ |E_\ep(v^\ep) - E_0(v^\ep)| \leq C\ep.\]
Since $\partial \{v>0\}$ has the same Hausdorff measure bounds, the same estimate holds for $v$.  Then using the minimization properties of each $v$ and $v^\ep$ we obtain
\[ |E_0(v) - E_0(v^\ep)| \leq C\ep.\]
Now, taking a subsequence as we did in the proof of \pref{mean}, $v^\ep \to u$ uniformly, by nondegeneracy $\{v^\ep>0\} \to \{u>0\}$ in Hausdorff distance in $D$, and $\grad v^\ep \to \grad u$ almost everywhere.  This means that the energies converge and
\[ E_0(u) = E_0(v)\]
with the same boundary data on $\partial D$.   Thus $u$ minimizes $E_0$ over $\mathcal{A}$, and, and therefore is a solution of \eref{minsoln}.  Thus every subsequence has a subsequence converging uniformly to some $v \geq   \varphi$ and therefore
\[ \lim_{\ep \to 0} \inf_{D} (v^\ep - \varphi) \geq 0.\]

\end{proof}

\section{Examples}\label{s.examples}
In this section we give several examples where we can either exactly compute $Q_*,Q^*$ or achieve some explicit bounds.  The contents of this section will prove parts \partref{mainQdisc} and \partref{mainQint} of \tref{mainQ}.
\subsection{Laminar media}
Consider the special case of a laminar medium, $Q = Q(x_1)$ depends only one a single variable. The pinning interval can be explicitly identified, for $p \in S^{d-1}$,
\[
 I(p) = \left\{
\begin{array}{ll}
\langle Q^2 \rangle^{1/2} & p \neq \pm e_1 \vspace{1.5mm}\\
{[} \min Q, \max Q  {]} & p = \pm e_1.
\end{array}\right.
\]
The cell problem can be solved exactly in the case $p = e_1$ (or $-e_1$), for any $\alpha\in {[} \min Q, \max Q  {]}$,
\[ u_\alpha(x) = \alpha [(x -x_\alpha) \cdot e_1]_+ \ \hbox{ for any } \ x_\alpha \in Q^{-1}(\{\alpha\}). \]
From \pref{mean} we already know that $\langle Q^2 \rangle^{1/2} \in I(p)$.  The following lemma completes the characterization of $I(p)$ in the laminar case, and is a bit more general.
\begin{lemma}\label{l.laminar}
Suppose that $\grad Q \cdot e = 0$ for some unit direction $e$.  Then if $p \cdot e \neq 0$ then $Q_*(p) = Q^*(p) = \langle Q^2 \rangle ^{1/2}$.
\end{lemma}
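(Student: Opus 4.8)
The plan is to exploit a single consequence of the hypothesis: $\nabla Q \cdot e = 0$ means $Q(x + ae) = Q(x)$ for every $a \in \R$, so if $u$ solves the cell problem \eref{cell} then so does every translate $u(\cdot + ae)$; and since $p\cdot e \neq 0$, such a translation shifts the ``height'' of a plane-like solution's free boundary along $\hat p := p/|p|$ by $-a\,\hat p \cdot e$, which runs over all of $\R$ as $a$ varies. By \pref{mean} we already have $\langle Q^2\rangle^{1/2} \in [Q_*(\hat p), Q^*(\hat p)]$, so it remains to show $Q^*(\hat p) \le \langle Q^2\rangle^{1/2}$ and $Q_*(\hat p) \ge \langle Q^2\rangle^{1/2}$; I would carry out the first, the second being symmetric.

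Fix, via \pref{mean}, a plane-like solution $u_0$ of \eref{cell} of slope $\langle Q^2\rangle^{1/2}\hat p$, and recall the approximate corrector $u_t$ from \eref{appcell} (with the vector there taken to be $\hat p$), with $r(t) = \inf_{\partial\{u_t>0\}} |x\cdot\hat p|$ and $Q^*(\hat p) = \lim_{t\to\infty} t/r(t)$ by \lref{subadd}; after replacing $e$ by $-e$ we may assume $\hat p \cdot e > 0$. For $\lambda > 1$, the function $\lambda\, u_0(\cdot + ae)$ solves the free boundary problem with coefficient $\lambda\, Q(\cdot + ae) = \lambda Q$, hence is a strict subsolution of \eref{cell} in the sense of \sref{viscositysolutions}, and it obeys
\[ \big(\lambda\langle Q^2\rangle^{1/2}(\hat p\cdot x + a\,\hat p\cdot e) - \lambda C\big)_+ \ \prec\ \lambda\, u_0(x + ae) \ \prec\ \big(\lambda\langle Q^2\rangle^{1/2}(\hat p\cdot x + a\,\hat p\cdot e) + \lambda C\big)_+ .\]
Choosing $a = a(t) := (t/\lambda - \lambda C)/(\langle Q^2\rangle^{1/2}\,\hat p\cdot e)$ makes $\lambda\, u_0(\cdot + a(t)e) \prec t = u_t$ on $\{\hat p\cdot x = 0\}$, and both functions vanish for $\hat p\cdot x$ sufficiently negative (for $u_t$ by \lref{width}). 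Since $\lambda\, u_0(\cdot + a(t)e)$ is a \emph{strict} subsolution one may then invoke the comparison principle (\lref{strictcomparison}/\lref{Rcomparison}, after a sup-convolution of $u_0$ if an honest $R$-subsolution is wanted) to conclude $\lambda\, u_0(\cdot + a(t)e) \le u_t$ on $\{\hat p\cdot x < 0\}$. Consequently $\{u_t > 0\} \supseteq \{\hat p\cdot x > -a(t)\,\hat p\cdot e + C\} \cap \{\hat p\cdot x < 0\}$, so $r(t) \ge a(t)\,\hat p\cdot e - C = t/(\lambda\langle Q^2\rangle^{1/2}) - C'$ and $t/r(t) \le \lambda\langle Q^2\rangle^{1/2}\, t/(t - C'')$. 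Letting $t\to\infty$ gives $Q^*(\hat p) \le \lambda\langle Q^2\rangle^{1/2}$, and then $\lambda \downarrow 1$ gives $Q^*(\hat p) \le \langle Q^2\rangle^{1/2}$.

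For the bound $Q_*(\hat p) \ge \langle Q^2\rangle^{1/2}$ I would run the mirror argument: compare the maximal subsolution $u^t$ of \eref{appcell} against the strict supersolutions $\mu\, u_0(\cdot + ae)$ with $\mu < 1$, choosing $a$ so that $\mu\, u_0(\cdot + ae) \succ t$ on $\{\hat p\cdot x = 0\}$; comparison then forces the free boundary of $u^t$ downward, hence (in the notation of the symmetric construction) $r^*(t) \le t/(\mu\langle Q^2\rangle^{1/2}) + C$, so $t/r^*(t) \ge \mu\langle Q^2\rangle^{1/2}\, t/(t+C')$, and $t\to\infty$, $\mu\uparrow 1$ finishes. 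The place where $p\cdot e \neq 0$ is used is exactly the solvability of the defining equation for $a(t)$, and the place where $\nabla Q\cdot e = 0$ is used is that $u_0(\cdot + ae)$ is still a solution of the \emph{same} problem. I expect the one genuinely delicate point to be the comparison step $\lambda\, u_0(\cdot + a(t)e) \le u_t$ on the unbounded slab $\{\hat p\cdot x < 0\}$: upgrading the no-touching statements of \lref{strictcomparison} and \lref{Rcomparison} to a full ordering requires the strictness of the barrier to kill free-boundary touching together with the interior strong maximum principle, and on the $Q_*$ side one additionally leans on the known nondegeneracy of the maximal subsolution (\lref{nondegen}, using only part \partref{nondegen3} where the free boundary is outer regular). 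All the remaining $O(1)$ bookkeeping is absorbed into the limits $t\to\infty$ and $\lambda\downarrow 1$ (resp.\ $\mu\uparrow 1$).
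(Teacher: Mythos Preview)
Your route is genuinely different from the paper's and is a reasonable alternative. The paper argues by contradiction: assuming $Q_*(p)<Q^*(p)$, it takes plane-like solutions $u_*$ and $u^*$ at those two slopes, regularizes by inf/sup convolution, and slides (a multiple of) $u^*$ in the $e$-direction until it touches $u_*$ from above, contradicting strict comparison. You instead bound $Q^*(\hat p)$ directly by using the existing $\langle Q^2\rangle^{1/2}$-slope plane-like solution $u_0$ (translated in the $e$-direction and scaled by $\lambda>1$) as a barrier \emph{below} the approximate corrector $u_t$, then read off the bound from $r(t)$ and the subadditive limit. The structural difference is that the paper compares two plane-like solutions with distinct slopes; you compare a plane-like barrier against the minimal supersolution of the approximate cell problem. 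Both arguments exploit the $e$-invariance of $Q$ in exactly the same way.

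The gap you should be aware of is precisely the one you flag but do not close. Lemmas \ref{l.strictcomparison} and \ref{l.Rcomparison} are \emph{no-touching} statements that already assume the ordering $u\le v$; they do not by themselves yield $\lambda u_0(\cdot+a(t)e)\le u_t$. To upgrade them to a full ordering you would run a sliding argument in the $e$-direction, but the slab $\{\hat p\cdot x<0\}$ is unbounded in the $p^\perp$-direction, and for irrational $p$ neither $u_t$ nor $u_0$ is periodic there, so the first touching point can escape to infinity. This is exactly the technical difficulty the paper isolates and resolves: it takes lattice translations $k_n$ with $|k_n\cdot\hat p|\le C$ to bring the near-touching point into a bounded window, extracts a locally uniform limit of both functions, and then uses nondegeneracy at outer-regular free boundary points (\lref{nondegen}\partref{nondegen3}) together with the inner/outer regularity of the convolved functions to ensure the limiting touching point actually lies on both free boundaries, where \lref{strictcomparison} applies. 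Your approach would need to import the same compactness argument essentially verbatim (both $u_t$ and $u_0$ carry the Birkhoff monotonicity \eref{monotonicity}, so lattice-translate limits are well behaved). Once that is done, your bookkeeping with $t\to\infty$ and $\lambda\downarrow1$ (resp.\ $\mu\uparrow1$) is correct.
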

The idea for this lemma was communicated to us by I. Kim.  Basically if there were two distinct slopes in the pinning interval we could slide the smaller slope solution in the direction $e$ until it touches the larger slope solution from below contradicting the strong maximum principle.  There are some technical difficulties, the usual difficulty of regularity in comparison principles for viscosity solutions, the unbounded domain, and the lack of a nondegeneracy estimate for the maximal subsolutions in dimensions $d \geq 3$ all need to be dealt with.  Unfortunately this causes the proof to be rather long despite the simple idea.
\begin{proof}
Suppose that $p \cdot e \neq 0$,  and that $Q_*(p) < Q^*(p)$.  We take $p \cdot e >0$, the other case is similar.  There are solutions to \eref{appcell} $u_*$ and $u^*$ with respective slopes $Q_*$ and $Q^*$ and
\begin{equation}\label{e.stargrowth}
 \sup_{x \in \R^d} |u^*(x) - Q^* (x \cdot p)_+|  \leq C \ \hbox{ and } \  \sup_{x \in \R^d} |u_*(x) - Q_* (x \cdot p)_+| <+\infty.
 \end{equation}
We need to regularize $u^*,u_*$ a bit for our comparison argument, we do standard inf and sup convolutions
\[ u^*_\delta(x) = \inf_{|y-x| \leq \delta} u^*(x) \ \hbox{ and } \ u_*^\delta(x) = \sup_{|y-x| \leq \delta} u_*(x).\]
Now $u^*_\delta$ and $u_*^\delta$ satisfy the same bounds as above, are, respectively, sub and super harmonic in their positivity sets, and satisfy the free boundary condition, in the viscosity sense,
\[ |\grad u^*_\delta|(x) \leq \sup_{B_\delta(x)}Q(y) \ \hbox{ and } \ |\grad u_*^\delta|(x) \geq  \inf_{B_\delta(x)}Q(y) \]
for $x$ in the respective free boundaries $\partial \{ u^*_\delta>0\}$ and $\partial \{u_*^\delta >0\}$.

Call $\lambda = (1-2\|\grad Q\|_\infty \delta/\min Q)$, then 
\[ \lambda |\grad {u}^*_\delta|(x) \leq \lambda \sup_{B_\delta(x)}Q(y) \leq  \inf_{B_\delta(x)}Q(y) \ \hbox{ for } \ x\in \partial \{ u^*_\delta>0\}.\]
For $\delta$ sufficiently small $\lambda Q^* > Q_*$ still.

Next translate $\lambda {u}^*_\delta$ in the $e$ direction by
\[v_t(x) = \lambda {u}^*_\delta(x-te).\]
From the invariance of $Q$ in the $e$ direction $v_t$ is still a supersolution with $|\grad v_t|(x) \leq  \inf_{B_\delta(x)}Q(y)$ on the free boundary and
\[ \sup_{x \in \R^d} |v_t(x) - \lambda Q^* (x \cdot p -te\cdot p)_+|.\]
For sufficiently large positive $t$, $t > T_+$, we will have $v_t(x) >  {u}_*^\delta(x)$ in $\overline{\{{u}_*^\delta >0\}}$, while for sufficiently large negative $t$, $t < T_-$, $\sup_{x} ({u}_*^\delta(x) - v_t(x)) >0$.  Then decreasing $t$ from $T_+$ to $T_-$, by continuity, we find that $\inf_{\overline{\{{u}_*^\delta >0\}}}(v_{t_0}-{u}_*^\delta) = 0$ at some value $t_0$. 

If the infimum is not achieved, take a sequence of lattice translations $k_n$, with $|k_n \cdot p| \leq C$ so that 
\[ \min_{[0,1)^d \cap \overline{\{{u}_*^\delta >0\}}}(v_{t_0}-{u}_*^\delta)(x+k_n)  \leq 1/n.\]
Say that the minimum occurs at a point $x_n \in [0,1)^d$.  By the Lipschitz continuity, \lref{properties}, up to taking a subsequence we can assume that $x_n \to x_\infty$ and the translations $v_{t_0}(x+k_n)$ and ${u}_*^\delta(x+k_n)$ converge locally uniformly to some $v$ and $u$ respectively, and hence satisfy the same viscosity solutions conditions.  Now we need to check that the touching point $x_\infty$ is actually in $\overline{\{u >0\}}$.  
For this we want to use the nondegeneracy \lref{nondegen}. Note that $v_{t_0}$, as an inf convolution of the minimal supersolution $u^*_\delta$, satisfies the nondegeneracy estimate from \lref{nondegen}
\[ v_{t_0}(x) \geq c d(x,\partial\{v_{t_0} >0\}).\]
Now $v_{t_0}(x_n+k) \leq 1/n$ and so there is a point $y_n \in \partial \{v_{t_0}(\cdot+k_n)>0\}$ with $|y_n-x_n| \leq C/n$.  The positivity set $\{v_{t_0}(\cdot+k_n)>0\}$ has an exterior ball $B$ of radius $\delta$ at $y_n$, let $B'$ be the touching ball of radius $\delta/2$.  Then slide $B'$ by $B'+t(x_n - y_n)$ until it touches $\{u_*^\delta>0\}$ from the outside at some point $z_n$ for some $0 \leq t \leq 1$.  Since the ball has moved by at most distance $C/n$ the touching could only occur at a point of $\partial B'$ which is within distance $C/n$ of $\partial B$.  The boundaries of $\partial B'$ and $\partial B$ separate quadratically near $y_n$
\[ d(z,\partial B) \geq \frac{c}{\delta} |z-y_n|^2 \ \hbox{ for } \ z \in \partial B'\]
and so the touching points $z_n = y_n + O(n^{-1}+\delta^{1/2}n^{-1/2}) = x_n +O(n^{-1}+\delta^{1/2}n^{-1/2})$, in particular it also converges to $x_\infty$ as $n \to \infty$.  Thus for any $0<r \leq \delta$, by the nondegeneracy at outer regular points, \lref{nondegen},
\[ \sup_{B_r(z_n)} u_{*}^\delta(\cdot+k_n) \geq c r.\]
Passing to the limit we obtain the same nondegeneracy at $x_\infty$ for $u$ implying that indeed $x_\infty \in \partial\{ u>0\}$.

Thus we find $u$ and $v$ sub/superharmonic in their positivity sets, $\{u>0\}$ and $\{v>0\}$ are respectively $\delta$ inner regular and $\delta$ outer regular, $u$ and $v$ are respectively sub and supersolutions of $|\grad w| = \inf_{B_\delta(x)}Q(y)$ on $\partial \{w>0\}$,  they have distinct asymptotic slopes $Q_* <\lambda Q^*$, and $u$ touches $v$ from below at some point of $\R^d$. This contradicts \lref{strictcomparison}.
\end{proof}

\subsection{An example with pinning at every direction}
The special structure of laminar media prevents pinning except at the laminar direction.  Without special structural assumptions our conjecture is that pinning at \emph{every} direction is generic.  Despite this expectation it is not that obvious even to come up with one field $Q$ with this property, we give such an example here.

Let $\rho$ be a smooth radially symmetric bump function, $\rho \equiv 0$ outside of $B_{1/2}(0)$ and $\int \rho^2  \geq 1$. Given parameters $A>1$, $1>\delta >0$ to be chosen large and small respectively, define
\[ Q(x) = 1+\sum_{k \in \Z^d} A\rho(\tfrac{x-k}{\delta}).\]
\begin{lemma}\label{l.everydir}
Let $Q$ as above. If $\delta$ is sufficiently small, depending on dimension, and $A \geq C(d)\delta^{-(d-1)/2}$, then $[Q_*(e),Q^*(e)]$ is nontrivial for all $e \in S^{d-1}$.
\end{lemma}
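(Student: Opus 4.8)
The plan is to exhibit, at every direction $e$, a plane-like solution of the cell problem \eref{cell} whose slope is bounded away from the mean-field slope $\langle Q^2 \rangle^{1/2}$; since $\langle Q^2 \rangle^{1/2} \in [Q_*(e),Q^*(e)]$ by \pref{mean}, the interval is then nontrivial. First I would record that, because the bumps have disjoint supports, $\langle Q^2 \rangle = 1 + 2A\delta^d\int\rho + A^2 \delta^d \int \rho^2$, so $\langle Q^2 \rangle^{1/2}$ exceeds $\min Q = 1$ by a quantity governed by $A^2\delta^d$, while the ``codimension one weight'' $\int_{B_{\delta/2}(0)\cap \{x_1=0\}} Q^2 \sim A^2\delta^{d-1}$ carried by a single bump is bounded below precisely under the hypothesis $A \geq C(d)\delta^{-(d-1)/2}$. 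The idea is that a free boundary pinned \emph{inside} a periodic array of such bumps can support a large extra amount of codimension one area at essentially no cost, since there $Q \sim A$ makes the free boundary condition $|\grad u| \leq Q$ (for a supersolution) almost vacuous; the extra area forces the asymptotic slope above $\langle Q^2\rangle^{1/2}$, whereas on the flat parts between bumps, where $Q \equiv 1$, one keeps $|\grad u| \leq 1$.

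Concretely, for a rational direction $\hat\xi$ (with $\xi \in \Z^d \setminus \{0\}$ irreducible) I would run the construction of \pref{mean} on a $(\xi^\perp \cap \Z^d)$-periodic slab, but trap the minimizer between barriers that pull its free boundary into a sublattice of the bumps chosen invariant under $\xi^\perp \cap \Z^d$; inside each such bump the free boundary is made to develop a small wrinkle (or rounded notch) whose corners are rounded at a scale $\sim A^{-c(d)}$, small enough to sit well inside the region $\{\rho(\cdot/\delta) \gtrsim 1\}$ where $Q \gtrsim A$. By \lref{localmin} the resulting state is a viscosity solution of \eref{cell}, and by the oscillation bound of \lref{width} its free boundary stays in a bounded slab, so it is a genuine plane-like solution; computing its energy per period as in step~3 of the proof of \pref{mean}, the added wrinkle area shows the slope differs from $\langle Q^2\rangle^{1/2}$ by at least a direction-independent $\eta = \eta(d,\delta)>0$ (equivalently, via \lref{visccorr}, one gets $Q^*(\hat\xi) \geq \langle Q^2\rangle^{1/2}+\eta$). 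The irrational directions then follow from the rational ones by upper semicontinuity of $Q^*$ (\tref{cell}): $Q^*(e) \geq \limsup_{\hat\xi \to e} Q^*(\hat\xi) \geq \langle Q^2 \rangle^{1/2}+\eta$, so $Q_*(e) \leq \langle Q^2\rangle^{1/2} < Q^*(e)$ for every $e$.

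The step I expect to be the main obstacle is the rigorous verification of the free boundary condition along the wrinkled free boundary. Near a wrinkle trough the positivity set has a reentrant corner, so a harmonic function vanishing there has an unbounded gradient; one must round the corner at a carefully chosen scale (a power of $A^{-1}$ tied to the opening angle, which is where the exponent $(d-1)/2$ enters) so that the gradient stays below $Q \sim A$ throughout the rounded region, while checking that the adjacent $Q \equiv 1$ portion of the free boundary still satisfies $|\grad u| \leq 1$; the $R$-supersolution comparison of \sref{viscositysolutions} and the nontangential asymptotic expansions of \lref{asymptotics} are the natural tools. A secondary difficulty is uniformity over all rational directions, in particular small-modulus $\xi$ such as $\hat\xi = e_1$, where a plane-like free boundary could otherwise avoid every bump: there one must show the sublattice of anchoring bumps can always be chosen so that the construction is nonempty with a lower bound on $\eta$ independent of $\xi$.
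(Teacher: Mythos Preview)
Your approach is quite different from the paper's, and the argument as written has a genuine gap. The paper works on the \emph{other} side of the interval: it uses only the automatic bound $Q^*(e) \geq \langle Q^2\rangle^{1/2} \geq (1+A^2\delta^d)^{1/2}$ from \pref{mean}, and then shows $Q_*(e) \leq 1+C\delta$ by constructing, for each $e$, a global \emph{subsolution} of slope $\approx 1$ whose free boundary \emph{dodges} all the bumps. Since $Q\equiv 1$ off the bumps, the subsolution condition $|\grad v| \geq Q$ is then almost free; the dodging is done by a sup-convolution (via \lref{caffperturb1} and \lref{perturbsubcond}) of $(x\cdot e)_+$ with a radius function that bulges by $O(\delta)$ near each lattice point projected onto $e^\perp$, which is easy because the bumps have radius $\delta/2$ and projected separation $\geq 1/2$ for $\delta$ small. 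The hypothesis $A \geq C\delta^{-(d-1)/2}$ is exactly what makes $(1+A^2\delta^d)^{1/2} > 1+C\delta$.

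Your route---pushing $Q^*(e)$ strictly above $\langle Q^2\rangle^{1/2}$ by wrinkling the free boundary \emph{into} bumps---stumbles at the energy step. Step~3 of the proof of \pref{mean} pins down the slope of the \emph{smallest periodic minimizer}; it says nothing about the slope of a constrained local minimizer produced by \lref{localmin}. ``Added wrinkle area'' does not by itself move the asymptotic slope: the slope is a flux, $\alpha\,|\textup{period}| = \int_{\partial\{u>0\}/\xi^\perp} |\grad u|\,d\mathcal{H}^{d-1}$, so you would need $|\grad u|$ bounded \emph{below} along the wrinkle (not just above by $Q\sim A$), i.e.\ a genuine solution there, together with a lower bound on the wrinkle area that survives the minimization between your barriers. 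But \lref{localmin} does not force the minimizer to keep the wrinkles---it may simply iron them out unless the barriers themselves are already wrinkled sub/supersolutions, which is precisely the corner-rounding construction you flag as the main obstacle and do not carry out. Even the uniformity-in-$\xi$ issue you raise (e.g.\ $\hat\xi=e_1$, where a plane-like free boundary can sit entirely in $\{Q\equiv 1\}$) is a symptom of the same problem: nothing in the energy argument prevents the solution from avoiding the bumps altogether. The paper's ``dodge'' strategy sidesteps all of this.
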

\begin{proof}
By the results of the previous section
\[ Q^*(e) \geq \langle Q^2 \rangle^{1/2} \geq (1+A^2\delta^d)^{1/2}.\]
 Next, for $\delta$ sufficiently small universal, we construct a subsolution with slope at most $1+C\delta$, a small perturbation of $(e \cdot x)_+$, yielding
\[Q_*(e) \leq 1+C\delta.\]
Then as long as
\[ A \geq C\delta^{-(d-1)/2},\]
for a sufficiently large universal $C$, the pinning interval is nontrivial.

To make the second part of the argument precise it is convenient to use the type of perturbations described below in \lref{caffperturb1} and \lref{perturbsubcond}.  Consider $\Lambda$ the projection of $\Z^d \cap \{ - 2\delta < x \cdot e < -\delta/2\}$ onto $x \cdot e = 0$.  For $\delta$ sufficiently small, universal, each pair $z,w \in \Lambda$ are separated by at least distance $1/2$.  Let $\zeta(s)$ be a smooth function on $\R_+$ which is equal to $1$ for $0 \leq s \leq 1/5$ and equal to zero for $s \geq 1/4$.  Suppose that $\delta/2 < 1/5$.  Define
\[ h(x) = \delta+\tfrac{3}{2}\delta \zeta\left(d(x,\Lambda)\right)\]
and
\[ 
\begin{cases}
\Delta \psi = 0 &\hbox{in } x \cdot e >-\tfrac{5}{2}\delta \\
\psi(x) = \begin{cases}
h(x)^{\frac{1}{2-d}} & d \geq 3\\
\log h(x) & d=2
\end{cases} &\hbox{on } x \cdot e = -\tfrac{5}{2}\delta.
\end{cases}
\]
Since $h$ is smooth with $\|h\|_{C^2} \leq C\delta$, by the boundary regularity for the Dirichlet problem 
\[ \|\psi\|_{C^1} \leq C\delta^{\frac{1}{2-d}} \ \hbox{ in $d \geq 3$ or } \ \|\psi\|_{C^1} \leq C \ \hbox{ in } \ d=2.\]
  Then define $\varphi(x) = \psi(x)^{2-d}$, or $\varphi(x) = \exp(\psi(x))$ in $d=2$.  By maximum principle $\delta \leq \varphi \leq 5 \delta/2$.  Since $h$ is smooth with $\|h\|_{C^2} \leq C\delta$, by the boundary regularity for the Dirichlet problem, $\|\grad \varphi\|_\infty \leq C\delta$.  Furthermore, calling $x' = x- (x\cdot e)e$,
  \[ \delta \leq \varphi(x) \leq \delta(1+C\delta) \ \hbox{ for } \ -\tfrac{5}{2}\delta \leq x\cdot e \leq -\delta \ \hbox{ and } \ d(x',\Lambda) \geq 1/5.  \]
  
  Then define the sup convolution
\[ v(x) = \sup_{|\sigma| \leq 1} [(x+\sigma\varphi(x))\cdot e]_+.\]
By \lref{caffperturb1} and \lref{perturbsubcond}, using the upper bound on $\|\grad \varphi\|_\infty$, $v$ is subharmonic in its positivity set and
\[ |\grad v(x)| \geq 1-C\delta \ \hbox{ on } \ \partial \{v>0\}. \]
We aim to show that the free boundary of $v$ does not intersect a $\delta/2$ neighborhood of any lattice point.  Then $(1+C\delta)v$ will be a subsolution of \eref{unitscale} and we could conclude.  

Call the infinite cylinder $\Gamma_r = \{|x'| <r\}$.  Away from $\Lambda+\Gamma_{1/5}$ the free boundary of $v(x)$ satisfies
\[  \partial \{v>0\} \cap (\Lambda+\Gamma_{1/5})^C  \subset \{ -(1+C\delta)\delta \leq x \cdot e \leq -\delta\}\]
For $\delta$ sufficiently small so that $C\delta <1/2$ this will not intersect the $\Z^d+B_{\delta/2}$.   On the other hand, for any $z \in \Lambda$,
\[ \partial \{v>0\} \cap (z+\Gamma_{1/5}) = \{x \cdot e = -5\delta/2\} \cap \Gamma_{1/5}\]
which, by the set up, will also not intersect $\Z^d+B_{\delta/2}$.

\end{proof}

\subsection{Structure of discontinuities of $Q^*,Q_*$ in $d \geq 3$} Now we combine the previous two examples, take $Q$ as in the previous section on $\R^2$ and then extend to $\R^3$ as a constant in the $x_3$ direction.  That is
\[ Q(x) = 1+\sum_{k \in \Z^2} A\rho(\tfrac{x'-k}{\delta}) \]
where $x = x'+x_3e_3$.  Then by \lref{laminar} and \lref{everydir}
\[ [Q_*(p),Q^*(p)] = \langle Q^2 \rangle^{1/2} \ \hbox{ for } \ p_3 \neq 0 \]
while for $p_3 = 0$ it holds $Q^*(p) > \langle Q^2 \rangle^{1/2} > Q_*(p)$.  Thus the endpoints of the pinning interval are discontinuous along the hyperplane $p_3 = 0$.

A similar construction is possible for any rational subspace, we carry it out below.

\begin{proof}[Proof of \tref{mainQ} part \partref{mainQdisc}.]
Let $\xi_1,\dots \xi_{k} \in \Z^d \setminus \{0\}$ linearly independent and consider the rational subspace spanned by the columns of $\Xi = [\xi_1,\dots,\xi_k]$.  Complete $\xi_1,\dots \xi_{k}$ to a basis of $\R^d$ by addending lattice vectors $\xi_{k+1},\dots,\xi_d$.  We will choose $Q$ with $\xi_{k+1} \cdot \grad Q = \cdots = \xi_{d} \cdot \grad Q= 0$. 

Then, by \lref{laminar}, if $e \in S^{d-1} \setminus \Xi$ then $e \cdot \xi_j \neq 0$ for some $k+1 \leq j \leq d$ and therefore
\[ Q_*(e) = Q^*(e) = \langle Q^2 \rangle^{1/2}.\]
Now, to be more precise, we choose
\[ Q(x) = 1+\sum_{m \in \Z^k} A\rho(\frac{x\cdot \xi_1-m_1}{\delta},\dots,\frac{x\cdot \xi_k-m_k}{\delta}) \]
If $e \in \Xi$ then, using the invariance of $Q$ in the $\Xi^\perp$ directions, by \sref{cell} above there are cell problem solutions sharing the same invariances, so we can just look for a cell problem solution of the form
\[ v(x) = u(x \cdot f_1,\dots,x\cdot f_k) \]
where $F = [f_1\dots f_k]$ is an orthonormal basis for $\Xi$ and $u: \R^k \to [0,\infty)$ is a solution of
\[ \Delta u = 0 \ \hbox{ in } \ \{u>0\}, \ \hbox{ with } \ |\grad u|(y) = Q(Fy) \ \hbox{ on } \ \partial \{u>0\}\]
with
\[ \sup_{\R^k} |v(y) - \alpha (e\cdot Fy)_+| < + \infty.\]
Now $Q$ is periodic with respect to the lattice generated by $\mathcal{Z} = \{F^T\xi_j\}_{j=1,\dots,k}$, even though this is not the lattice $\Z^k$, basically the same arguments as \lref{everydir} show that for $\delta$ sufficiently small and $A$ sufficiently large, depending only on universal parameters and the minimum distance between lattice points of $\mathcal{Z}$, there is a nontrivial pinning interval bounded below in width independent of $e$.
\end{proof}

\section{Limits of solutions to the $\ep$-problem}\label{s.limitsofep}

Let $U \subset \R^d$ an open domain, consider a sequence of solutions $u^\ep$ to
\begin{equation}\label{e.fbep}
  \begin{cases}
    \Delta u^\ep = 0 & \mbox{in } \{ u > 0 \} \cap U \\
    |\grad u^\ep| = Q(x/\ep) & \mbox{on } \partial \{ u > 0 \} \cap U
  \end{cases}
\end{equation}
which converge locally uniformly in $U$ to some $u$.  Then, we will show in this section, that $u$ solves in the viscosity sense
\begin{equation}\label{e.fbhom}
  \begin{cases}
    \Delta u = 0 & \mbox{in } \{ u > 0 \} \\
    \grad u \in [Q_*(\grad u),Q^*(\grad u)] & \mbox{on } \partial \{ u > 0 \}.
  \end{cases}
  \end{equation}
  The above equation is to be interpreted in the viscosity sense.  This is the content of \tref{kcl}.
  
  As mentioned in the introduction  The result of this section is not new, it can be derived from the paper of Kim \cite{Kim} on an associated dynamic problem.  Also, a special case is done Caffarelli-Lee~\cite[lemma 3.4]{CaffarelliLee}.  We include the argument here for completeness, and because it is quite simple in the static setting we consider here.

  Note that, assuming the $u^\ep$ are uniformly bounded, they are also uniformly Lipschitz continuous by \lref{properties} and therefore have a uniformly convergent subsequence.  Convergence of the whole sequence is unlikely to hold without some additional specification, e.g. minimality, maximality, energy minimization or in the case $Q^*(e) = Q_*(e)$ at every direction.  We will make this rigorous below in \sref{minimalsoln} when we discuss the limits of the minimal supersolution and maximal subsolution.

  \begin{proof}[Proof of \tref{kcl}.]
  It is standard to check $\Delta u = 0$ in $\{u >0\}$. We check the supersolution condition on the free boundary, the subsolution condition is analogous.   Suppose $\varphi$ is a smooth test function touching $u$ from below at some point $x_0 \in \partial {\{u >0\}} \cap U$ with
  \[ \Delta \varphi(x_0) > 0. \]  
  By standard arguments one can perturb so that $\varphi(x) < u(x)$ for $x \neq x_0$ and $\Delta\varphi(x) >0$ in a small neighborhood of $x_0$, which we still call $U$.  Now there exists a sequence $U \ni x_\ep \to x_0$ and constants $c_\ep$ such that
  \[ \varphi(x)+c_\ep \ \hbox{ touches $u^\ep(x)$ from below at $x_\ep$ in $K \subset \subset U$.} \]
  Since $u^\ep$ are harmonic and $\varphi$ is strictly subharmonic the touching points $x_\ep$ must be on the free boundary $\partial \{ u^\ep >0\} \cap \partial \{\varphi +c_\ep >0\}$.  Let $k_\ep \in \ep \Z^d$ with $|k_\ep - x_\ep| \leq C\ep$ and $k_\ep-x_\ep \cdot \grad \varphi(x_\ep) >0$. Up to taking a subsequence
  \[ \ep^{-1}(k^\ep-x_\ep) \to \tau \ \hbox{ with } \ \tau \cdot \grad \varphi(x_0) >0\].
   Now we blow up at $k_\ep$, defining
  \[ v^\ep(x) = \frac{1}{\ep}u^\ep(k_\ep + \ep x) \ \hbox{ and } \ \varphi^\ep = \frac{1}{\ep}\varphi(k_\ep+\ep x).\]
  By the Lipschitz estimate, \lref{properties}, $v^\ep(x) \leq C+C|x|$ and is uniformly Lipschitz continuous.  Thus, up to a subsequence, we can take limits $v^\ep \to v$ and $\varphi^\ep \to \grad \varphi(x_0) \cdot (x+\tau)$ locally uniformly.  Then, by the stability of viscosity solutions under uniform convergence, $v$ solves in $\R^d$
  \[ \Delta v = 0 \ \hbox{ in } \ \{ v>0\}, \ \hbox{ with } \ |Dv| = Q(x) \ \hbox{ on } \ \partial \{ v >0\}, \]  
  and furthermore
  \[ v(x) \geq (\grad \varphi(x_0) \cdot (x+\tau))_+ \geq (\grad \varphi(x_0) \cdot x)_+ \ \hbox{ in } \ \R^d.\]
By \lref{visccorr} 
\[|\grad \varphi(x_0)| \leq Q^*(\tfrac{\grad\varphi(x_0)}{|\grad\varphi(x_0)|}).\]
Thus we obtain the viscosity solution condition, if $\varphi$ is a smooth test function touching $u$ from below at some point $x_0 \in \partial {\{u >0\}} \cap U$,
\[ \min \{|\grad \varphi(x_0)| -Q^*(\tfrac{\grad\varphi(x_0)}{|\grad\varphi(x_0)|}),\Delta u(x_0)\} \leq 0  .\]
  \end{proof}


\section{The continuous part of the pinning interval}\label{s.contpart}
In this short section we give an abstract definition for what we call the continuous part of the pinning interval $I_{cont}(e)$ which will be a subset of the pinning interval $I(e) = [Q_*(e),Q^*(e)]$.  The definition is basically exactly designed so that the perturbed test function argument will work when we consider the convergence of the minimal supersolutions / maximal subsolutions.  This makes the perturbed test function argument easy, the entirety of the difficulty is transferred onto proving properties of $I_{cont}$.

Recall the half-space subsets we introduced before.  Let $e \in S^{d-1}$ and $U \subset \{x \cdot e = 0\}$ relatively open and connected.  Define
\[ D_e(U) = \{ x \in \R^d : x \cdot e >0 \ \hbox{ and } \ x - (x\cdot e) e \in U\}.\]
The definitions will use domains of this type because they come up naturally in the perturbed test function argument.

\begin{definition}\label{d.contsubsuper}
Let $e \in S^{d-1}$, we say that the slope $\alpha e$ is subsolution continuously pinned if the following holds.  For all $\lambda>0$ there exists a $\delta>0$ such that if $\varphi$ smooth on $D = D_{e'}(U)$, for some $U$ a domain of $\{x \cdot e' = 0\}$,
\[ \sup_{D \cap \{ \varphi>0\}}\left|\frac{\grad\varphi}{ |\grad \varphi|} -e\right| +|e'-e|\leq \delta,\]
  $\varphi$ is harmonic in $\{\varphi>0\} \cap D(U)$, and is a subsolution of
\[  |\grad \varphi| \geq (1+\lambda)\alpha \ \hbox{ on } \ \partial \{ \varphi >0\} \cap D,\]  
then for all $\ep>0$ there exists a subsolution $v^\ep$ of \eref{fb0} in $D$ such that
\[ \lim_{\ep \to 0} \sup_{\partial D} |v^\ep - v| = 0, \quad \lim_{\ep \to 0} \inf_{D} (v^\ep - v)  \geq 0, \]
and
\[\lim_{\ep \to 0} d_H(\{v^\ep>0\} \cap \partial D,\{\varphi>0\} \cap \partial D) = 0, \quad \lim_{\ep \to 0} d_H(\{v^\ep>0\} \cap D,\{v^\ep\vee\varphi>0\} \cap D) = 0.\]
Supersolution continuously pinned is define analogously with inequalities reversed where necessary.
\end{definition}

We call $I_{cont}(e)$ to be the set of slopes $\alpha e$ which are both subsolution and supersolution continuously pinned.  As we will see below $I_{cont}(e)$ is actually an interval.  The parameter $\lambda>0$ in the above definition is necessary to make sure that $I_{cont}$ closed and nonempty.  It turns out that on the interior of $I_{cont}$ a stronger condition holds, basically it is \dref{contsubsuper} without the parameter $\lambda>0$. We write that out here.

\begin{definition}\label{d.contstrictsubsuper}
Let $e \in S^{d-1}$, we say that the slope $\alpha e$ is strongly subsolution continuously pinned if the following holds.  There exists a $\delta>0$ such that if $\varphi$ smooth on $D = D_{e'}(U)$, for some $U$ a domain of $\{x \cdot e' = 0\}$,
\[ \sup_{D \cap \{ \varphi>0\}}\left|\frac{\grad\varphi}{ |\grad \varphi|} -e\right| +|e'-e|\leq \delta,\]
  $\varphi$ is harmonic in $\{\varphi>0\} \cap D(U)$, and is a subsolution of
\[  |\grad \varphi| >\alpha \ \hbox{ on } \ \partial \{ \varphi >0\} \cap D,\]  
then for all $\ep>0$ there exists a subsolution $v^\ep$ of \eref{fb0} in $D$ such that
\[ \lim_{\ep \to 0} \sup_{\partial D} |v^\ep - v| = 0, \quad \lim_{\ep \to 0} \inf_{D} (v^\ep - v)  \geq 0, \]
and
\[\lim_{\ep \to 0} d_H(\{v^\ep>0\} \cap \partial D,\{\varphi>0\} \cap \partial D) = 0, \quad \lim_{\ep \to 0} d_H(\{v^\ep>0\} \cap D,\{v^\ep\vee\varphi>0\} \cap D) = 0.\]
Strongly supersolution continuously pinned is define analogously with inequalities reversed where necessary.
\end{definition}

We give a result collecting some easy consequences of the definitions, plus a more difficult result, but it is one we have already proven above in \pref{Qcontenergy}.
\begin{lemma}\label{l.Qcont1}
Let $e \in S^{d-1}$.  
\begin{enumerate}[label=(\roman*)]
\item The set of subsolution continuously pinned slopes at direction $e$ is an interval $[Q_{*,cont}(e),\infty)$. The interior values are strongly subsolution continuously pinned. 
\item The set of supersolution continuously pinned slopes at direction $e$ is an interval $[0,Q^*_{cont}(e)]$. The interior values are strongly supersolution continuously pinned.
\item The endpoints $Q_{*,cont}(e) \leq Q^*_{cont}(e)$ are, respectively, upper semicontinuous and lower semicontinuous as functions on $S^{d-1}$.
\item\label{part.Qcontenergy} The energy minimizing slope is both subsolution and supersolution continuously pinned
\[ Q_{*,cont}(e) \leq \langle Q^2 \rangle^{1/2} \leq Q^*_{cont}(e).\]
\end{enumerate}
\end{lemma}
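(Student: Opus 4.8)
The plan is to obtain (i) and (ii) by elementary manipulations of the inequalities appearing in \dref{contsubsuper}, to read off (iv) directly from \pref{Qcontenergy}, and then to deduce (iii) by combining the ``interior values are strong'' part of (i)--(ii) with a small perturbation of the base direction; the inequality $Q_{*,cont}(e)\le Q^*_{cont}(e)$ in (iii) will then be a corollary of (iv). For (i) (the supersolution case (ii) being identical with inequalities reversed) the key observation is that a smooth $\varphi$ which is a subsolution of $|\grad\varphi|\ge c$ on its free boundary is automatically a subsolution of $|\grad\varphi|\ge c'$ for every $c'\le c$, and that the direction hypothesis in \dref{contsubsuper} does not involve $\alpha$. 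Hence if $\alpha e$ is subsolution continuously pinned and $\alpha'>\alpha$, then given $\lambda'>0$ one sets $\lambda:=(1+\lambda')\alpha'/\alpha-1>0$ and takes $\delta$ to be the tolerance $\delta(\lambda)$ furnished by $\alpha$: any $\varphi$ admissible for the data $(\alpha',\lambda',\delta)$ is admissible for $(\alpha,\lambda,\delta)$, since $(1+\lambda')\alpha'=(1+\lambda)\alpha$, so a recovery sequence exists and $\alpha' e$ is subsolution continuously pinned. Thus the set $S_e$ of subsolution continuously pinned slopes is upward closed. It is also closed: if $\alpha_n\downarrow\alpha$ with all $\alpha_n\in S_e$, then for $\lambda>0$ one chooses $n$ with $(1+\tfrac{\lambda}{2})\alpha_n\le(1+\lambda)\alpha$ and takes $\delta:=\delta_n(\tfrac\lambda2)$, and any $\varphi$ admissible for $(\alpha,\lambda,\delta)$ is admissible for $(\alpha_n,\tfrac\lambda2,\delta)$. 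A closed, upward closed subset of $(0,\infty)$ is either empty or of the form $[Q_{*,cont}(e),\infty)$, and emptiness is excluded by (iv). Finally, for $\alpha>Q_{*,cont}(e)$ one picks $\alpha_0\in[Q_{*,cont}(e),\alpha)\subset S_e$ and uses its property with $\lambda:=\alpha/\alpha_0-1>0$: any $\varphi$ with $|\grad\varphi|>\alpha=(1+\lambda)\alpha_0$ on its free boundary is a subsolution of $|\grad\varphi|\ge(1+\lambda)\alpha_0$, so the conclusion of \dref{contstrictsubsuper} holds for $\alpha e$.

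For (iv) I claim $\langle Q^2\rangle^{1/2}e$ is both subsolution and supersolution continuously pinned at every $e\in S^{d-1}$; since $Q_{*,cont}(e)$ is the left endpoint of $S_e$ and $Q^*_{cont}(e)$ the right endpoint of its supersolution analogue, this yields the sandwich $Q_{*,cont}(e)\le\langle Q^2\rangle^{1/2}\le Q^*_{cont}(e)$ and in particular $S_e\ne\emptyset$. Given $\lambda>0$ I choose $\delta>0$ small enough that unit vectors $n,e',e$ with $|n-e|+|e'-e|\le\delta$ satisfy $n\cdot e'>0$. Then for any $\varphi$ as in \dref{contsubsuper} with $\alpha=\langle Q^2\rangle^{1/2}$ and this $\delta$, one has $\tfrac{\grad\varphi}{|\grad\varphi|}\cdot e'>0$ on $\overline{\{\varphi>0\}}\cap\overline{D}$, while $|\grad\varphi|\ge(1+\lambda)\langle Q^2\rangle^{1/2}$ on $\partial\{\varphi>0\}\cap D$ makes $\varphi$ a strict subsolution of $|\grad\varphi|>\langle Q^2\rangle^{1/2}$, with margin $1+\lambda>1$. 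These are precisely the hypotheses of \pref{Qcontenergy} applied with the direction $e'$ in place of $e$ (possibly after restricting $U$ to an exhausting family of bounded subdomains on which $\varphi$ is bounded below and passing to a diagonal sequence), and its conclusion is the recovery-sequence statement demanded by \dref{contsubsuper}, the Hausdorff convergence on $\partial D$ being immediate from the uniform convergence there together with $|\grad\varphi|>0$ on $\partial\{\varphi>0\}$. The supersolution assertion uses the supersolution version of \pref{Qcontenergy} recorded just after it.

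For (iii), to prove upper semicontinuity of $Q_{*,cont}$ I fix $e$ and $\alpha>Q_{*,cont}(e)$, so by (i) $\alpha e$ is strongly subsolution continuously pinned with some tolerance $\delta_0>0$. For $|e'-e|<\delta_0/2$, any test function $\varphi$ admissible at base direction $e'$ with tolerance $\delta_0/2$ has its domain direction and gradient direction within $\delta_0$ of $e$ and satisfies, for any $\lambda>0$, $|\grad\varphi|\ge(1+\lambda)\alpha>\alpha$ on its free boundary; the strong pinning at $e$ then produces the recovery sequence, so $\alpha e'$ is subsolution continuously pinned and $Q_{*,cont}(e')\le\alpha$. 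Hence $\limsup_{e'\to e}Q_{*,cont}(e')\le\alpha$ for all $\alpha>Q_{*,cont}(e)$, which is upper semicontinuity; lower semicontinuity of $Q^*_{cont}$ is the symmetric argument using (ii), and $Q_{*,cont}(e)\le Q^*_{cont}(e)$ follows from (iv). The whole argument is formal bookkeeping --- all of the genuine analysis lives in \pref{Qcontenergy}, already established --- and the only point requiring any care is the hypothesis-matching in the step for (iv), where the margin $\lambda>0$ built into \dref{contsubsuper} is exactly what promotes the perturbed test function to a \emph{strict} subsolution so that \pref{Qcontenergy} applies; this is also the reason the marginless ``strong'' conditions of \dref{contstrictsubsuper} are available only on the interior of the pinning interval.
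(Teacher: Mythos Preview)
Your proposal is correct and follows essentially the same approach as the paper's proof: monotonicity and closedness of the pinned-slope set from elementary inequality manipulations on \dref{contsubsuper}, strong pinning on the interior by choosing $\lambda=\alpha/\alpha_0-1$, part (iv) read off from \pref{Qcontenergy}, and semicontinuity via the strong property with a halved (the paper uses a third of) the tolerance $\delta$. Your bookkeeping for upward-closedness and closedness in (i) is in fact more explicit than the paper's, which sketches the monotonicity with a terse remark about $s\varphi$ and leaves the closedness sentence visibly unfinished; and you correctly flag the hypothesis-matching issues in applying \pref{Qcontenergy} (boundedness of $U$, strictness of the subsolution condition) that the paper passes over in silence.
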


\begin{definition}\label{d.contpinned}
Let $e \in S^{d-1}$, we say that the slope $\alpha e$ is continuously pinned if it is both subsolution and supersolution continuously pinned, i.e. $\alpha \in I_{cont}(e)=[Q_{*,cont}(e),Q^*_{cont}(e)]$.
\end{definition}

We reiterate that the definition is designed to be exactly what we need to prove \tref{main}.  The difficulty is then transferred to showing nice properties of $I_{cont}(e)$. The strongest possible result we could expect to prove about $Q_{*,cont}$ and $Q^*_{cont}$ is that
\[ Q_{*,cont}(e) = \limsup_{e'\to e} Q_*(e) \ \hbox{ and } \ Q^*_{cont}(e) = \liminf_{e'\to e}Q^*(e).\]
 In $d=2$ we will make significant steps in this direction, see below in \sref{irr}.  We will prove that the above hold at irrational directions, and hold approximately at rational directions with large modulus.  In $d \geq 3$ the best estimate we will obtain is the one above in \lref{Qcont1} part~\partref{Qcontenergy}.  To really handle $d \geq 3$ we expect it would be necessary to refine \dref{contsubsuper} significantly to keep information about the range of $\grad f$, i.e. if it is faceted, lying in a certain rational subspace.

\begin{proof}[Proof of \lref{Qcont1}]
The conditions \dref{contsubsuper} are monotone.  If $\alpha e$ is subsolution continuously pinned then $s\alpha e$ is subsolution continuously pinned for $s >1$.  This is because if $\varphi$ is a subsolution as in \dref{contsubsuper} then $s\varphi$ is as well. One can argue analogously for supersolutions with $s <1$.

Suppose $\alpha e$ is subsolution continuous pinned then let $\alpha'>\alpha$.  Suppose that $\varphi$ is a strict subsolution with the free boundary condition $|\grad \varphi| > \alpha'$.  Then also $|\grad\varphi| \geq (1+\lambda)\alpha$ with $\lambda = \frac{\alpha'}{\alpha}-1$.  Then apply \dref{contsubsuper} using that $\alpha e$ is subsolution continuously pinned, there is a $\delta>0$ depending on $\frac{\alpha'}{\alpha}-1$ so that \dref{contstrictsubsuper} holds.

The conditions \dref{contsubsuper} are closed.  Suppose $\alpha'$ is subsolution continuous pinned for every $\alpha'>\alpha$, by the above $\alpha'$ is strongly subsolution continuously pinned. Let $\lambda>0$ and choose $\alpha' = (1+\lambda)\alpha$, there is $\delta>0$ so that .  

We prove the upper semi-continuity of 
\[ Q_{*,cont}(e) = \inf \{ \alpha : \hbox{ $\alpha e$ is subsolution continuously pinned}\}. \]
Suppose that $Q_{*,cont}(e) < \alpha' <\alpha$ so $\alpha'$ and $\alpha$ are strongly subsolution continuously pinned.  Let $\delta>0$ from \dref{contstrictsubsuper} for $\alpha'e$, suppose that $D = D_{e''}(U)$, $e',e'' \in S^{d-1}$ with $|e'-e| \leq \delta/3$, $|e''-e'| \leq \delta/3$, and $\varphi$ is a subsolution with
\[ |\grad \varphi| \geq \alpha' \ \hbox{ on } \ \partial \{\varphi>0\} \cap D \ \hbox{ and } \ \sup_{D \cap \{\varphi >0\}}|\frac{\grad \varphi}{|\grad \varphi|} - e'| \leq \delta/3.\]
Then there exists a sequence of subsolutions $v^\ep$ to \eref{fb0} converging to $\varphi$ in $D$ in the sense of \dref{contstrictsubsuper}.  Thus $Q_{*,cont}(e) \leq \alpha' < \alpha$, and so $\{ Q_{*,cont} < \alpha\}$ is open for every $\alpha$.

Finally part \partref{Qcontenergy} was already proven in \pref{Qcontenergy}.
\end{proof}

\section{Irrational directions}\label{s.irr}
In this section we consider plane-like solutions at irrational directions, $e$ not parallel to any lattice vector in $\Z^d \setminus \{0\}$.  The main result of this section is the continuity of $Q_*,Q^*$ at irrational directions in $d=2$, \tref{mainQ} part \partref{mainQcont}, which we repeat here.  
\begin{theorem}\label{t.irrcont}
When $d=2$ the upper and lower endpoints of the pinning interval, $Q^*$ and $Q_*$ respectively, are continuous at at irrational directions $e \in S^{1}\setminus \R\Z^2$.
\end{theorem}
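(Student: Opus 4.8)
The plan is to reduce the statement to two one-sided inequalities and then build the required plane-like sub/supersolutions by sewing a foliation. By the semicontinuity already recorded in \tref{mainQ}, part \partref{mainQexist} ($Q^*$ upper, $Q_*$ lower semicontinuous in $e$), continuity of both at the irrational direction $e$ is equivalent to
\[ \liminf_{e'\to e} Q^*(e') \ge Q^*(e) \qquad\text{and}\qquad \limsup_{e'\to e} Q_*(e') \le Q_*(e). \]
These two statements are dual: the $Q^*$ estimate is carried out with (minimal super)solutions of the cell problem and \lref{visccorr}(1), the $Q_*$ estimate with maximal subsolutions and \lref{visccorr}(2) — the latter being legitimate only because in $d=2$ one has the nondegeneracy of maximal subsolutions, \lref{nondegen}, part \partref{nondegen4}, which also feeds into the construction of the minimal-slope plane-like solution in \lref{minimalsuper}. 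I describe the $Q^*$ side.

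Fix $\alpha<Q^*(e)$. By \lref{visccorr}(1) it suffices, for every $e'$ in a fixed neighborhood of $e$, to produce a global supersolution $u$ of \eref{cell} with $u\ge\alpha(e'\cdot x)_+$ and $\inf_{B_C(0)}u=0$ for universal $C$. Start from the Birkhoff-monotone plane-like solution $w$ of \eref{cell} at the maximal slope $Q^*(e)e$, provided by \lref{minimalsuper}; its lattice translates $w(\cdot+k)$ are totally ordered by $\operatorname{sign}(k\cdot e)$, and in $d=2$ the free boundaries $\Gamma_k=\partial\{w>0\}-k$ are pairwise disjoint or equal and confined to slabs $\{\,|e\cdot x+e\cdot k|\le C\,\}$ of universal width by \lref{width}. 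These leaves foliate $\R^2$ away from a countable family of ``gaps''. The key input, and the only place irrationality of $e$ is used, is that the gaps are \emph{uniformly localized}: over a window of length $L$ in the $e^\perp$-direction there are translates whose free boundaries are $C/L$-dense, along $e$, among the heights they realize on that window. Concretely this rests on the Diophantine fact that for $e\in S^1\setminus\R\Z^2$ there are lattice vectors $\xi_j$ with $|\xi_j|\to\infty$ and $|\xi_j\cdot e|\lesssim|\xi_j|^{-1}$, together with a three-distance estimate controlling their spacing along $e^\perp$.

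Given $e'$ with $\theta:=|e-e'|$ small, set the scale $L\asymp\theta^{-1/2}$ and select, out of the foliation, a bi-infinite chain of leaves, each used over a transverse window of length $\asymp L$, arranged so that the chain stays at constant $e$-height on the side of $e^\perp$ on which $\{e'\cdot x=0\}$ lies above the $e$-leaves, and \emph{descends} at rate $\tan\theta$ per unit transverse length on the other side, with leaf-to-leaf height error $O(\theta L)+O(L^{-1})=O(\theta^{1/2})$ — whence the choice $L\asymp\theta^{-1/2}$ is optimal. On each window put the corresponding translate of $w$ below its leaf, and connect consecutive windows across the localized gaps: since the chain is never made to jut out of its positivity set, each interpolating patch is of supersolution type (the free boundary ``caves in'', so an interpolating harmonic function can be arranged with $|\grad|\le Q$ on its zero set), and since the gaps are localized each patch lives on a bounded region where a valid patch is produced by a barrier; taking lower envelopes in the overlaps and using the Birkhoff ordering keeps the glued $u$ a viscosity supersolution of \eref{cell}. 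One checks $u\ge(Q^*(e)-C\theta^{1/2})(e'\cdot x-C)_+$, and, anchoring the flat part of the chain at $e$-height $0$ near the origin, $u$ vanishes on $B_C(0)$. Hence $u\ge\alpha(e'\cdot x)_+$ once $\theta$ is small enough that $Q^*(e)-C\theta^{1/2}>\alpha$, and \lref{visccorr}(1) gives $Q^*(e')\ge\alpha$. Letting $\alpha\uparrow Q^*(e)$ proves $\liminf_{e'\to e}Q^*(e')\ge Q^*(e)$, and with upper semicontinuity, continuity of $Q^*$ at $e$; the $Q_*$ statement follows by the dual construction around the minimal-slope plane-like solution.

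The hard part is the foliation analysis of the last two paragraphs: locating the free boundaries of the lattice translates precisely enough to see that the gaps are localized, and that a descending chain can be crossed by a bounded supersolution patch. This is exactly where irrationality is indispensable — at a rational direction the foliation keeps an orientation and its gaps are only one-sidedly crossable, which is the mechanism behind the possible discontinuities at rational directions in \tref{mainQ}, part \partref{mainQdisc}, the one-sided directional limits in \tref{mainQ}, part \partref{mainQcont1}, and the $|\xi|^{-1/2}$ loss at rational directions in \tref{mainQcont}.
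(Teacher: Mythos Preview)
Your overall architecture matches the paper's: reduce to one-sided inequalities via the known semicontinuity, use a foliation by lattice translates of a Birkhoff plane-like solution at slope $Q^*(e)$ (resp.\ $Q_*(e)$), exploit irrationality to localize the gaps, and sew leaves together to produce a sub/supersolution at the nearby direction $e'$. The appeal to \lref{visccorr} to close the argument is also right.

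The gap is in the sewing step, which is the technical heart. Your proposal---``connect consecutive windows across the localized gaps \dots\ each interpolating patch is of supersolution type \dots\ a valid patch is produced by a barrier; taking lower envelopes in the overlaps''---is not a construction. Two concrete problems. First, the leaves are totally ordered by Birkhoff, so a min of two adjacent leaves is simply the smaller one; localizing each leaf to a window and then taking a global min produces a discontinuous function unless adjacent leaves are arranged to \emph{agree} at the window boundary, which they do not. Second, the ``interpolating patch produced by a barrier'' is never built, and there is no evident barrier furnishing a supersolution of \eref{cell} that interpolates between two distinct leaves over a bounded region while retaining the lower bound $\alpha(e'\cdot x)_+$.

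The paper resolves both issues at once with a \emph{bending} device (\sref{bending}): a sup-convolution $v\mapsto\overline v^{\varphi}$ by a carefully chosen $\varphi$ with $\varphi\Delta\varphi\ge(d-1)|\nabla\varphi|^2$ (\lref{caffperturb1}), which (a) preserves the sub/supersolution property up to a small, explicit error in the free-boundary slope (\lref{perturbsubcond}), and (b) produces a \emph{strict} increase of size $\asymp\varphi$ near the free boundary (\lref{bendingphi}). Centering one bent leaf on each window, the strict increase dominates the neighboring leaf at the window boundary provided the foliation is weakly continuous there; this weak continuity is the precise form of ``localized gaps'' proved in \pref{irrfamily}: over every transverse interval of length $\ge r(\delta)$ there is a point where $v_{s+\sigma}-v_s$ is small (obtained by an averaging argument using density of $\{k\cdot e:k\in\Z^2\}$ in $\R$, not a three-distance estimate on leaf heights as you suggest). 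The glued object is then a continuous subsolution---a max of localized bent leaves, capped by a translated linear piece away from the free boundary---with no separate patch needed. The bending perturbation, not a barrier, is the missing idea.
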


By the same techniques we are also able to derive information on $Q_{*,cont}$ and $Q^{*,cont}$ at irrational directions and rational directions with large modulus.

\begin{lemma}\label{l.irrQcont}
Let $d=2$.
\begin{enumerate}[label=(\roman*)]
\item Let $\xi \in \Z^2 \setminus \{0\}$ irreducible.  Then 
\[ Q_{*,cont}(\xi) \leq Q_*(\xi) + C|\xi|^{-1/2} \ \hbox{ and } \  Q^*_{cont}(\xi) \geq Q^*(\xi) - C|\xi|^{-1/2}.\]
\item Let $e \in S^{1}\setminus \R\Z^2$ irrational.  Then
\[ Q_{*,cont}(\xi) = Q_*(e) \ \hbox{ and } \ Q^*_{cont}(e) = Q^*(e).\]
\end{enumerate}
\end{lemma}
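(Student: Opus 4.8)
The plan is to read off both parts from the approximate‑foliation construction that occupies the rest of Section~\ref{s.irr} and whose companion statement is \tref{irrcont}; I only indicate how the pieces combine and where the genuine work lies. The two endpoints are dual, so I concentrate on $Q_{*,cont}$. By \lref{Qcont1} the subsolution continuously pinned slopes at $e$ form a half‑line $[Q_{*,cont}(e),\infty)$, which already contains $[\langle Q^2\rangle^{1/2},\infty)$ by \lref{Qcont1} part~\partref{Qcontenergy}; and by the a~priori bound \eref{Qconteq} one has $Q_{*,cont}(e)\ge\limsup_{e'\to e}Q_*(e')$. Hence, once we know $\limsup_{e'\to e}Q_*(e')=Q_*(e)$ for irrational $e$ in $d=2$ --- which is \tref{irrcont}, proved by the very same construction --- part (ii) reduces to showing that every $\alpha\in(Q_*(e),\langle Q^2\rangle^{1/2})$ is subsolution continuously pinned. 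For such $\alpha$ one has $Q_*(e)<\alpha<Q^*(e)$, so \lref{minimalsuper} and \pref{mean} provide Birkhoff plane‑like (sub)solutions of slope $\alpha e$, and, by continuity of $Q_*,Q^*$ at the irrational $e$, of slope $\alpha'e'$ for all $(\alpha',e')$ near $(\alpha,e)$. For part (i) the lower bound $Q_{*,cont}(\hat\xi)\ge Q_*(\hat\xi)$ holds automatically, so the content is the upper bound: recovery sequences must be produced for slopes down to $Q_*(\hat\xi)+C|\xi|^{-1/2}$.

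\emph{The approximate foliation and the sewing.} Fix $\alpha$ as above, $e$ irrational, $d=2$. From the Birkhoff plane‑like subsolutions of slope $\alpha e$, their $\Z^2$‑translates and appropriate limits, one extracts a one‑parameter family, totally ordered by the Birkhoff property, of plane‑like subsolutions whose free boundaries are graphs over $e^\perp$ of uniformly bounded oscillation in the $e$ direction by \lref{width}. The crucial structural input --- and the main obstacle --- is that in $d=2$ at an irrational $e$ this family is \emph{almost} a foliation: the complement of the union of its free boundaries is a disjoint union of open strips of bounded width, and these gap‑strips are \emph{localized}, so that any unit window of $e^\perp$ meets boundedly many of them and the total vertical displacement accumulated by crossing all gaps over a window of length $L$ is $o(L)$. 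Dimension two is essential (free boundaries are Jordan curves, cannot cross, and the ordered family is genuinely nested), and the localization is of Aubry--Mather/quasicrystal flavour. Granting it, subsolution continuous pinning follows by a rescaled perturbed‑test‑function argument modelled on \pref{Qcontenergy}: given $\varphi$ as in \dref{contsubsuper} on $D=D_{e'}(U)$ and $\ep>0$, pass to $\varphi_\ep(y)=\ep^{-1}\varphi(\ep y)$ on $\ep^{-1}D$, which on unit balls is affine up to $O(\ep)$ with gradient of magnitude $\ge(1+\lambda)\alpha$ and direction within $\delta$ of $e$; for $\delta$ small, select along $\partial\{\varphi_\ep>0\}$, leaf by leaf from the foliation, plane‑like subsolutions of slope $\alpha e$ straddling $\partial\{\varphi_\ep>0\}$, regularise them by sup/inf convolution, use them as constraints and take the constrained energy minimiser exactly as in the constructions of \pref{mean} and \pref{Qcontenergy}. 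This yields a viscosity subsolution $v_\ep$ of the unit‑scale problem with $v_\ep\ge\varphi_\ep$, $v_\ep=\varphi_\ep$ on $\partial(\ep^{-1}D)$, and --- because the gaps are localized, hence of unit scale --- with $\partial\{v_\ep>0\}$ within $o(1/\ep)$ of $\partial\{\varphi_\ep>0\}$ in the relevant one‑sided Hausdorff distance; unscaling, $v^\ep(x)=\ep\,v_\ep(x/\ep)$ satisfies the three limits of \dref{contsubsuper}. The argument with plane‑like supersolutions gives $Q^*_{cont}(e)=Q^*(e)$, which completes part (ii).

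\emph{Rational directions and the loss.} At $e=\hat\xi$ the construction degenerates: the plane‑like subsolutions of slope $Q_*(\hat\xi)\hat\xi$ are $\xi^\perp\cap\Z^2$‑periodic with period of size $\sim|\xi|$, and their Birkhoff order fixes an orientation, so there is a one‑sided gap the now‑discrete family cannot close. One repairs this either by tilting to a nearby irrational direction or by working inside a band of width $\gtrsim|\xi|$ --- the scale on which, by the boundary‑layer estimate \lref{bdrylayer}, the periodic plane‑like solution has already relaxed to a plane --- while allowing a slightly steeper slope to absorb the curvature‑ and orientation‑mismatch accumulated across that band. Balancing the band width $\sim|\xi|$ against the admissible slope perturbation, which costs $O(\|\grad Q\|_\infty\, s)$ in the free boundary condition for a spatial shift $s$ (the mechanism behind $\lambda=1-2\|\grad Q\|_\infty\delta/\min Q$ in the proof of \lref{laminar}), produces a loss of order $|\xi|^{-1/2}$, i.e. $Q_{*,cont}(\hat\xi)\le Q_*(\hat\xi)+C|\xi|^{-1/2}$ with $C=C(\min Q,\max Q,\|\grad Q\|_\infty)$, and symmetrically for $Q^*_{cont}$. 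I expect the two real difficulties to be making the localized‑gap statement precise in the irrational case and extracting the sharp exponent $-1/2$ in the rational case.
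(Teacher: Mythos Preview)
Your high-level plan --- build an approximate foliation by plane-like solutions at slope $\alpha e$ and sew along it to produce recovery subsolutions near a curved test function --- is the paper's plan, and you correctly locate the foliation in \sref{irrfoliation} (this is \pref{irrfamily}). But your proposed sewing mechanism and your account of the rational rate both diverge from what the paper does, and as written neither would close.

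\emph{The sewing.} You propose to regularise the foliation leaves by ordinary sup/inf convolution, use them as barriers, and take a constrained energy minimiser as in \pref{Qcontenergy}. For $\alpha\neq\langle Q^2\rangle^{1/2}$ this does not work: the unconstrained minimiser drifts to slope $\langle Q^2\rangle^{1/2}$, and constraining between two parallel leaves of slope $\alpha e$ produces another plane-like solution of slope $\alpha e$, not something tracking a curved $\varphi$. The paper instead uses the variable-radius sup-convolution of \sref{bending}, the Caffarelli perturbation with $\varphi$ satisfying $\varphi\Delta\varphi\ge(d-1)|\nabla\varphi|^2$ (\lref{caffperturb1}, \lref{perturbsubcond}, \lref{bendingphi}). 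Each leaf $v_{s_j}$ is replaced by $\tilde w_j=\overline{v}_{s_j}^{\varphi(\cdot-x_{\tau_j})}$, where $\varphi$ is a bump of height $\sim M\lambda$ localised over the $j$-th tangential strip; one then takes the pointwise maximum $w=\max_j w_j\vee(1+\tfrac{\lambda}{2})L\psi(\cdot/L - Ke)$. The two-sided estimate of \lref{bendingphi} is exactly what lets you verify that adjacent pieces dominate one another on the strip boundaries (the checks \eref{bdrywjregion0}--\eref{farbdryregion0}). Constant-radius sup-convolution cannot create this localised bump, so your sketch is missing the main gluing tool.

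\emph{The foliation continuity and the rate $|\xi|^{-1/2}$.} Your ``gap-strips are localised'' is the content of \pref{irrfamily}, but its proof is an averaging argument over long thin rectangles using a small lattice vector $k$ with $|k\cdot p|\le\delta/2$, not an Aubry--Mather construction; the output is the quantitative statement that in any $p^\perp$-window of length $r_0(\delta)$ there is a line on which $\sup_{|t|\le\ell}(v_{s+\sigma}-v_s)\le C\ell\delta$. In the sewing one needs this error $\lesssim\lambda$ over a strip of height $\ell\sim 1/\lambda$, which forces $\delta\sim\lambda^2$. In the rational case the index set $S$ is $|\xi|^{-1}$-periodic, so necessarily $\delta\ge|\xi|^{-1}$, and the balance $\delta/\lambda\lesssim\lambda$ with $\delta\ge|\xi|^{-1}$ gives $\lambda\ge C|\xi|^{-1/2}$. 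Your explanation via the boundary-layer relaxation scale of \lref{bdrylayer} and the $\|\nabla Q\|_\infty\,s$ cost from the proof of \lref{laminar} is not the mechanism here; the exponent $-1/2$ comes purely from this scaling in the sewing step.
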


We make some remarks. The result of \tref{irrcont} cannot be true as stated in $d \geq 3$.  As we have seen in \sref{examples} it is possible for $Q^*,Q_*$ to be discontinuous at some irrational directions when $d\geq 3$.  In the author's previous work with Smart \cite{FeldmanSmart} we studied the scaling of a discrete free boundary problem with a similar structure, in that case $Q^*$, $Q_*$ are only continuous at the totally irrational directions, those satisfying no rational relations $\xi \cdot e =0$ for some $\xi \in \Z^d \setminus \{0\}$.  In $d \geq 3$ there are irrational directions which satisfy some nontrivial rational relations.  A similar structure here is plausible, and, as we have shown in \sref{examples}, discontinuities of any co-dimension are possible in this problem as well.

We divide the proof into several parts.  First, in \sref{irrfoliation}, will be the construction of a foliation of $\R^2 \times (0,\infty)$ by the graphs of plane-like solutions.  This is not quite possible, in general the foliation may have gaps, the main result is that we still recover a weak type of continuity for the foliation.  Next, in \sref{bending}, we will introduce a method for bending solutions of the free boundary problem while still maintaining, approximately, the sub or supersolution property.  This is based on a nice family of perturbations suited to the problem which were introduced by Caffarelli~\cite{CaffarelliFBreg}.  Then, in \sref{irrcurved}, we sew the plane-like solutions of the foliation together using the bending perturbations to create approximate plane-like solutions at nearby directions, to show the continuity of $Q_*,Q^*$, the same method is used to show \lref{irrQcont}.

\subsection{A family of plane-like solutions sweeping out $\R^d$}\label{s.irrfoliation}

The main tool in the proof will be a monotone one-parameter family of global plane-like solutions $v_{s}(x)$ with slope $p = \alpha e$ for $\alpha \in [Q_*(p),Q^*(p)]$, $s \in S$ for some closed index set $S$. In the irrational case $S = \R$. In the rational case, $p = \xi/|\xi|$ for some $\xi \in \Z^d \setminus \{0\}$ irreducible, $S$ is $1/|\xi|$-periodic on $\R$.  The graphs of the family $v_s(x)$ will be, approximately, a foliation of $\R^2 \times (0,\infty)$.

More precisely, we claim there exists a family with the following properties.  Let $p \in S^{d-1}$ and $\alpha \in [Q_*(p),Q^*(p)]$.
\begin{enumerate}[label=(\roman*)]
\item $v_{s} : \R^d \to [0,\infty)$ defined for $s \in S$, $S$ is a closed subset of $\R$ which is $1/|\xi|$ periodic if $p = \xi/|\xi|$ for an irreducible lattice vector $\xi \in \Z^d \setminus \{0\}$, or $S= \R$ if $p$ is irrational.
\item For every $s \in S$, $v_{s}$ solves
\begin{equation}\label{e.cells}
\left\{
\begin{array}{ll}
 \Delta v_{s} = 0 & \hbox{ in } \ \{v_{s} >0\} \vspace{1.5mm}\\
|\grad v_{s}| = Q(x) & \hbox{ on } \ \partial \{v_{s} >0\} \vspace{1.5mm}\\
(\alpha p \cdot x + s+C)_+ \prec v_{s}(x) \prec   (\alpha p \cdot x + s+C)_+
\end{array}
\right.
\end{equation}
for a universal constant $C$.
\item The family $v_{s}$ is monotone increasing in $s$ and continuous in the following sense. For all $\delta>0$ there exists $r(\delta)\geq1$ so that for $0 \leq \sigma \leq \delta$, any interval $I \subset \R$ of length at least $r$, and any $\ell \geq 1$
\[ \inf_{y' \in I}\sup_{|t| \leq \ell  }  [v_{s+\sigma} - v_{s}](y'p^\perp+tp) \leq C\ell\delta. \]
Note that it is possible $S \cap [s,s+\sigma) = \{s\}$ in which case the statement is trivial.
\end{enumerate}

\begin{proposition}\label{p.irrfamily}
For any $p \in S^{d-1}$ and $\alpha \in [Q_*(p),Q^*(p)]$ there exists a family of solutions $v_{s}$ of \eref{cells} as above.
\end{proposition}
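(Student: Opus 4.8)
The plan is to produce the family by running the approximate–corrector construction of Section~\ref{s.cell} with the data plane placed at every height and then passing to a common subsequential limit.

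\emph{Construction for a dense set of phases.} Fix $p\in S^{d-1}$, $\alpha\in[Q_*(p),Q^*(p)]$, and let $u_t$ be the object from the proof of \lref{minimalsuper}: the minimal supersolution of \eref{appcell} if $\alpha=Q^*(p)$, the maximal subsolution if $\alpha=Q_*(p)$, and the constrained minimal supersolution caught between $\tfrac{\alpha}{Q_*}v_*$ and $\tfrac{\alpha}{Q^*}v^*$ for interior $\alpha$. For $s\in\R$ choose $k_{t,s}\in\Z^d$ with $|k_{t,s}\cdot p-(s/\alpha(t)-r(t))|\le\sqrt d$, the selection $s\mapsto k_{t,s}$ monotone (possible since $\{k\cdot p:k\in\Z^d\}$ is dense when $p$ is irrational and equals $\tfrac1{|\xi|}\Z$ when $p=\hat\xi$), and set $v_{t,s}(x)=u_t(x+k_{t,s})$. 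As in \lref{width}, \lref{normalbd} and \lref{minimalsuper}, the $v_{t,s}$ are uniformly Lipschitz (\lref{properties}), harmonic in their positivity sets, nondegenerate at outer regular free boundary points (\lref{nondegen} part~\partref{nondegen3}), satisfy $(\alpha(t)p\cdot x+s-C)_+\le v_{t,s}\le(\alpha(t)p\cdot x+s+C)_+$ with $C$ universal, and are monotone in $s$: $s\le s'$ gives $(k_{t,s'}-k_{t,s})\cdot p\ge 0$, so $v_{t,s}\le v_{t,s'}$ by the Birkhoff property of $u_t$ (\lref{monotonicity}). Fixing a countable dense $\{s_j\}$ and diagonalizing over $t\to\infty$ yields solutions $v_{s_j}$ of \eref{cells} with the stated plane bounds, monotone in $j$, Birkhoff, and nondegenerate.

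\emph{All phases and the index set.} Define $v_s=\sup_{s_j<s}v_{s_j}$; monotonicity makes this well defined, and the harmonic–lift and nondegeneracy bounds pass to the monotone limit, so each $v_s$ still solves \eref{cells}. In the irrational case keep $S=\R$. In the rational case $p=\hat\xi$ with $\xi$ irreducible there is $z_0\in\Z^d$ with $\xi\cdot z_0=1$; as $Q$ is $\Z^d$-periodic, $v_s(\cdot-z_0)$ again solves \eref{cells} with the phase $s$ replaced by $s+\alpha\hat\xi\cdot z_0=s+\alpha/|\xi|$, so identifying these we obtain a family whose set $S$ of distinct members is periodic with the period dictated by the $\hat\xi$-lattice spacing of $Q$ --- the asserted periodicity.

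\emph{The approximate continuity (iii), which I expect to be the main obstacle.} Fix $\delta>0$ and suppose no $r(\delta)$ works: there are $s$, $0\le\sigma\le\delta$, $\ell\ge1$, and intervals $I_n$ with $|I_n|\to\infty$ so that $\sup_{|t|\le\ell}[v_{s+\sigma}-v_s](y'p^\perp+tp)>C\ell\delta$ for every $y'\in I_n$. Recentering $I_n$ at the origin by a $p^\perp$-translation and using the uniform Lipschitz and plane bounds, extract locally uniform limits $\bar v\le\bar w$ of the translated $v_s$, $v_{s+\sigma}$: both solve a translated cell problem, their free boundaries lie in width-$C$ slabs about planes at $p$-distance $\le\delta/\alpha$, and $\bar w-\bar v>C\ell\delta$ on all the segments $\{y'p^\perp+tp:|t|\le\ell\}$ with $y'$ near $0$. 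Since $p$ is irrational, $\{tp^\perp\bmod\Z^d\}$ equidistributes on the torus, so along the recentering sequence the shifted media converge to a translate of $Q$; this almost-periodicity replaces the genuine periodicity of the rational case. One then runs the Aubry--Mather / Caffarelli--de~la~Llave \cite{CaffarellidelaLlave} argument: translate $\bar v$ by a lattice vector (rational case) or a near-lattice vector supplied by equidistribution (irrational case) that moves the medium by at most $\delta$ and slides $\bar v$ strictly into the open gap between $\bar v$ and $\bar w$, then use the Birkhoff property, the minimality inherited from the minimal-supersolution construction, and the strict comparison principle \lref{strictcomparison} to reach a contradiction; the $\ell$ in $C\ell\delta$ is the error accumulated by a translation that displaces $Q$ by $O(\delta)$ over a segment of length $\ell$. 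The genuinely delicate points --- where the real work lies --- are making the ``the gap is thick and long enough for a full competitor solution to fit'' heuristic precise, and controlling the boundary-layer profiles of $\bar v,\bar w$ near their free boundaries, where the linear growth rates $\alpha$ agree but the medium is only almost periodic; there one leans on nondegeneracy, the Lipschitz bound, and the one-sided expansions of \lref{asymptotics}.
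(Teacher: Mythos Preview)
Your construction of the family $\{v_s\}$ is essentially the same as the paper's: translate a single Birkhoff plane-like solution $v$ along lattice vectors and extend by monotone limits. The paper does this a bit more cleanly by first building one solution $v$ with the Birkhoff property (\lref{minimalsuper}) and then setting $v_s(x)=v(x+k)$ for $k\in\Z^d$ with $k\cdot p=s$, but your diagonalization over $t$ arrives at the same object.

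Where your proposal diverges is in the proof of the approximate continuity (iii), and here you have missed the actual mechanism. The paper's argument is a short averaging/pigeonhole computation, not a comparison-principle argument. The key observation is that all the $v_s$ are lattice translates of a \emph{single} function $v$, so $V_s(x):=v_s(x)-(\alpha p\cdot x+s)_+$ is uniformly bounded in $s$ and $x$. Given $\delta>0$ one picks $k\in\Z^d$ with $0<|k\cdot p|\le\delta/2$ and $|k|=A(\delta)$; then $v_{s+k\cdot p}(x)-v_s(x)=V_s(x+k)-V_s(x)+O(\delta)$, and since $V_s$ is bounded the average of $V_s(\cdot+k)-V_s(\cdot)$ over any rectangle $\Box_{\ell,r}$ aligned with $p,p^\perp$ is at most $C(\delta/\ell+A(\delta)/r)$. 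For $r\ge A(\delta)/\delta$ this is $O(\delta)$, and Fubini then gives \emph{some} $y'$ in any interval $I$ of length $r$ on which the line integral, and hence (by the uniform Lipschitz bound) the sup over $|t|\le\ell$, is at most $C\ell\delta$. No sliding, no strict comparison, no Aubry--Mather structure is needed.

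Your proposed contradiction argument, by contrast, is not only more elaborate but has a real gap: when you ``translate $\bar v$ by a lattice vector that moves the medium by at most $\delta$'', a genuine lattice translation does not move the medium at all, and what it does to the family is exactly $v_s\mapsto v_{s+\alpha k\cdot p}$. If the family has a jump at $s$ (which you explicitly allow, and which is exactly the point of the statement), this translate may land on $v_{s+\sigma}$ itself rather than strictly between $v_s$ and $v_{s+\sigma}$, so there is no strictly intermediate competitor to feed into \lref{strictcomparison}. The averaging argument sidesteps this entirely because it only uses boundedness of $V_s$, not any strict ordering.
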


\begin{proof}[Proof of \pref{irrfamily}]
Let $v$ be the solution of \eref{cell} constructed in \lref{minimalsuper}. Call $T = \{k \cdot p: k \in \Z^2\}$, then for $p$ irrational $T$ is dense in $\R$.  For $p = \hat\xi$ rational with $\xi \in \Z^d \setminus \{0\}$ irreducible $T$ is a $1/|\xi|$-periodic discrete subset of $\R$.  

Define, for $s \in T$,
\[ v_s(x) = v(x+k) \ \hbox{ for the $k$ such that } \ p \cdot k = s.\]
In the rational case $p \cdot k = s$ does not uniquely specify $k$, but, by periodicity, it does uniquely specify $v(x+k)$. By \lref{minimalsuper} $v_s$ is monotone increasing in $s$. 

When $p$ is irrational extend $v_s$ to $s \in S = \overline{T} = \R$ by left limits, i.e. define
\[ v_s(x) = \lim_{T \ni s' \nearrow s} v_{s'}(x).\]
The limit exists by monotonicity arguments. By the Lipschitz bound on $v$ the limit is actually locally uniform in $\R^d$.  By the stability of the viscosity solution property under uniform convergence $v_s$ solve \eref{cells}. Now $v_s$, so defined, is continuous in $s$ with respect to locally uniform convergence, except for at most countably many $s \in \R$.    Note that if $\{v = 0\}$ is not connected then $v_s$ would necessarily have jump discontinuities in $s$.  We expect, although it is not proven, that this is possible for the minimal supersolution when $Q$ has strong and localized de-pinning regions.

Consider
\[ V_s(x) = v_s(x) - (p\cdot x + s)_+.\]
These are bounded uniformly in $s$.  Let $\delta >0$ and $k\in \Z^d \setminus \{0\}$ such that $  \delta/2 \geq   |k \cdot p| >0$ small.  Call 
\[ A(\delta) = \inf\{ |k|: k \in \Z^2 \ \hbox{ with } \ |k\cdot p|\leq \delta/2\}.\]  
 When $p$ is irrational there is guaranteed to be such a $k$ as long as $\delta \geq 2/|\xi|$ and in that case $A(2/|\xi|) \leq |\xi|$.

  Consider a rectangle with axes parallel to the $p$ and $p^\perp$ directions
\[ \Box_{\ell,r} = \{ y: |y \cdot p| \leq \ell/2, \ |y \cdot p^\perp| \leq r/2\}\]
 and corresponding translations $\Box_{\ell,r}(x)$.  Let $\delta>0$, or $\delta \geq 1/|\xi|$ if $p$ is rational, and $k$ such that $|k \cdot p| \leq \delta$, and $|k| = A(\delta)$.  Note that
 \[ |\Box_{\ell,r}(x) \Delta \Box_{\ell,r}(x-k)| \leq |k \cdot p| r + |k \cdot p^\perp| \ell.\]
 Then, using the boundedness of $V_s$,
\begin{equation}\label{e.Vdiff}
 \left|\frac{1}{\ell r}\int_{\Box_{\ell,r}(x)}[V_s(y+k) - V_s(y)] dy \right| \leq C\frac{1}{r\ell}(|k \cdot p| r + |k \cdot p^\perp| \ell)  \leq C(\frac{\delta}{\ell} + \frac{A(\delta)}{r}).
 \end{equation}
 Call $t = y \cdot p$ and $y' = y \cdot p$ to be the coordinates in the $p,p^\perp$ basis and then
\[ \frac{1}{r\ell}\int_{\Box_{\ell,r}(x)}[V_s(y+k) - V_s(y)] dy  = \frac{1}{r}\int_{x \cdot p^\perp-r/2}^{x \cdot p^\perp +r/2}\frac{1}{\ell}\int_{s-\ell/2}^{s + \ell/2}[V_{s+k \cdot p}(t,y') - V_s(t,y')] dt dy' \]
so there is a $y'$ with $|y' - x \cdot p^\perp| < r/2$ and
\[ \left|\int_{s-\ell/2}^{s + \ell/2}[V_{s+k \cdot p}(t,y') - V_s(t,y')] dt\right| \leq C(\delta + \frac{\ell A(\delta)}{r}). \]
Now rephrasing in terms of $v_{s+k \cdot p} - v_s$
\[ \left|\int_{s-\ell/2}^{s + \ell/2}[v_{s+k \cdot p}(t,y') - v_s(t,y')] dt\right| \leq C(\ell\delta + \frac{\ell A(\delta)}{r}). \]
Then using the Lipschitz continuity of $v_s$, and emphasizing the dependencies of the parameters on the right hand side,
\[\max_{s-\ell/2 \leq t \leq s+\ell/2} (v_{s+k \cdot p}(t,y') - v_s(t,y')) \leq C(\ell\delta + \frac{\ell A(\delta)}{r}) \leq C\ell \delta \]
as long as $r \geq r_0(\delta) =A(\delta)/\delta$.

\end{proof}

\subsection{Bending the free boundary}\label{s.bending}  Before we proceed with the proof of \tref{irrcont} we need a technical tool.  In order to construct sub and supersolutions at nearby directions out of the family $v_s$ we will need to bend the free boundary while approximately maintaining the solution property.

A suitable family of perturbations has been constructed already by Caffarelli~\cite{CaffarelliFBreg}, the book of Caffarelli-Salsa~\cite[Lemma 4.7 and Lemma 4.10]{CaffarelliSalsa} is a convenient reference.  We recall the main points here.
\begin{lemma}[Lemma 4.7, Caffarelli-Salsa~\cite{CaffarelliSalsa}]\label{l.caffperturb1}
Let $\varphi$ be a $C^2$ positive function satisfying
\[ \Delta \varphi \geq \frac{(d-1)|\grad \varphi|^2}{\varphi}  \ \hbox{ in } \ B_1.\]
  Let $u$ be continuous, defined in a domain $\Omega$ sufficient large so that
\[ w(x) = \sup_{|\sigma| \leq 1} u(x + \varphi(x)\sigma)\]
is well defined in $B_1$.  Then if $u$ is harmonic in $\{ u>0\}$, $w$ is subharmonic in $\{w >0\}$.
\end{lemma}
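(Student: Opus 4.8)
This is a classical fact, and the plan is to recall Caffarelli's argument (as presented in \cite[Lemma 4.7]{CaffarelliSalsa}), isolating the three ingredients that make it work. Since $u$ and $\varphi$ are continuous, $w$ is continuous, so it suffices to verify that $w$ is a viscosity subsolution of $\Delta w\geq 0$ in $\{w>0\}$; by the usual perturbation it is enough to rule out that a $C^2$ function $P$ with $\Delta P<0$ near $x_0$ touches $w$ from above at a point $x_0$ with $w(x_0)>0$. So suppose this happens. Let $\sigma_0\in\overline{B_1}$ realize the supremum defining $w(x_0)$ and set $y_0=x_0+\varphi(x_0)\sigma_0$, so $u(y_0)=w(x_0)>0$ and $u$ is harmonic (hence smooth) near $y_0$. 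First I would show $|\sigma_0|=1$: if $y_0$ were interior to $\overline{B_{\varphi(x_0)}(x_0)}$ then $u$ would attain a local maximum at $y_0$, hence be constant near $y_0$ by the strong maximum principle; but then $w\geq w(x_0)$ in a neighborhood of $x_0$, so $P$ would have an interior minimum at $x_0$, contradicting $\Delta P(x_0)<0$.

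Next I would extract the optimality of $\sigma_0$ on the unit sphere. First order gives $\grad u(y_0)=\lambda\sigma_0$ with $\lambda=|\grad u(y_0)|\geq 0$ (the degenerate case $\lambda=0$ is handled by a soft approximation argument and can be set aside on a first pass). The second-order condition along a great circle through $\sigma_0$ with tangent $\tau\perp\sigma_0$ reads $\tau^\top D^2u(y_0)\,\tau\leq \lambda/\varphi(x_0)$; summing over an orthonormal basis of $\sigma_0^\perp$ and using harmonicity, $\operatorname{tr}D^2u(y_0)=\Delta u(y_0)=0$, yields the crucial inequality
\[ \sigma_0^\top D^2u(y_0)\,\sigma_0\;\geq\;-(d-1)\,\frac{\lambda}{\varphi(x_0)}, \]
which is exactly where the constant $d-1$ in the hypothesis on $\varphi$ will be matched.

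Finally I would \emph{bend the free boundary}: for any smooth unit vector field $\sigma(\cdot)$ with $\sigma(x_0)=\sigma_0$ one has $P(x)\geq w(x)\geq u(x+\varphi(x)\sigma(x))=:h(x)$ near $x_0$, with equality at $x_0$; since $h$ is smooth near $x_0$ this gives $\Delta P(x_0)\geq \Delta h(x_0)$, hence $\Delta P(x_0)\geq\sup_{\sigma(\cdot)}\Delta h(x_0)$, and the right-hand side depends on $\sigma(\cdot)$ only through $S:=D\sigma(x_0)$, a linear map free except that $|\sigma|\equiv 1$ forces $S$ to have range in $\sigma_0^\perp$ (and forces the identity $\sigma_0\cdot\Delta\sigma(x_0)=-|S|^2$). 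A chain-rule expansion of $\Delta h(x_0)$, using $\Delta u(y_0)=0$ and $\grad u(y_0)=\lambda\sigma_0$, produces a concave quadratic in $S$ whose $S$-independent ``diagonal'' part is $\lambda\,\Delta\varphi(x_0)+|\grad\varphi(x_0)|^2\,\sigma_0^\top D^2u(y_0)\sigma_0$, which is $\geq 0$ precisely by the displayed inequality together with $\Delta\varphi\geq (d-1)|\grad\varphi|^2/\varphi$. Choosing $S$ appropriately absorbs the remaining cross terms, so that $\sup_S\Delta h(x_0)\geq 0$, contradicting $\Delta P(x_0)<0$; this establishes that $w$ is a viscosity subsolution, hence subharmonic, in $\{w>0\}$.

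The main obstacle is the last step: carefully bookkeeping the chain-rule expansion of $\Delta h(x_0)$ and verifying that, after optimizing over $S$, the terms coupling $\grad\varphi(x_0)$, $D^2u(y_0)$ and $S$ cancel in the right direction. This is the algebraic ``calibration'' that singles out the differential inequality on $\varphi$ as the correct hypothesis, and it has a clean geometric counterpart: the superlevel sets of $w$ are $\varphi$-neighborhoods of those of the harmonic function $u$, and the hypothesis on $\varphi$ is exactly what keeps the mean curvature of these inflated hypersurfaces from pointing the wrong way. Secondary technical points are the degenerate cases $\grad u(y_0)=0$ and $u$ locally constant near $y_0$, both dispatched by soft arguments.
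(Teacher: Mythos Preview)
The paper does not give its own proof of this lemma; it is quoted verbatim from Caffarelli--Salsa \cite[Lemma~4.7]{CaffarelliSalsa} as a black box, so there is nothing in the paper to compare against. Your proposal correctly recalls the classical Caffarelli argument from that reference---touching by a superharmonic test function, Lagrange-multiplier analysis of the maximizer $\sigma_0$ on the sphere, then the chain-rule expansion of $\Delta h(x_0)$ optimized over the free jet $S=D\sigma(x_0)$---and the structure you outline is accurate, including the role of the constraint $\Delta\varphi\geq(d-1)|\nabla\varphi|^2/\varphi$ in matching the second-order sphere condition.
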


We consider applying the above type of perturbation to one of the plane-like solutions $v$ with slope $p$, defining
\[ v^\varphi = \sup_{|\sigma| \leq 1} v(x + \varphi(x)\sigma)\]
By the previous Lemma, as long as $\varphi$ is defined and satisfies the condition $\varphi \Delta\varphi \geq |\grad \varphi|^2$ in a sufficiently large neighborhood of $\{ v>0\}$ we will have $v^\varphi$ subharmonic in $\{v^\varphi >0\}$.  The following Lemma explains how the perturbation affects the free boundary condition.
\begin{lemma}\label{l.perturbsubcond}
Let $v$ and $\varphi$ as above, then $v^\varphi$ satisfies, in the viscosity sense,
\[ |\grad v^\varphi(x)| \geq (1-|\grad \varphi(x)|)\inf_{B_{\varphi(x)}(x)}Q  \ \hbox{ on } \ \partial \{v^\varphi >0\}.\]
\end{lemma}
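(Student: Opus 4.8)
The plan is to reduce the claim to a statement about a single touching point on $\partial\{v^\varphi>0\}$, and then exploit the geometry of the $\sup$-convolution together with the free boundary condition satisfied by $v$ itself. Suppose a smooth test function $\phi$ touches $v^\varphi$ from above at a free boundary point $x_0 \in \partial\{v^\varphi>0\}$, with $\phi(x_0)=0$, and suppose $|\grad\phi(x_0)| > 0$ (otherwise there is nothing to prove, since the desired inequality is interpreted in the viscosity sense and fails only if $|\grad v^\varphi|$ is forced to be too small at a genuine linear touching). By definition of $v^\varphi = \sup_{|\sigma|\le 1} v(x+\varphi(x)\sigma)$, the supremum at $x_0$ is attained at some $\sigma_0$ with $|\sigma_0| \le 1$; because $v(x_0+\varphi(x_0)\sigma_0) = v^\varphi(x_0) = 0$ and $v\ge 0$, in fact $y_0 := x_0 + \varphi(x_0)\sigma_0$ lies in $\overline{\{v>0\}}$ and $v(y_0)=0$, so $y_0 \in \partial\{v>0\}$ (using nondegeneracy of the plane-like solution $v$, which holds by Lemma \ref{l.minimalsuper} / Lemma \ref{l.nondegen}). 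Moreover, since $x_0$ is a maximizer of the supremum, the attaining direction must be $\sigma_0 = n$, the inward unit normal to $\{v^\varphi>0\}$ at $x_0$, so that $|\sigma_0| = 1$ and $y_0$ is at distance exactly $\varphi(x_0)$ from $x_0$.

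Next I would transfer the test function. The map $x \mapsto x + \varphi(x)n$ is a smooth perturbation of the identity near $x_0$ (this is where $\varphi \in C^2$ and $\|\grad\varphi\|_\infty$ small enter), and the standard computation for Caffarelli's perturbation family — see the proof of Lemma 4.10 in Caffarelli-Salsa \cite{CaffarelliSalsa} — shows that the linear growth rate of $v^\varphi$ at $x_0$ in the normal direction is related to the linear growth rate of $v$ at $y_0$ by a factor involving the Jacobian of this map restricted to the normal line, which contracts/expands by $1 + O(|\grad\varphi|)$; more precisely the change of variables shows that if $v(y) \ge \mu\,[(y-y_0)\cdot \nu]_+ + o(|y-y_0|)$ with $\nu$ the inward normal at $y_0$, then $v^\varphi(x) \ge \mu(1-|\grad\varphi(x_0)|)[(x-x_0)\cdot n]_+ + o(|x-x_0|)$. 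Since $v$ is an honest solution of \eref{cell}, it satisfies $|\grad v|(y_0) = Q(y_0)$ on its free boundary — in the viscosity sense the relevant lower bound is $v(y) \ge Q(y_0)[(y-y_0)\cdot\nu]_+ + o(\cdot)$ from the subsolution side (and here $v$ being a genuine solution gives us the linear expansion with coefficient exactly $Q(y_0)$, or at least $\ge Q(y_0)$, at regular points; at the level of generality needed one uses that $v$ is a viscosity supersolution, hence any test function from below at a free boundary point has gradient $\le Q$, combined with nondegeneracy to rule out superlinear behavior). Since $y_0 \in B_{\varphi(x_0)}(x_0)$, we have $Q(y_0) \ge \inf_{B_{\varphi(x_0)}(x_0)} Q$, and combining:
\[
|\grad v^\varphi(x_0)| \ge (1-|\grad\varphi(x_0)|)\,Q(y_0) \ge (1-|\grad\varphi(x_0)|)\inf_{B_{\varphi(x_0)}(x_0)}Q,
\]
which is exactly the asserted viscosity inequality.

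The main obstacle I anticipate is the careful bookkeeping of the linear blow-up under the change of variables $x \mapsto x + \varphi(x)\sigma$ — specifically, verifying that the contraction factor is precisely $(1-|\grad\varphi(x_0)|)$ and not merely $1 + O(|\grad\varphi|)$ with an uncontrolled constant, since the statement of the lemma is quantitative in that factor. This requires computing the derivative of the perturbation map in the normal direction and checking that the worst-case distortion along the relevant line segment is controlled by $|\grad\varphi|$ with constant exactly one; it is essentially the content of Lemma 4.10 in \cite{CaffarelliSalsa} but one must confirm the normalization matches. A secondary subtlety is the viscosity-sense interpretation: one must handle the case where $x_0$ is not a regular free boundary point of $v^\varphi$, but since $v^\varphi$ is a $\sup$-convolution it is automatically inner-regular with interior balls of radius comparable to $\inf\varphi$, so every free boundary touching from above occurs at an inner-regular point and the asymptotic expansion of Lemma \ref{l.asymptotics} applies; combined with Lemma \ref{l.caffperturb1} giving subharmonicity of $v^\varphi$ in its positivity set, this closes the argument.
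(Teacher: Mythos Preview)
Your proposal is correct and follows essentially the same approach as the paper, which simply cites Lemmas 4.9--4.10 of Caffarelli--Salsa~\cite{CaffarelliSalsa} as the source of the argument; you have outlined that very argument, including the identification of the touching point $y_0 \in \partial\{v>0\}$ via the sup-convolution structure and the transfer of the test function through the map $x \mapsto x + \varphi(x)\sigma_0$. One minor imprecision: the attaining direction $\sigma_0$ need not coincide with the inward normal to $\{v^\varphi>0\}$ at $x_0$ (indeed $x_0$ may not be a regular point of that set); what actually holds, and is all you need, is that $\overline{B_{\varphi(x_0)}(x_0)} \subset \{v=0\}$ so any $y_0 \in \partial B_{\varphi(x_0)}(x_0) \cap \partial\{v>0\}$ works, and the transferred test function $\tilde\phi(y) = \phi(\Psi^{-1}(y))$ with $\Psi(x) = x+\varphi(x)\sigma_0$ touches $v$ from above at $y_0$, after which the Jacobian bound $|\grad\tilde\phi(y_0)| \leq |\grad\phi(x_0)|/(1-|\grad\varphi(x_0)|)$ and the subsolution condition $|\grad\tilde\phi(y_0)| \geq Q(y_0)$ finish the proof.
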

This is a minor modification of Lemma 4.9, 4.10 from \cite{CaffarelliSalsa}.  An analogous supersolution condition holds for the corresponding inf-convolution.

Now this bending procedure will cause a strict increase in $v$ near the free boundary, due to nondegeneracy, and far from the boundary due to the linear growth.  In the intermediate region there may be degeneracy, we deal with this by doing a ``harmonic lift".  As in the proof of \lref{normalbd}
\[ |\grad v(x) - p | \leq \frac{C}{C+(x\cdot p)_+}.  \]
In particular for $R_0>1$ universal there is a universal lower bound on the gradient 
\begin{equation}\label{e.gradlowerbd}
|\grad v(x)| \geq |p|/2 \geq c \ \hbox{ for } \ x \cdot p \geq R_0.
\end{equation}
  The we define the lift $\overline{v}^\varphi$ by solving
\[ \begin{cases}
\Delta \overline{v}^\varphi = 0 & \hbox{in } \ \{v^\varphi >0\} \cap \{x \cdot p < R_0\} \\
\overline{v}^\varphi = 0 & \hbox{on } \partial \{v^\varphi >0\} \\
\overline{v}^\varphi = v^\varphi & \hbox{on } x \cdot p \geq  R_0
\end{cases}
 \]
   Since $v^\varphi$ was a subsolution $\overline{v}^\varphi \geq v^\varphi$ and is still a subsolution of the condition in \lref{perturbsubcond}.  As we will make precise later, if $\varphi$ is small then $v^\varphi$ is close to $v$ and also $\overline{v}^\varphi$ is close to $v$.  

We make more precise the choice of $\varphi$.  Note that a positive $\varphi$ is a solution of
\begin{equation}\label{e.phieqn}
 \varphi \Delta \varphi = (d-1)|\grad \varphi|^2
 \end{equation}
if and only if $\varphi^{2-d}$ is harmonic, or $\log \varphi$ harmonic in $d=2$ (as is the case for us).  This property is preserved by dilation and scalar multiplication.  We proceed in the case $d=2$, but all of this works with minor modification in $d \geq 3$ as well.

Let $M$ to be chosen (will be universal) and $h: \R \to [1,M]$ be smooth, even, radially decreasing, $h(t) = M$ for $|t| \leq 1/3$, $h(t) = 1$ for $|t| \geq 2/3$ and $|\grad h| \leq CM$.  Let $\psi$ be the solution of 
\[ \left\{
\begin{array}{ll}
\Delta \psi = 0 & \hbox{ in } \{ x \cdot p >0\} \vspace{1.5mm}\\
\psi(x) = \log [h(x\cdot p^\perp)] & \hbox{ on } \{ x \cdot p =0\},
\end{array}\right.
\]
there is a unique bounded solution of the above problem with $0 \leq \psi \leq \log 2$.  Furthermore, by the continuity up to the boundary of solution of the Dirichlet problem, for any $0 <\beta <1$
\[ |\psi(x) - \log [h(x\cdot p^\perp)]| \leq C[\log(h)]_{C^\beta}(x \cdot p)^\beta \]
The estimate could be improved for $|x \cdot p^\perp| \gg 1$, but we will only care about the behavior of $\psi$ in the strip $-1 \leq x \cdot p^\perp \leq 1$ and for $x\cdot p \ll 1$.  The quantity $[\log(h)]_{C^\beta}$ is universal.

Now we define,
\begin{equation}\label{e.bendingphi}
 \varphi_1(x) =  \exp(\psi(x)).
 \end{equation}
Then $1 < \varphi_1 < M$ in $x \cdot p >0$ and
\[
 \left|\log\frac{\varphi_1(x)}{ h(x\cdot p^\perp)}\right| \leq C(x \cdot p)^\beta
\]
 Thus for some $c>0$ universal
\begin{equation} \label{e.phibdrycont}
h(x)/2 \leq \varphi_1(x) \leq 2h(x) \ \hbox{ for } \ 0 \leq x \cdot p \leq c.
\end{equation}

Next we take the sup-convolution of a plane-like solution $v$ by a rescaling of $\varphi_1$, $\varphi  =\ep \varphi_1(\cdot/r)$ with $\ep>0$ small and $r>1$ large.  Due to the nondegeneracy of $v$ the sup convolution causes a strict increase of order $\sim \varphi$.  This is expressed in the following Lemma. 

\begin{lemma}\label{l.bendingphi}
Let $\varphi = \ep\varphi_1(\cdot/r)$ and $v$ a solution of \eref{cell}.  If $r \geq CM$ then
\[  c\varphi(x)\leq \overline{v}^\varphi(x) -v(x)\leq  C\varphi(x)\]
with constants $c,C$ universal (in particular independent of $M$).  The right inequality holds everywhere, the left holds for $x$ such that $d(x ,\partial \{ v >0\} ) \leq \varphi(x)/2$.
\end{lemma}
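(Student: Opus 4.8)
The plan is to prove the two inequalities by rather different arguments: the lower bound is local and uses only the sub‑mean‑value property of $v$ together with the density estimate for its free boundary, while the upper bound goes through the harmonic lift and a comparison in the bounded slab where the lift is nontrivial.

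For the lower bound, fix $x$ with $d(x,\partial\{v>0\})\le\varphi(x)/2$. Since $v$ is harmonic in $\{v>0\}$, vanishes on $\partial\{v>0\}$ and is continuous, it is globally subharmonic, with $\Delta v$ a nonnegative measure carried by $\partial\{v>0\}$; and since $v$ is (a lattice translate of) one of the plane‑like solutions of \lref{minimalsuper} or \pref{mean}, it is nondegenerate by \lref{nondegen}, so its free boundary obeys the uniform density bounds already used in \lref{width}. As $B_{\varphi(x)/2}(x)$ meets $\partial\{v>0\}$, this gives $\Delta v(B_s(x))\ge c\,(s-\varphi(x)/2)_+^{d-1}$ for $s\le\varphi(x)$ with $c$ universal. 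Inserting this into the identity $\frac{d}{ds}\fint_{\partial B_s(x)}v=|\partial B_s|^{-1}\Delta v(B_s(x))$ and integrating over $s\in[0,\varphi(x)]$ yields $\fint_{\partial B_{\varphi(x)}(x)}v\ge v(x)+c\,\varphi(x)$, whence, $v$ being subharmonic, $v^\varphi(x)=\sup_{\overline{B_{\varphi(x)}(x)}}v\ge\fint_{\partial B_{\varphi(x)}(x)}v\ge v(x)+c\,\varphi(x)$. Using $\overline v^\varphi\ge v^\varphi$ (which holds because $v^\varphi$ is subharmonic, \lref{caffperturb1}, and $\overline v^\varphi$ is its harmonic replacement) we obtain $\overline v^\varphi(x)-v(x)\ge c\,\varphi(x)$, with $c$ universal, in particular independent of $M$.

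For the upper bound, the estimate $v^\varphi(x)=\sup_{|\sigma|\le1}v(x+\varphi(x)\sigma)\le v(x)+L\varphi(x)$ is immediate from the universal Lipschitz constant $L$ of \lref{properties}, and on $\{x\cdot p\ge R_0\}$, where $\overline v^\varphi=v^\varphi$, this already gives the claim. It remains to bound $\overline v^\varphi-v$ on the bounded slab $D'=\{v^\varphi>0\}\cap\{x\cdot p<R_0\}$, which contains $\partial\{v^\varphi>0\}$ by the oscillation bound \lref{width} and $\varphi\le\ep M$. There $\overline v^\varphi$ is the harmonic function in $D'$ vanishing on $\partial\{v^\varphi>0\}$ and equal to $v^\varphi\le v+L\varphi$ on $\{x\cdot p=R_0\}$. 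Writing $\overline v^\varphi=v^{\mathrm{lin}}+G$, where $v^{\mathrm{lin}}$ and $G$ are harmonic in $D'$, vanish on $\partial\{v^\varphi>0\}$ and equal respectively $v$ and $v^\varphi-v$ on $\{x\cdot p=R_0\}$, the task reduces to $v^{\mathrm{lin}}-v\le C\varphi$ and $G\le C\varphi$ in $D'$. The crux is the boundary Lipschitz estimate $v^{\mathrm{lin}},G\lesssim d(\cdot,\partial\{v^\varphi>0\})$ near $\partial\{v^\varphi>0\}$, which one gets from an exterior‑ball barrier: in $d=2$ the free boundary $\partial\{v>0\}$ is $C^{1,\alpha}$ with universal bounds, so $\{v^\varphi>0\}$, being $\{v>0\}$ thickened by the slowly varying $\varphi$ (slow because $r\ge CM$), has exterior balls of universal radius. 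Since $\partial\{v>0\}$ lies within $\varphi$ of $\partial\{v^\varphi>0\}$, this bounds $v^{\mathrm{lin}}$ and $G$ by $C\varphi$ on $\partial\{v>0\}$; the maximum principle in $\{v>0\}\cap D'$ and in the collar $\{v=0\}\cap D'$ then propagates $v^{\mathrm{lin}}-v\le C\varphi$ and $G\le C\varphi$ to all of $D'$, the mild subharmonicity of $v+C\varphi$ being harmless because $\varphi\,\Delta\varphi=(d-1)|\grad\varphi|^2$ and $r\ge CM$ force $\Delta\varphi\lesssim \ep\,M^{-1}\log^2 M=o(\varphi)$.

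The main obstacle is exactly this upper bound in the slab: one must show that passing from the subsolution $v^\varphi$ to the harmonic function $\overline v^\varphi$ raises it by at most $O(\varphi)$, and this is where the specific construction of $\varphi$ — harmonicity of $\log\varphi_1$, and the scaling $r\ge CM$ keeping $|\grad\varphi|$ small and $\varphi$ slowly varying on scale $\gg\varphi$ — is essential, together with the bookkeeping that keeps all constants (the exterior‑ball radius for $\partial\{v^\varphi>0\}$, the correction from $\Delta\varphi$) independent of $M$.
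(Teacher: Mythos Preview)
Your lower bound is fine and more explicit than the paper's: the paper simply cites nondegeneracy, \lref{nondegen}, to assert $v^\varphi(x)\geq v(x)+c\varphi(x)$ when $d(x,\partial\{v>0\})\leq\varphi(x)/2$, then uses $\overline v^\varphi\geq v^\varphi$. Your spherical-mean computation via the measure $\Delta v$ fills in what the paper leaves implicit.

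For the upper bound you take a genuinely different route. The paper argues directly: on $\partial D'=\partial\{v^\varphi>0\}\cup\{x\cdot p=R_0\}$ one has $\overline v^\varphi\leq v+C\varphi$ (trivially on the free boundary, by Lipschitz on the top), and since $\Delta\varphi\leq CM^2\ep/r^2$ a quadratic correction of size $CR_0^2M^2\ep/r^2\leq\ep\leq\varphi$ (once $r\geq CR_0M$) is meant to close the maximum principle in $D'$. You instead decompose $\overline v^\varphi=v^{\mathrm{lin}}+G$ and treat the pieces separately: $G\leq C\varphi$ by that same quadratic-correction idea, and $v^{\mathrm{lin}}\leq C\varphi$ on $\partial\{v>0\}$ via an exterior-ball barrier coming from $C^{1,\alpha}$ regularity of $\partial\{v>0\}$.

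Your decomposition is actually more careful in one respect. The paper's direct comparison needs $v+C\varphi+Kq$ to be a superharmonic barrier in all of $D'$, but $v$ is globally subharmonic with a positive singular Laplacian on $\partial\{v>0\}\cap D'$, which lies in the \emph{interior} of $D'$; so the barrier is not superharmonic across $\partial\{v>0\}$, and the maximum principle as written does not immediately apply. One genuinely needs to control $\overline v^\varphi$ (equivalently your $v^{\mathrm{lin}}$) on $\partial\{v>0\}$, which is exactly the step you isolate. That said, your resolution uses $C^{1,\alpha}$ regularity of the free boundary, which the paper explicitly says it avoids (only Lipschitz and nondegeneracy); and your final remark that the ``mild subharmonicity of $v+C\varphi$ is harmless'' is not a proof but a restatement of the paper's quadratic correction, and should be executed concretely since that is where the hypothesis $r\geq CM$ enters quantitatively.
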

\begin{proof}
  By the nondegeneracy \lref{nondegen}
\begin{equation}\label{e.bdryphibd}
 \overline{v}^\varphi(x) \geq v^\varphi(x)  \geq v(x)+  c \varphi(x) \ \hbox{ for $x$ s.t. } \   \ d(x ,\partial \{ v >0\} ) \leq \varphi(x)/2
 \end{equation}
By \eref{gradlowerbd}, for $x\cdot p \geq C$ universal $|\grad v(x)| \geq |p|/2 \geq c$ universal and so
\begin{equation}\label{e.intrphibd}
 \overline{v}^\varphi(x) = v^\varphi(x) \geq v(x) + c\varphi(x) \ \hbox{ on } \ x \cdot p \geq R_0.
 \end{equation}
Thus by maximum principle, combining \eref{bdryphibd}, \eref{intrphibd}, the subharmonicity of $\varphi$ \eref{phieqn}, and the harmonicity of the lift $\overline{v}^\varphi$ in $\{v^\varphi>0\} \cap \{ x \cdot p <R_0\}$,
\[  \overline{v}^\varphi(x) \geq v(x) + c\varphi(x) \ \hbox{ for all $x$ s.t.  } \ d(x,\{ v>0\}) \leq \varphi(x)/2\]
This gives one direction of the estimate.

On the other hand, by the Lipschitz estimate \lref{properties}, 
\[ v^\varphi(x) \leq v(x) + C\varphi(x).\]
In $x\cdot p \geq R_0$ this is the same for $\overline{v}^\varphi$.  Then, using again the equation for $\varphi$ \eref{phieqn}, and $|\grad \varphi|^2/\varphi \leq CM^2\ep/r^2$, by maximum principle in the strip $\{v^\varphi > 0 \} \cap \{ x \cdot p < R_0\}$,
\[ \overline{v}^\varphi(x) \leq v(x) + C\varphi(x) + CR_0^2M^2\ep/r^2. \]
  Then we can choose $r$ sufficiently large in order that $CR_0^2M^2/r^2 \leq 1$ and so
\[ \overline{v}^\varphi(x) \leq v(x)+(C+1)\varphi(x). \]
\end{proof}

\subsection{Curved surface near an irrational direction}\label{s.irrcurved}

With the set-up above we finally are able to carry out the proof of \tref{irrcont}.  We prove \lref{irrQcont} at the same time since the proof is the same.

\begin{proof}[Proof of \tref{irrcont} and \lref{irrQcont}]
We just do the subsolution case, the supersolution case is similar.  We argue for rational and irrational directions at once.  In the rational case suppose that $e = \hat\xi$ for some irreducible $\xi \in \Z^d \setminus \{0\}$.  We will use $|\xi|^{-1}$ as a parameter, in the irrational case we abuse notation and say $|\xi|^{-1} = 0$. 

Let $\lambda>C|\xi|^{-1/2}$ and suppose that $\psi$ smooth on $D = D_{e'}(U)$, for some $U$ a domain of $\{x \cdot e' = 0\}$,
\[ \sup_{D \cap \{ \psi>0\}}\left|\frac{\grad\psi}{ |\grad \psi|} -e\right| +|e'-e|\leq \eta_0,\]
  $\psi$ is harmonic in $\{\psi>0\} \cap D(U)$, and is a subsolution of
\[  |\grad \psi| \geq (1+\lambda)Q_*(e) \ \hbox{ on } \ \partial \{ \psi >0\} \cap D.\] 
The parameter $\eta_0$ will be chosen small below depending on $e$ and $\lambda$.  Write the free boundary $\partial \{ L\psi(\cdot/L)>0\}$ as a graph over $x \cdot e = 0$ by 
\[ \tau \mapsto x_\tau = \tau e^\perp +Lf(\tau/L)e \ \hbox{ for } \ \tau \in U. \]  
Then $f$ is $C^1$ and $\|f'\|_\infty \leq C\eta_0$.  In the proof of \tref{irrcont}, $D = \R^d$, $\psi$ is a half-plane solution $\psi(x) = \alpha(e' \cdot x)_+$, with $\alpha \geq (1+\lambda)Q_*(e)$ and $f(\tau) = - \tau \frac{e' \cdot e^\perp}{e' \cdot e}$.

Let $\delta(\lambda) = c_0' \lambda^2$ so that $\delta \geq 1/|\xi|$.  By \pref{irrfamily} there exists $ r_0(\lambda,e) \geq 1$ large such that, for all $s \in \R$, $0 \leq \sigma \leq \delta$ and interval $I \subset \R$ of length at least $r$,
\begin{equation}\label{e.closeline}
 \inf_{\tau \in I} \sup_{|t| \leq R_0/\lambda} \ (v_{s+\sigma} - v_s)(\tau e^\perp + te) \leq Cc_0'R_0\lambda.
 \end{equation}
The constants $c_0$ and $R_0$ will be chosen, universal, in the course of the proof, call $c_1 = Cc_0R_0$ for convenience.   Now we specify $\eta_0$, we require that $\|f'\|_{L^\infty} \leq C\eta_0 \leq \delta/3r_0$ and call $r = 3r_0$.

Let $\tau_j = j r$ for $j \in \Z$ and push forward the partition $\{\tau_j\}_{j \in \Z}$ of the domain onto the range
\[ s_j =  [Lf(\tau_j/L)]_S \] 
  Then
  \[ |s_{j+1}-s_j| \leq \|f'\|_{\infty} r \leq \delta.\]
  From \eref{closeline} for each $j$ there is $y_j,z_j$ with $|y_j - \tau_{j-1}| \leq r_0 \leq r/3$ and $|z_j - \tau_{j+1}| \leq r_0 \leq r/3$ such that
\begin{equation}\label{e.closelinesp}
 \sup_{|t| \leq 1/\lambda} \ |v_{s_{j-1}} - v_{s_j}|(y_je^\perp + te) \leq c_1\lambda \ \hbox{ and } \  \sup_{|t| \leq 1/\lambda} \ |v_{s_{j-1}} - v_{s_j}|(z_je^\perp + te) \leq c_1\lambda. 
 \end{equation}

Now use the bending sup-convolutions of \sref{bending} to create a subsolution.  With $\varphi_1$ as in \sref{bending}, let $\varphi = c_1\lambda\varphi_1(\cdot/r)$, defined as above in \eref{bendingphi} with the parameter $M$ in the definition of $\varphi_1$ still to be chosen (it will be chosen universal).  For each $j \in \Z$ define
\[\tilde{w}_{j}(x) =   \overline{v}_{s_j}^{\varphi(\cdot - x_{\tau_j})}(x). \]
Each $\tilde{w}_j$ is harmonic in its positivity set and, on $\partial \{\tilde{w}_j >0\}$,
\begin{equation}\label{e.subsoln110}
 |\grad \tilde{w}_j| \geq (1-CMc_1\lambda)(Q(x) - 2\|\grad Q\|_\infty Mc_1\lambda) \geq (1+\tfrac{\lambda}{2})^{-1}Q(x),
 \end{equation}
for $c_1(M)$ chosen sufficiently small.  We reiterate $M$ will be chosen later universal, and will not depend on $c_1$.  Localize each $\tilde{w}_{j}$ to a vertical strip near $x  \cdot e^\perp = \tau_j $
\[ w_{j}(x) = \begin{cases} (1+\frac{\lambda}{2})\tilde{w}_{j}(x) &\hbox{ if } y_j \leq  x \cdot e^\perp \leq z_j  \\
-\infty &\hbox{ else}
\end{cases}\]
which is a subsolution of \eref{unitscale} in the strip where it is finite.  Finally define, for $x \in LD$,
\begin{equation*}
 w(x) =  \max\{ \max_{j \in \Z} w_{j}(x), L\psi(\frac{x}{L}-Ke)\}
 \end{equation*}
the translation $K$, universal, will be specified below.  Although this appears to be a maximum over an infinite set, at each $x$ only three of the $w_{j}(x)$ take a finite value.    We will show that
\begin{equation}\label{e.bdrywjregion0}
  \begin{array}{c}
  \tilde{w}_j(x) < \tilde{w}_{j-1}(x) \ \hbox{ on } \ x \cdot p^\perp = y_j, \ x \cdot e \leq Lf(x \cdot e^\perp/L)+R_0/\lambda, \vspace{1.5mm}\\
  \tilde{w}_j(x) < \tilde{w}_{j+1}(x) \ \hbox{ on } \ x \cdot p^\perp = z_j, \ x \cdot e \leq Lf(x \cdot e^\perp/L)+R_0/\lambda, 
  \end{array}
 \end{equation}
and 
\begin{equation}\label{e.farbdryregion0}
  \begin{array}{ll}
 w(x) = \max_{j \in \Z} w_j(x)  & \hbox{for $x \in \partial \{w>0\} + B_1$}, \vspace{1.5mm}\\
  w(x) = L\psi(\frac{x}{L}-Ke) & \hbox{for $x \cdot e \geq Lf(x \cdot e^\perp/L)+R_0/\lambda$}. 
  \end{array}
 \end{equation}
Once these two are proven, then $w$ defined as above will be continuous subsolution of \eref{unitscale}.

First consider \eref{bdrywjregion0}.  Let $x \cdot e \leq R_0/\lambda$ with $x \cdot e^\perp = y_j$, then by \eref{closelinesp} and \lref{bendingphi}
\begin{align*}
 \tilde{w}_{j-1}(x) &\geq v_{s_{j-1}}(x) + c\varphi(x - x_{\tau_{j-1}}) \\
 &\geq v_{s_{j}}(x) -c_1\lambda+cMc_1\lambda \\
 &\geq \tilde{w}_j(x)- C\varphi(x - x_{\tau_{j-1}}) -c_1\lambda+cMc_1\lambda ,\\
 &\geq \tilde{w}_j(x) +c_1(cM-C)\lambda
 \end{align*}
while, similarly, on $x \cdot e^\perp = z_j$
\begin{align*}
\tilde{w}_{j+1}(x) &\geq v_{s_{j+1}}(x) +c\varphi(x - x_{\tau_{j+1}}) \\
&\geq v_{s_{j}}(x) - c_1\lambda + cMc_1\lambda \\
&\geq \tilde{w}_j(x) +c_1(cM-C)\lambda. 
\end{align*}
Choosing $M$ large universal so that $cM-C>0$ above, we get \eref{bdrywjregion0}.  Now we also see that the choice of $c_1$, depending on $M$ and universal quantities, is indeed universal as well.

Now we aim to show \eref{farbdryregion0}.  We assume $f(0) = 0$ and show the result in $|x \cdot e^\perp| \leq r$.  The bounds for $\tilde{w}_{-1}$, $\tilde{w}_0$ and $\tilde{w}_1$ gives
\begin{equation}\label{e.bd001}
 Q_*(e)(x \cdot e -C)_+\leq  \max_{j \in \Z} w_j(x) \leq Q_*(e)(x \cdot e +C)_+ \ \hbox{ in } \ |x \cdot e^\perp| \leq r.
 \end{equation}
Note that since $f$ is $C^1$
\[ |Lf(\tau/L) - f'(0)\tau| \leq \omega(\tau/L)\]
where $\omega$ is the modulus of continuity of $f'$.  Now let $L \geq L_0 = r^{-1} \omega^{-1}(1)$ so we have $\omega(\tau/L) \leq 1$ for $|\tau| \leq r$ and $L \geq L_0$.  Note $r = 3r_0(\lambda,e)$ so the choice of $L_0$ depends on $\lambda$, $e$, and the modulus of continuity of $f'$.  Then, since $|f'(0)\tau| \leq C\eta_0 r \leq C\delta <1$,
\[ |Lf(\tau/L)| \leq 2 \ \hbox{ for } \ |\tau| \leq r. \]
Therefore as long as $K$ chosen large enough universal,
\[ L\psi(\frac{x}{L}-Ke) = 0 \ \hbox{ in } \ -C \leq x \cdot e \leq C \]
and the first part of \eref{farbdryregion0} holds. 

Let $x \cdot e^\perp = \tau$ and $x \cdot e = R_0/\lambda$
\begin{align*}
 L\psi(\frac{x}{L}-Ke) &= \grad \psi(\frac{x_\tau}{L}) \cdot (x-x_\tau-Ke)+O(|x-x_\tau|^2/L) \\
 & = (1+\lambda)Q_*(e)(x \cdot e -Lf(\tau/L)-K)+O(\frac{1}{\lambda^2L}+\frac{\eta_0}{\lambda})
 \end{align*}
 Note that $\lambda^{-1}\eta_0 = \delta/(3r_0\lambda) = c_0\lambda/(3r_0) <1$ and, choosing $L \geq L_0(\lambda)$ larger if necessary also $\frac{1}{\lambda^2L} <1$.  Thus, using \eref{bd001},
 \[L\psi(\frac{x}{L}-Ke) \geq (1+\lambda)Q_*(e)(x \cdot e) - C \geq Q_*(e)(x \cdot e)+R_0-2K \geq  w(x)\]
 as long as $R_0$ is sufficiently large universal.  This completes the proof of \eref{farbdryregion0}.  Also we see now, since $R_0$ is universal, and $c_1 = CR_0c_0$ was chosen to be small universal, also $c_0$ can be chosen small universal to fulfill the needed requirements.
 
 Finally we take $\ep = 1/L < \ep_0 = 1/L_0$ and define
 \[ w^\ep(x) = \ep w(x/\ep).\]
From the estimates proven above $w^\ep \to \psi$ uniformly in $D$ and also 
\[d_H(\partial \{w^\ep>0\} \cap D,\partial\{\psi>0\} \cap D) \to 0\]
 as $\ep \to 0$.  This is actually stronger than \dref{contsubsuper} requires.

\end{proof}

%
\section{Rational directions}
In this section we consider more carefully the solutions of the cell problem at a rational direction.  As before we will consider a general dimension $d \geq 2$ for as long as possible, but eventually we will focus on the case $d=2$. The main reason for this restriction is the lack of nondegeneracy estimate \lref{nondegen} for maximal subsolutions in $d \geq 3$.

Let $\xi \in \Z^d \setminus \{0\}$ irreducible and we consider the cell problem \eref{cell} at direction $\xi$.  As seen in \sref{examples}, $Q^*$ and $Q_*$ may be discontinuous at $\xi$.  Define the directional limits, for $\tau \in \xi^\perp \cap S^{d-1}$
\[ Q^*_\tau(\xi) = \limsup_{e \to_\tau \hat\xi} Q^*_\tau(e) \ \hbox{ and } \ Q_{*,\tau}(\xi) = \limsup_{e \to_\tau \hat\xi} Q_{*,\tau}(e)\]
where we say a sequence $e_n \to_\tau \hat{\xi}$ to mean that $e_n \to \hat\xi$ and
\[ \frac{e_n-\hat\xi}{|e_n-\hat\xi|} = \tau \ \hbox{ for $n$ sufficiently large}.\]
The $\tau$ direction limit of the pinning interval is defined
\[ I_\tau(\xi) = [Q^*_\tau(\xi),Q_{*,\tau}(\xi)].\]

When $d=2$ there are only two directional limits, which we refer to as the left and right limit.  Recall that we take the convention $\xi^\perp = (\xi_2,-\xi_1)$, we call directions $e \cap S^{d-1}$ with $e \cdot \xi^\perp >0$ to be to the right of $\xi$, and with $e \cdot \xi^\perp < 0$ to be to the left of $\xi$.  We define the left and right limits of $Q^*,Q_*$
\begin{equation}\label{e.lrlimits}
\begin{array}{ccc}
 Q^*_\ell(\xi) = \displaystyle{\limsup_{e \to_\ell \hat\xi} Q^*(e)} & \hbox{ and } & Q^*_r(\xi) = \displaystyle{\limsup_{e \to_r \hat\xi} Q^*(e)}, \vspace{1.5mm}\\
 Q_{*,\ell}(\xi) = \displaystyle{\limsup_{e \to_\ell \hat\xi} Q_*(e)} & \hbox{ and } & Q_{*,r}(\xi) = \displaystyle{\limsup_{e \to_r \hat\xi} Q_*(e)}, 
 \end{array}
\end{equation}
and corresponding $I_\ell(\xi)$ and $I_r(\xi)$.

Speaking informally, the free boundary can bend in the $\tau$ direction when the slope $|\grad u| \in I_\tau(\grad u)$.  In this section we will make this idea rigorous at the level of the $x$-dependent problem.  


  The main result of the section is \tref{mainQ} part~\partref{mainQcont1}, that the $\limsup$'s and $\liminf$'s in \eref{lrlimits} actually exist as limits.
\begin{theorem}\label{t.lrcont}
Suppose $\xi \in \Z^2 \setminus \{0\}$ is a rational direction. Left and right limits of $Q^*,Q_*$ exist at $\xi$, i.e.
\[ \lim_{e \to_\ell \hat\xi} I(e) = I_\ell(\xi) \ \hbox{ and } \ \lim_{e \to_r \hat\xi} I(e) = I_r(\xi).\]
\end{theorem}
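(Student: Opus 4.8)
The plan is to show, for each side $\tau\in\{\ell,r\}$, that the one-sided $\liminf$ of each endpoint is at least the corresponding one-sided $\limsup$; together with the definitions \eref{lrlimits} this gives the one-sided limits. I describe the upper endpoint on the left, writing $\gamma:=\limsup_{e\to_\ell\hat\xi}Q^*(e)$. Since $Q^*\ge\langle Q^2\rangle^{1/2}$ everywhere we may assume $\gamma>\langle Q^2\rangle^{1/2}$ (otherwise $\liminf_{e\to_\ell\hat\xi}Q^*=\langle Q^2\rangle^{1/2}=\gamma$ trivially). It then suffices to show that for every $\alpha'\in(\langle Q^2\rangle^{1/2},\gamma)$ one has $Q^*\ge\alpha'$ on a one-sided neighbourhood of $\hat\xi$ from the left.

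Fix such an $\alpha'$, choose $\alpha\in(\alpha',\gamma)$, and put $\lambda:=\alpha/\alpha'-1>0$ and $\delta:=\delta(\lambda)=c_0'\lambda^2$ as in \sref{irrcurved}. The key point is a \emph{uniform} control of the continuity modulus of the plane-like families of \pref{irrfamily} along directions approaching $\hat\xi$. Let $W$ be the set of directions $e$ to the left of $\hat\xi$ with $|e-\hat\xi|\le\delta/(2|\xi|)$. For $e\in W$, since $\hat\xi\cdot\xi^\perp=0$ we get $|\xi^\perp\cdot e|=|\xi^\perp\cdot(e-\hat\xi)|\le|\xi|\,|e-\hat\xi|\le\delta/2$, so the Diophantine quantity $A(\delta)=\inf\{|k|:k\in\Z^2,\ |k\cdot e|\le\delta/2\}$ from the proof of \pref{irrfamily} satisfies $A(\delta)\le|\xi^\perp|=|\xi|$, whence the modulus $r_0(\delta,e)=A(\delta)/\delta\le|\xi|/\delta$ is bounded uniformly over $e\in W$. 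Moreover, any rational $e\in W\setminus\{\hat\xi\}$ has modulus $\gtrsim\lambda^{-2}$, so the constraint $\lambda>C(\text{modulus})^{-1/2}$ needed to run the construction of \sref{irrcurved} at a rational base direction is automatically met. Consequently the bending-and-sewing construction of \sref{bending}--\sref{irrcurved} can be carried out with a tilt threshold $\eta=\eta(\lambda,\xi)>0$ that is uniform over base directions in $W$: if $e_0\in W$ carries a Birkhoff, nondegenerate plane-like solution of \eref{cell} with slope $\alpha$ --- which exists by \pref{mean}, using $Q_*(e_0)\le\langle Q^2\rangle^{1/2}<\alpha<Q^*(e_0)$ and, since $d=2$, the nondegeneracy available for such solutions --- then running the construction in its supersolution (inf-convolution) form, with base $e_0$ and target half-plane $\alpha'(e_1\cdot x)_+$ for any direction $e_1$ strictly between $e_0$ and $\hat\xi$, produces a global supersolution of \eref{cell} lying in a bounded slab about a plane normal to $e_1$, with growth rate $\ge\alpha'$, vanishing near the origin. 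Requiring $e_1$ to lie between $e_0$ and $\hat\xi$ is exactly what makes the tilt compatible with the fixed Birkhoff orientation of the family (automatic when $e_0$ is irrational; the admissible side when $e_0$ is rational) and forces $|e_1-e_0|<|e_0-\hat\xi|$, so choosing $|e_0-\hat\xi|<\eta$ keeps us in the valid range. By the characterization of $Q^*$ in \lref{visccorr}, after a translation this yields $Q^*(e_1)\ge\alpha'$.

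Now pick $e_0\in W$ with $Q^*(e_0)>\alpha$ and $|e_0-\hat\xi|<\eta$ (possible since $\gamma>\alpha$ and $W$ is a one-sided neighbourhood of $\hat\xi$). By the previous paragraph every direction strictly between $e_0$ and $\hat\xi$ has $Q^*\ge\alpha'$, which is a one-sided neighbourhood of $\hat\xi$; hence $\liminf_{e\to_\ell\hat\xi}Q^*(e)\ge\alpha'$, and letting $\alpha'\uparrow\gamma$ finishes this case. The right side is identical. For the lower endpoint $Q_*$ one argues dually and by contradiction: if $\liminf_{e\to_\tau\hat\xi}Q_*<\limsup_{e\to_\tau\hat\xi}Q_*$, pick $\alpha'<\alpha$ strictly between them (and below $\langle Q^2\rangle^{1/2}$), take $e_1\to_\tau\hat\xi$ with $Q_*(e_1)<\alpha'$, a slope $\beta\in(Q_*(e_1),\alpha')$, and the Birkhoff nondegenerate solution at $e_1$ of slope $\beta$ from \pref{mean}; for $e_0$ between $e_1$ and $\hat\xi$ with $Q_*(e_0)>\alpha$ (such $e_0$ accumulate at $\hat\xi$), running the supersolution construction with base $e_1$, target direction $e_0$, target slope $<\beta$ produces a global supersolution of \eref{cell} at $e_0$ with growth rate $<\beta$ that is positive near the origin, whence $Q_*(e_0)\le\beta<\alpha'<\alpha$ by \lref{visccorr} --- contradicting $Q_*(e_0)>\alpha$. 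Taking $|e_1-\hat\xi|$ small enough that $e_1$ and the whole arc $[e_1,\hat\xi]$ lie in the uniform window makes this work with the same uniform modulus.

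The decisive step is the uniform modulus estimate: although the families of \pref{irrfamily} degenerate as the direction tends to $\hat\xi$ (both the spacing, of order $1/(\text{modulus})$, and the modulus $r_0$ blow up), once $\delta$ is fixed the rate of degeneration is controlled and $r_0(\delta,\cdot)$ stays bounded on the one-sided cone $W$ --- which is precisely what allows the sewing construction of \sref{irrcurved} to be run with a base direction that is itself extremely close to (or a small-denominator perturbation of) a rational direction. The remaining subtlety is orientation: when the base direction is rational its plane-like family can only be bent to one side, which is why we transport information only \emph{toward} $\hat\xi$ and hence obtain a left and a right limit separately rather than a two-sided limit; it is also why the argument is confined to $d=2$, where \pref{mean} supplies nondegenerate plane-like solutions at the relevant intermediate slopes.
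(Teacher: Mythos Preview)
Your approach is genuinely different from the paper's and, for $Q^*$, essentially correct. The paper proves \tref{lrcont} by first building a $\tau$-oriented sweepout family at the rational direction $\hat\xi$ itself (\dref{sweepout}, \lref{sweepoutexistence}): a monotone periodic family $\{v_s\}_{s\in S}$ together with heteroclinic connections $v_{s_1,s_2}$ filling the gaps in $S$. That construction is substantial --- it passes through a Zorn's lemma argument to extract a maximal family of periodic limits, then manufactures the connections. Only after this machinery is in place does the paper sew along the sweepout to build sub/supersolutions at nearby $q$. Your idea sidesteps \lref{sweepoutexistence} entirely: you observe that for every direction $e_0$ in a one-sided cone $W$ around $\hat\xi$, the lattice vector $\xi^\perp$ already witnesses $A(\delta)\le|\xi|$ in the proof of \pref{irrfamily}, so the modulus $r_0(\delta,e_0)\le|\xi|/\delta$ is uniform over $W$, and hence the \sref{irrcurved} construction runs with a tilt threshold $\eta$ independent of $e_0\in W$. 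Picking a single base $e_0$ with $Q^*(e_0)>\alpha$ and $|e_0-\hat\xi|<\eta$ then transports the bound to every $e_1$ between $e_0$ and $\hat\xi$. This is a clean and more elementary route; what it does not give you is the sweepout structure at $\hat\xi$ itself, which the paper uses elsewhere.

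Two points need correction. First, your $Q_*$ argument invokes ``the supersolution construction'' and \lref{visccorr} to conclude $Q_*(e_0)\le\beta$ from a supersolution below a plane of slope $<\beta$. The characterization in \lref{visccorr}(2) should be read with \emph{subsolution} (it is the dual of part (1), whose proof is the only one given); a supersolution below a small-slope plane does not bound $Q_*$ from above. The fix is simply to run the \emph{sup}-convolution construction instead: from the Birkhoff nondegenerate solution at $e_1$ of slope $\beta$ you obtain a global plane-like subsolution at $e_0$ of slope $\le(1+C\lambda)\beta$, whence $Q_*(e_0)\le(1+C\lambda)\beta<\alpha$ for suitable $\lambda$, yielding the contradiction. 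The direct (non-contradiction) form --- pick $e_0$ with $Q_*(e_0)$ near $\liminf$, transport upward --- is cleaner and exactly dual to your $Q^*$ argument. Second, your remark that tilting ``toward $\hat\xi$'' is forced by a Birkhoff orientation at rational base directions is not needed: the construction in \sref{irrcurved} handles rational and irrational base directions symmetrically and bends either way; the orientation constraint you allude to is a feature of the paper's sweepout at $\hat\xi$, not of \pref{irrfamily} at $e_0\neq\hat\xi$.
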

The restriction to $d=2$ in this theorem is only because we do not know the nondegeneracy estimate \lref{nondegen} for maximal subsolutions when $d \geq 3$.  We expect that a more general result, along the same lines, computing the limit of $I(e)$ given a sequence of approach directions would be possible using similar ideas.

As a corollary of \tref{lrcont} and \lref{irrQcont} we obtain also part \partref{mainQcontdir} of \tref{mainQcont}.
\begin{corollary}
Suppose $\xi \in \Z^2 \setminus \{0\}$ is a rational direction then left and right limits of $I_{cont}$ exist at $\xi$ and agree with the left and right limits of $I(e)$
\[ \lim_{e \to_\ell \hat\xi} I_{cont}(e) = I_\ell(\xi) \ \hbox{ and } \ \lim_{e \to_r \hat\xi} I_{cont}(e) = I_r(\xi).\]
\end{corollary}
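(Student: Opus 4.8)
The plan is to combine three facts that are already in hand: the ordering between $I_{cont}$ and $I$; the quantitative comparison of $Q_{*,cont},Q^*_{cont}$ with $Q_*,Q^*$ at rational and irrational directions from \lref{irrQcont}; and the existence of one-sided directional limits of $I$ at $\hat\xi$ from \tref{lrcont}. I treat the left limit; the right limit is word-for-word the same. \emph{Ordering.} From \eref{Qconteq} together with the lower/upper semicontinuity of $Q_*,Q^*$ from \tref{mainQ}~\partref{mainQexist} (equivalently, from the chain of inequalities in \tref{mainQcont}), one has
\[ Q_*(e) \le Q_{*,cont}(e) \ \hbox{ and } \ Q^*_{cont}(e) \le Q^*(e) \ \hbox{ for all } e \in S^1 ,\]
so $0 \le Q_{*,cont}(e)-Q_*(e)$ and $0 \le Q^*(e)-Q^*_{cont}(e)$ everywhere.

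\emph{Lattice geometry.} If $\eta \in \Z^2$ is irreducible with $\hat\eta \ne \hat\xi$, then $\xi_1\eta_2-\xi_2\eta_1$ is a nonzero integer, hence $|\xi|\,|\eta|\,|\sin\angle(\hat\xi,\hat\eta)| = |\xi_1\eta_2-\xi_2\eta_1| \ge 1$. Consequently, along any sequence of rational directions $\hat\eta_n \to \hat\xi$ with $\hat\eta_n \ne \hat\xi$ we have $|\eta_n| \to \infty$, so the error terms $C|\eta_n|^{-1/2}$ in \lref{irrQcont} tend to $0$. \emph{Passing to the limit.} Let $e_n \to_\ell \hat\xi$. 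By \tref{lrcont} both endpoints of $I(e_n)=[Q_*(e_n),Q^*(e_n)]$ converge: $Q_*(e_n)\to Q_{*,\ell}(\xi)$ and $Q^*(e_n)\to Q^*_\ell(\xi)$, where $Q_{*,\ell}(\xi)$, $Q^*_\ell(\xi)$ are the endpoints of $I_\ell(\xi)$. If $e_n$ is irrational, \lref{irrQcont} gives $Q_{*,cont}(e_n)=Q_*(e_n)$ and $Q^*_{cont}(e_n)=Q^*(e_n)$. If $e_n=\hat\eta_n$ is rational, the ordering above and \lref{irrQcont} give $Q_*(e_n) \le Q_{*,cont}(e_n) \le Q_*(e_n)+C|\eta_n|^{-1/2}$ and $Q^*(e_n)-C|\eta_n|^{-1/2} \le Q^*_{cont}(e_n) \le Q^*(e_n)$, with $|\eta_n|\to\infty$ by the previous step. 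In all cases $Q_{*,cont}(e_n)-Q_*(e_n)\to 0$ and $Q^*(e_n)-Q^*_{cont}(e_n)\to 0$, so $Q_{*,cont}(e_n)\to Q_{*,\ell}(\xi)$ and $Q^*_{cont}(e_n)\to Q^*_\ell(\xi)$; hence $I_{cont}(e_n)=[Q_{*,cont}(e_n),Q^*_{cont}(e_n)] \to I_\ell(\xi)$. Since the sequence was arbitrary, $\lim_{e\to_\ell\hat\xi} I_{cont}(e)=I_\ell(\xi)$, and the identical computation on the right gives $\lim_{e\to_r\hat\xi} I_{cont}(e)=I_r(\xi)$.

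I expect no genuine obstacle here: essentially all the work has been pushed into \tref{lrcont} and \lref{irrQcont}. The only two points requiring attention are that the $O(|\xi|^{-1/2})$ correction in \lref{irrQcont} must be shown harmless in the limit — which is precisely the elementary observation that lattice directions crowding onto a fixed rational direction have unbounded modulus — and that rational and irrational approaching directions should be handled simultaneously, which is possible because in both cases $|I_{cont}(e_n)-I(e_n)|\to 0$.
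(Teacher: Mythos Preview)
Your proof is correct and follows exactly the same approach as the paper's own argument: use \lref{irrQcont} to show $|I_{cont}(e_n)\setminus I(e_n)|\to 0$ along any sequence $e_n\to\hat\xi$ with $e_n\neq\hat\xi$ (trivially at irrational $e_n$, and via $|\eta_n|\to\infty$ at rational $e_n$), then conclude from \tref{lrcont}. The paper compresses this into two sentences, whereas you have spelled out the lattice-geometry step and the ordering $I_{cont}(e)\subset I(e)$ explicitly, but the content is the same.
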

This is just because if $e_k = \hat{\xi}_k$ converges to $\hat{\xi}$ rational with $e_k \neq \hat \xi$ then $|\xi_k| \to \infty$ necessarily.  Then, by \lref{irrQcont}, $|I_{cont}(e_k) \Delta I(e_k)| \to 0$ as $k \to \infty$.

\subsection{A family of periodic plane-like solutions with oriented connections sweeping out $\R^d$} 
The first goal is to construct a continuous family of plane-like solutions sweeping out $\R^d$, as we did in the irrational direction case.  This will be the main tool in the proof of \tref{lrcont}.  The situation here is a bit different however, as can be guessed by considering the case of laminar media.  In general the sweepout family will consist of a monotone family of $\xi^\perp \cap \Z^d$-periodic plane-like solutions, possibly with gaps, and plane-like but non-periodic heteroclinic connections which fill the gaps.  For this construction we can consider general $d \geq 2$.

\begin{definition}\label{d.sweepout}
Let $\xi \in \Z^d \setminus \{0\}$ irreducible, $\tau \in \xi^\perp \cap S^{d-1}$, and let $\alpha \in I_\tau(\xi)$. A $\tau$-oriented sweepout family of plane-like solutions consists of a closed set $S \subset \R$, the parametrization domain, which is a $|\xi|^{-1}$-periodic, and strictly monotone decreasing family of plane-like solutions $\{v_s\}_{s \in S}$ solving \eref{cell}, $\xi^\perp \cap \Z^d$-periodic and
\[  \lim_{x \cdot \hat \xi \to \infty}|v_s(x) - (\alpha x \cdot \hat{\xi} + s)_+| = 0,\]
and for each pair $s_1<s_2 \in S$ with $(s_1,s_2) \cap S = \emptyset$ there is a plane-like solution $v_{s_1,s_2}$, monotone increasing with respect to $\xi^\perp \cap \Z^d$ translations with $k \cdot \tau \geq 0$, connecting $v_{s_1}$ at $x \cdot \tau = - \infty$ to $v_{s_2}$ at $x \cdot \tau = +\infty$ in the sense that
\[ v_{s_1} \leq v_{s_1,s_2} \leq v_{s_2} \ \hbox{ on } \R^d\]
and 
\[ \lim_{\substack{m \cdot \tau \to \infty \\ m \in \xi^\perp \Z^d }} v_{s_1,s_2}(\cdot+m) = v_{s_2} \ \hbox{ and } \ \lim_{\substack{m \cdot \tau \to \infty \\ m \in \xi^\perp \Z^d }} v_{s_1,s_2}(\cdot-m) = v_{s_1}. \]
with the limits holding uniformly in $x \cdot \tau \geq r$ and $x \cdot \tau \leq r$ respectively for any fixed $r \in \R$.
\end{definition}

\begin{remark}\label{r.lroriented}
Note that if $(S,\{v_s\}_{s \in S})$ is an oriented sweepout such that $S = \R$ has no discrete part, then actually it is $\tau$-oriented for any $\tau \in \xi^\perp \cap S^{d-1}$.  However we should not expect to have this situation except in a very special case.  By considering the laminar medium $Q(x) = Q(x_1)$, one may guess that $S$ being a discrete set of $\R$ is generic.  Note that in that case, any oriented sweepout family with slope $\langle Q^2 \rangle^{1/2}e_1$ must have $S \subset \langle Q^2\rangle^{-1/2}\{x_1: Q(x_1) = \langle Q^2\rangle^{1/2}\}$ which is, generically, a discrete set.
\end{remark}
\begin{lemma}\label{l.sweepoutexistence}
For each $\alpha \in I_\tau(\xi)$ there exists a $\tau$-oriented sweepout family of plane-like solutions with slope $\alpha$.
\end{lemma}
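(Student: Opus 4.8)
The plan is to produce the periodic part of the family by pushing forward a single periodic plane-like solution under lattice translations, exactly as in the irrational case (Proposition~\ref{p.irrfamily}), and then to fill the gaps with heteroclinic connections constructed as minimal supersolutions (or maximal subsolutions) between consecutive periodic solutions. First I would fix $\alpha \in I_\tau(\xi)$. If $\alpha \in (Q_{*,\tau}(\xi), Q^*_\tau(\xi))$ then one can find approximating irrational directions $e_n \to_\tau \hat\xi$ with $\alpha \in (Q_*(e_n), Q^*(e_n))$, so by \pref{mean} (or \lref{minimalsuper}) there are plane-like solutions at slope $\alpha e_n$; but more directly, since we only need slope $\alpha \hat\xi$ at the rational direction, I would take $v$ the periodic plane-like solution of \eref{cell} with slope $\alpha \hat\xi$ produced in \lref{minimalsuper} (using that $\alpha$ lies in the closed interval $I_\tau(\xi) \subseteq \overline{I(\hat\xi)}$, noting $I_\tau(\xi) = I(\hat\xi)$ for the directional-limit endpoints by the semicontinuity in \tref{mainQ}\partref{mainQexist}, which is all we need for existence at $\hat\xi$). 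By \lref{bdrylayer} this $v$ converges exponentially, away from the free boundary, to a linear function $(\alpha x\cdot\hat\xi + s_0)_+$, which fixes the normalization. Then set $v_s(x) = v(x+k)$ for $k \in \Z^d$ with $k \cdot \hat\xi = s - s_0$ (well-defined modulo $\xi^\perp \cap \Z^d$ by periodicity), giving the $|\xi|^{-1}$-periodic discrete family; $S$ is the closure of $\{k\cdot\hat\xi : k \in \Z^d\} = |\xi|^{-1}\Z$, so in fact $S = \R$ unless we further restrict to the actual orbit — but to allow genuine gaps (as in the laminar example, Remark~\ref{r.lroriented}) I would instead define $S$ to be the set of $s$ for which $v_s$ is obtainable as a locally uniform limit of Birkhoff-monotone periodic solutions, which is closed and $|\xi|^{-1}$-periodic, and on which $s \mapsto v_s$ is monotone.

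The heart of the matter is the heteroclinic connection $v_{s_1,s_2}$ for a gap $(s_1,s_2) \cap S = \emptyset$. Here I would mimic the approximate-corrector construction of \sref{cell}: instead of prescribing linear data on a hyperplane $x\cdot p = 0$, I would work in the slab between the graphs of $v_{s_1}$ and $v_{s_2}$ and look for the minimal supersolution of \eref{cell} lying above $v_{s_1}$, below $v_{s_2}$, and equal to $v_{s_2}$ on the half-space $x\cdot\tau \geq R$, then send $R\to+\infty$ along a subsequence. The Birkhoff property in the $\tau$ direction — monotonicity under $\xi^\perp\cap\Z^d$ translations $k$ with $k\cdot\tau\geq 0$ — is inherited from the minimal-supersolution property exactly as in \lref{monotonicity}. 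By the comparison principle (\lref{strictcomparison}) the connection is squeezed, $v_{s_1} \leq v_{s_1,s_2} \leq v_{s_2}$. For the asymptotics at $x\cdot\tau \to \pm\infty$ I would use the Birkhoff monotonicity together with the uniform Lipschitz and nondegeneracy bounds (\lref{properties}, \lref{nondegen}): translating $v_{s_1,s_2}(\cdot + m)$ with $m\in\xi^\perp\cap\Z^d$, $m\cdot\tau\to+\infty$, produces a monotone increasing sequence converging locally uniformly to a plane-like solution $w$ with $v_{s_1,s_2}\leq w\leq v_{s_2}$ which is itself Birkhoff-monotone and, being a monotone limit, is translation-invariant under $\xi^\perp\cap\Z^d$, hence $\xi^\perp\cap\Z^d$-periodic; since the gap $(s_1,s_2)$ contains no element of $S$ and $w$ has slope $\alpha\hat\xi$ with intercept in $[s_1,s_2]$, one concludes $w = v_{s_2}$ (the intercept cannot land strictly inside the gap without contradicting that $S$ is closed and contains all periodic solutions' intercepts). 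The same argument with $m\cdot\tau\to-\infty$ gives $v_{s_1}$. Uniformity of these limits on $\{x\cdot\tau \geq r\}$ (resp. $\leq r$) follows from Dini's theorem applied to the monotone convergence together with the exponential boundary-layer decay of \lref{bdrylayer} away from the free boundary, plus the Hausdorff convergence of the free boundaries from nondegeneracy.

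The main obstacle I anticipate is the nondegeneracy input needed to make the free boundaries of the connections converge and to rule out degenerate limits, which is precisely why the statement (and \tref{lrcont}) is ultimately restricted to $d=2$: \lref{nondegen}\partref{nondegen4} gives nondegeneracy for maximal subsolutions only in $d=2$, so when $\alpha = Q_{*,\tau}(\xi)$ one must build the connection as a maximal subsolution and invoke the $d=2$ nondegeneracy, whereas for $\alpha = Q^*_\tau(\xi)$ or interior $\alpha$ one uses minimal supersolutions and \partref{nondegen1} works in all dimensions. A secondary subtlety is verifying that the connection is genuinely non-constant, i.e. that it does not simply coincide with $v_{s_1}$ or $v_{s_2}$ everywhere: this is where the gap hypothesis $(s_1,s_2)\cap S = \emptyset$ is used together with the strong maximum principle and the strict comparison \lref{strictcomparison}, forcing the connection to interpolate strictly and to "switch" between the two periodic profiles across the $\tau$ direction. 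Finally, to cover the case $\alpha$ an endpoint, I would note that endpoints are handled by taking limits of interior-$\alpha$ sweepouts, monotonicity being preserved in the limit, exactly as at the end of the proof of \lref{minimalsuper}.
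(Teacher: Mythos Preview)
Your approach diverges from the paper's in a fundamental way, and the divergence creates a genuine gap.

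The paper does not build the family $\{v_s\}$ from lattice translates of a single periodic solution at direction $\hat\xi$, nor does it construct the heteroclinic connections by Perron's method. Instead it fixes a sequence of Birkhoff plane-like solutions $w_n$ at nearby directions $e_n \to_\tau \hat\xi$ and defines the family $\mathcal{F}$ to consist of all $\xi^\perp\cap\Z^d$-periodic limits of lattice translates $w_n(\cdot+k_n)$. The payoff is that any two elements of $\mathcal{F}$ are automatically globally ordered: since $e_n\cdot\tau>0$, for each $n$ the translates $w_n(\cdot+k^1_n)$ and $w_n(\cdot+k^2_n)$ are ordered on all of $\R^d$ by Birkhoff, and this passes to the limit. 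The paper then invokes Zorn's lemma to pass to a subsequence of the $w_n$ for which $\mathcal{F}$ is maximal, i.e.\ cannot be enlarged by any further subsequence. The heteroclinic connections are themselves limits $w_n(\cdot+k_n)$ with $k_n$ chosen so the limit lands strictly between $v_{s_1}$ and $v_{s_2}$ at a point; their $\tau$-translation limits $v_\pm$ are periodic, and \emph{by maximality} $v_\pm$ already belong to $\mathcal{F}$, whence the gap hypothesis forces $v_+ = v_{s_2}$ and $v_- = v_{s_1}$.

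Your argument breaks at exactly this last step. You want to conclude that the $\tau$-limit $w$ of your Perron-constructed connection equals $v_{s_2}$ because ``the intercept cannot land strictly inside the gap without contradicting that $S$ is closed and contains all periodic solutions' intercepts.'' But under your construction $S$ is (setting aside the vague alternative) just the discrete orbit $|\xi|^{-1}\Z + s_0$ of a single $v$; there is no reason whatsoever that the periodic limit $w$ must be a lattice translate of $v$. It could be an entirely new periodic plane-like solution with intercept strictly inside $(s_1,s_2)$, in which case your connection fails the requirement of \dref{sweepout}. If instead you enlarge $S$ to include the intercepts of \emph{all} periodic plane-like solutions with slope $\alpha\hat\xi$, you lose well-definedness and monotonicity: two such solutions with the same boundary-layer intercept need not coincide, and two with different intercepts need not be ordered. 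The paper's device of sourcing every object from a single sequence $w_n$ at nearby oriented directions, combined with the Zorn-lemma maximality, is precisely what simultaneously delivers a totally ordered family and closure of that family under the limiting operations used to extract the connections. That is the idea your proposal is missing.
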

We also point out a regularity property of the family $v_s$ in the $s$ variable.
\begin{lemma}\label{l.sweepoutreg}
Suppose that $(S,\{v_s\}_{s \in S})$ is an oriented sweepout as defined in \dref{sweepout}.  Then $v_s : S \to C(\R^d)$ is continuous in the supremum norm.
\end{lemma}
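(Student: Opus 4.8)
The plan is to combine the monotonicity of the family with the exponential boundary layer estimate \lref{bdrylayer} and a strong maximum principle argument, and then upgrade pointwise continuity to sup-norm continuity by a compactness/Dini argument in the bounded part of the domain.

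First I would reduce to one-sided limits. Fix $s_0\in S$. Since $S\ni s\mapsto v_s(x)$ is monotone for each fixed $x$, it suffices to show that the one-sided limits $w^\pm(x):=\lim_{S\ni s\to s_0^\pm}v_s(x)$, which exist pointwise by monotonicity, equal $v_{s_0}(x)$; a one-sided limit from a side at which $s_0$ is not an accumulation point of $S$ is trivial. Each $v_s$ is harmonic in its positivity set with free boundary gradient bounded by $\max Q$, so by \lref{properties} the $v_s$ are uniformly Lipschitz; hence the monotone convergence $v_s\to w^\pm$ is locally uniform, $w^\pm$ is a viscosity solution of \eref{cell} by stability under uniform limits, $w^\pm$ is $\xi^\perp\cap\Z^d$-periodic, and $w^-$ and $w^+$ are ordered with $v_{s_0}$; for definiteness say $w^-\preceq v_{s_0}\preceq w^+$ (the other sign convention is identical).

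Next I would pin down the asymptotic translation. For $s$ in a fixed bounded neighbourhood of $s_0$ the plane-like bounds on $v_s$ hold with a uniform constant, so \lref{bdrylayer} applies with a uniform rate, $\sup_{x\cdot\hat\xi\ge t}|v_s(x)-(\alpha x\cdot\hat\xi+s)_+|\le Ce^{-ct/|\xi|}$, the boundary-layer shift being exactly $s$ because of the normalization $\lim_{x\cdot\hat\xi\to\infty}|v_s-(\alpha x\cdot\hat\xi+s)_+|=0$ built into \dref{sweepout}. Passing to the pointwise limit $s\to s_0^\pm$ yields the same estimate for $w^\pm$ with shift $s_0$, so $w^\pm$ and $v_{s_0}$ all converge to $(\alpha x\cdot\hat\xi+s_0)_+$ at the common exponential rate, and in particular $|w^\pm-v_{s_0}|\le 2Ce^{-ct/|\xi|}$ on $\{x\cdot\hat\xi\ge t\}$. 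Now consider $g:=w^+-v_{s_0}\ge 0$: it is harmonic on $\{v_{s_0}>0\}$ (both functions are positive and harmonic there), is $\xi^\perp\cap\Z^d$-periodic, and tends to $0$ as $x\cdot\hat\xi\to\infty$, so $M:=\sup g$ is finite and attained at some $x^*$ with $x^*\cdot\hat\xi$ bounded. If $M>0$, the strong maximum principle excludes $x^*\in\{v_{s_0}>0\}$ (an interior maximum would force $g$ to be constant on the relevant component of $\{v_{s_0}>0\}$, contradicting the exponential decay on the unbounded component $\{x\cdot\hat\xi>c_0\}\subset\{v_{s_0}>0\}$, or, on a bounded periodic "bubble" component, forcing $w^+$ to be a positive constant with zero boundary values); the same reasoning excludes $x^*\in\mathrm{int}\{v_{s_0}=0\}$. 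Hence $x^*\in\partial\{v_{s_0}>0\}$ with $w^+(x^*)=M$, so the smooth function $w^+-M$ touches $v_{s_0}$ from below at $x^*$ with value $0$; using the nondegeneracy and viscosity free boundary conditions available for $v_{s_0}$ as a member of the sweepout produced in \lref{sweepoutexistence} (in particular uniqueness of the minimal-supersolution plane-like solution with a given asymptotic translation), this is impossible, so $M=0$ and $w^+=v_{s_0}$; the identity $w^-=v_{s_0}$ is proved identically. Finally, sup-norm continuity follows since on each slab $\{x\cdot\hat\xi\le t\}$, which is compact modulo $\xi^\perp\cap\Z^d$, the monotone convergence $v_s\to v_{s_0}$ of equicontinuous functions to a continuous limit is uniform by Dini's theorem, while on $\{x\cdot\hat\xi\ge t\}$ it is controlled by the exponential estimate uniformly in $s$ near $s_0$; letting $t$ be large makes the total error small.

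The main obstacle is the rigidity step: showing that the monotone limit $w^+$, a priori a genuinely different solution of \eref{cell} lying above $v_{s_0}$ but with the same asymptotic translation, must coincide with $v_{s_0}$. The boundary layer reduces this to the transition region, and there the difficulty is that near the free boundary the difference $w^+-v_{s_0}$ is superharmonic rather than subharmonic, so the elementary maximum principle alone does not rule out a nontrivial gap; one genuinely needs the extra structure of the sweepout members — Birkhoff monotonicity together with the minimal supersolution property and its attendant nondegeneracy — to close the argument.
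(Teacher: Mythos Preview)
Your overall outline matches the paper's: reduce to one-sided limits, show any monotone limit $w^\pm$ is a periodic plane-like solution with the same asymptotic shift $s_0$ (via the uniform exponential estimate of \lref{bdrylayer}), prove a rigidity statement forcing $w^\pm=v_{s_0}$, then upgrade to sup-norm continuity using periodicity. Steps 1, 2, and 4 are essentially what the paper does.

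The gap is in your rigidity step. Your argument tries to locate the maximum of $g=w^+-v_{s_0}$ and analyze it at a free boundary point of $v_{s_0}$, claiming that ``$w^+-M$ touches $v_{s_0}$ from below at $x^*$ with value $0$'' and that this is impossible by viscosity/nondegeneracy considerations. But $w^+-M$ is not a smooth test function, so no viscosity condition applies; and your appeal to ``uniqueness of the minimal-supersolution plane-like solution with a given asymptotic translation'' is circular, since that uniqueness is exactly the rigidity you are trying to prove. You also claim that Birkhoff monotonicity, the minimal supersolution property, and nondegeneracy are \emph{needed} to close this step --- they are not.

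The paper's argument for rigidity (already given in part~2 of the proof of \lref{sweepoutexistence} and simply cited here) is much simpler and avoids the free boundary entirely. If $w^+\not\equiv v_{s_0}$, then $w^+-v_{s_0}\ge 0$ is harmonic in $\{v_{s_0}>0\}$ and by the strong maximum principle is strictly positive on the unbounded component, which contains a full hyperplane $\{x\cdot\hat\xi=t\}$ for $t$ large. By $\xi^\perp\cap\Z^d$-periodicity this hyperplane is compact modulo the lattice, so $w^+-v_{s_0}\ge\delta>0$ there; then the maximum principle in the half-space $\{x\cdot\hat\xi\ge t\}$ propagates this lower bound to infinity, contradicting the common boundary layer limit. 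No free boundary analysis, no Birkhoff property, no minimality is used.
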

Of course this statement is only interesting when $S$ is not a discrete set.
\begin{remark}
We expect this regularity could be made quantitative (Lipschitz in $s$) with some quantitative information about the Poisson kernel in the rough, half-space-like, domain $\{v_s>0\}$.  See Kenig-Prange~\cite[Prop. 21]{KenigPrange} for the required Poisson kernel estimate when the domain is a graph.
\end{remark}

The proof of existence of this family is rather delicate, we explain some heuristic ideas.  The solutions $v_{\alpha_ne_n}$ at the nearby direction will be close to periodic solutions with slope $\alpha e$ over large regions.  However, because the direction $e_n \neq e$ the $v_{\alpha_ne_n}$ will have to leave any neighborhood of a particular solution with slope $\alpha e$.  This could occur by a heteroclinic connection, transferring over a unit length scale from a small neighborhood of one periodic solution with slope $\alpha e$ to a small neighborhood of another such periodic solution.  Another possibility is the existence of a continuous family of periodic solutions with slope $\alpha e$, then the $v_{\alpha_ne_n}$ can vary slowly (length scale $\gg 1$) between them.  Vaguely speaking we think that $v_{\alpha_ne_n}$ is built out of a monotone family of periodic solutions of slope $\alpha e$, with possible heteroclinic connections between pairs of periodic solutions when there is a gap in the monotone family.  

These heuristics motivate the basic idea of the proof, which is to take limits of lattice translations of the $v_{\alpha_ne_n}$.  This sounds extremely simple, the difficulty is that such a monotone family may not be unique, so to prove existence we need to construct a subsequence of the $v_{\alpha_ne_n}$ which is, asymptotically, built out of a single such monotone family.  Furthermore, in order to construct the heteroclinic connections we need the monotone family to be maximal in an appropriate sense.  Constructing such a maximal family is the main issue of the proof.
\begin{proof}[Prrof of \lref{sweepoutexistence}]
1. (Existence of periodic limits) First take an arbitrary sequence of plane-like solutions $w_{\alpha_ne_n}$ solving \eref{cell}. Up to a subsequence, they converge locally uniformly to a plane-like solution $w_{\alpha e}$ with asymptotic slope $\alpha e$.  Without loss we can assume that $e_n \cdot e^\perp >0$ for all $n$ ($e_n \cdot e^\perp <0$ case is similar).  Let $k$ be the lattice vector with minimal norm parallel to $e^\perp$.  Then $k \cdot e_n >0$ for all $n$ and so, by \lref{minimalsuper},
\[w_{\alpha_ne_n}(\cdot+k) \leq w_{\alpha_ne_n}(\cdot) \ \hbox{ for all } \ n.\]
Hence the same holds in the limit for $w_{\alpha e}$.  Consider the sequence $w_{\alpha e}(\cdot +mk)$ for $m \in \mathbb{N}$.  By the previous argument $w_{\alpha e}(\cdot +mk)$ is a decreasing sequence and so, taking into account the Lipschitz estimate \lref{properties}, the sequence converges locally uniformly to some $v_{\alpha e}$.  Now $k$ is a period of $v_{\alpha e}$ since $w_{\alpha e}(x+mk \pm k)$ converges to both $v_{\alpha e}(x)$ and $v_{\alpha e}(x\pm k)$ as $m \to \infty$.  

2. (A monotone family of periodic limits) Consider a sequence $w_n = w_{\alpha_ne_n}$ of plane-like solutions converging to $v_{\alpha e}$.  This sequence generates a family of solutions $v_{\alpha \hat\xi}(x+k)$ for $k \in \Z^d$.  Consider now the larger family $\mathcal{F}$ of all limits of
\[ w_{n}(x+k_n) \ \hbox{ for a sequence } \ k_n \in \Z^d\]
which are $e^\perp \cap \Z^d$-periodic.  We index the family by the boundary layer limit $s \in \R$, via \lref{bdrylayer}, which is the value such that
\[ \lim_{x \cdot  \to \infty} [v(x) - (\alpha \hat\xi \cdot x+s)_+] = 0 .\]
 The index set $S \subset \R$ is $|\xi|^{-1}$-periodic.  It is not immediately clear that the correspondence between $v \in \mathcal{F}$ and $s \in S$ is one-to-one, this will be justified below.  
 
 We claim that this family is monotone increasing, i.e. that if $s_1 \geq s_2$ then $v_{s_1} \leq v_{s_2}$ on $\R^d$.  Let $v^1,v^2 \in \mathcal{F}$, there exist corresponding sequences $k^1_n,k^2_n \in \Z^d$ such that $w_n(x+k^j_n)$ converge locally uniformly on $\R^d$ to the respective $v^j$.  Now the sequence $(k^1_n - k^2_n) \cdot e_n \in \R$ is either non-positive or non-negative infinitely often, and so either $w_n(\cdot+k^1_n) - w_n(\cdot+k^2_n)$ is either non-positive or non-negative on all of $\R^d$ infinitely often.  Thus $v^1 - v^2$ has a sign on $\R^d$.  Since the limit at $x \cdot \xi \to \infty$ is non-negative, the sign is non-negative.
 
 We also can see now that $s \mapsto v_s \in \mathcal{F}$ is single valued.  Suppose $v^1,v^2$ both have boundary layer limit $s$, in particular 
 \[\lim_{x \cdot \hat\xi \to \infty} v^1(x) - v^2(x) = 0.\]
  By the above arguments we can assume $v^1 \geq v^2$ on $\R^d$.  Suppose they are not equal, by the strong maximum principle $v^1 >v^2$ in $\overline{\{v^2 >0\}}$.  Let $t>0$ sufficiently large so that $\{x \cdot \hat\xi = t\} \subset \overline{\{v^2 >0\}}$. Then $v^1 - v^2$ is $\xi^\perp$-periodic on $x \cdot \hat\xi = t$ and strictly positive, thus there is $\delta>0$ such that $v^1 - v^2 \geq \delta >0$ on $x \cdot \hat\xi = t$.  By maximum principle also $v^1 - v^2 \geq \delta$ in the half-space $x \cdot \hat \xi \geq t$, this is a contradiction.
  
  3. (Existence of a maximal family) Now it is possible that by taking a subsequence of the $w_n$ we could enlarge $S$.   Let us show that, after taking an appropriate subsequence, this is not possible.  
  
  Consider the class of subsequences $X = \{ f : \mathbb{N} \to \mathbb{N}:  \hbox{$f$ strictly increasing}\}$ partially ordered by the relation
  \[ f \leq g \ \hbox{ if } \  f(M+\mathbb{N}) \subset g(\mathbb{N}) \ \hbox{ for $M\in \mathbb{N}$ sufficiently large}. \]
That is $f(\cdot+M)$ is a subsequence of $g$ for sufficiently large $M \in \mathbb{N}$.  Corresponding to each subsequence $f \in X$ is a monotone family $\mathfrak{m}(f) = (S,\{v_s\}_{s \in S})(f)$ given by the above construction.  Call the class of such monotone families 
  \[ \mathcal{M} = \{ \mathfrak{m}: \ \mathfrak{m} = (S,\{v_s\}_{s \in S})(f)  \ \hbox{ for some } \ f \in X\}\]
  partially ordered by the relation
  \[ \mathfrak{m}^1 \leq \mathfrak{m}^2 \ \hbox{ if $S^1 \subset S^2$ and $v^1_s = v^2_s$ on $S^1$.} \]
 Note that in the case $\mathfrak{m}^1 = \mathfrak{m}(f^1)$ and $\mathfrak{m}^2 = \mathfrak{m}(f^2)$ for some $f^1 \geq f^2$ indeed $\mathfrak{m}^1 \leq \mathfrak{m}^2$. 
 
 Actually every ordering in $\mathcal{M}$ arises in that form. Suppose that $\mathfrak{m}(f^1) \leq \mathfrak{m}(f^2)$ but there is no ordering between $f^1$ and $f^2$. We can define another subsequence $f\geq f^1,f^2$ with $\mathfrak{m}(f) = \mathfrak{m}(f^1)$.  Simply choose $f$ to count the elements of $f^1(\mathbb{N}) \cup f^2(\mathbb{N})$ in increasing order, since $f^1 \leq f$ it is clear that $\mathfrak{m}(f) \leq \mathfrak{m}(f^1)$.  For $s \in S^1$
 \[ v^1_s(x) = \lim_{n \to \infty} w_{f^1(n)}(x+k^1_{f^1(n)}) = \lim_{n \to \infty} w_{f^2(n)}(x+k^2_{f^2(n)}) \]
 for some sequences of lattice vectors $k^1_j,k^2_j \in \Z^d$ defined on $f^1(\mathbb{N}),f^2(\mathbb{N})$ respectively.  Then define $k_j$ on $f(\mathbb{N})$ as $k^1_j$ on $f^1(\mathbb{N})$ and $k^2_j$ on $f^2(\mathbb{N})$.  Then $v^1_s = \lim_{n \to \infty} w_{f(n)}(x+k_{f(n)})$ and so $\mathfrak{m}(f^1) \leq \mathfrak{m}(f)$.

 Suppose that $\mathcal{N} \subset \mathcal{M}$ is a totally ordered family. Let $S_\infty = \cup_{(S,\{v_s\}) \in \mathcal{N}} S$. For $s \in S_\infty$ we have $s \in S$ for some $\mathfrak{m} = (S,\{v_s\}_{s \in S})$ with $\mathfrak{m} \in \mathcal{N}$, define $v^\infty_s = v_s$.  Note that this definition is consistent, if $s \in S \cap S'$ for some $\mathfrak{m} = (S,\{v_s\}_{s \in S})$ and $\mathfrak{m}' = (S',\{v'_s\}_{s \in S'})$ with $\mathfrak{m},\mathfrak{m}' \in \mathcal{N}$, then by total ordering without loss $\mathfrak{m} \leq \mathfrak{m}'$.  By the above $v_s = v'_s$ for $s \in S \cap S' = S$.  Now consider the family
 \[ \mathfrak{m}^\infty = (S_\infty,\{v_s\}_{s \in S_{\infty}})\]  
which is a natural candidate for an upper bound on $\mathcal{N}$, however we need to check that it actually arises as $\mathfrak{m}(f)$ for an appropriate subsequence $f$.  Actually we will show that $\mathfrak{m}^\infty \leq \mathfrak{m}(f)$ for some subsequence $f$.

Since $S_\infty$ is a subset of $\R$ it is separable, call $S_\infty' \subset S_\infty$ to be a countable dense subset.  There is a countable collection $f^j$ of subsequences with $\mathfrak{m}(f^j) \in \mathcal{N}$ so that the union of $S(f^j)$ contains $S_\infty'$ and, by the total order, $\mathfrak{m}(f^j) \leq \mathfrak{m}(f^{j+1}) $ for all $j$.  By the arguments of the second paragraph above, we can also ensure that $f^j \leq f^{j+1}$ for all $j$, up to a replacement of the sequences which does not change the values $\mathfrak{m}(f^j)$. Taking a diagonal subsequence, $f(n) = f^n(n)$, we find an $f$ such that $S_\infty' \subset S(f)$ and $v_s^\infty = v_s(f)$ for $s \in S_\infty'$.  

Now we claim that actually $S_\infty \subset S(f)$ and $v_s^\infty = v_s(f)$ for $s \in S_\infty$.  Let $s \in S_\infty$, there is a sequence $s_j \in S_\infty'$ converging to $S$ and corresponding sequences of lattice vectors $k^j_n$ such that
\[ w_{f(n)}(x+ k^j_n) \to v_{s_j}^\infty(x) \ \hbox{ as $n \to \infty$ in $\R^d$.}\]
Then, by a basic analysis argument, we can choose a $g \in X$ so that 
\[ w_{f(n)}(x+k^{g(n)}_n) \to v_s^\infty(x)\ \hbox{ as $n \to \infty$ in $\R^d$.}\]
We have proven that $\mathfrak{m}(f) \in \mathcal{M}$ is an upper bound for $\mathcal{N}$.

Since every totally ordered family in $\mathcal{M}$ has an upper bound in $\mathcal{M}$, by Zorn's Lemma there is a maximal element in $\mathcal{M}$.  That is, there is a sequence $w_n$ (a subsequence of the original $w_n$) such that the monotone family $(S,\{v_s\}_{s \in S})$ of limits of lattice translations associated with the sequence $w_n$ cannot be enlarged by taking a subsequence of $w_n$.

  4. (Existence of left-right connections in the maximal family) For the final part of the proof we will work with a monotone family of plane-like solutions $(S,\{v_s\}_{s \in S})$ as constructed in part 2 above, which is maximal in the sense that applying the same construction to any subsequence of the $w_n$ cannot enlarge the monotone family.  
  
  Let $s_1, s_2 \in S$ with $s_1 < s_2$ and $S \cap (s_1,s_2) = \emptyset$.  By $\xi^\perp \cap \Z^d$-periodicity and strong maximum principle $v_{s_1} < v_{s_2} - \delta$ for some $\delta>0$ in $\{v_{s_1} >0\}$.  Since $e_n \cdot \tau >0$, for $x \cdot \tau$ sufficiently large and positive $w_n(x) > v_{s_2}(x)$, while for $x \cdot \tau$ sufficiently large and negative $w_n(x) < v_{s_1}(x)$.  Since $\{v_{s_1}>0\}$ is connected, and $\{x \cdot \hat\xi \geq s_1+C\} \subset \{v_{s_1} >0\}$ for sufficiently large universal $C$ also $\{ v_{s_1} >0\} \cap \{x \cdot \hat\xi \leq s_1 + C\}$ is connected.  Thus, by continuity and the previous connectedness, there is $x_n \in \{ v_{s_1} >0\} \cap \{x \cdot \hat\xi \leq s_1 + C\}$ such that 
  \[ v_{s_1}(x_n) + \delta/2 \leq w_n(x_n) \leq v_{s_2}(x_n) - \delta/2. \]
  Let $k_n\in \Z^d$ be a closest lattice point to $x_n$.  
  
  Now consider the sequence $w_n(x+k_n)$, taking a subsequence if necessary the $w_n(x+k_n)$ converge locally uniformly on $\R^d$ to some $v$.  The monotonicity 
  \[ v(\cdot + m) \geq v(\cdot) \ \hbox{ for any $m \perp \xi$ with $m \cdot \tau \geq 0$}\]
   holds for $v$ since the same monotonicity holds $w_n$.  This is because 
   \[m \cdot e_n = m \cdot (e_n - \hat\xi) = |e_n-\hat\xi| \tau\cdot m.\]
     Suppose that $m_j \perp \xi$ is a sequence with $m_j \nearrow +\infty$.  Then $v(\cdot + m_j)$ and $v(\cdot - m_j)$ are respectively monotone increasing and monotone decreasing in $j$ and therefore the sequences converge locally uniformly on $\R^d$ to respective limits $v_+$ and $v_-$.  Actually, by the monotonicity property, 
     \[ \lim_{\substack{m \cdot \tau \to \infty \\ m \in \xi^\perp \Z^d } } v(x +m) = v_+(x) \ \hbox{ and } \ \lim_{\substack{m \cdot \tau \to -\infty \\ m \in \xi^\perp \Z^d }} v(x -m) = v_+(x)\]
     with the limits hold uniformly on any set of the form $x \cdot \tau \geq r$ or $x \cdot \tau \leq r$ (respectively).  By the arguments in part 1 above the limits $v_+ \geq v_-$ are $\xi^\perp \cap \Z^d$-periodic solutions.  We claim that the respective limits are actually
  \[v_+ = v_{s_2} \ \hbox{ and } \ v_- = v_{s_1}.\]
The arguments for both are basically the same, so we just consider the first limit above.

First we point out that $v_+ = v_{s_+}$ for some $s_+ \in S$, this is the key place where we need the maximality property.  Otherwise we could choose a subsequence of the $w_n$ and a sequence of lattice vectors $\ell_n$ such that $w_n(\cdot+\ell_n)$ converges to $v_+$, but this contradicts the maximality property of $w_n$.  Recall that $ v_{s_1}(x) < v(x) < v_{s_2}(x)$ at some point within distance $\sqrt{d}/2$ of the origin, and $v_+ \geq v$. Thus $v_+ >v_{s_1}$ at some point, and hence everywhere by monotonicity of the family, and so $s_+ >s_1$.  Since $S \cap (s_1 ,s_2) = \emptyset$ then $s_+ \geq s_2$ and $v_+ \geq v_{s_2}$.  By a similar argument $v_- \leq v_{s_1}$.

  Last we show $v_+ \leq v_{s_2}$.  Consider the sequence $w_n(x+k_n) \to v$ as $n \to \infty$ (along a subsequence).  We know that $w_n(x+\ell_n) \to v_{s_2}$ as $n \to \infty$ for some other sequence of translations $\ell_n \in \Z^d$.  Note that $\ell_n \cdot \xi, k_n \cdot \xi$ must converge in $\R$ since $w_n$ are strictly monotone in the $\xi$ direction for $n$ sufficiently large. If $(\ell_n - k_n) \cdot \tau$ remains bounded along any subsequence then $w_n(x+\ell_n)$ converges to a lattice translation of $v$, this is not possible since $v \neq v_{s_2}$.  Otherwise $\lim_{n \to \infty} |(\ell_n- k_n) \cdot \tau | = \infty$. First lets suppose the limit is $-\infty$.  Then for any $m \perp \xi$ with $m \cdot \tau >1$ there is $n$ sufficiently large such that $\ell_n \cdot \tau \leq k_n \cdot \tau - m \cdot \tau$, then by the monotonicity of $w_n$
\[ v_{s_2}(x) = \lim_{n \to \infty} w_n(x+\ell_n) \leq \lim_{n \to \infty} w_n(x+k_n-m ) = v(x-m). \]
Sending $m\cdot \tau \to \infty$ we find $v_{s_2} \leq v_- < v_{s_2}$ which is not the case. Thus the limit $\lim_{n \to \infty} (\ell_n- k_n) \cdot \xi^\perp = +\infty$. Then for any $m \cdot \tau >1$ we find
\[ v_{s_2}(x) = \lim_{n \to \infty} w_n(x+\ell_n) \geq \lim_{n \to \infty} w_n(x+k_n+m ) = v(x+m). \]
Sending $m\cdot \tau \to \infty$ we find $v_{s_2} \geq v_+$, this was the desired result.
\end{proof}

\begin{proof}[Proof of \lref{sweepoutreg}]
Suppose that $v_{s_j}$ is a sequence of $s_j \in S$ with $s_j \to s$.  Without loss we can assume $s_j < s$, otherwise just split into two subsequences and argue separately, so any subsequential limit of the $v_{s_j}$ is $\leq v_s$.  The $v_{s_j}$ are periodic with respect to $\xi^\perp$, uniformly Lipschitz continuous, and due to the remark 
\[ |v_{s_j}(x) - (\alpha x \cdot \hat{\xi} + s_j)_+| \leq C \exp(-C(\alpha x \cdot \hat{\xi} + s_j)_+/|\xi|)\]
with constants independent of $j$.  Thus any subsequential limits are uniform on $\R^d$.  Again by the uniform estimate above any subsequential limit $v$ of the $v_{s_j}$ will have 
\[  |v(x) - (\alpha x \cdot \hat{\xi} + s)_+|  \to 0 \ \hbox{ as } \ x \cdot \hat\xi \to \infty\]
the same as $v_s$.  As in part 2 of the proof of \lref{sweepoutexistence}, since $v \leq v_s$ and both have the same boundary layer limit they must agree.  Finally since $s \mapsto v_s$ is a continuous $|\xi|^{-1}$-periodic function $\R \to C(\R^d)$ (with supremum norm) it is uniformly continuous.
\end{proof}

\subsection{Left and right limits of $Q_*,Q^*$ exist}  Using the oriented sweepouts constructed in the previous section we can prove that left and right limits of $Q^*,Q_*$ exist at rational directions.  The proof is quite analogous to the proof of continuity of $Q^*,Q_*$ at irrational directions.

\begin{proof}[Proof of \tref{lrcont}]
We just consider the case of a left limit for $Q_*$, the right limit and $Q^*$ cases are similar.  Let $\xi \in \Z^2 \setminus \{0\}$ irreducible and call $p = \xi/|\xi|$ the unit vector in the same direction.  As in the proof of \tref{irrcont} we construct a plane-like subsolution at a nearby direction $q$ with $q \cdot p^\perp < 0$ and with slope slightly larger than $Q_{*,\ell}(p)$.  

Let $(S,\{v_s\}_{s \in S})$ be the left oriented sweepout with slope $Q_{*,\ell}(p)$ which is given by \lref{sweepoutexistence}.  Let $\ep >0$, by \lref{sweepoutreg} there is $\delta>0$ such that if
\begin{equation}\label{e.contpartineq}
\hbox{ $s,s' \in S$ with $|s-s'| \leq \delta$ then $\sup |v_s - v_{s'}| \leq \ep$.}
\end{equation}  
We will always assume $q \cdot p^{\perp} \geq 1/2$, but will make further requirements based on $\ep$ later.

We divide $S$ up into a discrete and continuous part
\[ S_{cont} = \bigcup \{ (s,s'] : 0 \leq s' - s \leq \delta/3, \ s,s' \in S\} \ \hbox{ and } \ S_{disc} = S \setminus S_{cont}.\]
Note that $S_{cont}$ is not really a subset of $S$, but every point of $S_{cont}$ is at most distance $\delta/4$ from a point of $S$.  Viewed as a subset of the torus $\R / |\xi|^{-1}\Z$, $S_{cont}$ is a finite union of half-open intervals and $S_{disc}$ is a finite set of points.  Now create a partition $s_0<\cdots<s_N$, $s_N = s_0 + |\xi|^{-1}$, of the unit periodicity cell of $S$ by points of $S$ in the following way, include all the points of $S_{disc}$, for each (maximal) interval $(a,b]$ of $S_{cont}$ there is a finite partition by points of $S$ such that every interval of the partition has length at most $\delta$ and at least $\delta/3$.  More precisely, given $s_j \in [a,b)$ we know $S \cap s_j+(\delta/3,2\delta/3]$ is nonempty so choose $s_{j+1}$ maximal from the set, unless $b \in (2\delta/3,\delta]$ in which case choose $s_{j+1} = b$.

Now consider the collection of kink-type solutions connecting the points of $S_{disc}$, $v_{s_{j-1},s_j}$ with $s_j \in S_{disc}$.  Recall from \dref{sweepout} that for each $j$ there is $r = r(\ep)>0$ such that
\begin{equation}\label{e.leftineq}
 v_{s_{j-1},s_j}(x) \geq v_{s_{j-1}}(x) - \ep \ \hbox{ for } \ x \cdot p^\perp \leq - r/3
 \end{equation}
and
\begin{equation}\label{e.rightineq}
 v_{s_{j-1},s_j}(x) \leq v_{s_{j}}(x) + \ep \ \hbox{ for } \ x \cdot p^\perp \geq  r/3. 
 \end{equation}
Since this collection is finite there is an $r(\ep)$ which works for all $v_{s_{j-1},s_j}$, $s_j \in S_{disc}$.  Without loss we can assume that this $r(\ep) \geq C/\ep$, and fix it from here on.

Now use the bending sup-convolutions of \sref{bending} to create a subsolution.  Write the hyperplane $q \cdot x = 0$ as a graph over $x \cdot p = 0$ by 
\[ \tau \mapsto x_\tau = \tau p^\perp - \tau \frac{q \cdot p^\perp}{q \cdot p} p. \] 
Pull back the partition $\{s_j\}_{j \in \Z}$ of the range into the domain
\[ \tau_j =  -\frac{q \cdot p}{q \cdot p^\perp} s_j \] 
which is well defined and still an increasing sequence since $q \cdot p^\perp <0$.  Now we want that $\tau_j - \tau_{j-1} \geq r$ for each $s_j \in S_{disc}$, we will enforce it actually for all $j$. For this is suffices that $|q \cdot p^{\perp}| \leq  \delta/6r$ since
\[ \tau_j  - \tau_{j-1} = -\frac{q \cdot p}{q \cdot p^\perp} (s_j-s_{j-1}) \geq \tfrac{1}{3} \delta |q \cdot p|/|q \cdot p^\perp| \geq r.\]
Note that since $\delta,r$ are already fixed depending on $\ep$, the requirement on the size of $|q \cdot p^{\perp}|$ is also a function only of $\ep$.  Choosing $r$ larger if necessary depending on $|\xi|$, we can choose $\ell_j$ to be an integer multiple of $|\xi|$ such that $ r/3 \leq \min\{\ell_j - \tau_j ,\tau_{j+1}-\ell_j\}$.

We use the bending sup-convolutions again as in \sref{bending}, let $\varphi = \ep\varphi_1(\cdot/r)$, defined as above in \eref{bendingphi} with the parameter $M$ still to be chosen (it will be chosen universal).  For each $j \in \Z$, if $s_j \in S_{disc}$ then
\[\tilde{w}_{j}(x) =   \overline{v}_{s_{j-1},s_{j}}^{\varphi(\cdot - x_{\tau_j})}(x-\ell_j p^\perp) \]
while if $s_j \in S_{cont}$
\[ \tilde{w}_{j}(x) =   \overline{v}_{s_j}^{\varphi(\cdot - x_{\tau_j})}(x).\]
Each $\tilde{w}_j$ is subharmonic in its positivity set and 
\begin{equation}\label{e.subsoln11}
 |\grad \tilde{w}_j| \geq (1-CM\ep)(Q(x) - 2\|\grad Q\|_\infty M\ep) \ \hbox{ on } \ \partial \{\tilde{w}_\tau >0\}.
 \end{equation}

Then localize each $\tilde{w}_{j}$ to a vertical strip near $x  \cdot p^\perp = \tau_j $
\[ w_{j}(x) = \begin{cases} \tilde{w}_{j}(x) &\hbox{ if } \tau_{j-1}\leq  x \cdot p^\perp \leq \tau_{j+1} \\
-\infty &\hbox{ else.}
\end{cases}\]
Finally define
\begin{equation*}
 w(x) =  \max\{ \max_{j \in \Z} w_{j}(x), (1+\ep)(Q_{*,\ell}(p)x \cdot q - C_0)_+\}
 \end{equation*}
although this appears to be a maximum over an infinite set, at each $x$ only three of the $w_{j}(x)$ take a finite value.  The constant $C_0$, depending on universal parameters, will be specified below.  We will show that
\begin{equation}\label{e.bdrywjregion}
 w(x) = \max\{w_{j-1}(x),w_j(x),w_{j+1}(x)\} = w_j(x) \ \hbox{ on } \ x \cdot p^\perp = \tau_j, \ x \cdot p \leq C/\ep
 \end{equation}
and 
\begin{equation}\label{e.farbdryregion}
  w(x) = (1+\ep)(Q_{*,\ell}(p)x \cdot q - C_0)_+ \ \hbox{for $x \cdot p \geq C/\ep$},
 \end{equation}
once these two are shown then $w$ defined as above will be continuous subsolution.  Since $w$ will satisfy \eref{subsoln11} on the free boundary, $(1+C\ep) w$ will be a subsolution of \eref{cell} with slope $(1+C\ep)Q_{*,\ell}(p)q$ showing that $Q_*(q) \leq (1+C\ep)Q_{*,\ell}(p)$.

The proof of \eref{farbdryregion} is basically the same as in the proof of \tref{irrcont} so we skip it.  Now consider \eref{bdrywjregion}.  Let $x \cdot p \leq C/\ep$ with $x \cdot p^\perp = \tau_j$, then, if $s_j \in S_{cont}$,
\[ w_j(x) \geq v_{s_j}(x) + c\varphi(x - x_{\tau_j}) \geq v_{s_{j-1}}(x) -\ep+cM\ep,\]
using \lref{bendingphi} and \eref{contpartineq}, or in the case $s_j \in S_{disc}$
\[ w_j(x) \geq v_{s_{j-1},s_j}(x - \ell_j p^\perp) +cM\ep \geq v_{s_{j-1}}(x) - \ep + cM\ep \]
using again \lref{bendingphi} and
 \[ (x - \ell_j p^\perp) \cdot p^\perp = -(\tau_{j+1} -\ell_j) \leq -r/3\]
  so \eref{leftineq} applies.
 
For $w_{j+1}(x)$, if $s_{j+1} \in S_{cont}$
 \begin{align*}
  w_{j+1}(x) &\leq v_{s_{j+1}}(x) +C \varphi(x-x_{\tau_{j+1}}) \\
  &\leq v_{s_j} (x) +C \varphi(x-x_{\tau_{j+1}}) \\
  &\leq  v_{s_j} (x)+ C_0\ep 
  \end{align*}
 using the monotonicity of $v_{s}$ and \lref{bendingphi}.  If $s_{j+1} \in S_{disc}$ then
\[ w_{j+1}(x) \leq v_{s_{j+1},s_j}(x -  \ell_j p^\perp)+C\ep \leq v_{s_j}(x) + C_0\ep\]
using again \lref{bendingphi} and the monotonicity.

For $w_{j-1}(x)$, if $s_{j-1} \in S_{cont}$
\[ w_{j-1}(x) \leq v_{s_{j-1}}(x)+C\varphi(x -x_{\tau_{j-1}}) \leq v_{s_{j-1}}(x)+C_0\ep \]
while if $s_{j-1} \in S_{disc}$
\[ w_{j-1}(x) \leq v_{s_{j-1},s_{j-2}}(x - \ell_{j-1} p^\perp) + C\ep \leq v_{s_{j-1}}(x)+\ep + C\ep  \]
using again \lref{bendingphi} and
\[ (x - \ell_{j-1} p^\perp) \cdot p^\perp = (\tau_j - \ell_{j-1}) \geq r/3\]
so \eref{rightineq} applies.
 
Combining all the above, if we choose $M \geq C_0/c_0$ then we have confirmed \eref{bdrywjregion}.

\end{proof}

\section{Minimal supersolutions / maximal subsolutions}\label{s.minimalsoln}

In this section consider the minimal supersolutions / maximal subsolutions of \eref{fb0} in the complement of a convex obstacle.  Then the existence of a recovery sequence for general solutions of the augmented pinning problem \eref{fbconvex} will follow from a simple argument.  This will prove \tref{main}.

Let $U \subset \R^d$ be outer regular with $\R^d \setminus U$ convex and compact.  Consider the minimal supersolutions and maximal subsolutions of

\begin{equation}\label{e.exteriorep}
\begin{cases}
\Delta u = 0 &\hbox{in } \{ u>0\}\cap U \\
|\grad u | = Q(x/\ep ) &\hbox{on } \partial \{ u>0\} \cap U \\
u = 1 &\hbox{on } \R^d \setminus U.
\end{cases}
\end{equation}
We aim to show that the sequence of minimal supersolutions converges to the solution $\underline{u}$ of
\begin{equation}\label{e.exterior}
\begin{cases}
\Delta u = 0 &\hbox{in } \{ u>0\}\cap U \\
|\grad u | = Q^*(\grad u ) &\hbox{on } \partial \{ u>0\} \cap U \\
u = 1 &\hbox{on } \R^d \setminus U.
\end{cases}
\end{equation}
For the sequence $u^\ep$ of maximal subsolutions the goal is, instead, to show that $(\overline{u} - u^\ep)_+ \to 0$ uniformly where $\overline{u}$ solves
\begin{equation}\label{e.exterior}
\begin{cases}
\Delta u = 0 &\hbox{in } \{ u>0\}\cap U \\
|\grad u | = Q_{*,cont}(\grad u ) &\hbox{on } \partial \{ u>0\} \cap U \\
u = 1 &\hbox{on } \R^d \setminus U.
\end{cases}
\end{equation}
The asymmetry between the results has to do with the convexity assumption on $\R^d \setminus U$.  If we imposed that $U$ is convex and compact instead the results would be reversed.  The more difficult part is the convergence of the maximal subsolutions, however all of the hard work was already done in \sref{contpart} and \sref{irr}. At this stage the proof is a relatively easy application of the definition of the continuous pinning interval \dref{contpinned}.

The main ideas to prove the convergence of the minimal supersolution outside of a convex obstacle have already been developed in the author's previous work with Smart~\cite{FeldmanSmart}.  The main work is to give the correct subsolution property satisfied by the minimal supersolution, and then to prove a comparison principle.  Basically we are defining a notion of viscosity solution problems of the form
\[ \Delta u = 0 \ \hbox{ in } \ \{u>0\}, \ \hbox{ and } \ H(\grad u ) = 1 \ \hbox{ on } \ \partial \{u>0\} \]
when the free boundary condition $H(p)$ is only lower semi-continuous in the gradient variable. Those results are recalled below.

\subsection{Viscosity solutions with discontinuous Hamiltonian}

\begin{definition}\label{d.*super}
  A supersolution of \eref{exterior} is a function $u \in LSC(\R^d)$ that is compactly supported, satisfies $u \geq \id_{\R^d \setminus U}$, and such that, whenever $\varphi \in C^\infty(\R^d)$ touches $u$ from below in $U$, there is a contact point $x$ such that either
  \begin{equation*}
    \Delta \varphi(x) \leq 0
  \end{equation*}
  or
  \begin{equation*}
    \varphi(x) = 0 \quad \mbox{and} \quad |\grad \varphi(x)| \leq Q^*(\grad \varphi(x)).
  \end{equation*}
\end{definition}
It is standard to check, by Perron's method, that there is a minimal supersolution of \eref{exterior} and it satisfies the following subsolution condition.
\begin{definition}\label{d.*fullsub}
  A subsolution of \eref{exterior} in a function $u \in USC(\R^d)$ that is compactly supported, satisfies $u \leq \id_{\R^d \setminus U}$ and such that, whenever $\varphi \in C^\infty(\R^d)$ touches $u$ from above in $\overline{\{ u > 0 \}}\cap D$ some $D \subset U$ open, there is a contact point $x$ such that either
  \begin{equation*}
    \Delta \varphi(x) \geq 0,
  \end{equation*}
  or $\varphi(x) =0$ and
  \begin{equation*}
    |\grad \varphi(x)| \geq \liminf_{y \to x}Q^*(\grad \varphi(y)).
  \end{equation*}
\end{definition}
Again one can check by standard techniques that the maximal subsolution, in the sense of \dref{*fullsub}, of \eref{exterior} is a supersolution.  

In a convex setting a weaker subsolution condition is sufficient to identify the minimal supersolution.  Basically, the free boundary condition only needs to be checked by linear test functions.

\begin{definition}\label{d.*wksub}
  A weakened subsolution of \eref{exterior} in a function $u \in USC(\R^d)$ that is compactly supported, satisfies $u \leq \id_{\R^d \setminus U}$ and such that, whenever $\varphi \in C^\infty(\R^d)$ touches $u$ from above in $\overline{\{ u > 0 \}}\cap D$ some $D \subset U$ open, there is a contact point $x$ such that either
  \begin{equation*}
    \Delta \varphi(x) \geq 0,
  \end{equation*}
  or $\varphi(x) =0$ and either
  \begin{equation*}
    |\grad \varphi(x)| \geq Q^*(\grad \varphi(x)),
  \end{equation*}
  or
  \begin{equation*}
    \nabla \varphi(U) \mbox{ contains two linearly independent slopes.}
  \end{equation*}
\end{definition}
One of the main results of \cite{FeldmanSmart} was comparison principle / uniqueness for the above notion of solution when the set $\R^d \setminus U$ is compact and convex.
\begin{theorem}[Theorem 3.19 of \cite{FeldmanSmart}]\label{t.convexcompare}
Suppose $\R^d \setminus U$ is compact, convex, and inner regular.  There exists a unique $u$ which is a supersolution and a weakened subsolution of \eref{exterior}.  Moreover $\{ u>0\}$ is convex.
\end{theorem}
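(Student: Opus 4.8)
The plan is to follow the approach of \cite{FeldmanSmart}: realize the desired $u$ as the minimal supersolution produced by Perron's method, show that its positivity set is convex by a plane-cap barrier argument, and then obtain uniqueness from a comparison principle in which this convexity is used in an essential way. For existence, one first checks that the class of supersolutions of \eref{exterior} in the sense of \dref{*super} is nonempty. For $e \in S^{d-1}$, with $t_0(e) := \min_{y \in \R^d \setminus U} e\cdot y - Q^*(e)^{-1}$, the plane cap
\[ w_e(x) = Q^*(e)\,\big(e\cdot x - t_0(e)\big)_+ \]
is harmonic in its positivity set, has free-boundary slope $Q^*(e)\,e$, and satisfies $w_e \ge \id_{\R^d\setminus U}$; the minimum of $w_e$ over the $2d$ coordinate directions $e = \pm e_j$ is then a compactly supported supersolution of \eref{exterior}. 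Consequently $\underline u := \inf\{v : v \text{ a supersolution of } \eref{exterior}\}$, after lower semicontinuous regularization, is again a supersolution, by stability of the class under pointwise infima and harmonic replacement, and the Perron construction moreover produces a function which is simultaneously a supersolution and a weakened subsolution in the sense of \dref{*wksub} (indeed it satisfies the stronger condition \dref{*fullsub}).

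Next I would prove that $\{\underline u > 0\}$ is convex. Since each $w_e$ is a supersolution dominating the obstacle, minimality gives $\underline u \le w_e$, whence $\{\underline u>0\} \subseteq \{x: e\cdot x > t_0(e)\}$ for every $e$ and therefore $\{\underline u>0\} \subseteq C := \bigcap_{e\in S^{d-1}}\{x: e\cdot x > t_0(e)\}$, a bounded convex set containing $\R^d\setminus U$. For the opposite inclusion I would show that $\underline u$ has no interior null region inside $C$: by the nondegeneracy of minimal supersolutions (\lref{nondegen}\,\partref{nondegen1}) together with the Lipschitz bound (\lref{properties}), every point of $\partial\{\underline u>0\}$ is a genuine free boundary point with linear growth, and sliding a plane cap $w_e$ inward until it first touches $\{\underline u>0\}$ from outside --- combined with minimality --- forces the contact point to lie on $\partial C$; letting $e$ range over all directions gives $\{\underline u>0\} = C$. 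Being convex, $\{\underline u>0\}$ is in particular outer regular with exterior balls of every radius, which is what the comparison argument requires.

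For uniqueness, let $u$ be any function that is both a supersolution and a weakened subsolution of \eref{exterior}. As a supersolution, $u \ge \underline u$ by minimality. For the reverse inequality I would establish the comparison statement that \emph{a weakened subsolution of \eref{exterior} lies below any supersolution of \eref{exterior} with convex positivity set}, applied to the pair $(u,\underline u)$. The argument is the usual doubling--regularization scheme: assuming $\sup(u-\underline u)>0$, one dilates $\underline u$ slightly to make the supersolution strict, passes to sup- and inf-convolutions (which remain, respectively, a weakened subsolution and a supersolution for slightly perturbed free-boundary data), locates a maximum of the difference, and excludes an interior maximum inside $\{\underline u>0\}$ by the maximum principle for harmonic functions. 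The maximum therefore lies over $\partial\{\underline u>0\}$, where one plays the supersolution bound $|\grad\varphi| \le Q^*(\grad\varphi)$ for $\underline u$ against the weakened subsolution condition of \dref{*wksub} for $u$: the supporting hyperplane of the convex set $\{\underline u>0\}$ at the contact point constrains the relevant slope to a single direction, so the ``two linearly independent slopes'' alternative of \dref{*wksub} is unavailable and a contradiction results. Hence $u \le \underline u$, so $u = \underline u$; and since $\{\underline u>0\} = C$ is convex, the last assertion follows.

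The delicate step is the comparison principle in the previous paragraph: setting up the sup/inf-convolution bookkeeping so that the perturbed free-boundary conditions still straddle $Q^*$ in the limit, and, most importantly, using the convexity of $\{\underline u>0\}$ precisely to rule out the facet alternative built into \dref{*wksub}. This is the content of \cite[Theorem 3.19]{FeldmanSmart}, whose proof we invoke.
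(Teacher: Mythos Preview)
The paper does not give its own proof of this theorem; it is stated as a citation of \cite[Theorem~3.19]{FeldmanSmart} and used as a black box.  So there is nothing in the paper to compare against, and your proposal already acknowledges this by invoking that reference for the comparison step.

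That said, your sketch of the convexity of $\{\underline u>0\}$ has a genuine gap.  The inclusion $\{\underline u>0\}\subset C=\bigcap_e\{e\cdot x>t_0(e)\}$ follows from $\underline u\le w_e$, but the reverse inclusion is false in general: the positivity set of the minimal supersolution is typically \emph{strictly smaller} than the intersection of all the single-direction plane-cap supports.  The plane cap $w_e$ is the largest half-plane supersolution with normal $e$ that still dominates the obstacle; there is no reason its free boundary coincides with that of $\underline u$.  Your sliding argument does not rescue this: once you slide $w_e$ inward it drops below $1$ on part of $\R^d\setminus U$, so it ceases to be an admissible supersolution and minimality of $\underline u$ says nothing.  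Even ignoring that, ``touching from outside forces contact on $\partial C$'' does not follow from minimality.  The convexity proof in \cite{FeldmanSmart} proceeds differently (roughly, one shows that if $\{\underline u>0\}$ were not convex one could build a strictly smaller supersolution, or equivalently one works with a convex-envelope replacement), and this is not a detail that can be papered over with plane caps.
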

In particular the minimal supersolution of \eref{exterior}, when $\R^d \setminus U$ is convex, is the same as the maximal subsolution, in the sense of \dref{*wksub}.  Furthermore, given a supersolution $u$ of \eref{exterior}, one only needs to check the weakened subsolution condition \dref{*wksub} to see that $u$ is minimal.

Thus, in the convex setting, to show the convergence of the minimal supersolutions $u^\ep$ to \eref{exteriorep} to the minimal supersolution $u$ of \eref{exterior}, we just need to show the supersolution and weakened subsolution property for any subsequential limit of the $u^\ep$.
\begin{proposition}\label{p.minimalsuperconv}
Let $u^\ep$ be the minimal supersolution of \eref{exteriorep}.  If $u^\ep \to u$ uniformly along some subsequence $\ep \to 0$ then $u$ is a supersolution and weakened subsolution of \eref{exterior}.
\end{proposition}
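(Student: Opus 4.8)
The plan is to verify the two conditions in Definitions~\ref{d.*super} and \ref{d.*wksub} separately, using the standard perturbed test function machinery together with the characterizations of the pinning interval endpoints (\lref{visccorr}) and, crucially, the strongly-supersolution-continuously-pinned property (\dref{contstrictsubsuper}) whose validity at the relevant slopes is supplied by \tref{irrcont} and \lref{irrQcont} in $d=2$. First I would record the easy preliminaries: each $u^\ep$ is a minimal supersolution of \eref{exteriorep}, hence uniformly Lipschitz by \lref{properties} and the harmonic lift, and satisfies $u^\ep \geq \id_{\R^d\setminus U}$; all of these pass to the uniform limit $u$, which is therefore a nonnegative, compactly supported (uniform bound on the $u^\ep$ plus nondegeneracy of minimal supersolutions, \lref{nondegen}, confines the support), Lipschitz function with $u \geq \id_{\R^d\setminus U}$ and $\Delta u = 0$ in $\{u>0\}\cap U$ by stability of viscosity solutions under uniform limits.

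For the supersolution property, I would run the argument from the proof of \tref{kcl} essentially verbatim. Suppose $\varphi$ is a smooth test function touching $u$ from below at $x_0 \in \partial\{u>0\}\cap U$ with $\Delta\varphi(x_0)>0$; perturb so that the touching is strict away from $x_0$ and $\varphi$ is strictly subharmonic nearby. Since $u^\ep$ are harmonic in their positivity sets, the nearby touching points $x_\ep \to x_0$ of $\varphi+c_\ep$ with $u^\ep$ must lie on $\partial\{u^\ep>0\}$; choosing $k_\ep\in\ep\Z^d$ with $|k_\ep-x_\ep|\le C\ep$ and $(k_\ep-x_\ep)\cdot\grad\varphi(x_\ep)>0$, blowing up at $k_\ep$ at scale $\ep$, and extracting limits, one gets a global solution $v$ of \eref{cell} with $v(x)\geq(\grad\varphi(x_0)\cdot x)_+$ on $\R^d$, and $\inf_{B_C(0)}v=0$ (the blow-up is of a minimal supersolution, so nondegeneracy gives the free boundary passes within a bounded ball of $k_\ep$). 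By \lref{visccorr}(1), $|\grad\varphi(x_0)|\leq Q^*(\widehat{\grad\varphi(x_0)})$, which is exactly the required free boundary inequality; hence $\min\{\Delta\varphi(x_0), |\grad\varphi(x_0)|-Q^*(\cdots)\}\leq 0$, establishing Definition~\ref{d.*super}.

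For the weakened subsolution property, let $\varphi$ touch $u$ from above in $\overline{\{u>0\}}\cap D$ at $x_0$ with $\varphi(x_0)=0$, and suppose $\grad\varphi(U)$ does \emph{not} contain two linearly independent slopes, so near $x_0$ the gradient direction is essentially fixed, $\widehat{\grad\varphi}\approx e$; I must show $|\grad\varphi(x_0)|\geq Q^*(\grad\varphi(x_0))$. Suppose for contradiction that $|\grad\varphi(x_0)| < Q^*(e) = Q^*_{cont}(e)$ — here is where the identity $Q^*=Q^*_{cont}$ at the relevant direction (irrational directions, or with the $|\xi|^{-1/2}$ caveat at rational ones) from \tref{irrcont}/\lref{irrQcont} enters, and this is the step I expect to be the main obstacle, since it is what couples the abstract ``$I_{cont}$'' construction to the minimal supersolution. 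Then $\alpha := |\grad\varphi(x_0)|$ is strongly supersolution continuously pinned at $e$, so for a slightly perturbed smooth supersolution $\psi$ (obtained from $\varphi$ in a half-space-type domain $D_{e'}(V)$ with gradient close to $\alpha e$ and free boundary condition $|\grad\psi| < \alpha$, say $\psi = (1-\eta)\varphi$ after a harmonic/smoothing adjustment so that it is a strict supersolution of the $\alpha$-problem) \dref{contstrictsubsuper} produces, for every $\ep>0$, a \emph{supersolution} $v^\ep$ of \eref{fb0} with $v^\ep\to\psi$ appropriately and positivity sets shrinking toward $\{\psi>0\}$. Sliding this $v^\ep$ downward a small amount and using it as a local barrier above $u^\ep$ near $x_0$ (valid since $u^\ep$ is the \emph{minimal} supersolution, so any supersolution lying above it on the boundary of a subdomain lies above it inside), one forces $\{u^\ep>0\}$ to retract near $x_0$ by a definite amount uniformly in $\ep$, contradicting that $x_0\in\overline{\{u>0\}}$ with $u=\lim u^\ep$ touching $\varphi$ there from the correct side. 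I would phrase this last localization carefully using the comparison principle \lref{strictcomparison} (or \lref{Rcomparison} after inf/sup convolution) in a small ball, exactly as in the perturbed test function arguments of \sref{irr}; the bookkeeping of the domains $D_{e'}(V)$ and the matching of boundary data between $v^\ep$ and $u^\ep$ is the delicate part, but it is structurally identical to the constructions already carried out, so I would cite those and only indicate the modifications (using supersolutions in place of subsolutions, and the minimality rather than maximality of $u^\ep$).
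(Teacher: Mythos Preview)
Your supersolution argument is fine and coincides with the paper's.  The weakened subsolution argument, however, has a genuine gap and is also far more complicated than necessary.

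You reduce the free boundary check to the identity $Q^*(e) = Q^*_{cont}(e)$, but \lref{irrQcont} only gives this at \emph{irrational} directions in $d=2$; at a rational direction $\hat\xi$ one has merely $Q^*_{cont}(\hat\xi) \geq Q^*(\hat\xi) - C|\xi|^{-1/2}$, which does not rule out $\alpha \in [Q^*_{cont}(\hat\xi), Q^*(\hat\xi))$.  So your contradiction fails precisely at rational directions, and along the way you import a restriction to $d=2$ that the proposition does not carry.

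The paper's proof sidesteps all of this by exploiting what ``weakened'' means in \dref{*wksub}: the escape clause ``$\grad\varphi(D)$ contains two linearly independent slopes'' reduces the free boundary check to \emph{linear} test functions $\varphi(x) = p\cdot(x-x_0)$.  For those a barrier is obtained directly from a single plane-like solution: with $v^*$ a plane-like solution of slope $Q^*(p)\hat p$ from \lref{minimalsuper}, the rescaling $v = \tfrac{|p|}{Q^*(p)} v^*$ is a strict supersolution of \eref{unitscale} (since $|p|/Q^*(p)<1$), and $\ep$-rescalings $\ep v((\cdot-\ep k_n)/\ep)$ converge uniformly to $(p\cdot(x-x_0)-\delta)_+$ after an appropriate lattice shift.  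Then taking $w^\ep = v^\ep \wedge u^\ep$ in a small domain and $= u^\ep$ outside gives a strictly smaller supersolution of \eref{exteriorep}, contradicting minimality of $u^\ep$.  No curved-surface recovery sequences, no $Q^*_{cont}$, and no dimensional restriction are needed.  (The $Q_{*,cont}$ machinery is used in the paper, but for the dual statement about \emph{maximal subsolutions}; see part (ii) of \pref{mainprop}.)
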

\begin{proof}
The supersolution property has already been established in \sref{limitsofep}, and $u$ harmonic in $\{u>0\}$ is standard.  Note that, by \lref{nondegen}, the uniform convergence $u^\ep \to u$ also implies that the free boundaries $\partial \{u^\ep >0\}$ converge in Hausdorff distance to $\partial \{u>0\}$.  Suppose that $\varphi = p \cdot (x-x_0)$ touches $u$ from above in $\overline{\{ u > 0 \}}\cap D$ for some open $D \subset U$ with 
\[  |p| < Q^*(p)\]
for some $p \in \R^d \setminus \{0\}$.  We may assume that $\overline{D}$ is compact since $\overline{\{u>0\}}$ is compact.  By the strong maximum principle the contact set is a compact subset of $\partial \{u>0\} \cap D$.  By the strict ordering on $\partial D$ we may choose $\delta>0$ so that $\{u > \varphi - \delta\} \cap \overline{\{ u>0\}} \cap D$ is compactly contained in $D$.

Let $v = \frac{|p|}{Q^*(p)}v^*$ where $v^*$ is a plane-like solution with slope $Q^*(p)$, then $v$ is a supersolution of \eref{unitscale} since $|p|/Q^*(p) <1$.  There is a sequence of points $k_n \in \Z^d$ such that
\[ \ep v(\tfrac{x-\ep k_n}{\ep}) \to (\varphi - \delta)_+ \ \hbox{ uniformly in $D$}\]
and the free boundaries converge in the Hausdorff distance.  Thus $\{v^\ep < u^\ep\} \cap \{u^\ep >0\} \cap D$ is nonempty for sufficiently small $\ep>0$.  Then 
\[
w^\ep = \begin{cases}
v^\ep \wedge u^\ep & x \in D \\
u^\ep & x \notin D
\end{cases}
\]
is a strictly smaller supersolution than $u^\ep$ of \eref{exteriorep}.  This is a contradiction.
\end{proof}

\subsection{Augmented pinning problem}\label{s.augpinning}  In this section we introduce a free boundary problem with pinning interval, with some additional information augmenting the free boundary condition.  We will motivate this problem by deriving it as a limit of spatially homogeneous problems.  

Suppose that we are given $[Q_*,Q^*]$ satisfying all the properties of \tref{mainQcont}.  That is 
\begin{enumerate}[label=(\alph*)]
\item $Q_*,Q^*$ are respectively lower and upper semi-continuous on $S^{d-1}$.
\item There is some number $\langle Q^2 \rangle^{1/2} \in [Q_*(e),Q^*(e)]$ for all $e \in S^{d-1}$.
\item\label{part.augiii} Left and right limits of $Q_*,Q^*$ exist at every $e \in S^{d-1}$ and $Q_*,Q^*$ are continuous at irrational directions.
\end{enumerate}
Then we augment this with a continuous pinning interval $[Q_{*,cont},Q^*_{cont}]$ satisfying
\begin{enumerate}[resume,label=(\alph*)]
\item\label{part.augiv}  $Q_{*,cont},Q^*_{cont}$ are respectively upper and lower semi-continuous on $S^{d-1}$.
\item\label{part.augv}  For all $e \in S^{d-1}$, $\langle Q^2 \rangle^{1/2} \in [Q_{*,cont}(e),Q^*_{cont}(e)] \subset[Q_*(e),Q^*(e)]$.
\item\label{part.augvi} Left and right limits of $Q_{*,cont},Q^*_{cont}$ exist at every $e \in S^{d-1}$ and $[Q_{*,cont},Q^*_{cont}] = [Q_{*},Q^*]$ at irrational directions.
\end{enumerate}
Note that, combining assumptions, the left and right limits of $Q_{*,cont}$ and $Q^*_{cont}$ at a rational direction agree with the corresponding left and right limits of $Q_*,Q^*$.

We do not claim to completely classify the limit shapes.  We will just consider the exterior case $\R^d \setminus U$ is convex and compact, analogous results hold for the interior case $\overline{U}$ convex and compact. Consider the problem
\begin{equation}\label{e.exterioraug}
\begin{cases}
\Delta u = 0 &\hbox{in } \{ u>0\}\cap U \\
|\grad u | \in[Q_{*,cont}(\grad u), Q^*(\grad u)] &\hbox{on } \partial \{ u>0\} \cap U \\
u = 1 &\hbox{on } \R^d \setminus U.
\end{cases}
\end{equation}
Note that, unlike in \eref{fb1}, the subsolution condition is upper semi-continuous.  This means we need to be careful with the notion of subsolution.

\begin{definition}\label{d.*convaugsub}
  A subsolution of \eref{exterioraug} in a function $u \in C(\R^d)$ supported in a compact convex domain $\overline{\{u>0\}}$, satisfying $u \leq \id_{\R^d \setminus U}$ and such that, whenever $\varphi \in C^\infty(\R^d)$ touches $u$ from above in $\overline{\{ u > 0 \}}\cap D$ some $D \subset U$ open, there is a contact point $x$ such that either
  \begin{equation*}
    \Delta \varphi(x) \geq 0,
  \end{equation*}
  or $\varphi(x) =0$ and
  \begin{equation*}
   |\grad \varphi(x)| \geq   \liminf_{\{u>0\} \ni y \to  x}Q_{*,cont}(\grad \varphi(y)) .
  \end{equation*}
\end{definition}

\subsection{Limit shapes of local minimizers}
\begin{proposition}\label{p.mainprop}
Let $U$ such that $\R^d\setminus U$ is convex. 
\begin{enumerate}[label=(\roman*)]
\item Let $u^\ep$ be the minimal supersolution of \eref{exteriorep}. Then $u^\ep \to \underline{u}$ where $\underline{u}$ is the minimal supersolution of \eref{exterioraug}.
\item Let $u^\ep$ be the maximal subsolution of \eref{exteriorep}. Suppose that $u^\ep \to u$ along some subsequence, then $u \geq \overline{u}$ where $\overline{u}$ is the maximal subsolution of \eref{exterioraug}.
\item Let $u$ be a solution of \eref{exterioraug} such that $\{u \geq \lambda\}$ is compact and convex for all sufficiently small $\lambda >0$.  Then there exists a sequence $u^\ep$ solving \eref{exteriorep}, local energy minimizers in the sense of \sref{energysetup}, such that $u^\ep \to u$ as $\ep \to 0$.
\end{enumerate}
\end{proposition}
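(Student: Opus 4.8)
The plan is to treat the three assertions separately, each reducing to machinery already in place: \tref{kcl} and \pref{minimalsuperconv} for (i); the perturbed test function method of \sref{contpart}--\sref{irr} (and \tref{lrcont}) for (ii); and \lref{localmin} together with (i)--(ii) and the plane-like solutions of \tref{cell} for the local-minimizing recovery sequence in (iii). For part (i), I would first extract from an arbitrary subsequence a uniformly convergent sub-subsequence $u^\ep\to u$ (uniform boundedness and, after harmonic lifting, the Lipschitz bound \lref{properties}), with Hausdorff convergence of the free boundaries by \lref{nondegen}. By \tref{kcl}, $u$ is harmonic in $\{u>0\}$ and satisfies the supersolution free-boundary condition with $Q^*$; since the minimal supersolution of \eref{exterioraug} is governed by the \emph{upper} endpoint $Q^*$ (cf.\ \dref{*super}), $u$ is a supersolution of \eref{exterioraug}. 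The argument of \pref{minimalsuperconv} -- using minimality of each $u^\ep$ and the scaled plane-like supersolution $\tfrac{|p|}{Q^*(p)}v^*$ to contradict a linear test function with $|p|<Q^*(p)$ -- shows $u$ is a weakened subsolution in the sense of \dref{*wksub}. Then \tref{convexcompare} identifies $u$ as the unique supersolution-and-weakened-subsolution of \eref{exterior}, hence the minimal supersolution of \eref{exterioraug}; independence of the subsequence gives full convergence.

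For part (ii) I would approximate $\overline u$ from below by genuine strict subsolutions and realize each via the $\ep$-problem. For $\lambda>0$ let $\overline u_\lambda$ be the maximal subsolution of the problem with free-boundary interval $[(1+\lambda)Q_{*,cont},(1+\lambda)Q^*]$; then $\overline u_\lambda\uparrow\overline u$ as $\lambda\downarrow0$, it is a subsolution of \eref{exterioraug}, and its convex free boundary carries slope $\ge(1+\lambda)Q_{*,cont}(n)$. Mollifying that free boundary and taking the harmonic replacement yields a smooth $\varphi_\lambda$ with convex positivity set, harmonic inside, slope $\ge(1+\tfrac\lambda2)Q_{*,cont}(n)$, and gradient direction within an error tending to $0$ with $\lambda$ of the normal. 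Covering $\partial\{\varphi_\lambda>0\}$ by finitely many arcs on each of which $\grad\varphi_\lambda/|\grad\varphi_\lambda|$ stays within $\delta$ of a fixed direction, \dref{contsubsuper} (whose hypotheses are supplied by \sref{contpart}, \sref{irr} and \tref{lrcont}) produces $\ep$-subsolutions on each arc converging to $\varphi_\lambda$, which I splice as in \sref{irrcurved} into a global subsolution $\varphi_\lambda^\ep$ of \eref{exteriorep}. Since $\varphi_\lambda\le\overline u\le u\le1$, strictly away from the obstacle, one has $\varphi_\lambda^\ep\le u^\ep$ near $\partial U$ for small $\ep$, and maximality of the subsolution $u^\ep$ gives $u^\ep\ge\varphi_\lambda^\ep$; letting $\ep\to0$, then $\lambda\to0$ and the mollification scale to $0$, yields $u\ge\overline u$.

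For part (iii) I would construct $u^\ep$ as a local energy minimizer trapped between barriers converging to $u$, then invoke \lref{localmin}. For small $\lambda>0$, build a smooth strict subsolution $\underline\varphi_\lambda$ of the augmented problem with positivity set $\{u>0\}$ pushed inward by $O(\lambda)$ -- which raises the slope to $\ge(1+c\lambda)\max(|\grad u|,Q_{*,cont})$, so the sub-side of \dref{contsubsuper} applies exactly as in (ii) -- and a smooth strict supersolution $\overline\varphi_\lambda$ with positivity set pushed outward by $O(\lambda)$, lowering the slope below $|\grad u|$, realized by the super-side of \dref{contsubsuper} wherever $|\grad u|<Q^*_{cont}(n)$. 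On the finitely many facets of $\partial\{u>0\}$ with rational normal $\hat\xi$ at which $|\grad u|$ lies in the residual gap $(Q^*_{cont}(\xi),Q^*(\xi)]$, I would instead paste in a rescaled periodic plane-like solution of slope slightly below $|\grad u|$ -- available for every interior slope by \tref{cell}/\pref{mean}, with flat pieces needing no bending -- and transition to the neighbouring arcs through the $\tau$-oriented sweepouts of \lref{sweepoutexistence} bent via \sref{bending}. Sup- and inf-convolving at a scale $\delta(\ep)\to0$ turns the two $\ep$-level constructions into a nondegenerate inner-regular $R$-subsolution $\prec$ an outer-regular $R$-supersolution, both tending to $u$ as $\ep\to0$ then $\lambda\to0$; \lref{localmin} then produces a local energy minimizer $u^\ep$ of \eref{energyep} solving \eref{exteriorep} and squeezed between them, whence $u^\ep\to u$ uniformly with Hausdorff convergence of positivity sets by \lref{nondegen}.

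The step I expect to be hardest is the \emph{upper} barrier in part (iii) at faceted directions: when $|\grad u|$ at a facet normal $\hat\xi$ falls in the gap $(Q^*_{cont}(\xi),Q^*(\xi)]$ -- nonempty exactly at rational directions of moderate modulus -- the supersolution perturbed test function of \sref{contpart} does not reach, so one must bridge a flat facet carrying a genuine plane-like solution from \tref{cell} to the adjacent strictly convex arc while keeping a global supersolution. This is where the full rational-direction package -- the left/right limits of \tref{lrcont}, the oriented sweepouts of \lref{sweepoutexistence}, and the bending perturbations of \sref{bending} -- and hence the fine regularity properties of \tref{mainQcont}, must be deployed. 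The subsolution side is comparatively routine, since its slopes only need to exceed $(1+\lambda)Q_{*,cont}$, which \dref{contsubsuper} handles uniformly in the direction; and part (i) is essentially immediate once one notes that supersolutions of \eref{exterioraug} and \eref{exterior} coincide.
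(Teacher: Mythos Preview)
Your treatment of (i) matches the paper. For (ii) and (iii) the paper takes a substantially simpler route that you have missed.

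For (ii), the paper does \emph{not} construct a global $\ep$-subsolution approximating $\overline u$. It shows directly that the limit $u$ is a supersolution of the problem with free-boundary condition $|\grad u|\le Q_{*,cont}(\grad u)$, via a purely local perturbed test function argument: if a smooth $\varphi$ touches $u$ from below at a free boundary point with $|\grad\varphi(0)|>\alpha>Q_{*,cont}(e)$, then after a standard perturbation the very definition of $Q_{*,cont}$ in \dref{contsubsuper} hands you a recovery sequence $v^\ep$ of subsolutions of \eref{fb0} on a small domain $D$, and $v^\ep\vee u^\ep$ is a strictly larger subsolution than $u^\ep$, contradicting maximality. Since $\overline u$ is also the \emph{minimal supersolution} of the $Q_{*,cont}$ problem, $u\geq\overline u$ follows immediately. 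Your global splicing ``as in \sref{irrcurved}'' is both unnecessary and not quite right as stated: that section sews along a single plane-like direction, not around a closed convex curve.

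For (iii), the paper sidesteps entirely the faceted-direction difficulty you correctly identify. The device is a level-set reduction to (i) and (ii). For small $\lambda>0$ take the convex set $\{u\geq\lambda\}$, dilate it slightly inward and outward to $K_\lambda\subset K^\lambda$, and let $u^{\lambda,\ep}$ and $u^\ep_\lambda$ be the minimal supersolution and maximal subsolution of \eref{exteriorep} in the exteriors of $K^\lambda$ and $K_\lambda$ with datum $\lambda$. By (i), $u^{\lambda,\ep}\to u^\lambda\prec u(\cdot/(1+C_0\lambda))$; by (ii), $\liminf u^\ep_\lambda\geq u_\lambda\succ u(\cdot/(1-C_0\lambda))$. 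Extending each into $K^\lambda$, $K_\lambda$ by the rescaled $u$ yields barriers $\underline v^\ep\prec\overline v^\ep$, and \lref{localmin} furnishes the local energy minimizer between them. The upper barrier is thus the minimal supersolution of the $\ep$-problem itself, whose convergence is controlled by (i) \emph{regardless} of whether $|\grad u|$ on a facet lies in the gap $(Q^*_{cont}(\xi),Q^*(\xi)]$; no plane-like solutions need to be pasted in and no transitions across facet endpoints need to be engineered. Your direct barrier construction, by contrast, leaves exactly that transition step unresolved.
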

\begin{proof}
We have already addressed the convergence of the minimal supersolutions above in \pref{minimalsuperconv}.  First we prove the result on the maximal subsolutions, then we show how the first two parts imply the third.

1. Let $u^\ep$ be the maximal subsolution of \eref{exteriorep} and suppose that $u^\ep \to u$ uniformly along some subsequence (not relabeled).  The maximal supersolution $\overline{u}$ of \dref{*convaugsub} is also the minimal supersolution of \eref{exterior} with $Q_{*,cont}$. We aim to show the supersolution property
\[|\grad u| \leq Q_{*,cont}(\grad u) \ \hbox{ on } \ \partial \{u>0\}. \]
Then we will find $u \geq \overline{u}$.

Let $\varphi \in C^{\infty}(\R^d)$ touch $u$ from below in $D$ at a free boundary point $x_0$ for some open $D \subset U$, without loss take $x_0 = 0$.  Call $e$ to be the unit vector in the direction $\grad \varphi(x_0)$.  Suppose that
\[  \Delta \varphi(0) < 0 \ \hbox{ and } \ |\grad \varphi|(0) > \alpha > Q_{*,cont}(e).\]
Let $\delta>0$, we can assume, by shrinking $D$, perturbing $\varphi$ by a quadratic, and making a small translation in the $e$ direction, that 
\[ |\grad\varphi| \geq \alpha  \ \hbox{ on $\partial \{\varphi>0\} \cap D$,} \ \hbox{ $\varphi(0) >u(0) = 0$,} \ \varphi \prec u \ \hbox{ on } \partial D, \]
and
\[ |\tfrac{\grad \varphi}{|\grad \varphi|}(x) - e| \leq \delta \ \hbox{ for } \ x\in D.\]
From the definition of $Q_{*,cont}(e)$, if $\delta>0$ is sufficiently small $\varphi$ has a recovery sequence $v^\ep$ subsolutions of \eref{fb0} in $D$ with
 \[ \lim_{\ep \to 0 } \inf_D (v_\ep - \varphi_+) \geq 0, \ \lim_{\ep \to 0}\sup_{\partial D}|v_\ep- \varphi| =0\]
 and
 \[\lim_{\ep \to 0} d_H(\{v^\ep>0\} \cap D,  (\{v^\ep >0\} \cup \{ \varphi>0\}) \cap D) = 0. \]
In particular $v_\ep \prec u$ on $\partial D$ and $v^\ep(0) > u(0) = 0$ for $\ep>0$ sufficiently small, and therefore,
\[ \overline{u}^\ep(x) = \begin{cases}
v^\ep(x) \vee u^\ep(x) & \hbox{for } x \in D \\
u^\ep & \hbox{for } x \in U \setminus D
\end{cases}\]
is a strictly larger subsolution than $u^\ep$, which is a contradiction.

2.   Without loss $0$ is in the interior of $\R^d \setminus U$.  Choose a level set $\{ u \geq \lambda\}$ convex, and rescale
\[K_\lambda = (1-C_0\lambda)\{ u \geq \lambda\} \ \hbox{ and } \ K^\lambda = (1+C_0\lambda)\{u \geq \lambda\}. \]
 The universal constant $C_0$ will be made precise below.  Then define $u^\lambda$ and $u_\lambda$ mapping $\R^d \to [0,\lambda]$ to be, respectively, the minimal supersolution and maximal subsolution of \eref{exterioraug} in the respective domains $U^\lambda = \R^d \setminus K^\lambda$ and $U_\lambda = \R^d \setminus K_\lambda$ with
\[ u^\lambda = \lambda \ \hbox{ on } \ K^\lambda \ \hbox{ and } \ u_\lambda = \lambda \ \hbox{ on } \ K_\lambda.\]
Similarly let $u^{\lambda,\ep}$ and $u^\ep_\lambda$ be, respectively, the minimal supersolution and maximal subsolution of \eref{exteriorep} respectively in $U_\lambda$ and $U_\lambda$ with data
\[ u^{\lambda,\ep} = \lambda \ \hbox{ on } \ K^\lambda \ \hbox{ and } \ u^\ep_\lambda = \lambda  \ \hbox{ on } \ K_\lambda.\]
As shown in \tref{convexcompare} both $\{u^\lambda>0\}$ and $\{ u_\lambda >0\}$ are convex.  Furthermore 
\[ u^\lambda \prec u(\tfrac{1}{1+C_0\lambda} \cdot) \ \hbox{ in } \ U^\lambda,\]
because $u(\frac{1}{1+C_0\lambda} \cdot)$ is a strict supersolution and $u^\lambda$ is the minimal supersolution and they agree on $\partial K^\lambda$, similarly $ u(\frac{1}{1-C_0\lambda} \cdot) \prec u_\lambda$ in $U_\lambda$.

Non-degeneracy follows from \lref{nondegen}, and then using the upper bound $u^\lambda,u_\lambda \leq \lambda$
\[ d_H(\partial \{u_\lambda^\ep>0\},\partial K_\lambda) +d_H(\partial \{u^{\lambda,\ep}>0\},\partial K^\lambda)\leq C\lambda\]    
with $C$ universal.  Now $C_0$ is chosen so that
\[ \{u^{\lambda,\ep}>0\} \subset K_\lambda + B_{C\lambda} \subset K^\lambda \subset \subset \{u^{\lambda,\ep}>0\}.\]
This is possible because of the star-shapedness of $K$ with respect to a neighborhood of the origin.

As shown above $u^{\lambda,\ep} \to u^\lambda$ and $\liminf_{\ep \to 0} u^\ep_\lambda \geq u_\lambda$ as $\ep \to 0$ uniformly in $\R^d$.
 Thus, by nondegeneracy \lref{nondegen}, for $\ep>0$ sufficiently small
 \[ \{u^{\lambda,\ep}>0\} \subset \subset \{ u(\tfrac{1}{1+C_0\lambda} \cdot)>0\}  \ \hbox{ and } \ \{ u(\tfrac{1}{1-C_0\lambda} \cdot)>0\}\subset\subset \{u^\ep_\lambda>0\}\]
 and by maximum principle
 \[ u^{\lambda,\ep} \prec u(\tfrac{1}{1+C_0\lambda} \cdot) \ \hbox{ in $U^\lambda$ and } \ u(\tfrac{1}{1-C_0\lambda} \cdot) \prec u^\ep_\lambda \ \hbox{ in } \ U_\lambda. \]
  Extend $u^{\lambda,\ep}$ and $u_\lambda^\ep$ to $U$ by
\[ \overline{v}^\ep(x) = 
\begin{cases}
u^{\lambda,\ep}(x) & x \in U^\lambda \\
u(\tfrac{1}{1+C_0\lambda}x) & x \in  K^\lambda
\end{cases}
\ \hbox{ and } \ 
\underline{v}^\ep(x) = 
\begin{cases}
u^\ep_\lambda(x) & x \in U_\lambda \\
u(\tfrac{1}{1-C_0\lambda}x) & x \in K_\lambda.
\end{cases}
 \]
 Then the superharmonic/subharmonic properties of $\overline{v}^\ep$ and $\underline{v}^\ep$ are easily checked, for $x \in \partial K^\lambda$ and $r>0$ sufficiently small
 \[ \overline{v}^\ep(x) =u(\tfrac{1}{1+C_0\lambda}x) = \frac{1}{|B_r|}\int_{B_r} u(\tfrac{1}{1+C_0\lambda}y) \ dy \geq \frac{1}{|B_r|}\int_{B_r} \overline{v}^\ep(y) \ dy \]
 and similar for $\underline{v}^\ep$.
 
 Now, using $\underline{v}^\ep \prec\overline{v}^\ep$, we apply \lref{localmin} to find that there is a minimizer $v^\ep$ of the energy $E_\ep(\cdot,U)$ in the constraint set
\[ \mathcal{A} = \{ v \in H^1(U): \underline{v}^\ep \leq  v \leq \overline{v}^\ep \ \hbox{ and } \ v = 1 \ \hbox{ on }  \ \partial U  \} \]
which is, furthermore, a viscosity solution of \eref{exteriorep} satisfying the strict separation
 \[\underline{v}^\ep \prec v^\ep \prec \overline{v}^\ep.\]
  Thus, for any $\ep \leq \ep_0(\lambda)$,
\[  |v^\ep - u| \leq C \lambda \ \hbox{ and } \ d_H(\{v^\ep>0\},\{u>0\}) \leq C\lambda.\]
Since $\lambda >0$ was arbitrary we conclude.
\end{proof}

\appendix

\section{Augmented pinning problem as a limit of spatially homogeneous problems }\label{s.augmentedpinningex}

In this section we derive the augmented pinning problem via a limit of regular spatially homogeneous pinning problems.  This gives at least a plausibility argument for why $Q_{*,cont}$ and/or $Q^*_{cont}$ may be nontrivially different from the upper and lower semicontinuous envelopes of $Q_*$ and $Q^*$.

Consider a natural limiting procedure to derive \eref{fb1}, one might choose to regularize the jump discontinuities of $[Q_*,Q^*]$.  It is natural to do this in a monotone way by an inf/sup convolution.  We define the inf and sup convolving operators $\Box_{*,n}$ and $\Box^*_{n}$ respectively on $LSC(S^{d-1})$ and $USC(S^{d-1})$
\[ \Box_{*,n} f(e):=  \inf_{e'\in S^{d-1}}\{ f(e') + n|e'-e|\} \ \hbox{ and } \ \Box^*_{n}f(e) = \sup_{e'\in S^{d-1}}\{ f(e') - n|e'-e|\}. \]
Note that $\Box_{*,n} f$ and $\Box^*_{n} f$ are Lipschitz continuous with constant $n$ on $S^{d-1}$.  The natural monotone approximation procedure would be to define
\[ Q_{*,n}(e) = \Box_{*,n}Q_{*}(e) \ \hbox{ and } \ Q^*_n(e) = \Box^*_{n}Q^*_{n}(e) .\]
Basically we are regularizing the discontinuities of $I(e)$, replacing by Lipschitz spikes.  In this case it is straightforward to check that the minimal supersolution $\underline{u}_n$ and maximal subsolution $\overline{u}_n$ associated with $Q_{*,n}$ and $Q^*_n$ do converge, respectively, to the minimal supersolution $\underline{u}$ and maximal subsolution $\overline{u}$ of \eref{exterior}.

Now we consider a different approximation procedure which is not monotone.   Assume that we are given $I(e) = [Q_*(e),Q^*(e)]$ and $I_{cont}(e) = [Q_{*,cont}(e),Q^*_{cont}(e)]$ satisfying the assumptions listed in \sref{augpinning}.  Define
\begin{equation}\label{e.Qm}
Q_{*,m}(e) = 
\begin{cases}
\Box^*_{m}Q_{*,cont}(e) & e \hbox{ irrational} \\
Q_{*}(e) & e \hbox{ rational}
\end{cases} \ \hbox{ and } \ 
Q^*_{m}(e) = 
\begin{cases}
\Box_{*,m}Q^*_{cont}(e) & e \hbox{ irrational} \\
Q^*(e) & e \hbox{ rational}
\end{cases}
\end{equation}
and send $m \to \infty$.  This isn't really a regularization, $Q_{*,m}$ and $Q^*_{m}$ may still have jump discontinuities at rational directions, but one can think of regularizing again
\[ Q_{*,m,n}(e)= \Box_{*,n}Q_{*,m}(e) \ \hbox{ and } \ Q^*_{m,n}(e)= \Box^*_{n}Q^*_{m}(e).\]
and sending both $m,n \to \infty$ but with $m= o(n)$.  

The pinning intervals $I_m(e)$ still converge as $m \to \infty$ pointwise to $I(e)$, however the convergence is no longer monotone.  Are all solutions of \eref{fb1} achieved as a limit of solutions to $\eref{fb1}_m$ for the approximating pinning intervals $[Q_{*,m}(e),Q^*_m(e)]$?  It turns out that the answer is no, limits of solutions to $\eref{fb1}_m$ actually satisfy a stronger condition in general.  

\begin{proposition}
Let $d=2$, $U$ such that $\R^2\setminus U$ is convex.  We refer to $\textit{\eref{exterior}}_m$ for problem \eref{exterior} with pinning interval $[Q_{*,m},Q^*_m]$ as defined in \eref{Qm}.
\begin{enumerate}[label=(\roman*)]
\item Let $u_m$ be the minimal supersolution of $\textit{\eref{exterior}}_m$, then $u_m \to \underline{u}$ uniformly where $\underline{u}$ is the minimal supersolution of \eref{exterioraug}.
\item Let $u_m$ be the maximal subsolution of $\textit{\eref{exterior}}_m$, then $u_m \to \underline{u}$ uniformly where $\underline{u}$ is the maximal subsolution of \eref{exterioraug}.
\item Let $u$ be a solution to \eref{exterioraug} with convex support.  Then there exists a sequence of solutions $u_m$ to $\textit{\eref{exterior}}_m$ such that $u_m \to u$ uniformly as $m \to \infty$.
\end{enumerate}
\end{proposition}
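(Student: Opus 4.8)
The plan is to rerun the argument of \pref{mainprop} (equivalently \pref{minimalsuperconv}) with the oscillatory problem \eref{exteriorep} replaced by the homogeneous but discontinuous--Hamiltonian problem $\eref{exterior}_m$, and with the plane--like cell solutions replaced by linear functions $\alpha(e\cdot x-c)_+$; in part (iii) the energy minimization of \lref{localmin} is replaced simply by Perron's method, there being no energy for $\eref{exterior}_m$. The first step is to describe the coefficients. Since $Q^*_{cont}$ is lower semicontinuous (\lref{Qcont1}) and, in $d=2$, $Q^*_{cont}(e)=Q^*(e)$ at every irrational $e$ by \tref{mainQcont}, the inf--convolutions satisfy $\Box_{*,m}Q^*_{cont}(e)\uparrow Q^*_{cont}(e)=Q^*(e)$, so $Q^*_m\uparrow Q^*$ pointwise with $Q^*_m\le Q^*$ for every $m$; dually $Q_{*,m}\downarrow Q_*$ pointwise with $Q_{*,m}\ge Q_{*,cont}$ at irrational directions. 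Combining this with the existence of directional limits (\tref{mainQ}), the continuity of $Q_*,Q^*$ at irrational directions, and the compatibility assumption \partref{augvi}, a direct check against \dref{*super} and \dref{*fullsub} gives: for $\alpha<Q^*(e)$ the linear function $\alpha(e\cdot x-c)_+$ is a supersolution of $\eref{exterior}_m$ once $m$ is large, and for $\alpha>Q_{*,cont}(e)$ it is a subsolution of $\eref{exterior}_m$ once $m$ is large.

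For parts (i) and (ii), comparison with constant--coefficient barriers bounds $\min Q\le Q_{*,m}\le Q^*_m\le\max Q$ uniformly in $m$, so the $u_m$ are uniformly Lipschitz (\lref{properties}) and, along a subsequence, $u_m\to u$ uniformly with Hausdorff convergence of free boundaries by \lref{nondegen}. In (i), that $u$ is harmonic in $\{u>0\}$ and satisfies $|\grad\varphi(x_0)|\le Q^*(\widehat{\grad\varphi(x_0)})$ at a free boundary point touched from below is proved as in \tref{kcl} --- indeed more directly, since $\eref{exterior}_m$ is homogeneous and $Q^*_m\le Q^*$ with $Q^*$ upper semicontinuous, so no blow--up is required; the weakened subsolution condition \dref{*wksub} is checked exactly as in \pref{minimalsuperconv}, using the linear supersolutions above: if $p\cdot(x-x_0)$ touched $u$ from above at a free boundary point with $|p|<Q^*(\widehat p)$, pick $|p|<\alpha'<Q^*(\widehat p)$ and, for $m$ large, slide the supersolution $\alpha'(\widehat p\cdot(x-x_0)-c)_+$ of $\eref{exterior}_m$ against $u_m$ to produce a strictly smaller supersolution, contradicting minimality. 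By \tref{convexcompare} $u=\underline u$, and since every subsequential limit is $\underline u$ the full sequence converges. Part (ii) parallels step~1 of the proof of \pref{mainprop}: a subsequential limit $u$ is shown to be a supersolution of \eref{exterior} with $Q_{*,cont}$, equivalently of \dref{*convaugsub}, so that $u\ge\overline u$; if $\varphi$ touched $u$ from below at a free boundary point with $\Delta\varphi<0$ and $|\grad\varphi|>\alpha'>Q_{*,cont}(\widehat p)$, then the linear subsolution $\alpha'(\widehat p\cdot(x-x_0)+c)_+$ of $\eref{exterior}_m$ (available for $m$ large) gives, via $u_m\vee v$, a strictly larger subsolution than $u_m$, contradicting maximality. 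As in \pref{mainprop}(ii), this one--sided bound is the content of part (ii).

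Part (iii) follows step~2 of the proof of \pref{mainprop}, with the energy minimization omitted. For small $\lambda>0$ pick a convex level set $\{u\ge\lambda\}$, set $K_\lambda=(1-C_0\lambda)\{u\ge\lambda\}$ and $K^\lambda=(1+C_0\lambda)\{u\ge\lambda\}$ with $C_0$ universal as there, and let $u^{\lambda,m}$ and $u^m_\lambda$ be the minimal supersolution of $\eref{exterior}_m$ in $\R^d\setminus K^\lambda$ and the maximal subsolution in $\R^d\setminus K_\lambda$, with boundary value $\lambda$. Parts (i) and (ii), applied in these dilated domains, give $u^{\lambda,m}\to u^\lambda$ and $\liminf_m u^m_\lambda\ge u_\lambda$, where $u^\lambda$, $u_\lambda$ are the corresponding minimal supersolution and maximal subsolution of the augmented problem \eref{exterioraug}. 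As in \pref{mainprop}, $u(\cdot/(1+C_0\lambda))$ is a strict supersolution of \eref{exterioraug} agreeing with $u^\lambda$ on $\partial K^\lambda$, so $u^\lambda\prec u(\cdot/(1+C_0\lambda))$ by \lref{strictcomparison}, and dually $u(\cdot/(1-C_0\lambda))\prec u_\lambda$; by nondegeneracy \lref{nondegen} and Hausdorff convergence, for $m$ large the same strict orderings hold with $u^{\lambda,m},u^m_\lambda$. Gluing these with the dilations of $u$ across $\partial K^\lambda,\partial K_\lambda$ produces an inner--regular $R$--subsolution $\underline v^m$ and an outer--regular $R$--supersolution $\overline v^m$ of $\eref{exterior}_m$ in $U$ with $\underline v^m\prec\overline v^m$, both $C\lambda$--close to $u$; Perron's method between them, using \lref{Rcomparison}, yields a solution $u_m$ of $\eref{exterior}_m$ with $\underline v^m\prec u_m\prec\overline v^m$, whence $|u_m-u|\le C\lambda$ and $d_H(\{u_m>0\},\{u>0\})\le C\lambda$ for all $m$ large depending on $\lambda$. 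Letting $\lambda\to0$ gives $u_m\to u$ uniformly.

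The main difficulty is reproving the convergence of minimal supersolutions and maximal subsolutions --- parts (i) and (ii) --- for the homogeneous but discontinuous--Hamiltonian problem $\eref{exterior}_m$: because $Q^*_m,Q_{*,m}$ are neither continuous nor uniformly convergent, the arguments must be localized, resting on the test--function definitions \dref{*super}--\dref{*convaugsub}, the directional--limit structure of \tref{mainQ}, and the identifications $Q^*_{cont}=Q^*$ and $Q_{*,cont}=Q_*$ at irrational directions from \tref{mainQcont}. It is precisely here that it matters that the $\Box$--regularizations in \eref{Qm} are performed at irrational directions only, and the bookkeeping in this step is what shows that the limiting equation is the augmented pinning problem \eref{exterioraug} rather than the bare pinning problem with interval $[Q_*,Q^*]$.
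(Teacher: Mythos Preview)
Your treatment of parts (i) and (iii) is fine and matches the paper's approach (the substitution of Perron's method for energy minimization in (iii) is natural, since the statement does not ask for local minimizers).

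However, your proof of part (ii) is incomplete. You establish only the one-sided bound $u\ge\overline u$ (supersolution property of the limit), and then assert ``this one-sided bound is the content of part (ii).'' That is the content of \pref{mainprop}(ii), but the present proposition claims full uniform convergence $u_m\to\overline u$. The paper therefore also checks the weak subsolution condition for the limit, i.e.\ $|p|\ge Q_{*,cont}(\hat p)$ whenever a half-plane $(p\cdot x)_+$ touches $u$ from above at a free boundary point. This is labeled ``the interesting part'' and is where the essential difficulty of the appendix lies.

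The missing argument is as follows. At an irrational direction the monotonicity $Q_{*,m}\ge Q_{*}=Q_{*,cont}$ (at irrationals) gives the subsolution condition directly. At a rational $p$, however, $Q_{*,m}(\hat p)=Q_*(\hat p)$ may lie strictly below $Q_{*,cont}(\hat p)$, and the naive passage to the limit only yields $|p|\ge Q_*(\hat p)$, which is too weak. The paper uses the convexity of $\{u_m>0\}$ and the fact that, for $m$ large, the left and right limits of $Q_{*,m}$ at $\hat p$ both equal $Q_{*,cont}(\hat p)$; then the Caffarelli--Lee complex-analytic argument (\cite[Lemma 3.5]{CaffarelliLee}, reproduced at the end of the paper's proof) shows that any facet of $\partial\{u_m>0\}$ with normal $\hat p$ must be a single point. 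Since the facet is trivial, the touching plane can be tilted to a nearby direction $q\neq\hat p$, giving $|q|\ge Q_{*,m}(q)$, and sending $q\to\hat p$ and $m\to\infty$ yields $|p|\ge Q_{*,cont}(\hat p)$. This step is precisely what distinguishes the augmented pinning problem \eref{exterioraug} from the bare one, and it is absent from your proposal.
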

\begin{proof}
We show convergence of the minimal supersolution and maximal subsolution.  The last part follows as in \pref{mainprop}.

First the minimal supersolutions. Suppose that $u_m \to u$ uniformly along some subsequence. By \tref{convexcompare} we just need to check the supersolution and weak subsolution property for $u$.  The supersolution property is easy because of the monotonicity $Q^*_m \nearrow Q^*$.  The weak subsolution property is also easy because we only need to test with linear functions, the convergence $Q^*_m \to Q^*$ pointwise on $S^{d-1}$ is enough.

Now the maximal subsolutions, again we just need to check the subsolution and weakened supersolution condition.  Suppose that $u_m \to u$ uniformly along some subsequence. The supersolution property in the limit is, for any $\varphi$ touching $u$ from below at $x\in U \cap \partial \{u>0\}$ either $\Delta \varphi(x) \leq 0$ or
\[ |\grad \varphi|(x) \leq \limsup_{y \to x, m \to \infty} Q_{*,m}(\grad \varphi(y)). \]
Since $Q_{*,cont}$ is upper semicontinuous for any $\ep>0$ there is a neighborhood $N$ of $\grad \varphi(x)$ such that for $m$ sufficiently large and $e \in N$ we have $Q_{*,m}(e) \leq Q_{*,cont}(\grad \varphi) +\ep$.  Thus
\[ |\grad \varphi|(x) \leq Q_{*,cont}(\grad \varphi(x)).\]
Now we consider the weak subsolution condition, this is the interesting part.  Suppose that $\varphi(x) = (p \cdot x)_+$ touches $u$ from above at $0 \in \partial \{u>0\} \cap U$ in some domain $D \subset U$ with $p$ rational.  Then we can find $x_m \to 0$ such that $\varphi(x - x_m) = [p \cdot (x-x_m)]_+$ touches $u_m$ from above at $x_m \in \partial \{u_m>0\}$.  Since $\{u_m>0\}$ is convex $|\grad u_m|$ is defined and concave on the facet $\{p \cdot (x-x_m) = 0\} \cap \partial \{u_m>0\}$.  For $m$ sufficiently large the left and right limits of $Q_{*,m}$ at $e$ are $Q_{*,cont}(e)$.  We argue below that the facet must be a singleton $\{p \cdot (x-x_m) = 0\} \cap \partial \{u_m>0\}= \{x_m\}$.  This means that given $r>0$ small enough that $B_r(x_m) \subset D$ and for $|q - p|$ sufficiently small $u_m> [q \cdot (x-x_m)]_+$ is compactly contained in $B_r(x_m)$ so for an appropriate choice of $x_m$ now $[q \cdot (x-x_m')]_+$ touches $u_m$ from above in $B_r(x_m)$ at $x_m' \in \partial \{u_m>0\}$.  Therefore
\[ |q| \geq  Q_{*,m}(q)\]
and
\[ |p| \geq \lim_{m \to \infty}\lim_{q \to p, \ q \neq p}Q_{*,m}(q) = Q_{*,cont}(p).\]
The case of irrational $p$ is easy because of the correct monotonicity.

Now we argue that if $u_m$ solves \eref{exterior} with convex support and the left and right limits of $Q_{*,m}$ at $p$ agree, with value $Q_{*,cont}$ in this case, then the facet with normal $p$, call it $\Omega_p$, is trivial.  This fact was used above.  The argument is in Caffarelli-Lee~\cite[Lemma 3.5]{CaffarelliLee}, but it is very brief so we repeat it here with more details. Suppose $\Omega_p$ is a non-trivial line segment, without loss $0 \in \Omega_p$. Then $|\grad u_m|$ is concave on the facet and therefore must be identically equal to $Q_{*,cont}(p)$.  Then extend $u$ by reflection through $\Omega_p$ and subtract off the linear function $Q^*(p)x\cdot p$ to obtain a harmonic function $v$ in an open domain $\R^2 \supset V \supset \Omega_p$ with $v = 0$ and $|\grad v| = 0$ on $\Omega_p$.  We identify $\R^2$ with the complex plane $\C$ and after rotation we can assume that $\Omega_p$ is a segment of the real line.  Then
\[ \varphi = v_y - iv_x\]
is analytic in $V$ and $\varphi = 0$ on $V \cap \R$.  Thus $\varphi \equiv 0$ in $V$ and $u$ is linear with slope $p$ in $\Omega_p$, this is not the case.

\end{proof}

  \bibliographystyle{plain}
\bibliography{pinning_articles}
\end{document}